\documentclass[11pt]{article}
\usepackage{amsmath,amsthm,amssymb,enumerate,mathscinet}
\usepackage{fullpage}
\usepackage{graphicx}

\usepackage{caption}
\usepackage{mathrsfs}  
\usepackage[backref,colorlinks]{hyperref} 

\usepackage[abbrev,msc-links]{amsrefs}   
\usepackage[utf8]{inputenc}
\usepackage{tikz}

\usepackage{enumerate}
%%%%%%%%%%%%%%%%%%%%%%%%%%%%%%%%%%%%%%%%%%%%%%%%%%%%%%%%%%%%%%		
\usepackage{cleveref}     

\newtheorem{theorem}{Theorem}[section]

\newtheorem{lemma}[theorem]{Lemma}

\newtheorem{proposition}[theorem]{Proposition}

\newtheorem{claim}[theorem]{Claim}
\newtheorem{definition}[theorem]{Definition}

\newtheorem{thm}[theorem]{Theorem}

\newtheorem{cor}[theorem]{Corollary}

\newtheorem{defin}[theorem]{Definition}

%%%%%%   Mathematical environments
\usepackage{lineno}
\theoremstyle{plain}

\newtheorem*{clm*}{Claim}

\theoremstyle{definition}
\newtheorem{rem}[theorem]{Remark}

\numberwithin{equation}{section}

\numberwithin{equation}{section}

\def\eps{\varepsilon}

%%% Greek
\let\eps=\epsilon
\let\ep=\epsilon
\let\theta=\vartheta
\let\rho=\varrho
\let\phi=\varphi

%%% Sets

\def\NN{\mathbb N}

%%% calligraphic 
\def\cA{{\mathcal A}}
\def\cH{{\mathcal H}}

\def\cF{{\mathcal F}}

\def\cP{{\mathcal P}}

\def\cR{{\mathcal R}}
\def\cS{{\mathcal S}}

\def\cK{{\mathcal K}}
\def\cU{{\mathcal U}}

%%% German 

%%% script 

%%% Homomorphism numbers and densities

\def\be{\mathop{\text{\rm Be}}\nolimits}

%%%%%%%%%%%%%%%%%%%%%%%%%%%%%%%%%%%%%%%%%%%%%%%%%%%%%%%%%%%%%%%%%
%%%%%%%%%  Miscellaneous things, some of which only Mathias understands
   
\newcommand{\floor}[1]{\left\lfloor#1\right\rfloor}
\newcommand{\ceil}[1]{\left\lceil#1\right\rceil}

\usepackage{enumitem} 

%%%%%%%%%%% comments
\setlength{\marginparwidth}{1.5cm}

\let\polishlcross=\l
\def\l{\ifmmode\ell\else\polishlcross\fi}

\usepackage{bm}

\def\COMMENT#1{}
\let\COMMENT=\footnote% COMMENT OUT for clean output

\title{Tilings in randomly perturbed graphs: bridging the gap between  Hajnal--Szemer\'edi and  Johansson--Kahn--Vu} 
%\author{

\author{Jie Han\footnote{University of Rhode Island, Kingston, RI, USA, {\tt jie\_han@uri.edu}.}, Patrick Morris\footnote{Freie Universit\"at Berlin, Germany and Berlin Mathematical School, Germany, {\tt pm0041@mi.fu-berlin.de}. Research supported by a Leverhulme Trust Study Abroad
  Studentship (SAS-2017-052$\backslash$9).}  \, and
 Andrew Treglown\footnote{University of Birmingham, United Kingdom, {\tt a.c.treglown@bham.ac.uk}. Research supported by EPSRC grant EP/M016641/1.}}
%\date{}

\begin{document}
%\linenumbers

 \maketitle
\begin{abstract}
A perfect $K_r$-tiling in a graph $G$ is a collection of vertex-disjoint copies of $K_r$ covering all the vertices in $G$. 
In this paper we consider perfect $K_r$-tilings in the setting of randomly perturbed graphs;  a model introduced by Bohman, Frieze and Martin~\cite{bfm1} where one starts with a dense graph and then 
adds $m$ random edges to it. Specifically,  given any fixed $0< \alpha <1-1/r$ we determine how many random edges one must add to an $n$-vertex graph $G$ of minimum degree $\delta (G) \geq \alpha n$ to ensure that,
asymptotically almost surely, the resulting graph contains a perfect $K_r$-tiling. As one increases $\alpha$ we demonstrate that the number of random edges required `jumps' at regular intervals, and within these intervals  our
 result is best-possible. This work therefore closes the gap between the seminal work of Johansson, Kahn and Vu~\cite{jkv} (which resolves the purely random case, i.e., $\alpha =0$) and that of Hajnal and Szemer\'edi~\cite{hs} (showing that when $\alpha \geq 1-1/r$ the initial graph already houses the desired perfect $K_r$-tiling).
\end{abstract}
\section{Introduction}

A significant facet of both extremal graph theory and random graph theory is the study of embeddings. In the setting of random graphs, one is interested in the threshold for the property that $G(n,p)$ \emph{asymptotically almost surely} (a.a.s.) contains a fixed 
(spanning) subgraph $F$. Meanwhile, a classical line of inquiry in extremal graph theory is to determine the minimum degree threshold that ensures a graph $G$ contains a fixed (spanning) subgraph $F$.
A much studied problem in both the extremal and random settings concerns the case when $F$ is a so-called \emph{perfect $H$-tiling}. In this paper we bridge the gap between the random and extremal models for the problem of perfect clique tilings. 

\subsection{Perfect tilings in graphs} 
Given two graphs $H$  and $G$, an \emph{$H$-tiling} in $G$ 
is a collection of vertex-disjoint copies of $H$ in $G$. An
$H$-tiling is called \emph{perfect} if it covers all the vertices of $G$.
Perfect $H$-tilings are also referred to as \emph{$H$-factors}, \emph{$H$-matchings} or \emph{perfect $H$-packings}. 
Note that  a perfect $H$-tiling is a generalisation of the notion of a perfect matching; indeed, perfect matchings correspond to the case when $H$ is a single edge.

One of the cornerstone results in extremal graph theory is the  Hajnal--Szemer\'edi theorem~\cite{hs} which determines the minimum degree threshold that ensures a graph
contains a perfect $K_r$-tiling. 
\begin{thm}[Hajnal and Szemer\'edi~\cite{hs}]\label{hs}
Every graph $G$ whose order $n$
is divisible by $r$ and whose minimum degree satisfies $\delta (G) \geq (1-1/r)n$ contains a perfect $K_r$-tiling. 
Moreover, there is an $n$-vertex graph $G$ with $\delta (G) = (1-1/r)n-1$ that does not contain a perfect $K_r$-tiling.
\end{thm}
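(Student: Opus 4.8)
I would first dispatch the lower‑bound construction. Fix a set $W$ of $n/r+1$ vertices and let $G$ be obtained from the complete graph on $n$ vertices by deleting all edges inside $W$. Every vertex of $W$ then has degree $n-|W|=(1-1/r)n-1$ and every other vertex has degree $n-1$, so $\delta(G)=(1-1/r)n-1$. Since $W$ is independent, each copy of $K_r$ in $G$ meets $W$ in at most one vertex, so a $K_r$-tiling covers at most $n/r<|W|$ of the vertices of $W$; hence $G$ has no perfect $K_r$-tiling.

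\textbf{The positive direction: set-up.} The degree hypothesis first of all lets one build copies of $K_r$ greedily, since the common neighbourhood of any $j<r$ pairwise-adjacent vertices has size at least $n-j\cdot(n/r)=n(r-j)/r\ge 1$, so any clique on fewer than $r$ vertices extends. One cannot, however, simply take an $r$-clique $Q$, delete it, and recurse, because $\delta(G-V(Q))$ can be as small as $(1-1/r)n-r$ — one below the threshold for the graph $G-V(Q)$ — and bridging this off-by-one loss is the whole difficulty. Instead I would fix a $K_r$-tiling $\mathcal{K}=\{Q_1,\dots,Q_t\}$ of maximum size, suppose for contradiction that it is not perfect, and pick a vertex $v$ lying in no $Q_i$.

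\textbf{The positive direction: augmentation.} The core step is a rotation/exchange argument showing that such a $\mathcal{K}$ can always be enlarged, contradicting maximality. Starting from $v$ one forms chains of single-vertex swaps among the cliques: e.g.\ if $v$ is adjacent to all of $Q_i$ except one vertex $w$, then $(Q_i\setminus\{w\})\cup\{v\}$ is again a copy of $K_r$ and the \emph{deficient} vertex has moved from $v$ to $w$; repeating this defines a reachability relation on $\mathcal{K}$. If some chain reaches a state in which the current deficient vertex, together with other currently-uncovered vertices, completes to a new copy of $K_r$, then performing the swaps along the chain produces a $K_r$-tiling covering strictly more vertices — contradiction. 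If no chain ever reaches such a state, the family $\mathcal{R}$ of cliques reachable from $v$ is closed under the relevant exchanges, and a careful double count of edges between $\{v\}\cup\bigcup\mathcal{R}$ and the rest of $G$, using that every vertex has at most $n/r-1$ non-neighbours, contradicts $\delta(G)\ge(1-1/r)n$. Either way, $\mathcal{K}$ was in fact perfect.

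\textbf{Main obstacle.} Everything hard lives in the augmentation step: classifying the cliques of $\mathcal{K}$ by how $v$ and the successively freed vertices sit relative to them, controlling how one swap cascades through many cliques (and handling cliques that are only partly reachable), and squeezing out of non-improvability a global edge deficit sharp enough to contradict $(1-1/r)n$ with no slack to spare. This intricate case analysis and discharging is exactly where the Hajnal--Szemer\'edi argument does its real work.
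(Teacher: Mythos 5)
The paper never proves this statement: Theorem~\ref{hs} is quoted as a known result of Hajnal and Szemer\'edi, with~\cite{short} cited for a short proof, so there is no internal argument to measure you against; your proposal has to stand on its own. Its first half does. The extremal construction (an independent set $W$ of size $n/r+1$ inside an otherwise complete graph, so that $\delta(G)=n-|W|=(1-1/r)n-1$, every copy of $K_r$ meets $W$ in at most one vertex, and hence any $K_r$-tiling covers at most $n/r<|W|$ vertices of $W$) is complete and correct, and is essentially the standard example.

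The positive direction, however, is not a proof but a programme, and the gap is exactly where you say the difficulty lies. You fix a maximum $K_r$-tiling $\mathcal{K}$, an uncovered vertex $v$, and describe a swap/reachability relation, and then assert that either some chain augments $\mathcal{K}$ or else ``a careful double count of edges \dots contradicts $\delta(G)\ge(1-1/r)n$.'' That counting claim is the entire content of the theorem and nothing in the proposal establishes it: you do not specify the closure properties of the reachable family $\mathcal{R}$, how swaps interact when several vertices are uncovered (a maximum tiling can miss many vertices, not just $v$), how cliques that are ``only partly reachable'' are classified, or why non-augmentability forces an edge deficit that beats a bound with zero slack. The known arguments (Hajnal--Szemer\'edi's original proof, or Kierstead--Kostochka's short proof via equitable colourings of the complement) need substantially more structure than a single deficient vertex migrating along a chain --- e.g.\ an inductive set-up in which the tiling misses exactly one clique's worth of vertices, a classification of colour classes/cliques by accessibility, and a discharging-style count over that classification --- and it is not evident (and to my knowledge not true) that the bare maximum-tiling-plus-one-vertex scheme you outline can be closed without introducing that machinery. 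As written, the augmentation step is a restatement of the problem rather than a solution to it, so the hard direction remains unproved.
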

Earlier, Corr\'adi and Hajnal~\cite{corradi} proved Theorem~\ref{hs} in the
case when $r=3$. See~\cite{short} for a short proof of Theorem~\ref{hs}. 

Since the proof of Theorem~\ref{hs} there have been many generalisations obtained in several directions. For example,
 K\"uhn and Osthus~\cite{kuhn2}  characterised, up to an additive constant, the minimum degree which ensures that a graph $G$ 
contains a perfect $H$-tiling for an \emph{arbitrary} graph $H$.
Keevash and Mycroft~\cite{my} proved an analogue of the Hajnal--Szemer\'edi theorem in the setting of \emph{$r$-partite graphs}, whilst there 
are now several generalisations of Theorem~\ref{hs} in the setting of \emph{directed graphs} (see e.g.~\cites{cdkm, treg, forum}).
See~\cite{survey} for a survey including many of the results on graph tiling. 
There has also been significant interest in tiling problems in hypergraphs; the survey of Zhao~\cite{zsurvey} describes many of the results in the area.

\subsection{Perfect tilings in random graphs} 
Recall that the  random graph $G(n,p)$ consists of vertex set $[n]:=\{1,\dots,n\}$ where each edge is present with probability $p$, independently of all other choices.
In the early 1990s, the problem of determining the threshold for the property that $G(n,p)$ contains a perfect $H$-tiling attracted the attention of Erd\H{o}s (see the appendix of~\cite{prob1}). 
Indeed, as well as raising the general problem,
Erd\H{o}s particularly focused on the case when $H=K_3$ stating that `the correct answer will be probably about $n^{4/3}$ edges', though cautioned that `the lack of analogs to Tutte's theorem may cause serious trouble'.
This caution turned out to be well-founded as 
for a number of years even the case of triangles  remained quite stubborn, 
despite some  partial results towards it~\cite{kim, k1}. However, in 2008, spectacular work of Johannson, Kahn and Vu~\cite{jkv} not only resolved the problem for perfect $K_3$-tilings, but the general problem of perfect $H$-tilings for all 
so-called \emph{strictly balanced} graphs $H$. Below we state their result only in the case of perfect clique tilings.
\begin{thm}[Johansson, Kahn and Vu~\cite{jkv}]\label{thmjkv}
Let  $n\in \mathbb N$ be divisible by $r \in \mathbb N$ where $r\geq 3$.
\begin{itemize}
\item If $p =\omega( n^{-2/r}(\log n)^{2/(r^2-r)})$ then a.a.s.~$G(n,p)$ contains a perfect $K_r$-tiling.
\item If $p =o( n^{-2/r}(\log n)^{2/(r^2-r)})$ then a.a.s.~$G(n,p)$ does not contain a perfect $K_r$-tiling.
\end{itemize}
\end{thm}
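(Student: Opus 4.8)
The plan is to prove the two directions separately. The non-containment statement (the regime $p=o(n^{-2/r}(\log n)^{2/(r^2-r)})$) is a routine second-moment argument, so I would dispatch it first; the containment statement is the substance of the theorem.

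\emph{The lower bound.} I would show that below the stated $p$, a.a.s.\ some vertex of $G=G(n,p)$ lies in no copy of $K_r$, which rules out a perfect $K_r$-tiling outright. Let $\mu:=\binom{n-1}{r-1}p^{\binom r2}$ be the expected number of copies of $K_r$ through a fixed vertex $v$; since $2/(r^2-r)=1/\binom r2$, the hypothesis gives $\mu=o(\log n)$. The events ``$S$ spans a $K_r$'', ranging over the $(r-1)$-subsets $S$ of $V\setminus\{v\}$, are increasing, so by the FKG/Harris inequality $\Pr[v\text{ lies in no }K_r]\ge (1-p^{\binom r2})^{\binom{n-1}{r-1}}=e^{-(1+o(1))\mu}=n^{-o(1)}$ (here $\mu p^{\binom r2}=n^{-(r-1)+o(1)}\to0$ justifies the first-order exponent). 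Hence the number $X$ of uncovered vertices has $\mathbb E[X]=n^{1-o(1)}\to\infty$, and a second-moment computation — using that for distinct $u,v$ the relevant edge sets are almost disjoint, so the two ``uncovered'' events are asymptotically independent — yields $\operatorname{Var}(X)=o(\mathbb E[X]^2)$, so $X\ge1$ a.a.s.\ by Chebyshev. The same computation shows $\mathbb E[X]\to0$ once $p=\omega(n^{-2/r}(\log n)^{1/\binom r2})$, which is the coupon-collector heuristic behind the upper bound.

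\emph{Setup for the upper bound.} I would recast the problem as perfect matching: let $\cH=\cH(G)$ be the $r$-uniform hypergraph on $V(G)$ whose edges are the vertex sets of the copies of $K_r$ in $G$, so that a perfect $K_r$-tiling of $G$ is exactly a perfect matching of $\cH$. Fix $p=\omega(n^{-2/r}(\log n)^{1/\binom r2})$ and $G=G(n,p)$. Since $p$ lies far above the $K_r$-appearance threshold $n^{-2/(r-1)}$, Janson-type concentration shows that a.a.s.\ $\cH$ is pseudorandom: every vertex has degree $(1+o(1))D$ with $D=\Theta(n^{r-1}p^{\binom r2})=\omega(\log n)$, and every pair of vertices lies in $O(n^{r-2}p^{\binom r2-1})=O(D/(np))=o(D)$ common edges, the last step using $np\to\infty$ (which holds as $r\ge3$).

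\emph{Finding the perfect matching.} From pseudorandomness alone the Pippenger--Spencer / R\"odl nibble produces a matching of $\cH$ missing only $o(n)$ vertices, i.e.\ an almost-perfect $K_r$-tiling. The real difficulty, and the reason the theorem is hard, is covering the last $o(n)$ vertices while keeping $p$ down to the sharp $(\log n)^{1/\binom r2}$: near the threshold each vertex sits in only $\Theta(\log n)$ copies of $K_r$, so there is barely enough room to cover everything, and the usual ``reserve a random set and absorb greedily'' template has essentially no slack and does not reach the sharp value. I see two routes through this. The first is that of Johansson, Kahn and Vu: instead of building one tiling, bound from below the \emph{number} of perfect $K_r$-tilings, revealing a uniformly random tiling clique by clique and controlling the conditional probabilities by an entropy estimate in which the $\Theta(\log n)$ cliques per vertex are precisely what is needed — a coupon-collector bound at the level of vertices. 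The second, more modern route is to exhibit a $q$-spread probability distribution on the perfect $K_r$-tilings of $K_n$ (the uniform distribution works, with $q=\Theta(n^{-2/r})$: the extremal configuration is a single clique, for which $\Pr[K_r\subseteq F]=\Theta(n^{-(r-1)})=q^{\binom r2}$) and then invoke the Frankston--Kahn--Narayanan--Park theorem to transfer it to $G(n,p)$; bare spreadness, however, only delivers $p=O(n^{-2/r}\log n)$, so obtaining the exact power of $\log n$ needs a refined spread-plus-absorber version of that machinery. Either way, pinning the exponent $1/\binom r2$ on $\log n$ in the endgame is the main obstacle; everything preceding it is standard.
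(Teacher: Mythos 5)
This statement is not proven in the paper at all: Theorem~\ref{thmjkv} is imported verbatim from Johansson, Kahn and Vu~\cite{jkv} (with the sharpness remark resting on Theorem~2.3 of \cite{jkv} and Theorem~3.22(ii) of \cite{jlr}), so there is no in-paper argument to compare against. Judged on its own terms, your proposal proves only half the theorem. The lower-bound direction is essentially right and matches the standard argument: with $2/(r^2-r)=1/\binom r2$, the expected number of $K_r$'s through a fixed vertex is $o(\log n)$, Harris/FKG gives $\Pr[v \text{ uncovered}]\ge (1-p^{\binom r2})^{\binom{n-1}{r-1}}=n^{-o(1)}$, and a second-moment calculation (this is exactly the \cite{jlr} Theorem~3.22 computation) gives an uncovered vertex a.a.s.; you assert rather than carry out the variance estimate, but that step is routine.

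The containment direction, however, is a genuine gap rather than a proof. Recasting the problem as a perfect matching in the $r$-uniform hypergraph of $K_r$-copies and running the nibble to an almost-perfect tiling is fine, but you then say explicitly that the endgame --- absorbing the last $o(n)$ vertices at the sharp density, where each vertex lies in only $\Theta(\log n)$ cliques --- is handled either by ``the route of Johansson, Kahn and Vu'' (their entropy/counting argument, which you describe in one sentence but do not execute) or by a spread/Frankston--Kahn--Narayanan--Park argument that, as you yourself note, only reaches $p=O(n^{-2/r}\log n)$ and would need an unspecified refinement to capture the exponent $1/\binom r2$ on the logarithm. That missing endgame is precisely the content of the theorem; naming the two known strategies and deferring to them does not constitute a proof. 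For the purposes of this paper the correct move is simply to cite \cite{jkv}, as the authors do.
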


\subsection{The model of randomly perturbed graphs}
Bohman, Frieze and Martin~\cite{bfm1} introduced a model which provides a connection between the extremal and random graph settings.
In their model one starts with a dense graph and then 
adds $m$ random edges to it. A natural problem in this setting is to determine \emph{how many} random edges are required to ensure that the resulting graph  a.a.s.~contains a given graph $F$ as a spanning subgraph.
For example, the main result in~\cite{bfm1} states that for every $\alpha>0$, there is a $c=c(\alpha)$ such that if we start with an arbitrary $n$-vertex graph $G$ of minimum degree $\delta(G)\geq \alpha n$ and
add $cn$ random edges to it, then a.a.s.~the resulting graph is Hamiltonian. 
This result characterises how many random edges we require for \emph{every} fixed $\alpha>0$. Indeed, if $\alpha \geq 1/2$ then Dirac's theorem implies that we do not require any random edges; that is any $n$-vertex graph $G$ of minimum degree $\delta(G)\geq \alpha n$
is already Hamiltonian. On the other hand, if $0<\alpha <1/2$ then the following example implies that we indeed require a linear number of random edges: Let $G'$ be the complete bipartite graph  with vertex classes of size $\alpha n,(1-\alpha) n$.
It is easy to see that if one adds fewer than $(1-2\alpha )n$ (random) edges to $G'$, the resulting graph is \emph{not} Hamiltonian.

In recent years, a range of results  have been obtained concerning embedding spanning subgraphs into a randomly perturbed graph, as well as other  properties of the model; see e.g.~\cite{ bwt2, benn, bfkm, bhkmpp, bmpp2, das, dudek, joos2, kks2, kst,antoniuk2020high,knierim2019k_r}.
The model has also been investigated in the setting of directed graphs and hypergraphs (see e.g.~\cite{bhkm, hanzhao, kks1, mm}). Much of this work has focused on the range where the minimum degree of the deterministic graph is linear but with respect to some arbitrarily small constant $\alpha$. In this range, one thinks of the deterministic graph as `helping' $G(n,p)$ to get a certain spanning structure and the observed phenomenon is usually a decrease in the probability threshold of a logarithmic factor, as is the case for Hamiltonicity as above. Recently, there has been interest in the other extreme, where one starts with a minimum degree slightly less than the extremal minimum degree threshold for a certain spanning structure and requires a small `sprinkling' of random edges to guarantee the existence of the spanning structure in the resulting graph, see e.g. \cite{dudek, nan}. 

Balogh, Treglown and Wagner~\cite{bwt2} considered the perfect $H$-tiling problem in the setting of randomly perturbed graphs. Indeed, for every fixed graph $H$ they determined how many random edges one must add to a graph $G$ of linear minimum degree to ensure that a.a.s. 
 $G \cup G(n,p)$ contains a perfect $H$-tiling. Again we only state their result in the case of perfect clique tilings.
\begin{thm}[Balogh, Treglown and Wagner~\cite{bwt2}]\label{btwthm}
Let  $r \geq 2$.
For every $\alpha >0$, there is a $C=C(\alpha,r)>0$ such that if $p\geq Cn^{-2/r}$ and $(G_n)_{n\in r\mathbb{N}}$ is a sequence of graphs with $|G_n|=n$
and minimum degree $\delta (G_n) \geq \alpha n$
then a.a.s.~$G_n\cup G(n,p)$ contains a perfect $K_r$-tiling.
\end{thm}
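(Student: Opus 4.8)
The natural plan is the \emph{absorption method}: use the random edges to cover all but a tiny linear fraction of the vertices with vertex-disjoint copies of $K_r$, and then use the minimum-degree hypothesis $\delta(G)\ge\alpha n$ (aided by a few more random edges) to absorb the uncovered vertices. Write $G=G_n$, $V=V(G)$, $H=G\cup G(n,p)$, and fix a small constant $\varepsilon=\varepsilon(\alpha,r)>0$. Splitting the randomness in the usual way, we may assume $H$ contains two independent random graphs $G_1,G_2\sim G(n,p')$ with $p'\ge\tfrac{C}{2}n^{-2/r}$, and we are free to choose $C$ as large as we wish in terms of $\alpha,r,\varepsilon$.

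First, the almost-perfect tiling, which uses only $G_1$. I would show that if $C$ is large enough then a.a.s.\ \emph{every} $U\subseteq V$ with $|U|\ge\varepsilon^{2}n$ contains a copy of $K_r$ lying in $G_1$: for a fixed such $U$ the number of copies of $K_r$ in $G_1[U]$ has mean $\mu=\binom{|U|}{r}(p')^{\binom r2}=\Theta(\varepsilon^{2r}C^{\binom r2}n)$, while the clustering term in Janson's inequality (the contribution of pairs of copies sharing at least one edge) is only $\Theta(n^{2/r})=o(\mu)$, so Janson gives $\Pr[G_1[U]\text{ is }K_r\text{-free}]\le e^{-\Omega(\mu)}\le e^{-2n}$ for $C$ large, and a union bound over the fewer than $2^n$ choices of $U$ finishes the claim. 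Greedily deleting vertex-disjoint copies of $K_r$ from $H$ until fewer than $\varepsilon^{2}n$ vertices remain then gives a $K_r$-tiling of $H$ covering all but at most $\varepsilon^{2}n$ vertices. Here $G$ is not used at all; it is precisely the last $o(n)$ vertices that force the logarithmic factor in Theorem~\ref{thmjkv}, and it is these vertices that the deterministic minimum degree will let us finish.

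Second, the absorbing set, which is where $\delta(G)\ge\alpha n$ is essential. Before the previous step I would build, using $G$ and $G_2$, a set $A\subseteq V$ with $r\mid|A|$ and $|A|\le\varepsilon n$ that is \emph{absorbing}: for every $W\subseteq V\setminus A$ with $r\mid|W|$ and $|W|\le\varepsilon^{2}n$, the graph $H[A\cup W]$ has a perfect $K_r$-tiling. Following the standard recipe, fix a constant $s$ divisible by $r$; for an $r$-set $T=\{t_1,\dots,t_r\}$ call an $s$-set $S$ disjoint from $T$ a \emph{$T$-absorber} if both $H[S]$ and $H[S\cup T]$ have perfect $K_r$-tilings. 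A workable gadget is to split $S$ into blocks $B_1,\dots,B_r$ (plus one auxiliary $K_r$) with $B_i$ chosen inside $N_G(t_i)$ so that $\{t_i\}\cup B_i$ spans $K_r$: the edges from $t_i$ into $B_i$ are free because $\deg_G(t_i)\ge\alpha n$, and the bounded number of remaining clique edges are supplied by $G_2$, each present with probability at least $p'$. The key point is that \emph{every} $r$-set $T$ then has at least $\gamma n$ absorbers (for some $\gamma=\gamma(\alpha,r)>0$), even when $T$ is independent in $G$ and sits in a sparse part of $H$, because each $t_i$ still has at least $\alpha n$ neighbours in $G$ from which to grow $B_i$. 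Choosing absorbers into $A$ by a probabilistic selection, deleting overlaps, and taking a union bound over the $n^r$ choices of $T$ yields $A$ in which every $r$-set has many pairwise-disjoint absorbers; then, given $W$, one partitions $W$ into $r$-sets, greedily assigns each to a distinct absorber of $A$ via Hall's theorem, tiles each used absorber $S$ together with its $r$-set in the ``$H[S\cup T]$'' mode and every unused absorber on its own, which exhibits the perfect $K_r$-tiling of $H[A\cup W]$.

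Finally I would combine the two: reserve $A$, apply the almost-perfect tiling step to $H-A$ to cover all but a set $U\subseteq V\setminus A$ with $|U|\le\varepsilon^{2}n$, note $r\mid|U|$ since $r\mid n$ and $r\mid|A|$, and absorb $U$ into $A$; together with the tiles covering $V\setminus(A\cup U)$ this is a perfect $K_r$-tiling of $H$. The main obstacle is the construction and analysis of the absorbing set: one has to engineer the absorber gadget so that \emph{every} $r$-set admits a robustly large family of them (this is exactly where $\delta(G)\ge\alpha n$ enters, guaranteeing the $\ge\alpha n$ vertices that each $t_i$ needs), and, since this family is only linear in $n$ rather than a higher power, one must additionally control the overlaps among absorbers carefully enough that a set $A$ of size at most $\varepsilon n$ can absorb a leftover of size up to $\varepsilon^{2}n$. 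The probabilistic estimates elsewhere — Janson for the almost-tiling and Chernoff-type bounds for the selection of $A$ — are routine once $C$ is taken large.
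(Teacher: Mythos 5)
Your first step (the almost-perfect tiling) is correct and complete: the Janson computation is right (off-diagonal pairs contribute $\Theta(n^{2/r})=o(\mu)$ while $\mu=\Theta(\varepsilon^{2r}C^{\binom r2}n)$), and it matches the remark in Section~\ref{sec:almost} that for $k=r$ the copies of $K_r$ can be supplied entirely by $G(n,p)$, so no regularity is needed. Your absorber gadget is also the right object, and the count of \emph{candidate} placements per $r$-set $T$ is indeed governed by $\delta(G)\ge\alpha n$. Note, incidentally, that this paper does not prove Theorem~\ref{btwthm} (it is quoted from \cite{bwt2}); its own machinery for the case $k=r$ of Theorem~\ref{thm:Main} deliberately takes a different route (reachability paths, absorbing gadgets, and Montgomery's template, Definition~\ref{def: absorbing structure}, all embedded via the multi-round exposure argument of Proposition~\ref{prop:embeddinglargegraphs}), and the reason it does so is exactly the step you compress into one sentence.

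The genuine gap is the construction of the absorbing set $A$. Once the random edges are taken into account, each $r$-set $T$ has only $\Theta_C(n)$ \emph{completed} absorbers: your gadget has about $r^2$ vertices and needs $(r+1)\binom r2$ random edges, and $n^{r^2}p^{(r+1)\binom r2}=\Theta(C'n)$ at $p=Cn^{-2/r}$. (Even the statement ``every $T$ has $\ge\gamma n$ absorbers'' is not a consequence of the degree condition alone; it needs a Janson lower-tail bound with exponential failure probability per $T$ before the union bound over $n^r$ sets — the same calculation as your step one, so this part is fine but not ``because each $t_i$ has $\alpha n$ neighbours''.) The real problem is the selection: the standard recipe you invoke (include each $s$-set with probability $\lambda n^{1-s}$, Chernoff per $T$, delete the few overlapping pairs, Hall/greedy) requires $\Omega(n^s)$ absorbers per $T$; with only linearly many it gives each $T$ an expected $o(1)$ selected absorbers. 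You cannot fix this by pre-selecting deterministic candidates and sprinkling afterwards, since a fixed family of $O(n)$ disjoint candidates is completed by $G(n,p)$ with probability $n^{-(r^2-1)}$ each, i.e.\ a.a.s.\ none of them survive. The only version of your plan that can work selects the already-completed gadgets with a \emph{constant} probability $q$; then the budget $|A|\le\varepsilon n$ forces $q=\Theta(C^{-e})$ with $e=(r+1)\binom r2$, the expected number of overlapping selected pairs is of the same order as the per-$T$ selected count, and to delete overlaps without destroying some unlucky $T$'s supply you need, uniformly over all $n^r$ sets $T$, an upper-tail bound on the number of overlapping pairs among $T$'s completed gadgets — a dependent subgraph-type count for which ``Chernoff-type bounds'' give nothing. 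This overlap/selection control is the heart of the proof (it is what \cite{bwt2} and, in this paper, the template-based absorbing structure together with Lemma~\ref{lm:gnpfindingsubgraph} and Proposition~\ref{prop:embeddinglargegraphs} are engineered to handle), and as written your proposal does not supply it.
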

 Theorem~\ref{btwthm}, unlike Theorem~\ref{thmjkv}, does not involve a logarithmic term. Thus comparing  the randomly perturbed model with the  random graph model,
we see that starting with a graph of linear minimum degree instead of the empty graph
saves a logarithmic factor in terms of how many random edges one needs to ensure the resulting graph a.a.s.~contains a perfect $K_r$-tiling.
Further,
Theorem~\ref{btwthm} is best-possible in the sense that given any $0<\alpha <1/r$, there is a constant $c=c(\alpha,r)>0$ and sequence of graphs $(G_n)_{n\in r \mathbb N}$ where $G_n$ is an $n$-vertex graph with minimum degree at least $\alpha n$ 
so that a.a.s. $G_n \cup G(n,p)$ does not contain a  perfect $K_r$-tiling  when $p\leq cn^{-2/r} $ (see Section~2.1 in~\cite{bwt2}).
However, as suggested in~\cite{bwt2}, this still leaves open the question of how many random edges one requires if $\alpha >1/r$.

In this paper we give  a sharp answer to this question. Before we can state our result we introduce some notation.
\begin{defin} \label{def:threshold}[Perturbed perfect tiling threshold]
Given some $0 \leq \alpha \leq 1$, and a graph $H$ of order $h$, the \emph{perturbed perfect tiling threshold} $p(H, \alpha)$ satisfies the following.
\begin{itemize}
	\item[(i)] If $p = p(n) =\omega( p(H, \alpha))$, then for any sequence $(G_n)_{n \in h\mathbb{N}}$ of $n$-vertex graphs with $\delta (G_n) \geq \alpha n$, the graph $G_n \cup G(n,p)$ a.a.s. contains a perfect $H$-tiling.
	\item[(ii)] If $p = p(n) =o( p(H, \alpha))$, for some sequence $(G_n)_{n \in h\mathbb{N}}$ of $n$-vertex graphs with $\delta (G_n) \geq \alpha n$, the graph $G_n \cup G(n,p)$ a.a.s. does not contain a perfect $H$-tiling.
\end{itemize}
If it is the case that every sufficiently large $n$-vertex graph of minimum degree at least $\alpha n$ contains a perfect $H$-tiling we define $p(H, \alpha):=0$.  
We say the threshold $ p(H, \alpha)$ is \emph{sharp} if there are constants $C(H,\alpha),D(H,\alpha)>0$ such that (i) remains valid with $p\geq C  p(H, \alpha)$ and (ii) is satisfied for any $p\leq D  p(H,\alpha)$.
\end{defin}
Thus,  Theorem~\ref{hs} implies that $p(K_r, \alpha)=0$ for all $\alpha \geq 1-1/r$ whilst Theorem~\ref{thmjkv} precisely states that $p(K_r, 0)=n^{-2/r}(\log n)^{2/(r^2-r)}$ (actually Theorem~2.3 in \cite{jkv} and Theorem~3.22(ii) in \cite{jlr} imply this threshold is sharp).
Our main result deals with the intermediate cases (i.e. when $0<\alpha <1-1/r$).

\begin{thm} \label{thm:Main}
Let $2\leq k\leq r$ be integers.
%\COMMENT{AT: TO DO: the proof given is only for $k <r$. Can we include $k=r$ case so that it implies Theorem~\ref{btwthm}?}
 Then given any $1-\frac{k}{r} < \alpha < 1-\frac{k-1}{r}$, \[p(K_r, \alpha)=n^{-2/k}.\]
Moreover, the threshold $p(K_r, \alpha)$ is sharp.
%Moreover, there exists $c=c(\gamma,k,r)>0$ and a sequence $(G'_n)_{n\in \mathbb{N}}$ with $v(G'_n)=n$ and $\delta(G'_n)\geq (1-\frac{k}{r}+\gamma)n$, then a.a.s. $G'_n\cup G(n,p)$ does not contain a $K_r$-tiling. 
%covering all but at most $\alpha n$ vertices. 
\end{thm}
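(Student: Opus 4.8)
The plan is to prove the two halves of sharpness separately. For the \emph{upper bound} I will show there is $C=C(r,\alpha)$ so that whenever $p\ge Cn^{-2/k}$ and $\delta(G)\ge\alpha n$, a.a.s.\ $G\cup G(n,p)$ has a perfect $K_r$-tiling; for the \emph{lower bound} I will exhibit $D=D(r,\alpha)>0$ and graphs $G_n$ with $\delta(G_n)\ge\alpha n$ for which $G_n\cup G(n,p)$ a.a.s.\ has no perfect $K_r$-tiling once $p\le Dn^{-2/k}$. Combined with the fact that $p(K_r,\alpha)\ne 0$ in this range (the lower-bound graph is not perfectly tileable on its own, since $k\ge 2$ forces too large a sparse set), these two statements also give the $\omega/o$ form of Definition~\ref{def:threshold}.

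I would start with the lower bound, since the construction dictates the exponent. Let $G_n$ be $K_n$ with all edges inside a set $X\subseteq[n]$ of size $|X|=\lfloor(1-\alpha)n\rfloor$ removed, so that $\delta(G_n)=n-|X|\ge\alpha n$, and — because $\alpha$ lies strictly between $1-\tfrac{k}{r}$ and $1-\tfrac{k-1}{r}$ — for large $n$ we have $\tfrac{k-1}{r}n<|X|<\tfrac kr n$. In any perfect $K_r$-tiling of $G_n\cup G(n,p)$ the intersection of each of the $n/r$ copies with $X$ induces a clique in $G(n,p)$ (as $X$ is independent in $G_n$). A short counting argument — since there are only $n/r$ copies, those meeting $X$ in fewer than $k$ vertices cover at most $(k-1)\tfrac nr<|X|$ vertices of $X$ — shows that the copies meeting $X$ in at least $k$ vertices must cover $\Omega(n)$ vertices of $X$, and hence $G(n,p)[X]$ must contain $\Omega(n)$ vertex-disjoint copies of $K_k$. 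Since $|X|=\Theta(n)$, the expected number of copies of $K_k$ in $G(n,p)[X]$ is $\Theta(|X|^k p^{\binom k2})=\Theta(D^{\binom k2}n)$, so choosing $D$ small makes this $o(n)$ and Markov's inequality rules out such a family a.a.s. This is exactly where $2/k$ is pinned down: pushing $\alpha$ across $1-\tfrac{k-1}{r}$ would let an $X$ of size up to $\tfrac kr n$ be avoided but force $\Omega(n)$ disjoint $K_{k-1}$'s instead, i.e.\ threshold $n^{-2/(k-1)}$, the next interval.

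For the upper bound I would run the \emph{absorbing method}. First reserve a sparse ``random slice'' of the edges of $G(n,p)$ together with a small vertex set, and build an \emph{absorbing structure}: using $\delta(G)\ge\alpha n$ and the reserved randomness one constructs enough vertex-disjoint local gadgets that any leftover set of size at most $\varepsilon^2 n$ (automatically divisible by $r$) can be incorporated into a perfect $K_r$-tiling; this part follows the pattern of earlier work on randomly perturbed tilings. The heart of the matter is then an \emph{almost-perfect tiling lemma}: a.a.s.\ $G\cup G(n,p)$ has a $K_r$-tiling covering all but at most $\varepsilon^2 n$ vertices. The guiding picture is to cover almost all of $V(G)$ by $r$-sets $S_i$, each inducing in $G$ a copy of $K_r$ with only the edges of a $K_k$ (on a prescribed $k$-subset of $S_i$) missing, and to supply the missing $K_k$'s from $G(n,p)$. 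The reason no logarithmic factor appears is that it is enough to do this for \emph{almost all} of the $S_i$: an almost-perfect $K_k$-tiling of a linear subset of $G(n,p)$ exists a.a.s.\ already at $p=\Theta(n^{-2/k})$, with no coupon-collector loss (by a standard deletion/nibble argument), in contrast to the perfect version in Theorem~\ref{thmjkv}; the remaining $\varepsilon^2 n$ vertices are mopped up by the absorber.

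The main obstacle is the almost-perfect tiling lemma, and specifically the purely deterministic problem of producing the frame $S_1,\dots,S_m$ for \emph{every} $n$-vertex $G$ with $\delta(G)\ge\alpha n$. There is no single clean tiling statement about such $G$ to appeal to: the ``right'' copies of $K_r$, and how they split their missing edges between $G$ and $G(n,p)$, genuinely depend on the structure of $G$ (one $K_k$ per copy when $G$ resembles the extremal graph above, but several smaller cliques per copy when, for instance, $G$ is close to a balanced complete multipartite graph). I therefore expect the proof to go through the regularity method, isolating a bounded list of near-extremal configurations for $G$ that are handled by hand, with the generic case treated by an almost-tiling argument in the reduced graph, and both streams funnelled through the same absorbing structure. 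Verifying that the tight density $n^{-2/k}$ suffices uniformly across this dichotomy — and that the reserved randomness, the absorber and the almost-perfect tiling are mutually compatible — is where essentially all of the work lies.
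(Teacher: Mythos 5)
Your lower bound is essentially the paper's construction (an independent set of size $(1-\alpha)n$ inside an otherwise complete graph), and the counting argument that forces $\Omega(n)$ vertex-disjoint copies of $K_k$ inside $G(n,p)[X]$ is correct. The one flaw is the final probabilistic step: with $p=Dn^{-2/k}$ for a \emph{constant} $D$, the expected number of copies of $K_k$ in $G(n,p)[X]$ is $\Theta\bigl(D^{\binom{k}{2}}n\bigr)$, which is still $\Theta(n)$, so Markov only bounds the probability of finding $\Omega(n)$ disjoint copies by a small constant, not by $o(1)$; Markov does suffice for the non-sharp statement $p=o(n^{-2/k})$, but for sharpness you need concentration of the $K_k$-count (Chebyshev works, since $\Phi_{K_k}=\Theta(n)\to\infty$) or the off-the-shelf statement the paper invokes (Theorem~\ref{jansonthm3}).

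The upper bound, however, has genuine gaps where the real work of the theorem lies. First, your frame is wrong: an $r$-set inducing $K_r-E(K_k)$ in $G$ has critical chromatic number $(r-k)\tfrac{r}{r-1}$, which exceeds $r/k$ in general (e.g.\ $k=2$, $r=4$), so $\delta(G)\ge(1-\tfrac{k}{r}+\gamma)n$ does \emph{not} yield an almost perfect tiling by such frames, and the extremal-configuration dichotomy you propose instead is speculative and not carried out. The missing idea is to leave $\lceil r/k\rceil$ holes per clique, i.e.\ to tile with $H_{\text{det}}=K_{k,\dots,k,q}$ of Definition~\ref{Hzerominus}, whose critical chromatic number is exactly $r/k$, so Koml\'os' theorem (Theorem~\ref{thm:komlos}) applies in the reduced graph of \emph{every} such $G$ with no case analysis. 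Second, even granted a frame, "supplying the missing $K_k$'s from $G(n,p)$" on prescribed $k$-subsets fails outright: a fixed $k$-set spans a random clique with probability $\Theta(n^{-(k-1)})$, so of your $n/r$ prescribed holes a.a.s.\ only $o(n)$ would ever be filled. One must instead build the almost tiling greedily so that at every step there are $\Omega(n^r)$ candidate copies of $H_{\text{det}}$ available (via regularity, inside the clusters of a reduced-graph copy), and complete one of them by Janson with a union bound over all $2^n$ host sets; this is exactly Theorem~\ref{almost tiling thm}. Third, dismissing the absorber as "the pattern of earlier work" skips the hardest part of the paper: at density exactly $Cn^{-2/k}$ one must embed \emph{linearly many} rooted gadgets whose random parts are cliques of size up to $k$, which needs the $\Phi_{F,W}$ bookkeeping and multi-round exposure of Lemma~\ref{lm:gnpfindingsubgraph} and Proposition~\ref{prop:embeddinglargegraphs} together with Montgomery's template, and when $k>r/2$ reachability fails globally, so one must work within a partition into closed parts (Proposition~\ref{case3reachability}) and repair divisibility across parts with crossing copies of $K_r$ --- none of which your sketch addresses.
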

%Theorem~\ref{thm:Main} is best-possible in the sense that we cannot lower the value of $p$ here: more precisely, 
% there exists $c=c(\gamma,k,r)>0$ and a sequence $(G'_n)_{n\in \mathbb{N}}$ with $v(G'_n)=n$ and $\delta(G'_n)\geq (1-\frac{k}{r}+\gamma)n$ such that a.a.s. $G'_n\cup G(n,p)$ does not contain a perfect $K_r$-tiling. 
%covering all but at most $\alpha n$ vertices.
%Further, the minimum degree condition is also essentially best possible; indeed, one cannot replace the $(1-\frac{k}{r}+\gamma)$-term in Theorem~\ref{thm:Main} with any term smaller than $1-\frac{k}{r}$.
%We explain this more precisely in Section~\ref{}.

%Note that Theorem~\ref{thm:Main} provides a `bridge' between the Hajnal--Szemer\'edi theorem and the Johansson--Kahn--Vu theorem. Indeed, at one extreme Theorem~\ref{hs} deals with the case when $G_n$ has minimum degree at least $(1-\frac{1}{r})n$ and at the other extreme Theorem~\ref{thmjkv} deals with the case when $G_n$ has minimum degree $0$; Theorem~\ref{thm:Main} considers the intermediate ground.
Thus, Theorem~\ref{thm:Main} provides a bridge between the Hajnal--Szemer\'edi theorem and the Johansson--Kahn--Vu theorem.
Notice that the value of $p(K_r, \alpha)$ demonstrates a `jumping' phenomenon; given a fixed $k$ the value of $p(K_r, \alpha)$ is the \emph{same} for all $\alpha \in ((r-k)/r, (r-k+1)/r)$, however if $\alpha$ is just above this interval the
value of $p(K_r, \alpha)$ is significantly smaller.

Note in the case when $k=r$, Theorem~\ref{thm:Main} is implied by the results from~\cite{bwt2}; whilst finalising the paper we learned of a very recent result~\cite{nan} concerning powers of Hamilton cycles in randomly perturbed graphs which implies the case when $k=2$ and $r$ is even.  %\COMMENT{PM: I wonder if we should dwell on this work a bit more? And maybe mention the paper of Dudek, Reiher, Ruci{\'n}ski and Schacht \cite{dudek} which came before. The reason I say this is because it seems somewhat like this work, looking at a large min degree slightly below the extremal threshold for appearance,  is seen as significantly different from the perturbed model. In particular, in \cite{dudek}, they don't even mention the perturbed model, and instead opt for calling it a `randomly augmented model'. The fact that our paper contains both points of view and in fact everything in between, is maybe something we should emphasise? }. 
To help provide some intuition for Theorem~\ref{thm:Main}, note that $n^{-2/k}$ is the threshold for the property that $G(n,p)$ contains a copy of $K_k$ in every linear sized subset of vertices; this property will be exploited throughout the proof. 
Our proof uses the absorption method, and in particular the novel `absorption reservoir method' introduced by Montgomery \cite{M14a,M19}, where we use a robust sparse bipartite graph, which we call a template, in order to build an absorbing structure in our graph. We also use `reachability' arguments, introduced by Lo and Markst\"om \cite{LM1}, in order to build absorbing structures. We use various probabilistic techniques throughout, such as multi-round exposure, and we appeal to Szemer\'edi's regularity lemma in order to obtain an `almost tiling'. 
%\COMMENT{TO DO: AT: add paragraph describing what tools we use in proof?}

\smallskip

The paper is organised as follows. In the next section we introduce some fundamental tools that will be applied in the proof of Theorem~\ref{thm:Main}. 
Section~\ref{sec:lower} then contains the construction that  provides the lower bound on $p(K_r,\alpha)$ in Theorem~\ref{thm:Main}. In Section~\ref{sec:overview} we give an overview of our proof for the upper bound on $p(K_r,\alpha)$ in Theorem~\ref{thm:Main}, which is given in Section~\ref{sec:proof} after developing the necessary theory in Section \ref{sec:almost} and Section \ref{sec:absorption}. Finally some open problems are raised in the concluding remarks section (Section~\ref{sec:conc}). 

\vspace{2mm}

{\bf Additional Note:} Since the paper was first submitted there have been some related results proven. Indeed, Knierim and Su \cites{knierim2019k_r}, expanding on work of Nenadov and Pehova \cite{nenadov2018ramsey}, considered the so-called Ramsey-Tur\'an problem for clique tilings. They showed that for any $\alpha>1-\frac{2}{r}$, there exists an $\eta>0$ such that if  $G'$ is a graph with $\delta(G')\geq \alpha n$ and  independence number less than $\eta n$, then $G'$ contains a perfect $K_r$-tiling. This implies Theorem \ref{thm:Main} for $k=2$ and all $r$ as if $G$ has minimum degree $\alpha n$ with $\alpha$ as above and $p=\omega (n^{-1})$ then $G(n,p)$ (and hence $G'=G\cup G(n,p)$) has sublinear independence number. In a different direction, Antoniuk, Dudek, Reiher, Ruci\'nski and Schacht~\cite{antoniuk2020high} explored the appearance of powers of Hamilton cycles in randomly perturbed graphs, building on the work of Nenadov and Truji\'c~\cite{nan}. As the $k^{th}$ power of a Hamilton cycle is a supergraph of a perfect  $K_{k+1}$-tiling, their work gives bounds for the existence of clique tilings. They focus solely on $G$ with minimum degree $\alpha n$ where  $\alpha=j/(j+1)+ \eps$ for some $j\in \mathbb{N}$, $j\geq 1$ and they obtain tight results in certain  cases.  The only range where their implied results on clique tilings is tight with regards to the threshold obtained and the minimum degree condition is the case when $k=2$ and $r$ is even in Theorem \ref{thm:Main}, already implied by~\cite{nan} and~\cite{knierim2019k_r} as mentioned above.

%%%%%%%%%%%%%%%
\section{Notation and preliminaries}
%\section{Preliminaries}

%\subsection{Notation}
We use standard graph theory notation throughout. In particular we use $|G|$ to denote the number of vertices of a graph $G$. 
Sometimes we will also write $v_G$ and $e_G$ to denote the number of vertices and edges in $G$ respectively.
We write $N_G(v)$ to denote the neighbourhood of a vertex $v\in G$. 
For a subset of vertices $V'\subseteq V=V(G)$, $G[V']$ denotes the graph induced by $G$ on $V'$ and we use the shorthand $G\setminus V'$ to denote $G[V\setminus V']$. 
If $V'=\{x\}$ we simply write $G \setminus x$.
Further, for disjoint subsets of vertices $V',V''\subseteq V $, $G[V',V'']$ denotes the bipartite graph induced by $G$ on $V'\cup V''$ by  considering only the edges of $G$ with one endpoint in $V'$ and the other endpoint in $V''$. If $G'$ is a graph on the same vertex set as $G$ we write $G\cup G'$ to denote the
graph on vertex set $V(G)$ with edge set $E(G)\cup E(G')$. We write $G-E(G')$ to be the graph obtained from $G$ by deleting any edges that also lie in $G'$. One key exception to the use of standard notation is our use of $\overline{H}$, to denote the complement of $H$ with respect to a graph which is \emph{not} complete, see Definition \ref{def:Hcom}.

We write $K^{r}_{m_1,m_2,\ldots,m_r}$ to denote the complete $r$-partite graph with parts of size $m_1, \ldots, m_r.$ For a graph $J$ on $r$ vertices $\{v_1,\ldots,v_r\}$ and $m_1,\ldots,m_r \in \mathbb{N}$, we define the \emph{blow-up of $J$} to be the graph $J_{m_1,\ldots,m_r}$ with vertex set $P_1\sqcup P_2 \sqcup \ldots \sqcup P_r$, such that $|P_i|=m_i$ and for all $i,j \in [r]$ and $w\in P_i$, $w' \in P_j$ we have $ww'\in E( J_{m_1,\ldots,m_r})$ if and only if $v_iv_j \in E(J)$. Given a set $A$ and $k \in \mathbb N$ we denote by $A^k$ the set of all ordered $k$-tuples  of elements from $A$, while $\binom{A}{k}$ denotes the set of all (unordered) $k$-element subsets  of $A$. %\COMMENT{PM: Added some notational remarks} 
At times we have statements such as the following: ``Choose constants $0\ll c_1 \ll c_2 \ll \ldots \ll c_k$". This should be taken to mean that one can choose constants from right to left so that all the subsequent constraints are satisfied. That is, there exist increasing functions $f_i$ for $i\in [k]$ such that whenever $c_{i}\leq f_{i+1}(c_{i+1})$ for all $i\in[k-1]$, all constraints on these constants that are in the proof, are satisfied.   Finally, we omit the use of floors and ceilings unless it is necessary, so as not to clutter the arguments.

Throughout, we will deal exclusively with \emph{ordered} embeddings of graphs, which we also refer to as \emph{labelled} embeddings. Thus when we refer to an embedding of $H$ in $G$, we implicitly fix an ordering on $V(H)$, say $V(H):=\{h_1,\ldots, h_{v_H}\}$ and say that there is an embedding of $H$ onto an (ordered) vertex set $\{v_1,\ldots,v_{v_H}\}\subseteq V(G)$ if $v_iv_j\in E(G)$ for all $i$ and $j$ such that $h_ih_j\in E(H)$.

In what follows, we introduce the tools that we will use in our proofs. Most of these are well known and so are stated without proof. One  exception is Lemma~\ref{lm:gnpfindingsubgraph}, which is tailored to our purposes and slightly technical 
(but follows from well-known techniques nonetheless).

\subsection{A deterministic tiling result}  \label{sec:det-tiling}
Let $\chi_{cr}(H):=(\chi(H)-1)\frac{|H|}{|H|-\sigma(H)}$ where $\sigma(H)$ is the smallest size of a colour class over all colourings of $H$ with $\chi(H)$ colours.  
The following result of Koml\'os \cite{komlos} is a crucial tool in the proof of Theorem~\ref{thm:Main}. It determines the minimum degree threshold for the property of containing an `almost' perfect $H$-tiling. 

\begin{thm}\label{thm:komlos}
For every graph $H$ and every $\alpha>0$, there exists $n_0$ such that if $G$ is a graph on $n\geq n_0$ vertices with $\delta(G)\geq \left( 1-\frac{1}{\chi_{cr}(H)}\right) n$, then $G$ contains an $H$-tiling which covers all but at most $\alpha n$ vertices of $G$. 
\end{thm}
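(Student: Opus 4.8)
Write $r := \chi(H)$ and $h := |H|$, and fix a proper $r$-colouring of $H$ whose colour classes have sizes $n_1 \le n_2 \le \dots \le n_r$ with $n_1 = \sigma(H)$; thus $\sum_i n_i = h$ and $\chi_{cr}(H) = (r-1)h/(h-\sigma(H))$, a quantity lying in $(r-1,r]$ which equals $r$ exactly when $H$ has a balanced $r$-colouring. The plan is to apply Szemer\'edi's regularity lemma, transfer the problem to the reduced graph $R$, solve a fractional tiling problem on $R$ in which the asymmetry of the colour classes of $H$ is precisely what makes $\chi_{cr}(H)$ the relevant parameter, and finally realise the fractional solution by greedily embedding copies of $H$ into the regular tuples of $G$.

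First I would fix constants $1/n_0 \ll \eps \ll d \ll \gamma \ll \alpha,\,1/h$ and apply the regularity lemma to $G$ to obtain an $\eps$-regular partition $V_0 \cup V_1 \cup \dots \cup V_k$ with $|V_0| \le \eps n$ and clusters of common size $m = (1\pm\eps)n/k$. Let $R$ be the reduced graph on $[k]$ in which $ij$ is an edge precisely when $(V_i,V_j)$ is $\eps$-regular of density at least $d$. A routine edge count shows that, after discarding at most $\sqrt\eps\, k$ clusters, one may assume $\delta(R) \ge (1 - 1/\chi_{cr}(H) - \sqrt d)k$; in particular $\delta(R) > (1-1/(r-1))k$, so $R$ contains copies of $K_r$, in fact many pairwise disjoint ones. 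When $H$ has a balanced colouring we have $\chi_{cr}(H) = r$, so $\delta(R) > (1-1/r)k$ and Theorem~\ref{hs} already yields an almost-perfect $K_r$-tiling of $R$; the general unbalanced case is where the real work lies.

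The key step is a purely combinatorial claim about $R$: if $\delta(R) \ge (1 - 1/\chi_{cr}(H) - \sqrt d)k$ then $R$ admits an almost-perfect \emph{fractional $H$-tiling}. Informally this is an assignment of nonnegative weights to pairs consisting of a copy of $K_r$ in $R$ and a bijection $\pi$ from its clusters to the colour classes of $H$ — distinct pairs may reuse clusters — such that no cluster is over-used (a cluster playing the role of the colour class of size $n_c$ in a copy of weight $w$ places demand $w n_c$ on it, and the total demand on each cluster is at most $m$), while the weights together cover all but at most $\gamma k$ of the total mass $km$. I expect the verification of this claim to be the main obstacle. The natural approach is to phrase it as a linear program and control its optimum via LP duality: a deficient fractional tiling would supply a dual certificate, namely a fractional weighting of $V(R)$ behaving like an independent-transversal obstruction, which when fed back into the minimum degree hypothesis forces $R$ to contain essentially the extremal unbalanced complete multipartite graph — precisely the configuration pinning the threshold at $1 - 1/\chi_{cr}(H)$ rather than anything smaller. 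Rounding the fractional optimum, at the cost of a further $\gamma k$ clusters, produces for each copy of $K_r$ in its support an integer number of copies of $H$ to be embedded across its clusters with the colour-class roles given by the bijection, and with the total demand on each cluster still below $m$.

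It then remains to realise this in $G$. I would process the chosen copies of $K_r$ one at a time: for a copy on clusters $V_{i_1},\dots,V_{i_r}$ with its prescribed bijection $\pi$ and multiplicity $t$, embed $t$ vertex-disjoint copies of $H$ greedily, one after another, always placing the colour class of size $n_{\pi(j)}$ into the still-unused portion of $V_{i_j}$. Because $H$ has bounded order, each individual copy is found using only the defect form of the counting/embedding lemma for $\eps$-regular pairs — no blow-up lemma is needed — and this stays possible throughout, since the demand bounds guarantee that every relevant cluster retains a linear-sized unused part. The resulting $H$-tiling of $G$ fails to cover only the vertices of $V_0$, the clusters discarded during regularisation, and the $o(m)$ rounding remainders inside each cluster, which in total is at most $(\gamma + \sqrt\eps)n \le \alpha n$ vertices, as required.
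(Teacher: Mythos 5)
The paper does not actually prove this statement: it is quoted as Koml\'os' tiling theorem \cite{komlos} and used as a black box (see the sentence ``Most of these are well known and so are stated without proof''), so there is no internal proof to compare your argument with; it has to be judged as a self-contained proof of Koml\'os' theorem.

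Judged that way, there is a genuine gap, and it sits exactly where you flag ``the main obstacle''. The claim that $\delta(R)\geq \bigl(1-\tfrac{1}{\chi_{cr}(H)}-\sqrt{d}\bigr)k$ forces an almost-perfect fractional $H$-tiling of the reduced graph is not a routine LP exercise: modulo the standard regularity reductions it \emph{is} Koml\'os' theorem. Your sketch --- ``a deficient fractional tiling would supply a dual certificate \dots which forces $R$ to contain essentially the extremal unbalanced complete multipartite graph'' --- is the statement of a stability/extremal analysis, not a proof of one. Nothing in the proposal explains why an optimal dual solution has that structure, why near-extremal structure is incompatible with a deficiency (in the extremal configuration the fractional optimum equals the claimed value exactly, so one needs a quantitative argument relating the deficiency to the degree surplus $\sqrt{d}$, $\gamma$), or how the asymmetry of the colour classes enters the dual side to produce precisely the parameter $\chi_{cr}(H)$. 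This extremal analysis is the substance of Koml\'os' paper (which reduces $H$ to its ``bottle'' complete multipartite graph and then carries out a careful direct argument), so as written the proposal assumes the heart of the theorem.

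The outer scaffolding is fine and standard: applying the regularity lemma, passing to the reduced graph with $\delta(R)\geq (1-\tfrac{1}{\chi_{cr}(H)}-\sqrt{d})k$ (as in Corollary~\ref{cor:mindeg}), rounding a fractional solution (the per-cluster rounding error is bounded independently of $n$, hence negligible against $m=\Theta(n)$), and greedily embedding bounded-size copies of $H$ into regular tuples via Lemmas~\ref{lem:slice} and~\ref{lem:regularitycounting} while linear-sized unused parts remain. All of that would go through once the fractional tiling claim is established, but that claim is currently unproved, so the proposal is not a proof.
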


This was later improved to a constant number of uncovered vertices by Shokoufandeh and Zhao \cite{sz}, but Koml\'os' result suffices for our purposes. 
We will apply Koml\'os'  theorem to 
find an almost perfect $H$-tiling in a reduced graph $R$ of our (deterministic) graph $G$ from Theorem~\ref{thm:Main}; here $H$ will be a carefully chosen auxiliary graph (not $K_r$!). We discuss this further in the proof overview section. 
%However, 
 %it is worth noting already that for our application we do not require the exact tiling result of K\"uhn and Osthus \cite{kuhn2}.

\subsection{Regularity}

We will use the famous regularity lemma due to Szemer\'edi \cite{szemeredi}. The lemma and its consequences appeared in the form we give here, in a survey of Koml\'os and Simonovits \cite{ksregularitysurvey}, which we also recommend for further details on the subject. First we introduce some necessary terminology. Let $G$ be a bipartite graph with bipartition~$\{A,B\}$. For non-empty sets $X\subseteq A$, $Y\subseteq B$, we define the \emph{density} of $G[X,Y]$ to be $d_G(X, Y):=e(G[X,Y])/(|X||Y|)$.
%Let $\ep>0$.
We say that $G$ is \emph{$\ep$-regular} for some $\eps>0$ if for all sets $X \subseteq A$ and $Y \subseteq B$ with $|X|\geq \ep |A|$ and $|Y| \geq \ep |B|$ we have 
\[
|d_G(A,B) - d_G(X,Y)| < \ep.
\]
It is also common, when the underlying graph $G$ is clear, to refer to $(A,B)$ as an \emph{$\eps$-regular pair}. 

%Regularity is a pseudorandom condition. It requires the number of edges that lie between subsets of vertices in the bipartite graph to be roughly what we expect to see in a random graph of the same density, with the parameter $\eps$ controlling the discrepancy from this paradigm. As with many notions of pseudorandomness, this simple definition allows us to conclude many further properties that we would expect from a random graph. The following two lemmas are examples of this. 
We will use the following two well-known results in our proof.
The so-called slicing lemma shows that regularity is hereditary, with slightly weaker parameters.
% in the sense that subgraphs of regular graphs are also regular albeit with slightly weaker parameters. 

\begin{lemma}[Slicing lemma \cite{ksregularitysurvey}*{Fact 1.5}] \label{lem:slice}
Let $G$ be $\eps$-regular on parts $\{A,B\}$ with density $d$ and let $\alpha > \eps$.
Let $A' \subseteq A$ and $B' \subseteq B$ with  $|A'| \ge \alpha |A|$ and $|B'| \ge \alpha |B|$.
Then $G[A', B']$ is $(2\eps/\alpha)$-regular with density at least $d - \eps$.
\end{lemma}

The next lemma is an extremely useful tool, extending the control on the edge count in regular pairs to be able to count the number of embeddings of small subgraphs. 

\begin{lemma}[Counting lemma \cite{ksregularitysurvey}*{Theorem 2.1}] \label{lem:regularitycounting}
Given $d>\eps>0$, $m\in \NN$ and $H$ some fixed graph on $r$ vertices, let $G$ be a graph obtained by replacing every vertex $x_i$ of $H$ with an independent set $V_i$ of size $m$ and every edge of $H$ with an $\eps$-regular pair of density at least $d$ on the corresponding sets. If $\eps \leq \frac{d^r}{(2+r)2^r}=:d_0$, then there are at least $(d_0 m)^r$ embeddings of $H$ in $G$ so that each $x_i$ is embedded into the set $V_i$. %\COMMENT{AT: added this last condition as we really use that the vertices are mapped to different clusters e.g. in the proof of Theorem 5.2}
\end{lemma}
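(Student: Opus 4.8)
The plan is to construct the embeddings greedily, embedding the vertices $x_1,\dots,x_r$ of $H$ one at a time in this order, with $x_i$ sent into $V_i$, while maintaining for each not-yet-embedded vertex $x_i$ a \emph{candidate set} $C_i\subseteq V_i$ consisting of those $v\in V_i$ that are adjacent in $G$ to all already-embedded neighbours of $x_i$. The driving mechanism is the usual one-sided consequence of $\eps$-regularity: if $(A,B)$ is $\eps$-regular of density at least $d$ and $X\subseteq A$, $Y\subseteq B$ satisfy $|X|\ge\eps|A|$ and $|Y|\ge\eps|B|$, then all but fewer than $\eps|A|$ vertices $v\in X$ satisfy $|N_G(v)\cap Y|\ge(d-\eps)|Y|$. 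Indeed, if the set $Z$ of exceptional vertices had $|Z|\ge\eps|A|$, then by $\eps$-regularity $d_G(Z,Y)>d_G(A,B)-\eps\ge d-\eps$, contradicting $d_G(Z,Y)<d-\eps$, which holds by the definition of $Z$.

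With this in hand I would keep the invariant that, after $x_1,\dots,x_j$ have been embedded, every unembedded $x_i$ has $|C_i|\ge(d-\eps)^{d_i}m$, where $d_i$ denotes the number of already-embedded neighbours of $x_i$ in $H$; since $d_i\le\deg_H(x_i)\le r-1$, this gives $|C_i|\ge(d-\eps)^{r-1}m$ throughout. Initially each $C_i=V_i$ has size $m$. Now suppose $x_1,\dots,x_j$ are embedded and we wish to embed $x_{j+1}$. For each unembedded neighbour $x_i$ of $x_{j+1}$, apply the one-sided fact above to the $\eps$-regular pair on $(V_{j+1},V_i)$ and the sets $C_{j+1},C_i$ — both of size at least $(d-\eps)^{r-1}m\ge\eps m$, where this last inequality is the first place the hypothesis $\eps\le d^r/((2+r)2^r)=d_0$ is used — to conclude that fewer than $\eps m$ vertices of $C_{j+1}$ have fewer than $(d-\eps)|C_i|$ neighbours in $C_i$. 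Call these vertices \emph{bad for $x_i$}. Summing over the at most $r-1$ unembedded neighbours of $x_{j+1}$, fewer than $r\eps m$ vertices of $C_{j+1}$ are bad for some neighbour, so at least $|C_{j+1}|-r\eps m\ge(d-\eps)^{r-1}m-r\eps m$ choices remain; a short computation, again using $\eps\le d_0$ and $d\le1$, shows this quantity is at least $d_0m$. Pick any such good $v_{j+1}$, embed $x_{j+1}$ onto it, and for each unembedded neighbour $x_i$ of $x_{j+1}$ replace $C_i$ by $C_i\cap N_G(v_{j+1})$; by the choice of $v_{j+1}$ the new candidate set has size at least $(d-\eps)|C_i|\ge(d-\eps)^{d_i+1}m$, so the invariant is restored.

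Iterating over all $r$ steps (the final step, embedding $x_r$, having no unembedded neighbours, simply offers $|C_r|\ge(d-\eps)^{r-1}m\ge d_0m$ choices), and noting that distinct sequences of choices produce distinct labelled embeddings of $H$, yields at least $(d_0m)^r$ embeddings with each $x_i$ in $V_i$, as required.

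The only real obstacle is bookkeeping the constants: one needs the candidate sets to stay above the threshold $\eps m$ below which $\eps$-regularity carries no information, and \emph{simultaneously} needs at least $d_0m$ surviving choices at each of the $r$ steps after discarding the fewer than $r\eps m$ bad vertices. Reconciling these two demands — together with ensuring $r\eps m<|C_{j+1}|$ so that a single vertex can be chosen good for all of its unembedded neighbours at once — is precisely what forces the admissible range $\eps\le d^r/((2+r)2^r)$; everything else is routine.
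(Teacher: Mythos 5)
Your proof is correct, and it is essentially the standard argument for this lemma: the paper states it without proof, citing Koml\'os--Simonovits, and the greedy vertex-by-vertex embedding with candidate sets, the one-sided regularity fact, and the bookkeeping showing each step leaves at least $d_0m$ good choices is exactly the argument in that reference. The constant calculations you sketch (that $(d-\eps)^{r-1}\ge\eps$ and $(d-\eps)^{r-1}-r\eps\ge d_0$ under $\eps\le d^r/((2+r)2^r)$ and $d\le 1$) do check out, so nothing further is needed.
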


We now turn to the regularity lemma, which 
%allows us to deduce useful structural information about \emph{any} large enough graph. It 
tells us that there is a way to partition \emph{any} large enough graph in such a way that the graph induces $\eps$-regular pairs on almost all of the pairs of parts in this partition.
Actually, we apply a variant of the lemma which ensures that, ignoring a small number of edges and a small exceptional set of vertices,  \emph{all} such pairs are $\eps$-regular. %\COMMENT{AT: added sentence  to avoid confusion}
% Remarkably, for dense\footnote{We say a family of graphs is \emph{dense} if every graph in the family has positive density, in that $e(G)/\binom{v(G)}{2}$ is bounded away from $0$.} \COMMENT{AT: I'd say we don't need this footnote}
 %graphs the lemma asserts that there is such a partition  in which the edges lying in dense $\eps$-regular pairs comprise the majority of the edges in our original graph. 
%This allows one to form strong conclusions about \emph{any} large enough  dense graph by using the properties of $\eps$-regular pairs, such as those given above. 

\begin{lemma}[Degree form of the regularity lemma \cite{ksregularitysurvey}*{Theorem 1.10}]\label{lem:degreeform}
Let $0<\eps<1$ and $m_0 \in \NN$. %\COMMENT{AT: I'd prefer using $\NN$ instead of $\ZZ ^+$ as most people in our field use the former to not include zero}
 Then there is an $N = N(\ep,m_0)$ such that the following holds for every $0\leq d < 1$ and for every graph $G$ on $n \ge N$ vertices. There exists a partition $\{ V_0, V_1, \dots, V_m\}$ of $V(G)$ and a spanning subgraph $G'$ of $G$ satisfying the following:
\begin{enumerate}
	\item $m_0\leq m \leq N$;
	\item $|V_0|\leq  \ep n$ and
$|V_1| = \dots = |V_m|=:n'\leq \ep n$; %\COMMENT{AT: I've separated the conditions as technically we do not have that $V_0$ is smaller than the rest of the $V_i$}
	\item for each $v\in V(G)$, $d_{G'}(v) > d_G(v) - (d + \eps)n$;
	\item  for all 
	%but at most $\eps k^2$ 
	pairs $V_i,V_j$, where $1\leq i<j \leq k$, the graph $G'[V_i, V_j]$ is $\eps$-regular and has density either $0$ or greater than $d$.
	\end{enumerate}
\end{lemma}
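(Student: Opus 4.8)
The plan is to deduce this from the \emph{original} form of Szemer\'edi's regularity lemma~\cite{szemeredi} --- which produces, for every sufficiently large graph, an equipartition of its vertex set into a bounded number of parts (plus a small exceptional set) in which all but an $\eps'$-fraction of the pairs of parts are $\eps'$-regular --- by a standard cleaning argument. First I would fix an auxiliary parameter $\eps'$ equal to a sufficiently small fixed power of $\eps$ (small enough that all the estimates below have room to spare) and set $m_0':=\max\{2m_0,\lceil 4/\eps\rceil\}$. Applying the original regularity lemma with parameters $\eps'$ and $m_0'$ yields a partition $\{W_0,W_1,\dots,W_k\}$ of $V(G)$ with $m_0'\leq k\leq N':=N'(\eps',m_0')$, with $|W_0|\leq\eps' n$, with equal parts $W_1,\dots,W_k$ of common size $w\leq n/k$, and with at most $\eps'\binom{k}{2}$ irregular pairs. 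Since $N'$ depends only on $\eps'$ and $m_0'$, and hence only on $\eps$ and $m_0$ --- and in particular not on $d$ --- I would take $N:=N'$.

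Now for the cleaning. A simple averaging argument shows that all but at most $2\sqrt{\eps'}k$ of the parts $W_i$ lie in fewer than $\sqrt{\eps'}k$ irregular pairs; I would move the remaining (``heavy'') parts wholesale into the exceptional set. Next, for each surviving pair $(W_i,W_j)$ that is $\eps'$-regular but has $d_G(W_i,W_j)\leq d+2\eps'$, regularity forces all but at most $\eps' w$ vertices of $W_i$ to have at most $(d+3\eps')w$ neighbours in $W_j$; calling the exceptional ones \emph{atypical for that pair}, a second averaging argument gives that all but at most $\sqrt{\eps'}n$ vertices are atypical for fewer than $\sqrt{\eps'}k$ of their pairs, and I would move these (``atypical'') vertices into the exceptional set as well. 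Finally I would trim every surviving part down to a common size $n'$, discarding into the exceptional set both the trimmed-off vertices and the few parts that have already shrunk below $n'$. The outcome is a partition $\{V_0,V_1,\dots,V_m\}$ with $m_0\leq m\leq N$, with equal parts $V_1,\dots,V_m$ of common size $n'\leq w\leq n/m_0'\leq\eps n$, and with $|V_0|\leq\eps n$ --- the last inequality is where it matters that $\eps'$ is a small power of $\eps$ --- which gives conclusions (1) and (2). I would then let $G'$ be the graph obtained from $G$ by deleting, for every $1\leq i<j\leq m$ such that the original pair containing $(V_i,V_j)$ is irregular or has density at most $d+2\eps'$, all of the edges between $V_i$ and $V_j$.

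For conclusion (4), each surviving pair $(V_i,V_j)$ is contained in an $\eps'$-regular pair $(W_{i'},W_{j'})$ of density greater than $d+2\eps'$, with $|V_i|\geq\frac12|W_{i'}|$ and $|V_j|\geq\frac12|W_{j'}|$ (indeed $|V_i|,|V_j|$ are $(1-o(1))$-fractions of $|W_{i'}|,|W_{j'}|$, since the trimming removes only a tiny fraction); the slicing lemma (Lemma~\ref{lem:slice}) then gives that $G'[V_i,V_j]=G[V_i,V_j]$ is $\eps$-regular of density greater than $d+2\eps'-\eps'>d$, while every other pair among $V_1,\dots,V_m$ is edgeless, hence trivially $\eps$-regular of density $0$. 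For conclusion (3), a vertex of $V_0$ retains all its edges in $G'$ so there is nothing to check, and a vertex $v\in V_i$ loses edges only to parts $V_j$ with which $V_i$ forms a deleted pair. Of these, the irregular ones number fewer than $\sqrt{\eps'}k$ (since $V_i$ was not a discarded part), costing $v$ at most $\sqrt{\eps'}n$ edges; the sparse ones for which $v$ is atypical number fewer than $\sqrt{\eps'}k$ (since $v$ was not a discarded vertex), costing $v$ at most $\sqrt{\eps'}n$ edges; and the sparse ones for which $v$ is typical cost $v$ at most $(d+3\eps')w$ edges apiece, hence, there being at most $m$ of them, at most $(d+\eps/2)n$ edges in total once $\eps'$ is small enough. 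Adding these up, the total loss at $v$ is below $(d+\eps)n$, which is conclusion (3).

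The crux is conclusion (3). The original regularity lemma bounds only the \emph{total} number of irregular pairs, and inside a sparse regular pair it controls a vertex's degree only \emph{on average}, whereas here the degree bound is demanded at \emph{every} vertex; this is exactly what forces the cleaning step to relocate the heavy parts and the atypical vertices into the exceptional set. The remaining work is essentially bookkeeping --- checking that, after these relocations and after re-equalising the parts (the slicing lemma being what guarantees regularity survives the trimming), the exceptional set is still at most $\eps n$ --- and it goes through provided $\eps'$ is chosen to be a small enough fixed power of $\eps$ and $m_0'$ a large enough multiple of $m_0$ and of $1/\eps$.
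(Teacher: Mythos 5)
Your derivation is correct, but note that the paper offers no proof of this lemma to compare against: it is imported verbatim as Theorem~1.10 of the Koml\'os--Simonovits survey \cite{ksregularitysurvey} and used as a black box. What you have written is essentially the standard derivation of the degree form from the original regularity lemma, and you correctly isolate the only non-routine point: conclusion (3) is a \emph{per-vertex} bound, whereas the original lemma controls irregular pairs only in total and controls degrees inside a sparse regular pair only on average. Your fix is the right one --- move the ``heavy'' clusters (those in at least $\sqrt{\eps'}\,k$ irregular pairs, at most $2\sqrt{\eps'}\,k$ of them by the incidence count $\eps' k^2$) and the ``atypical'' vertices (atypical for at least $\sqrt{\eps'}\,k$ sparse pairs, at most $\sqrt{\eps'}\,n$ of them since regularity gives at most $\eps' w$ atypical vertices per side of each sparse pair) into $V_0$ \emph{before} defining $G'$, and delete edges only between surviving pairs, so vertices of $V_0$ lose nothing and a surviving vertex loses at most $\sqrt{\eps'}\,n+\sqrt{\eps'}\,n+(d+3\eps')n<(d+\eps)n$. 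The slicing lemma then correctly rescues $\eps$-regularity and density $>d$ after the relocations and the trimming, and empty pairs are trivially $\eps$-regular of density $0$. The only detail left implicit is the choice of the common size $n'$; taking, say, $n':=\lceil(1-\eps'^{1/4})w\rceil$ makes both the trimmed-off vertices and the parts that shrink below $n'$ contribute $O(\eps'^{1/4}n)$ to $V_0$, after which the accounting $|V_0|\le\eps n$, $m_0\le m\le N$ and $n'\le\eps n$ goes through exactly as you indicate once $\eps'$ is a sufficiently small fixed power of $\eps$ and $m_0'\ge\max\{2m_0,\lceil 4/\eps\rceil\}$. So the proposal is sound; it simply supplies the routine proof that the paper, like most of the literature, chooses to cite rather than reproduce.
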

The sets $V_1,\dots, V_m$ are called \emph{clusters}, $V_0$ the \emph{exceptional set} and the vertices in $V_0$ are \emph{exceptional vertices}.

The degree condition (\emph{3.}) in Lemma \ref{lem:degreeform} guarantees that the majority of the edges of $G$ lie in $G'$. To make this more transparent it is useful to focus on the dense $\eps$-regular pairs and define the following auxiliary graph. 
The \emph{$(\ep, d)$-reduced graph} $R$ is as follows:
The vertex set of $R$ is the set of clusters $\{V_i : i \in [m]\}$ and
for each $U,U' \in V(R)$, $U U'$ is an edge of $R$ if the subgraph $G'[U,U']$ is $\ep$-regular and has density greater than $d$. The following then follows easily from Lemma \ref{lem:degreeform}.

\begin{cor}

\label{cor:mindeg}
Suppose that $0<\eps \leq  d \leq c$ are constants.
Let $G$ be a graph on $n$ vertices and $\delta(G) \geq cn$.
Suppose that $G$ has a partition $\mathcal{P}=\{V_0, V_1, \dots, V_m\}$ and a subgraph $G'\subseteq G$ as given by Lemma~\ref{lem:degreeform} and corresponding $(\ep, d)$-reduced graph $R$. Then $\delta(R) \geq (c-d-2\ep)m$.
%\COMMENT{AT: changed the condition of $\delta (R)$ here slightly as what was written was not technically correct e.g. if $d=\eps$}
\end{cor}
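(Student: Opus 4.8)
The plan is to derive Corollary~\ref{cor:mindeg} directly from the degree form of the regularity lemma (Lemma~\ref{lem:degreeform}) by a simple double-counting/averaging argument over the neighbourhoods of a single vertex in a fixed cluster. Fix any cluster $V_i$, $i \in [m]$, and pick an arbitrary vertex $v \in V_i$. The goal is to show $v$'s cluster $V_i$ has at least $(c-d-2\eps)m$ neighbours in the reduced graph $R$, i.e. that $V_i$ sends a dense $\eps$-regular pair to at least that many other clusters.

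First I would bound the number of edges of $G'$ incident to $v$ that go outside the exceptional set $V_0$ and outside $V_i$ itself. By property (3.) of Lemma~\ref{lem:degreeform}, $d_{G'}(v) > d_G(v) - (d+\eps)n \geq cn - (d+\eps)n = (c-d-\eps)n$, using $\delta(G) \geq cn$. At most $|V_0| \leq \eps n$ of these $G'$-neighbours lie in $V_0$, and at most $n' \leq \eps n$ lie in $V_i$; actually since $G'[V_i,V_i]$ carries no edges we only lose the $|V_0|$ term, but bounding crudely by $2\eps n$ (or even just $\eps n$) is harmless and gives that $v$ has more than $(c-d-2\eps)n$ neighbours in $G'$ lying in clusters $V_j$ with $j \neq i$. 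Next, for each cluster $V_j$ with $j \neq i$, the edges of $G'$ between $V_i$ and $V_j$ exist only if $G'[V_i,V_j]$ has density greater than $d$ (by property (4.), density is $0$ or $>d$), which is precisely the condition for $V_iV_j \in E(R)$. Since each cluster has size $n' \le \eps n \le n/m$ (in fact $n' = n'$ exactly and $m \cdot n' \le n$), a cluster $V_j$ can contribute at most $n'$ neighbours of $v$. Hence the number of clusters $V_j$ adjacent to $V_i$ in $R$ is at least $(c-d-2\eps)n / n' \geq (c-d-2\eps)n/(n/m) = (c-d-2\eps)m$, which is the desired bound on $\delta(R)$ after taking the minimum over all $i$ and all $v \in V_i$.

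The only mild subtlety — and the step I would be most careful about — is the bookkeeping of which vertices of $v$'s $G'$-neighbourhood to discard (those in $V_0$, those in $V_i$) and the precise inequality $n' \le n/m$ used to convert a vertex count into a cluster count; one wants the constant in front of $m$ to come out exactly $c-d-2\eps$ rather than something weaker, so the $2\eps$ must absorb both the $(d+\eps)n$ slack from property (3.) beyond $dn$ and the $|V_0| \le \eps n$ loss, i.e. $(c-d-\eps)n - \eps n = (c-d-2\eps)n$. This is entirely routine given Lemma~\ref{lem:degreeform}, so I expect no real obstacle; the statement is essentially just unpacking the definition of the reduced graph together with the minimum-degree inheritance built into the degree form.
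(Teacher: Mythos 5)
Your argument is correct and is exactly the routine derivation the paper intends (the paper states the corollary without proof, saying it "follows easily" from Lemma~\ref{lem:degreeform}): bound $d_{G'}(v)$ via property (3.), discard the at most $\eps n$ neighbours in $V_0$, use that an edge of $G'$ between $V_i$ and $V_j$ forces density $>d$ and hence $V_iV_j\in E(R)$, and divide by $n'\le n/m$ (which follows from $mn'\le n$, not from the irrelevant and generally false inequality $\eps n\le n/m$ you mention in passing). The only caveat is that your exact constant relies on $G'$ having no edges inside a cluster, a clause of the degree form in the cited survey (Theorem 1.10) that the paper's restatement omits; with it your bookkeeping $(c-d-\eps)n-\eps n=(c-d-2\eps)n$ is precisely right.
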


\subsection{Supersaturation}

The following phenomenon was first noticed by Erd\H{o}s and Simonovits in their seminal paper~\cite{erdHos1983supersaturated}. It states that if there are many copies of a given small subgraph in some host graph, then we can also find many copies of a blow-up in the host graph. It can be proven easily e.g. by induction.

\begin{lemma} \label{lem:supersat}
Let $r, m_1,m_2,\ldots, m_r\in \mathbb{N}$, let $J$ be some graph on $r$ vertices $\{v_1,\ldots,v_r\}$ and $c>0$. Then there exists $c'=c'(r, m_1,m_2,\ldots, m_r,c)>0$ such that the following holds. Suppose $G$ is a graph on $n$ vertices with $n$ sufficiently large such that there are subsets $V_1,\ldots, V_r \subset V(G)$ and $G$ contains at least $cn^r$ labelled copies of $J$ with $v_i\in V_i$ for $i=1,\ldots, r$. Then $G$ contains at least $c'n^{m_1+\ldots+m_r}$ labelled copies of $J_{m_1,m_2,\ldots,m_r}$ with parts $P_1,\ldots, P_r$ such that $P_i\subset V_i$ and $|P_i|=m_i$.
\end{lemma}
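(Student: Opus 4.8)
The plan is to prove this by the standard convexity argument, blowing up the vertices of $J$ into the sets $P_i$ one part at a time and losing only a multiplicative constant at each step. Set $M:=m_1+\cdots+m_r$, and for $0\le k\le r$ let $J^{(k)}$ be the graph obtained from $J$ by replacing $v_1,\dots,v_k$ with independent sets of sizes $m_1,\dots,m_k$ while keeping $v_{k+1},\dots,v_r$ as single vertices, so that $J^{(0)}=J$ and $J^{(r)}=J_{m_1,\dots,m_r}$. It is convenient to work with \emph{homomorphic} copies, i.e.\ to drop the requirement that images be distinct: let $N_k$ be the number of tuples $(\bar y_1,\dots,\bar y_k,x_{k+1},\dots,x_r)$ with $\bar y_i\in V_i^{m_i}$ and $x_j\in V_j$ that preserve every edge of $J^{(k)}$ (repeats allowed). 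The hypothesis immediately gives $N_0\ge cn^r$, since each labelled copy of $J$ with $v_i\in V_i$ is such a tuple; and at the end I will recover the count of genuine (injective) labelled copies of $J_{m_1,\dots,m_r}$ from $N_r$ by discarding the at most $\binom{M}{2}n^{M-1}$ tuples that repeat a coordinate.

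The key step is to pass from $N_{k-1}$ to $N_k$. Freeze all coordinates other than those making up the $k$-th part, writing $\bar z:=(\bar y_1,\dots,\bar y_{k-1},x_{k+1},\dots,x_r)$, and for each such $\bar z$ let $f(\bar z)$ be the number of vertices of $V_k$ adjacent in $G$ to every coordinate of $\bar z$ which $J^{(k)}$ requires to neighbour part $k$ (so $f(\bar z)=0$ unless $\bar z$ is internally consistent). Because $v_k$ has no loop, the $m_k$ coordinates of the $k$-th part of a copy of $J^{(k)}$ may be chosen \emph{independently} from this common-neighbourhood set, whereas in $J^{(k-1)}$ that part has size one; hence $N_k=\sum_{\bar z}f(\bar z)^{m_k}$ and $N_{k-1}=\sum_{\bar z}f(\bar z)$, with the \emph{same} function $f$. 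Since the number of possible $\bar z$ is at most $n^{a_{k-1}}$ where $a_{k-1}:=m_1+\cdots+m_{k-1}+(r-k)$, Jensen's inequality applied to the convex map $t\mapsto t^{m_k}$ gives $N_k\ge N_{k-1}^{m_k}\big/ n^{a_{k-1}(m_k-1)}$. Feeding in a bound of the form $N_{k-1}\ge c_{k-1}n^{a_{k-1}+1}$ (note $a_{k-1}+1$ is the number of coordinates of a copy of $J^{(k-1)}$) yields $N_k\ge c_{k-1}^{m_k}\,n^{a_{k-1}+m_k}$, which is the same shape of estimate since $a_{k-1}+m_k=a_k+1$; thus $c_k:=c_{k-1}^{m_k}$ works at level $k$. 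Iterating from $c_0=c$ gives $N_r\ge c^{\,m_1m_2\cdots m_r}n^{M}$.

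Finally, subtracting the $O(n^{M-1})$ degenerate tuples shows that $G$ contains at least $c^{\,m_1\cdots m_r}n^{M}-\binom{M}{2}n^{M-1}\ge\tfrac12 c^{\,m_1\cdots m_r}n^{M}$ labelled copies of $J_{m_1,\dots,m_r}$ with the $i$-th part inside $V_i$, once $n$ is large in terms of $c$ and $M$; so $c':=\tfrac12 c^{\,m_1\cdots m_r}$ suffices. The only points that need care --- and none is a real obstacle --- are the observation that blowing up a loopless vertex leaves the $m_k$ new coordinates mutually unconstrained, which is exactly what makes $N_k=\sum_{\bar z}f(\bar z)^{m_k}$ and hence Jensen applicable, and the routine bookkeeping of exponents so that the per-step constant loss telescopes to the clean bound $c^{\,m_1m_2\cdots m_r}$; the statement is genuinely just a consequence of convexity.
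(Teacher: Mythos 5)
Your proof is correct: the one-part-at-a-time blow-up with homomorphism counts and Jensen's inequality, followed by discarding the $O(n^{M-1})$ degenerate tuples, is exactly the standard induction the paper alludes to (it gives no proof, merely noting the lemma ``can be proven easily e.g.\ by induction''). The only cosmetic point is that $f(\bar z)$ should formally be defined to vanish on internally inconsistent $\bar z$ (or the sums restricted to consistent $\bar z$), as your parenthetical intends, so that $N_{k-1}=\sum_{\bar z}f(\bar z)$ and $N_k=\sum_{\bar z}f(\bar z)^{m_k}$ hold exactly.
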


\subsection{Subgraph counts in random graphs}

%In this section we introduce some results related to binomial random graphs.
%Similar ones were used in....
%Our main tools are Janson's inequality (see, e.g.,%...~\cite[Theorem 2.14]{JLR}) 
%and Chebyshev's inequality.
%In the following lemma,~\eqref{eq:1} follows from Chebyshev's inequality and~\eqref{eq:2} follows from Janson's inequality.
%Let $I_i\in \be(p_i)$, $i\in \mathcal{I}$, be a finite family of Bernoulli random variables having a dependency graph $L$, i.e., if $ij\in E(L)$ then $I_i$ and $I_j$ are dependent. Let $X=\sum_i I_i$ and $\lambda =\mathbb{E}X=\sum_i p_i$. Moreover, write $i\sim j$ if $ij\in E(L)$, and let $\Delta = \sum_{i\sim j}\mathbb{E}[I_iI_j]$ and $\delta =max_i \sum_{k\sim i}p_k$ and $\overline{\Delta}=\lambda+2\Delta$. 
%Then we have the following estimates on $X$.

%\begin{lemma}
%Let $X$ be a random variable. Then
%$$
%\mathbb{P}(X\ge 2\mathbb{E}[X)] \le \frac{\mathbb{E}[X)+\Delta}{\mathbb{E}[X)^2} .
%$$
%\end{lemma}

%Given a family of random variables $\{I_i\}_{i\in \mathcal{I}}$, a \emph{dependency graph $L$ for $\{I_i\}_{i\in \mathcal{I}}$} is a graph with vertex set $\mathcal I$ and $ij\in E(L)$ if and only if $I_i$ and $I_j$ are dependent. 
%We write $i\sim j$ if $ij\in E(L)$ and define $\Delta = \Delta(\{I_i\}_{i\in \mathcal{I}}) = \sum_{i\sim j}\mathbb{E}[I_iI_j]$.
%Let $X:=\sum_{S\in \cS} I_S$ be the sum of a family of Bernoulli random variables and let $\lambda = \mathbb E(X)$.
%Let $\Delta_{X} := \sum_{S\cap T\neq\emptyset}\mathbb{E}(I_S I_T)$, where the sum is over ordered pairs $S, T\in \cS$.

We first recall Janson's inequality (see e.g. \cite[Theorem 2.14]{jlr}).
Let $\Gamma$ be a finite set and let $\Gamma_p$ be a random subset of $\Gamma$ such that each element of $\Gamma$ is included independently with probability $p$.
Let $\mathcal{S}$ be a family of non-empty subsets of $\Gamma$ and for each $S\in \mathcal{S}$, let $I_S$ be the indicator random variable for the event $S\subseteq \Gamma_p$.
%$I_A = \textbf{1}[A\subseteq \Gamma_p]$.
Thus each $I_S$ is a Bernoulli random variable $\be(p^{|S|})$.
%Given a family of random variables $\{I_i\}_{i\in \mathcal{I}}$, for $i, j\in \mathcal I$, we write $i\sim j$ if and only if $I_i$ and $I_j$ are dependent. 
%Let $\Delta = \sum_{i\sim j}\mathbb{E}[I_iI_j]$, where the sum is over unordered pairs.
Let $X:=\sum_{S\in \cS} I_S$ and $\lambda := \mathbb E(X)$.
Let $\Delta_{X} := \sum_{S\cap T\neq\emptyset}\mathbb{E}(I_S I_T)$, where the sum is over not necessarily distinct ordered pairs $S, T\in \cS$.
Then Janson's inequality states that for any $0\le t\le \lambda$,

\begin{equation}
\mathbb{P}(X\leq \lambda -t)\leq \exp \left ( -\dfrac{t^2}{2\Delta_{X}}\right ). \label{eq:2}
\end{equation}

%Next note that $\var(X)=\mathbb E(X^2) - \mathbb E(X)^2 \le \Delta_X$.
%Then by Chebyshev's inequality,
%\begin{equation}
%\mathbb{P}(X\ge 2\lambda) \le \frac{\var(X)}{\lambda^2} \le \frac{\Delta_X}{\lambda^2}. \label{eq:1}
%\end{equation}

Consider the random graph $G(n, p)$ on an $n$-vertex set $V$.
Note that we can view $G(n, p)$ as $\Gamma_p$ with $\Gamma := \binom{V}2$.
Following \cite{jlr}, for a fixed graph $F$, we define $\Phi_F = \Phi_F(n, p):= \min \{n^{v_H} p^{e_H}: H\subseteq F, e_H>0\}$. This parameter helps to simplify calculations of $\Delta_X$ in the context of counting the number of embeddings of the graph $F$ in $G(n,p)$.  We will also be interested in the appearance of graphs in $G(n,p)$ where we require some subset of vertices to be already fixed in place. Therefore, for a graph $F$, and some independent\footnote{With respect to $F$ i.e. $E(F[W])=\emptyset.$} subset of vertices $W\subset V(F)$, we define \[\Phi_{F,W}=\Phi_{F,W}(n,p):=\min\{n^{v_{H}-v_{H[W]}}p^{e_H}:H\subseteq F, e_H>0\}.\]
Note that $\Phi_F=\Phi_{F,\emptyset}$ and $\Phi_{F\setminus W}\geq \Phi_{F,W}$ for any $F$ and independent set $W\subset V(F)$.   If $W=\{w\}$ for a single vertex $w \in V(F)$, we drop the brackets and simply write $\Phi_{F,w}$ and $\Phi_{F\setminus w}$.
%Also, for a given $F'$ and $W\subset V(F')$, defining $F=F'[V\setminus W]$, we have that $\Phi_F\geq \Phi_{F',W}$ as any subgraph of $F$ can be realised as a subgraph of $F'$.
%and the edges between $W$ and this subgraph can only lower the value that this subgraph gives when calculating the minimum. 
Let us collect some more simple observations concerning $\Phi_F$ and $\Phi_{F,W}$ which will be useful later.

\begin{lemma} \label{PhiFobservations}
The following hold:
\begin{enumerate}
\item Let $C>1$ be some constant, $k\in \NN\setminus\{1\}$ and $p=p(n)\geq C n^{-\frac{2}{k}}$. Let $k'\le k$ and $F_1:=K_{k'}$, then we have that $\Phi_{F_1}\geq Cn$.
\item As above, let  $C>1$ be some constant, $k\in \NN\setminus\{1\}$ and $p=p(n)\geq C n^{-\frac{2}{k}}$. Suppose now that $3 \le k'\le k$ and let $F_2:=K_{k'}^-$ be the complete graph on $k'$ vertices with one edge missing and let  %$W_2:=\{w\}$, where 
$w\in V(F_2)$ be one of the endpoints of the missing edge. Then  $\Phi_{F_2}\geq Cn$ and $\Phi_{F_2,w}\geq \min\{Cn^{1-\frac{2}{k}}, Cn^{\frac{2}{k}}\}\geq Cn^{\frac{1}{k}}$.
\item Let $F_3$, $F_4$ be graphs with vertex subsets $W_3\subset V(F_3)$, $W_4\subset V(F_4)$, let $\Phi_{3}:=\Phi_{F_3,W_3}$ and $\Phi_4:=\Phi_{F_4,W_4}$ and suppose that $\Phi_3,\Phi_4\ge 1$. %such that $\Phi_{F_3,W_3},\Phi_{F_4,W_4} \geq Cn$. 
Let $F_5$ be the graph formed by the union of $F_3$ and $F_4$ meeting in exactly one vertex $x\in (V(F_3)\setminus W_3) \cap (V(F_4)\setminus W_4)$, and let $F_6$ be the graph obtained by taking a disjoint union of $F_3$ and $F_4$. Then letting $W_5:=W_3\sqcup W_4$, we have that $\Phi_{F_5,W_5}\geq \min\{\Phi_{3},\Phi_{4}, \Phi_{3}\Phi_{4}n^{-1}\}$ and $\Phi_{F_6,W_5}= \min\{\Phi_3,\Phi_4\}$. 
\end{enumerate}
\end{lemma}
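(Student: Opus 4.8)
\textbf{Proof proposal for Lemma~\ref{PhiFobservations}.}

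The plan is to treat the three parts in order, each reducing to elementary manipulations of the defining minima. For part~(1), I would observe that for any subgraph $H\subseteq K_{k'}$ on $j$ vertices ($2\le j\le k'$) we have $e_H\le\binom{j}{2}$, so $n^{v_H}p^{e_H}\ge n^j p^{\binom j2}$; it therefore suffices to minimise $g(j):=n^j p^{\binom j2}$ over $2\le j\le k'$. Substituting $p\ge Cn^{-2/k}$ gives $g(j)\ge C^{\binom j2} n^{j-\frac{2}{k}\binom j2}=C^{\binom j2}n^{j(1-\frac{j-1}{k})}$. The exponent $j(1-\frac{j-1}{k})$ is, as a function of $j$, a downward parabola in $j$ which at the endpoints $j=2$ and $j=k$ takes values $2(1-1/k)\ge 1$ and $1$ respectively, and is at least $1$ throughout the interval $[2,k]\supseteq[2,k']$ (and one checks $C^{\binom j2}\ge 1$ since $C>1$). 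Hence $\Phi_{F_1}\ge g(k')\wedge\dots\ge Cn$ after being slightly careful that the worst case is an endpoint; a clean way is to note the exponent is $\ge 1$ on $[2,k]$ and the constant factor is $\ge C$.

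For part~(2), $\Phi_{F_2}\ge Cn$ follows from part~(1) applied to the subgraph structure of $K_{k'}^-$, since every subgraph $H$ of $K_{k'}^-$ with $e_H>0$ is also a subgraph of $K_{k'}$, so the same bound $n^{v_H}p^{e_H}\ge Cn$ holds verbatim. For the rooted quantity $\Phi_{F_2,w}$, I would enumerate the relevant minimum: for $H\subseteq F_2$ with $e_H>0$ we must bound $n^{v_H-v_{H[w]}}p^{e_H}$, where $v_{H[w]}\in\{0,1\}$ according to whether $w\in V(H)$. If $w\notin V(H)$ the bound is just $n^{v_H}p^{e_H}\ge Cn$ as before. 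If $w\in V(H)$, write $v_H=j$, $2\le j\le k'$, and then $n^{j-1}p^{e_H}$; since $w$ is an endpoint of the missing edge of $K_{k'}^-$, its degree inside any $H$ containing it is at most $j-2$, so $e_H\le\binom{j-1}{2}+(j-2)=\binom j2-1$, giving $n^{j-1}p^{e_H}\ge n^{j-1}p^{\binom j2-1}\ge C^{\binom j2-1}n^{(j-1)-\frac 2k(\binom j2-1)}$. The exponent simplifies to $(j-1)(1-\frac{j-2}{k})$, again a downward parabola in $j$ whose minimum over $2\le j\le k$ is attained at an endpoint: $j=2$ gives exponent $1\cdot(1)=1$ — wait, more carefully, I should just evaluate the two extreme useful cases $j=3$ (giving $2(1-1/k)=2-2/k$) and $j=k$ (giving $(k-1)\cdot\frac 2k=2-2/k$), and the single edge case which does not arise here since $w$ together with an edge needs $j\ge 3$; in all cases the exponent equals $2(1-1/k)\ge 2/k$ only when... — the cleanest statement is that the minimum of $n^{j-1}p^{e_H}$ over such $H$ is $\min\{Cn^{1-2/k},Cn^{2/k}\}$, and since $1-2/k\ge 1/k$ iff $k\ge 3$ (which holds) and $2/k\ge 1/k$ always, this is $\ge Cn^{1/k}$.

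For part~(3), everything reduces to how the exponents of subgraphs of $F_5$ and $F_6$ decompose. Any subgraph $H\subseteq F_5$ with $e_H>0$ splits as $H=H_3\cup H_4$ with $H_i\subseteq F_i$ and $V(H_3)\cap V(H_4)\subseteq\{x\}$; then $e_H=e_{H_3}+e_{H_4}$ and $v_H-v_{H[W_5]}=(v_{H_3}-v_{H_3[W_3]})+(v_{H_4}-v_{H_4[W_4]})-[x\in V(H_3)\cap V(H_4)]$. If at most one of $H_3,H_4$ has an edge, the term $n^{v_H-v_{H[W_5]}}p^{e_H}$ is at least $\min\{\Phi_3,\Phi_4\}$ (using $\Phi_3,\Phi_4\ge 1$ to absorb the edgeless part's vertex factors, noting such a part contributes a nonnegative exponent and $x$ being shared costs at most one factor of $n$ which is compensated since the edgeless side then has $v_{H_i}-v_{H_i[W_i]}\ge 1\ge$ the deficit); if both have edges, the product of the two contributions is at least $\Phi_3\Phi_4$, and the shared vertex $x$ removes at most one factor $n$, giving $\ge\Phi_3\Phi_4 n^{-1}$. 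Taking the minimum over the three cases yields $\Phi_{F_5,W_5}\ge\min\{\Phi_3,\Phi_4,\Phi_3\Phi_4 n^{-1}\}$. For $F_6$ (disjoint union) the same decomposition holds with no shared vertex, so every relevant $H$ gives either $\ge\Phi_3$, $\ge\Phi_4$, or $\ge\Phi_3\Phi_4\ge\min\{\Phi_3,\Phi_4\}$ (again since both are $\ge 1$), and conversely taking $H$ to be a minimiser entirely inside $F_3$ or $F_4$ shows $\Phi_{F_6,W_5}=\min\{\Phi_3,\Phi_4\}$.

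The main obstacle I anticipate is purely bookkeeping rather than conceptual: in parts~(1) and~(2) one must verify that the relevant exponent functions of $j$, being concave, attain their minima at the endpoints of the range $[2,k]$, and keep track of whether the constant $C$ survives with exponent $\ge 1$; in part~(3) the delicate point is the treatment of edgeless components and the shared vertex, making sure the hypotheses $\Phi_3,\Phi_4\ge 1$ are invoked exactly where a vertex-count deficit needs to be absorbed. None of this is deep, but the case analysis in~(3) should be written carefully to avoid an off-by-one in the exponent of $n$.
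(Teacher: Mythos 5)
Parts (1) and (3) of your proposal are correct and essentially coincide with the paper's proof (your treatment of part (3) is in fact slightly more explicit than the paper's, which simply observes that a minimiser must lie in $F_3$, in $F_4$, or contain $x$; your bookkeeping with the shared vertex and the edgeless side is sound).

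Part (2), however, has a genuine gap in the bound on $\Phi_{F_2,w}$. Your key claim that the degree of $w$ inside \emph{any} subgraph $H\ni w$ is at most $v_H-2$, hence $e_H\le \binom{v_H}{2}-1$, is false whenever the other endpoint $u$ of the missing edge does \emph{not} lie in $H$: for instance, $H$ the complete graph on $w$ together with $k'-2$ vertices other than $u$ has $e_H=\binom{v_H}{2}$. Moreover, the exponent you extract, $(j-1)\bigl(1-\tfrac{j-2}{k}\bigr)$, is not what $n^{j-1}p^{\binom{j}{2}-1}$ yields (that exponent is $j-1-\tfrac{(j-2)(j+1)}{k}$), and if one follows your formula its minimum over $2\le j\le k$ is $1$, which would give the false conclusion $\Phi_{F_2,w}\ge Cn$: taking $H=F_2$ itself (so $u\in H$, $v_H=k'=k$, $e_H=\binom{k}{2}-1$) gives $n^{v_H-1}p^{e_H}=\Theta\bigl(n^{2/k}\bigr)$ when $p=\Theta\bigl(n^{-2/k}\bigr)$. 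Your final line then simply asserts the stated minimum $\min\{Cn^{1-2/k},Cn^{2/k}\}$ rather than deriving it; indeed the term $n^{1-2/k}$ never arises in your computation. The missing idea, which is exactly how the paper proceeds, is a case split on whether $u\in V(H)$. If $u\in V(H)$, then $e_H\le\binom{v_H}{2}-1$ and $v_H\le k$ give exponent at least $\tfrac{2}{k}$; if $u\notin V(H)$, then $e_H$ may equal $\binom{v_H}{2}$ but crucially $v_H\le k-1$, so the exponent is $(v_H-1)\bigl(1-\tfrac{v_H}{k}\bigr)$, which over $2\le v_H\le k-1$ is minimised at the endpoints, giving $1-\tfrac{2}{k}$. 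Combining the two cases (and the $w\notin H$ case from part (1)) produces the claimed bound, and $\min\{1-\tfrac2k,\tfrac2k\}\ge\tfrac1k$ since $k\ge 3$. This case distinction is the only nontrivial content of the lemma (it is the source of the $n^{1/k}$ used later), so it needs to be carried out, not inferred from the statement.
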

\begin{proof}
For parts 1 and 2, it suffices to consider the case $k'=k$.
For part 1, we have a simple calculation. 
%It suffices to consider the case $k'=k$. % as $K_k$ contains all possible $F_1$. 
Let $H$ be a subgraph of $K_k$ with $v_H$ vertices and $e_H$ edges. As $v_H\leq k$, we obtain
\[
n^{v_H}p^{e_H}\geq n^{v_H}(Cn^{-\frac{2}{k}})^{\frac{v_H(v_H-1)}{2}}\geq Cn^{v_H-(v_H-1)}= C n.
\]
%minimising $v_H(1-\frac{v_H-1}{k})$ in the range $2\leq v_H \leq k$ at $v_H=k$.

For part 2 first note that as $F_2 \subseteq K_{k} =F_1$ we have that $\Phi _{F_2}\geq \Phi _{F_1}\geq Cn$. %For the second inequality in part 2,
 %it suffices to consider $k'=k$.
 Let $H$ be a subgraph of $K_k^-$. If $w \notin H$, the calculation from part 1 gives that $n^{v_H}p^{e_H}\geq Cn$. So suppose $w\in H$. Now let us distinguish two cases, depending on whether the vertex $u$ is in $H$, where $u$ is the vertex in $K_k^-$ such that $uw$ is a \emph{non}-edge. If $u\in H$, we have that 
\[
n^{v_H-1}p^{e_H}\geq n^{v_H-1}Cn^{-\frac{2}{k}\left(\frac{v_H(v_H-1)}{2}-1\right)}\geq Cn^{v_H-1-(v_H-1-\frac{2}{k})}\geq Cn^{\frac{2}{k}},
\]
%minimising $v_H-\frac{(v_H+2)(v_H-1)}{k}$ in the range $3\leq v_H \leq k$ at $v_H=k$.
again using that $v_H\leq k$.
Likewise, if $u\notin H$, we have that 
\[
n^{v_H-1}p^{e_H}\geq n^{v_H-1}p^{\binom{v_H}{2}}\geq Cn^{(v_H-1)\left(1-\frac{v_H}{k}\right)}\geq Cn^{1-\frac{2}{k}},
\]
where the last inequality follows as
 $(v_H-1)\left(1-\frac{v_H}{k}\right)$ is minimised in the range $2\le v_H\le k-1$ at $v_H=2$ and $v_H=k-1$.
This shows that $\Phi _{F_2,w}$ is bounded as desired.

Part 3 also follows from the definition. Indeed, note that one subgraph $H$ of $F_6$ that is a minimiser of the term in the definition of $\Phi_{F_6,W_5}$ must be a subgraph of $F _3$ or a subgraph of $F _4$. This ensures 
$\Phi_{F_6,W_5}= \min\{\Phi_3,\Phi_4\}$.
Similarly,  one subgraph $H$ of $F_5$ that is a minimiser of the term in the definition of $\Phi_{F_5,W_5}$ must be a subgraph of $F _3$, a subgraph of $F _4$, or a subgraph of $F_5$ that contains $x$. This ensures 
$\Phi_{F_5,W_5}\geq \min\{\Phi_{3},\Phi_{4}, \Phi_{3}\Phi_{4}n^{-1}\}$.
\end{proof}

We now apply Janson's inequality in order to give a general result about embedding  constant sized graphs into $G(n,p)$. 
The following lemma provides the basis for a greedy process in which we find some larger (linear size) graph in $G(n,p)$. 
We will require that the embedding of our larger graph has certain vertices already prescribed and repeated applications of Lemma \ref{lm:gnpfindingsubgraph} will then allow us to embed the remaining vertices of the graph in a greedy manner. 
%This explains the need for the flexibility in the result allowing us to apply it to \emph{any} subset of $s$ (remaining) indices and forbidding \emph{any} small enough set of (previously used) vertices from being used. 
So it is crucial that we can apply the lemma to \emph{any} subset of $s$ (remaining) indices while avoiding \emph{any} small enough set of (previously used) vertices from being used. 

For future applications, we state and prove the following lemma in the context of \emph{$r$-uniform hypergraphs} ($r$-graphs), and the definition of $\Phi_F$ extends naturally to $r$-graphs $F$ and $G^{(r)}(n,p)$.
Recall that $G^{(r)}(n,p)$ is an  $r$-graph on $n$ vertices where each $r$-tuple of vertices forms an edge with probability $p$, independent of all other $r$-tuples.

\begin{lemma} \label{lm:gnpfindingsubgraph}
Let $n,t(n),s(n)\in\NN$,  $0<\beta<1/2$ and 
%$t=t(n)\in \mathbb{N}$ and 
$L, v, w, e, r\in \mathbb{N}$ such that $r\ge 2$,
%$v_i\le v$ for all $i\in [t]$ and 
$L t, sw \leq \frac{\beta n}{4v}$
%\COMMENT{AT: define $\ll$ notation somewhere? In fact here I guess for what we need$\leq$ instead of $\ll$ suffices?!}
 and $\binom{t}{s}\le 2^n$. %such that for $i\in[t]$, $0\le w_i<v_i$.
Let $F_1,\ldots,F_t$ be labelled $r$-graphs with distinguished vertex subsets $W_i\subset V(F_i)$ such that $|W_i|\leq w$, $|V(F_i\setminus W_i)|=v$, $e(F_i)=e$ and $e(F_i[W_i])=0$ for all $i\in[t]$. 
%Further, suppose that $e(F_1)=e(F_2)=\cdots=e(F_t):=e$ and $e(F_i[W_i])=0$ for all $i\in[t]$. 
Now %let $n$ be sufficiently large %be a labelled graph with $b$ vertices and $a$ edges and $C>0$, $s\in \mathbb{N}$ be some constants.
%Suppose $1/n\ll \gamma, 1/C, 1/b, 1/a, 1/s$.
%Write $a:=g(b)$.
let $V$ be an $n$-vertex set 
%with $n$ sufficiently large (in particular $n\geq t$), 
and let $U_1,\ldots, U_t\subset V$ be labelled vertex subsets with $|U_i|=|W_i|$ for all $i\in[t]$. 
%Suppose also that 
Finally, suppose there are families $\cF_1, \dots, \cF_t\subset \binom{V}{v}$ of labelled vertex sets such that for each $i\in [t]$, $|\cF_i|\geq \beta n^v$.

Now suppose that $1\le s(n) \le t(n)$ and $p=p(n)$ are such that  %$\Phi\cdot s \ge 2^{v+4}v!t/\beta^2$ and  
\begin{align}\label{eq1}
 s\cdot\Phi \ge \left(\frac{2^{v+7}v!}{\beta^2}\right)\min\{Lt\log n,n\}\mbox{ and }\Phi' \ge \left(\frac{2^{v+7}v!}{\beta^2}\right)n,
\end{align}
%\COMMENT{AT: replaced 5 with 7 in two places in~(\ref{eq1})}
where $\Phi:=\min\{\Phi_{F_i,W_i}:i\in [t]\}$ and $\Phi':=\min\{\Phi_{F_i\setminus W_i}:i \in [t]\}$ with respect to $p=p(n)$.  
Then, a.a.s., for any $V'\subseteq V$, with $|V'|\geq n-Lt$ and any subset $S\subseteq [t]$ such that $|S|=s$ and $U_i\cap U_j=\emptyset$ for $i\neq j\in[s]$, there exists \emph{some} $i\in S$ such that there is an embedding (which respects labelling) of $F_i$ in $G^{(r)}(n,p)$ on $V$ which maps $W_i$ to $U_i$ and $V(F_i)\setminus W_i$ to a labelled set in $\cF_i$ which lies in $V'$.
\end{lemma}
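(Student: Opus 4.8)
The plan is to use Janson's inequality together with a union bound over all choices of $V'$ and $S$. First I would fix, for the moment, a target set $V'\subseteq V$ with $|V'|\ge n-Lt$ and a subset $S\subseteq[t]$ with $|S|=s$ and pairwise-disjoint $U_i$ for $i\in S$. For each $i\in S$ let $X_i$ count the number of embeddings of $F_i$ in $G^{(r)}(n,p)$ that map $W_i$ to $U_i$ and $V(F_i)\setminus W_i$ to a set in $\cF_i$ contained in $V'\setminus\bigcup_{j\in S}U_j$ (we may throw away the other prescribed vertex sets, at a cost of at most $sw$ further forbidden vertices, and by the hypothesis $Lt+sw\le \beta n/(4v)\cdot 2 \le \beta n/(2v)$ at least $\beta n^v/2$ of the sets in $\cF_i$ survive, since each forbidden vertex lies in at most $v\cdot n^{v-1}$ of them). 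Set $X:=\sum_{i\in S}X_i$; it suffices to show $\PP(X=0)$ is very small, and then union bound. We have $\lambda:=\EE X=\sum_{i\in S}\EE X_i$, and since each $X_i$ counts at least $\beta n^v/2$ (roughly) candidate images, each times a probability $p^{e}$ for the edges of $F_i\setminus W_i$ (minus the already-present edges inside $U_i$, but $e(F_i[W_i])=0$ so all $e$ edges meet $V(F_i)\setminus W_i$), standard estimates give $\EE X_i\gtrsim \beta n^v p^{e}/v! \gtrsim (\beta^2/(2^{v+6}v!))\,\Phi_{F_i\setminus W_i}$ — wait, more carefully, the relevant lower bound on $\EE X_i$ should be in terms of $\Phi_{F_i,W_i}$, giving $\lambda \gtrsim (\beta/(2v!))\, s\,\Phi$.

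The key computation is bounding $\Delta_X=\sum_{(i,\sigma),(j,\tau):\ \sigma\cap\tau\neq\emptyset}\PP(\sigma,\tau\text{ both present})$, where $\sigma,\tau$ range over candidate embeddings. I would split $\Delta_X$ according to whether the overlapping embeddings come from the same index ($i=j$) or different indices; the cross terms with $i\ne j$ only share vertices among the $W$'s (which contribute no edges) so they factor and can be absorbed, while for each fixed $i$ the standard $\Phi_F$-machinery (summing over the subgraph $H=F_i[\text{overlap}]$, exactly as in the usual $G(n,p)$ subgraph-counting argument, but now with $\Phi_{F_i,W_i}$ playing the role of $\Phi$) yields $\Delta_X \lesssim \lambda^2 \big(\tfrac{1}{s\Phi} + \tfrac{1}{\Phi'}\big)\cdot \text{const}(v)$, after also accounting for the contribution of $H$'s that stay within a single $F_i\setminus W_i$ (these give the $\Phi'$ term) versus those that use the distinguished vertices. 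Then Janson~\eqref{eq:2} with $t=\lambda$ gives $\PP(X=0)\le \exp(-\lambda^2/(2\Delta_X))\le \exp(-c\beta^2/(2^{v+7}v!)\cdot\min\{s\Phi,\Phi'\})$, and the hypothesis~\eqref{eq1} makes this at most $\exp(-\min\{Lt\log n, n\})$.

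Finally I would take the union bound over all admissible pairs $(V', S)$. The number of choices of $S\subseteq[t]$ with $|S|=s$ is at most $\binom{t}{s}\le 2^n$ by hypothesis; the number of choices of $V'$ with $|V'|\ge n-Lt$ is at most $\binom{n}{\le Lt}\le n^{Lt}=e^{Lt\log n}$ (it suffices to union bound over the extremal case $|V'|=n-Lt$, i.e. over the $Lt$-subsets we delete, since shrinking $V'$ only makes the event harder). Thus the total failure probability is at most $e^{Lt\log n}\cdot 2^n\cdot \exp(-2\min\{Lt\log n,n\}) \le \exp(Lt\log n + n - 2Lt\log n - n)\cdot(\text{slack})\to 0$, once the constant in~\eqref{eq1} is chosen large enough to beat both the $2^n$ and the $e^{Lt\log n}$ factors (this is exactly why the right-hand sides of~\eqref{eq1} carry the $\min\{Lt\log n, n\}$ and the standalone $n$, respectively, and why there are two separate conditions — one controlling overlaps involving the prescribed vertices, one controlling overlaps inside $F_i\setminus W_i$).

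\textbf{Main obstacle.} The routine-but-delicate part is the bookkeeping in $\Delta_X$: correctly separating the contribution governed by $\Phi_{F_i,W_i}$ (overlaps that may involve $W_i=U_i$, which are shared across many embeddings) from that governed by $\Phi_{F_i\setminus W_i}$ (genuinely internal overlaps), and checking that cross-index terms are harmless because the $W_i$ induce no edges. Getting the two conditions in~\eqref{eq1} to be exactly what the union bound over both $\binom{t}{s}\le 2^n$ choices of $S$ and the $e^{Lt\log n}$ choices of $V'$ needs — in particular that one needs $s\Phi$ large relative to $\min\{Lt\log n,n\}$ but $\Phi'$ large relative to $n$ alone — is the one genuinely non-mechanical point, and it is precisely where the statement's somewhat unusual hypotheses are used.
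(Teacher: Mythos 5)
Your overall strategy (restrict each $\cF_i$ to sets inside $V'\setminus\bigcup_{j\in S}U_j$, apply Janson to $X=\sum_{i\in S}X_i$, split $\Delta_X$ into same-index terms governed by $\Phi=\min_i\Phi_{F_i,W_i}$ and cross-index terms governed by $\Phi'=\min_i\Phi_{F_i\setminus W_i}$, then union bound over $(V',S)$) is exactly the paper's, and your claimed bound $\Delta_X\lesssim \lambda^2\bigl(\tfrac{1}{s\Phi}+\tfrac{1}{\Phi'}\bigr)$ is the correct one. But the final union bound, as you wrote it, is wrong, and this is a genuine gap. You bound the number of choices of $S$ by $\binom{t}{s}\le 2^n$ and of $V'$ by $e^{Lt\log n}$, against a per-pair failure probability $\exp(-2\min\{Lt\log n,n\})$, and then claim the total is at most $\exp(Lt\log n+n-2Lt\log n-n)$. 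That step replaces $2\min\{a,b\}$ by $a+b$, which is an inequality in the wrong direction: the true total is $\exp(Lt\log n+n\log 2-2\min\{Lt\log n,n\})$, which does \emph{not} tend to $0$ whenever $Lt\log n\le n\log 2$ (a regime that genuinely occurs in the paper's applications, e.g.\ the later exposure rounds where $t_j=o(n/\log n)$). Nor can this be repaired by "choosing the constant in \eqref{eq1} large enough": that constant is fixed in the statement, and more fundamentally no constant multiple of $\min\{Lt\log n,n\}=Lt\log n$ beats the $2^n$ factor when $Lt\log n\ll n$. The paper closes this by a case distinction: if $Lt\log n\le n$ it bounds the number of sets $S$ by $2^t\le e^{t\log 2}\le e^{Lt\log n}$ (not $2^n$) and the number of $V'$ by $\binom{n}{Lt}\le e^{Lt(1+\log n)}$, which $\exp(-2Lt\log n)$ does beat; only when $Lt\log n>n$ does it use $\binom{t}{s}\le 2^n$ together with the trivial $2^n$ bound on the number of $V'$, against $\exp(-2n)$. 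Your write-up needs this case split (or an equivalent sharper count of the $S$'s); as it stands the a.a.s.\ conclusion does not follow.

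A smaller point: your description of the cross-index terms ("only share vertices among the $W$'s, which contribute no edges, so they factor") is backwards. Since the $U_i$, $i\in S$, are pairwise disjoint and the free images are taken inside $V'\setminus\bigcup_{j\in S}U_j$, two copies with $i\neq j$ share \emph{no} distinguished vertices at all; a shared edge must therefore lie entirely inside the two free parts, and this is precisely why these terms are controlled by $\Phi'$ (via the maximum edge count of $k$-vertex subgraphs of $F_i\setminus W_i$), while the same-index terms, whose shared edges may touch $U_i$, are controlled by $\Phi_{F_i,W_i}$. Your final attribution of the two terms comes out right, so this is a matter of justification rather than of the bound itself, but it should be stated correctly since it is where the hypothesis $e(F_i[W_i])=0$ and the disjointness of the $U_i$ are actually used.
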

Note by `labelled' here we mean   that for all $j$, the $j^{th}$ vertex in $W_i$ is mapped to the $j^{th}$ vertex in $U_i$; moreover the $j^{th}$ vertex in $V(F_i)\setminus W_i$ is mapped to the $j^{th}$ vertex in some labelled set from $\cF_i$.

\begin{proof}
%Firstly, let $v=\max_{i\in [t]}v_i$ and note that we can assume $v_i=v$ for each $i\in [t]$. Indeed, we extend every $F_i$ to a $v$-vertex graph by adjoining isolated vertices and then add arbitrary vertices from $V\setminus (X\cup_{i\in[t]}U_i)$ to each $X\in \cF_i$ so that $X$ has size $v$. 
Let us fix $S\subset [t]$ with $|S|=s$ and a vertex subset $V'\subset V$ as in the statement of the lemma. Let $U:=\cup_{i\in S}U_i$ and fix $V'':=V'\setminus U$. Note that $(V\setminus V')\cup U$ intersects at most $\frac{\beta}{2} n^v$ of the elements of $\cF_i$ for each $i$
%. Thus, 
%as $\eta v\ll \beta$, 
 and so we can focus on a subset $\cF'_i$ of each $\cF_i$ of at least $\frac{\beta}{2} n^v$ sets which are all contained in $V''$.
 %disjoint from $V\setminus V'$.
For each $i\in S$  and each labelled subset $ X\in \cF'_i$, %which lies in $V'$,
let $I_{X,i}$ denote the indicator random variable that $X\cup U_i$ hosts a labelled copy of $F_i$ where $W_i$ is mapped to $U_i$.
 %assuming here that $X\in \cF'_i$. 
To ease notation sometimes we write $I_{X}$ instead of $I_{X,i}$.
Note that $Z:=\sum\{I_{X,i}:X\in\cup_{i\in S}\cF'_i\}$ counts the number of suitable embeddings in $G^{(r)}(n,p)$. (So here if $X$ is in $a$ of the collections $\mathcal F'_i$, then there are $a$ indicator random variables in this sum corresponding to $X$.) 
%We aim to bound the probability that $Z< 1$ by proving concentration for the variable $Z$.
%Indeed, an easy calculation  (using the first part of (\ref{eq1})) gives that $\mathbb{E}[Z]> 2$ for large enough $n$ and so it suffices to bound $\mathbb{P}[Z\le \mathbb{E}[Z]/2]$. 

%First we calculate a lower bound on the expected number of suitable embeddings of $F_i$ for some $i\in S$. In order to get this lower bound, it suffices to consider one index $i^*\in S$ as follows

%\begin{align*}\mathbb{E}[Z]&=\sum \{ \mathbb{E}[I_X]:X\in\cup_{i\in S}\cF_i\} \\ &\ge \sum \{\mathbb{E}[I_X]:X\in\cF_{i^*}\}\\ &\ge \frac{\beta}{2} n^{v}p^{e}\\ &\ge \frac{\beta}{2} \Phi_{F_{i^*},W_{i^*}} \\ &\ge 2t/s\ge 2.\end{align*}

%We now turn to showing concentration for the random variable $Z$. 
%If $t\log n=o(n)$, we are done if we can show that $\mathbb{P}[Z\le \mathbb{E}[Z]/2]\le \exp(-2Lt\log n)$, as taking a union bound over the (at most $2^t$) possible sets $S$ and the $\binom{n}{L t}\le \exp(L t(1+\log n))$ possible $V'$, we have that a.a.s., $Z\ge 1$ for all such $S$ and $V'$. 
%If $t\log n=\Omega(n)$, we instead bound the number of possible $V'$ by $2^n$ and show that $\mathbb{P}[Z\le \mathbb{E}[Z]/2]\le \exp(-2n)$. 

An easy calculation  (using the first part of (\ref{eq1})) gives that $\mathbb{E}[Z]> 2$ for large enough $n$.
%\smallskip
We will show that
\begin{equation}\label{eq:deltaZ}
\Delta_Z\le \frac{\mathbb{E}[Z]^2}{16\min\{Lt\log n,n\}} 
%\max\{\frac{1}{Lt\log n},\frac{1}{n}\}
\end{equation}
 and thus by Janson's inequality (\ref{eq:2}),
$\mathbb{P}[Z\le \mathbb{E}[Z]/2]\le \exp(-2\min\{Lt\log n,n\})$. If $Lt\log n\le n$, taking a union bound over the (at most $2^t$) possible sets $S$ and the $\binom{n}{L t}\le \exp(L t(1+\log n))$ possible $V'$, we have that a.a.s., $Z\ge 1$ for all such $S$ and $V'$; if $Lt\log n>n$, we instead bound both the number of $V'$ and the number of $S$ by $2^n$ and draw the same conclusion. 
So in both cases $Z\ge 1$ a.a.s. for all such $S$ and $V'$ and we are done.

%In order to show concentration, we turn to Janson's inequality, as discussed at the beginning of this subsection. 
Now it remains to verify~\eqref{eq:deltaZ}.
Firstly let $Z_i:=\sum_{X\in \cF'_i}I_{X,i}$. Then 
\begin{equation} \label{eqn:expectation squared}
\mathbb{E}[Z]^2=\left(\sum_{i\in S}\mathbb{E}[Z_i]\right)^2=\sum_{i,j\in S}\mathbb{E}[Z_i]\mathbb{E}[Z_j].
\end{equation}
To ease notation, let $\cF:=\cup_{i\in S}\cF'_i$ and for $X,X'\in \cF$, we write $X\sim X'$ if, assuming $X\in \cF_i$, $X'\in \cF_j$, the labelled copies of $F_i$ on $X\cup U_i$ and $F_j$ on $X'\cup U_j$ %\COMMENT{AT:added $U_i$ and $U_j$ here}
intersect in at least one edge. We  split $\Delta_Z$ as follows:

\begin{align}\Delta_Z&=\sum_{\{(X,X')\in\cF^2:X \sim X'\}}\mathbb{E}[I_XI_{X'}] \nonumber
\\ \label{eqn:Delta}&=\sum_{i\in S} \Delta_{Z_i} + \sum_{\{(i,j)\in S^2: \, i\neq j\}}\sum_{\{(X,X')\in\cF'_i\times\cF'_j:X \sim X'\}}\mathbb{E}[I_XI_{X'}],\end{align}
where $\Delta_{Z_i}$ is defined analogously to $\Delta_Z$ for the random variable $Z_i$.

For integers $a$ and $b$, write $(a)_b := a(a-1)\cdots (a-b+1)$.
Fix $1\le k\le v$. There are $\binom{v}{k} (v)_k\le \binom vk v!$ ways that two labelled $v$-sets share exactly $k$ vertices. Fixing two such $v$-sets, there are at most $(n)_{2v - k}\le n^{2v-k}$ ways of mapping their $2v-k$ vertices into  
$V$.
%$V\setminus (\cup_{i\in[t]}U_i)$.
Let $f_k$ denote the maximum number of edges of a $k$-vertex subgraph of $F_i\setminus W_i$, taken over all $i\in [t]$.  As we explain in the next paragraph, we have that for $i\neq j$,  
%\begin{align*}
%\sum_{\{(X,X')\in\cF'_i\times\cF'_j:X \sim X'\}}\mathbb{E}[I_XI_{X'}] &\le \sum_{k=1}^v \binom{v}{k} (v)_k (n)_{2v - k} p^{2e - f_k}\\ &\le \sum_{k=1}^v \binom vk v! \, n^{2v-k} p^{2e - f_k} \\
%&\le \frac{2^{v} v! \, n^{2v}p^{2e}}{\Phi'}\\ &\le \frac{2^{v+2} v!\mathbb{E}[Z_i]\mathbb{E}[Z_j]}{\beta^2\Phi'} .
%\end{align*}
\[
\sum_{\{(X,X')\in\cF'_i\times\cF'_j:X \sim X'\}}\mathbb{E}[I_XI_{X'}] \le \sum_{k=1}^v \binom vk v! \, n^{2v-k} p^{2e - f_k}\le \frac{2^{v} v! \, n^{2v}p^{2e}}{\Phi'}\le \frac{2^{v+2} v!\mathbb{E}[Z_i]\mathbb{E}[Z_j]}{\beta^2\Phi'} .
\]
%\COMMENT{AT: corrected last inequality: $2^{v+2}$}

Here, we crucially used that any copy of $F_i$ on $X\in \cF'_i$ does not have edges intersecting $U_j$ for $j\neq i$. 
Note that the penultimate inequality follows by definition of $\Phi '$. The last inequality follows as $\beta n^v p^e/2 \leq \mathbb{E}[Z_i]$ for all $i \in S$.

Using the above calculation (and the second part of (\ref{eq1})) to compare (\ref{eqn:Delta}) and (\ref{eqn:expectation squared}), we see that the right hand summand of $(\ref{eqn:Delta})$ is less than $\mathbb{E}[Z]^2/ (32 n)$.
 We now estimate the left hand summand of (\ref{eqn:Delta}) in a similar fashion. For a fixed $i\in S$, let $1\le k\le v$. We let $g_k$ denote the maximum number of edges of a subgraph of $F_i$ which has $k$ vertices disjoint from $W_i$.  We have, similarly to before, that
\[
\Delta_{Z_i} \le \sum_{k=1}^v \binom{v}{k} (v)_k (n)_{2v - k} p^{2e - g_k} \le \sum_{k=1}^v \binom vk v! \, n^{2v-k} p^{2e - g_k} 
\le \frac{2^{v} v! \, n^{2v}p^{2e}}{\Phi}.
\]
Thus, 
\[\sum_{i\in S}\Delta_{Z_i}\le \frac{s2^v v!n^{2v}p^{2e}}{\Phi}
\stackrel{(\ref{eq1})}{\leq}
\frac{(s\beta n^{v}p^{e}/2)^2}{32\min\{Lt\log n,n\}}\le \frac{\left(\sum_{i\in S}\mathbb{E}[Z_i]\right)^2}{32\min\{Lt\log n,n\}}=\frac{\mathbb{E}[Z]^2}{32\min\{Lt\log n,n\}}.\]
So bringing both summands together,~\eqref{eq:deltaZ} holds and we are done. 
%$ \Delta_Z\le \frac{\mathbb{E}[Z]^2}{16\min\{Lt\log n,n\}} 
%\max\{\frac{1}{Lt\log n},\frac{1}{n}\}
%$ and thus by Janson's inequality (\ref{eq:2}),
%$\mathbb{P}[Z\le \mathbb{E}[Z]/2]\le \exp(-2\min\{Lt\log n,n\})$. If $Lt\log n\le n$, taking a union bound over the (at most $2^t$) possible sets $S$ and the $\binom{n}{L t}\le \exp(L t(1+\log n))$ possible $V'$, we have that a.a.s., $Z\ge 1$ for all such $S$ and $V'$.
%If $Lt\log n>n$, we instead bound both the number of $V'$ and the number of $S$ by $2^n$ and draw the same conclusion. 
%if $t\log n=o(n)$, we have that $\Delta_Z\le \mathbb{E}[Z]^2/(16Lt\log n)$ which by Jansen's inequality, implies that 
%$\mathbb{P}[Z\le \mathbb{E}[Z]/2]\le \exp(-2Lt\log n)$ as required. 
%Similarly, if $t\log n=\Omega(n)$, we have that $\Delta_Z\le \mathbb{E}[Z]^2/(16n)$ and thus $\mathbb{P}[Z\le \mathbb{E}[Z]/2]\le \exp(-2n)$.
\end{proof}

%This lemma provides the basis for a greedy process in which we find some larger (linear size) graph in $G(n,p)$. We will require that the embedding of our larger graph has certain vertices already prescribed and repeated applications of Lemma \ref{lm:gnpfindingsubgraph} will then allow us to embed the remaining vertices of the graph in many small steps. This explains the need for the flexibility in the result allowing us to apply it to \emph{any} subset of $s$ (remaining) indices and forbidding \emph{any} small enough set of (previously used) vertices from being used. 

In its full generality, Lemma \ref{lm:gnpfindingsubgraph} will be a valuable tool in our proof. However, we will also have instances where we do not need to use the full power of the lemma. 
For instance, setting $r=2$, $s=1$ and $W_i=U_i=\emptyset$ for all $i\in[t]$, we recover a more standard application of Janson's inequality to subgraph containment which we state below for convenience.

\begin{cor} \label{cor:simplegraphcontainment}
%\COMMENT{Made this about labelled embeddings}
Let $\beta>0$, $1\leq t\leq2^n$ and $F$ some fixed labelled graph on $v$  vertices. Then there exists $C>0$ such that the following holds. If $V$ is a set of $n$ vertices, $\cF_1,\ldots, \cF_t\subset\binom{V}{v}$ are families of  labelled subsets such that $|\cF_i|\geq \beta n^v$ and $p=p(n)$ is such that $\Phi_F\geq Cn$, then a.a.s., for each $i\in [t]$, there is an embedding  of $F$ onto a set in $\cF_i$, which respects labellings.
\end{cor}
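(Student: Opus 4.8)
The plan is to deduce this from Lemma~\ref{lm:gnpfindingsubgraph} by specialising all the general parameters as indicated just before the statement. Concretely, take $r=2$, set $t=t(n)$ as given (so $t\le 2^n$), $s=1$, and for every $i\in[t]$ take $F_i=F$, $W_i=\emptyset=U_i$, $v=v_F$, $e=e_F$, $w=0$, and $L=1$ (say). With these choices the bookkeeping hypotheses of Lemma~\ref{lm:gnpfindingsubgraph} become trivial: $Lt=t\le 2^n$ and $sw=0$, so certainly $Lt,sw\le \beta n/(4v)$ provided $n$ is large enough that $t\le \beta n/(4v)$ — and here is the one small point to address, namely that the corollary as stated only assumes $t\le 2^n$, not $t\le \beta n/(4v)$, so I would handle the regime $t>\beta n/(4v)$ separately (see below). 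Also $\binom{t}{s}=t\le 2^n$ holds, and $e(F_i[W_i])=0$ vacuously.

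Next, the families: given the hypothesis $|\cF_i|\ge\beta n^v$ I simply feed the same families $\cF_1,\dots,\cF_t$ into Lemma~\ref{lm:gnpfindingsubgraph}; the condition $|\cF_i|\ge\beta n^v$ is exactly what is required. It remains to verify the quantitative condition~\eqref{eq1}. Since $W_i=\emptyset$ we have $\Phi=\Phi_{F_i,W_i}=\Phi_F$ and $\Phi'=\Phi_{F_i\setminus W_i}=\Phi_F$, so both parts of~\eqref{eq1} read (up to the harmless factor $s=1$ and the factor $\min\{Lt\log n,n\}\le n$) as $\Phi_F\ge \left(\frac{2^{v+7}v!}{\beta^2}\right)n$. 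Hence I define $C:=C(F,\beta)=\frac{2^{v+7}v!}{\beta^2}$ (equivalently any constant at least this large), and then the assumption $\Phi_F\ge Cn$ of the corollary immediately implies~\eqref{eq1}. Applying Lemma~\ref{lm:gnpfindingsubgraph} with these parameters, a.a.s.\ for every $S\subseteq[t]$ with $|S|=1$ — i.e.\ for every singleton $\{i\}$, and the disjointness condition on the $U_i$ is vacuous since they are empty — and for $V'=V$ (which satisfies $|V'|=n\ge n-Lt$), there is some $i$ in the (singleton) set $S$ with an embedding of $F=F_i$ onto a set of $\cF_i$ lying in $V'=V$, respecting labellings. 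Quantifying over all singletons $S=\{i\}$, $i\in[t]$, this is exactly the conclusion: a.a.s., for each $i\in[t]$ there is a labelling-respecting embedding of $F$ onto a set in $\cF_i$.

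The only genuine wrinkle, and the step I would be most careful about, is the mismatch between the hypothesis $t\le 2^n$ in the corollary and the stronger requirement $Lt\le \beta n/(4v)$ needed to invoke Lemma~\ref{lm:gnpfindingsubgraph} verbatim. This is not a real obstacle: if $t\le \beta n/(4v)$ the argument above goes through directly. If instead $\beta n/(4v)<t\le 2^n$, I would split the index set $[t]$ into $\lceil t/(\beta n/(4v))\rceil = O(2^n/n)$ blocks each of size at most $\beta n/(4v)$, apply the previous case to each block (a.a.s.\ for each block, since there are at most $2^n$ blocks a union bound over blocks still succeeds — indeed Lemma~\ref{lm:gnpfindingsubgraph}'s internal failure probability is at most $\exp(-2\min\{Lt\log n,n\})$ which beats $2^{n}$ comfortably), and take the intersection of these a.a.s.\ events. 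Alternatively, and more cleanly, one can simply note that re-running the proof of Lemma~\ref{lm:gnpfindingsubgraph} in this special case ($r=2$, $s=1$, $W_i=\emptyset$) the union bound is over the $t\le 2^n$ choices of $i$ and over the single set $V'=V$, so the bound $\mathbb{P}[Z_i\le\mathbb{E}[Z_i]/2]\le\exp(-2n)$ (valid once $\Phi_F\ge Cn$ with $C$ as above) suffices directly via a union bound over $i\in[t]\le 2^n$. Either route is routine; I would present the second, as it avoids invoking the full statement of Lemma~\ref{lm:gnpfindingsubgraph} and instead extracts its core estimate. No step requires more than the calculations already carried out in the proof of Lemma~\ref{lm:gnpfindingsubgraph}.
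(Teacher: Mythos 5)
Your proposal is correct and is essentially the paper's own derivation: the corollary is obtained exactly as the $r=2$, $s=1$, $W_i=U_i=\emptyset$ specialisation of the Janson estimate behind Lemma~\ref{lm:gnpfindingsubgraph}, with the per-family failure probability $e^{-\Omega(n)}$ (valid once $\Phi_F\geq Cn$ for $C=C(v,\beta)$ large enough) beating a union bound over the at most $2^n$ families---which is precisely your preferred second route. One caution: your first (blocking) workaround does not work as a black-box application, since the lemma's ``a.a.s.''\ conclusion (and even its internal failure bound after its own union over all $S$ and $V'$, which is of order $4^{n}e^{-2n}$) is too weak to survive a further union bound over exponentially many blocks, so presenting the second route, as you say you would, is the right choice.
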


\section{Lower bound construction for  the proof of Theorem~\ref{thm:Main}}\label{sec:lower}
In this section we give a construction that provides the lower bound in the  proof of Theorem~\ref{thm:Main}. Our construction is a generalisation of that used for the lower bound in Theorem~\ref{btwthm}
(see Section~2.1 of~\cite{bwt2}). We will make use of the following result.

\begin{thm}[\cite{jlr}, part of Theorem $4.9$]\label{jansonthm3}
For every $k \geq 2$ and for every $0<\eps<1$ there is a positive constant $c=c(r,\eps)$ such that if $p\leq cn^{-2/k}$,
\[\lim_{n\rightarrow \infty} \mathbb{P}(G({n,p}) \ \text{contains a $K_k$-tiling of size $\eps n$})=0.\]
\end{thm}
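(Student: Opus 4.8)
The plan is to prove this by the first moment (union bound) method: I will show that the \emph{expected} number of $K_k$-tilings of size $\eps n$ already tends to $0$ once $c$ is a small enough constant, and then Markov's inequality finishes. Set $s:=\ceil{\eps n}$ and let $X$ be the number of collections of $s$ pairwise vertex-disjoint copies of $K_k$ in $G(n,p)$; then $G(n,p)$ contains a $K_k$-tiling of size $\eps n$ exactly when $X\ge 1$ (and if ``size'' is meant to count covered vertices rather than copies, one simply replaces $\eps$ by $\eps/k$ below). The number of unordered families of $s$ pairwise disjoint $k$-subsets of $[n]$ is $\frac{n!}{(k!)^s\, s!\,(n-ks)!}$, and each such family spans $s$ disjoint copies of $K_k$ with probability exactly $p^{s\binom k2}$, so
\[
\EE[X]=\frac{n!}{(k!)^s\, s!\,(n-ks)!}\,p^{s\binom k2}.
\]

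Next I would bound this crudely. Using $\frac{n!}{(n-ks)!}\le n^{ks}$ and $s!\ge (s/e)^s$ gives $\EE[X]\le \bigl(\tfrac{e\,n^k p^{\binom k2}}{k!\,s}\bigr)^{s}$. Substituting $p\le c\,n^{-2/k}$ and using $k-\tfrac2k\binom k2=1$ yields $n^k p^{\binom k2}\le c^{\binom k2}n$, and since $s\ge \eps n$ this gives
\[
\EE[X]\le \left(\frac{e\,c^{\binom k2}}{k!\,\eps}\right)^{\eps n}.
\]
Now choose the constant $c=c(k,\eps)>0$ small enough that $\frac{e\,c^{\binom k2}}{k!\,\eps}\le \tfrac12$; then $\EE[X]\le 2^{-\eps n}\to 0$, and by Markov's inequality $\mathbb{P}(X\ge 1)\le \EE[X]\to 0$, which is precisely the claim (with plenty of room to spare, in fact exponential decay).

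As for difficulty: there is no real obstacle here. The point is that in this regime of $p$ the expected number of copies of $K_k$ in $G(n,p)$ is only linear in $n$, whereas a tiling of size $\eps n$ requires linearly many \emph{disjoint} copies, so the first moment is comfortably $o(1)$ once $c$ is small — no second moment or Janson lower bound is needed, and indeed a plain union bound suffices since we are only excluding linear-size tilings rather than a single copy of $K_k$. The only things that need a little care are the Stirling-type estimate for $s!$, the exponent arithmetic $k-\tfrac2k\binom k2=1$ (valid for all $k\ge 2$), and the boundary case $k=2$, where $K_k$ is an edge and the statement reduces to the (easy) fact that $G(n,cn^{-1})$ a.a.s.\ has no matching of size $\eps n$ for $c$ small.
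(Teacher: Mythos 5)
Your proof is correct. One point of comparison is unavoidable, though: the paper does not prove this statement at all — it is imported wholesale as part of Theorem~4.9 of Janson, \L uczak and Ruci\'nski, which is a general result on the maximum number of vertex-disjoint copies of an arbitrary graph $H$ at the threshold $n^{-1/d}$ governed by the maximum $1$-density of $H$. What you have done is give a short, self-contained, elementary proof of exactly the special case the paper needs ($H=K_k$, ``size'' meaning the number of disjoint copies, which matches how the theorem is used in Section~3). Your first-moment computation is valid: the count $\frac{n!}{(k!)^s\,s!\,(n-ks)!}$ of unordered families of $s$ disjoint $k$-sets is right, the probability factors exactly as $p^{s\binom{k}{2}}$ because vertex-disjoint cliques have disjoint edge sets, the exponent arithmetic $\frac{2}{k}\binom{k}{2}=k-1$ is what makes $n^kp^{\binom{k}{2}}\le c^{\binom{k}{2}}n$ work, and Markov then gives an exponentially small failure probability — strictly stronger than the $o(1)$ claimed. (Two cosmetic remarks: if $ks>n$, i.e.\ $\eps>1/k$ under the ``number of copies'' reading, the event is impossible and the statement is vacuous, so your formula should be read with that trivial case set aside; and note that a plain Markov bound on the number of copies of $K_k$ would only give a constant bound $O(c^{\binom k2}/\eps)$ on the probability, so counting \emph{families} of disjoint copies, as you do, is genuinely needed to get convergence to $0$.) Compared with citing JLR, your argument buys simplicity and an explicit constant $c(k,\eps)$; the cited theorem buys generality (all graphs $H$, plus the matching $1$-statement above the threshold), which this paper does not need at this point.
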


Let $k$ and $r$ be in the statement of Theorem~\ref{thm:Main}.
Consider any $1-\frac{k}{r} <\alpha <1-\frac{k-1}{r}$ and let $\gamma >0$ such that $(1-\gamma)(1-\frac{k-1}{r}) = \alpha$. 
Let $n\in \mathbb N$ be divisible by $r$.
Suppose $G$ is an $n$-vertex graph with vertex classes $A$ and $B$ such that $|B|=(1-\gamma)(1- \frac{k-1}{r})n$ and $|A|=n-|B|$, where there are all possible edges in $G$ except that $A$ is an independent set.
%(Note here for the sizes of $A$ and $B$ we ignore floors and ceilings as it does not affect the argument.)
So $\delta (G) \geq \alpha n$. 

Choose $c'=c'(\gamma, k,r)=c'(\alpha, k,r)$ sufficiently small so that, if $p=c'n^{-2/k}$ a.a.s. $G(n,p)[A] \cong G(|A|,p)$ does not contain a 
$K_k$-tiling of size $\gamma |A|/r\leq \gamma n/r$. The existence of $c'$ is guaranteed by Theorem~\ref{jansonthm3} since $A$ has size linear in $n$.

Observe that any copy of $K_r$ in $G \cup G(n,p)$ either contains a $K_k$ in $A$, or uses at least $r-(k-1)$ vertices in $B$.
Thus, a.a.s., the largest $K_r$-tiling in $G \cup G(n,p)$ has size less than $|B|/(r-k+1)+ \gamma n/r = n/r$ and we are done.
%We claim that a.a.s. $G \cup G(n,p)$ does not contain a perfect $K_r$-tiling. Indeed, in any perfect $K_r$-tiling in $G\cup G(n,p)$ one would require that at least $\gamma n/r$ of the copies of $K_r$ each contain at least $k$ vertices in $A$; this observation is immediate from the size of $A$.
%Thus, we would require a $K_k$-tiling in $G(n,p)[A]$ that covers at least $\gamma k n/r$ vertices. Since a.a.s. no such tiling exists we have shown that a.a.s. $G \cup G(n,p)$ does not that contain a perfect $K_r$-tiling, as desired.

%%%%%%%%%%%%%%%%%%%
\section{Overview of the proof of the upper bound of Theorem~\ref{thm:Main}} \label{sec:overview}
%\COMMENT{AT: I would say it is definitely preferable to prove Theorem~\ref{thm:Main} in full generality i.e. to include the case when $k=r$. That way our result implies Theorem~\ref{btwthm}. So we would need to extends subresults, definitions etc. For example,
%in the case $k=r$, $H_0$ becomes an independent set of size $r$}
In this section we sketch some of the ideas in the remainder of our proof of Theorem~\ref{thm:Main}.
We use the by now well-known absorbing method, which reduces the problem into finding a small absorbing structure on some vertex subset $A$ and finding a $K_r$-tiling that leaves a set $U$ of $o(n)$ vertices uncovered.
The property of the absorbing structure on $A$ is that for any small set $U$ with $|U|\in r\mathbb N$, one can find a perfect $K_r$-tiling in $(G\cup G(n,p))[A\cup U]$, which will finish the proof.

Let $p\geq Cn^{-2/k}$ and let $G$ be an $n$-vertex graph with $\delta(G)\geq (1-\frac{k}{r}+\gamma)n$.
Note that it might be true that both $G$ and $G(n,p)$ are $K_r$-free (a.a.s. for $G(n,p)$).
Thus, to build even a single copy of $K_r$, we may have to use both deterministic edges (from $G$) and random edges (from $G(n,p)$).
We will use the following partition of the edge set of $K_r$.

\begin{definition} \label{Hzerominus}
For $r \in \mathbb{N}$ and $k \in \mathbb{N}$ such that $2\leq k \leq r$, let $r^{\ast},q \in \mathbb{N}$ be such that $k(r^{\ast}-1)+q=r$ and $0< q\leq k$. Then $H_{\text{det}}=H_{\text{det}}(r,k)$ is the complete $r^*$-partite graph with parts $V_1,V_2, \ldots, V_{r^*}$ such that $|V_1|=|V_2|=\ldots=|V_{r^*-1}|=k$ and $|V_{r^*}|=q$, i.e. $H_{\text{det}}:=K^{r^*}_{k,\ldots,k,q}$. We also define $\overline{H_{\text{det}}}$ to be $K_r-E( H_{\text{det}})$, 
%\COMMENT{AT: changed notation here to $K_r-E( H_{\text{det}})$ to avoid conflict with other uses of $G\setminus V$ notation}
i.e. the complement of $H_{\text{det}}$ on the same vertex set. 
%We also define $H_0=K^{r^*}_{k,\ldots,k,q+1}$, and identify two vertices $w_{01}$ and $w_{02}$ of $H_0$ in the part of size $q+1$.
\end{definition}

Some examples are given in Figure \ref{fig:Hdet}. Note that when $k=r$, $H_{\text{det}}$ is simply an independent set of size $k=r$ and $\overline{H_{\text{det}}}$ is an $r$-clique. The motivation for this partition comes from the following observation. We can build a copy of $K_r$ in $G\cup G(n,p)$ by taking $\Omega(n^r)$ copies of $H_{\text{det}}$ in $G$ and then applying Janson's inequality to conclude that we can `fill up' the independent sets in some copy of $H_{\text{det}}$ by $K_k$s and a $K_q$ and obtain a copy of $K_r$.
With a few more ideas, one can repeatedly apply this naive idea to greedily obtain an almost perfect $K_r$-tiling (see Theorem~\ref{almost tiling thm}).

To build the absorbing set, we use the reachability arguments introduced by Lo and Markstr\"om~\cite{LM1}.
The main part of the reachability arguments rely on the following notion of reachable paths.
Given two vertices $u, v$, a set $P$ of constant size is called a reachable path for $u,v$ if both $P\cup \{u\}$ and $P\cup \{v\}$ contain perfect $K_r$-tilings.
%However, we cannot use arbitrary types of reachable paths because we may not be able to cover the missing 
Then we meet the same problem as above, and thus need to build certain structures by deterministic edges and `fill up the gaps' by random edges.
We need much more involved arguments, including building copies of $K_r$ in a few different ways and making sure that we can recover the missing edges by $G(n, p)$.
Moreover, when $k>r/2$ we cannot prove the reachability between \emph{every} two vertices and have to pursue a weaker property, namely, building a partition of $V(G)$ such that the reachability can be established within each part.

Once we have established the existence of reachable paths, we piece these together to form what we call `absorbing gadgets' (Definition \ref{absorbing gadgets}) and then further combine these absorbing gadgets to define our full absorbing structure in $G\cup G(n,p)$. 
We use an idea of Montgomery \cite{M14a,M19} in order to define our absorbing structure, using an auxiliary `template' to dictate how we interweave our absorbing gadgets, which will ensure that the resulting absorbing structure has a strong absorbing property, in that it can contribute to a $K_r$-tiling in many ways. We will introduce the random edges of $G(n,p)$ only in the last stage, when proving the existence of the full absorbing structure in $G\cup G(n,p)$. Thus, we will first be occupied with finding many reachable paths and absorbing gadgets which use these reachable paths, restricting our attention \emph{only} to the deterministic edges which will contribute to our eventual absorbing structure.

Our analysis splits into three cases depending on the structure of $H_{\text{det}}$ or equivalently, the values of $r$ and $k$. The cases are as follows: 
\begin{enumerate}
\item $H_{\text{det}}$ is balanced i.e. $q=k$ and so ${r}/{k}\in \mathbb{N}$,
\item $\chi (H_{\text{det}})=r^*\geq 3$ and $H_{\text{det}}$ is not balanced i.e. $k<r/2$ and $r/k\notin \mathbb{N}$,
\item $\chi (H_{\text{det}})=r^*=2$ and $H_{\text{det}}$ is not balanced i.e. $r/2<k<r$.
\end{enumerate}
Examples of each case can be seen in Figure \ref{fig:Hdet}. 

%\COMMENT{AT: again should extend definition~\ref{Hzerominus} to $k=r$ case. PM: done}

%\COMMENT{AT: in this overview explain where $H^-_0$ is coming in}
%\COMMENT{TO DO: add an overview of proof}

\begin{figure}   
    \centering
  \includegraphics[scale=1.0]{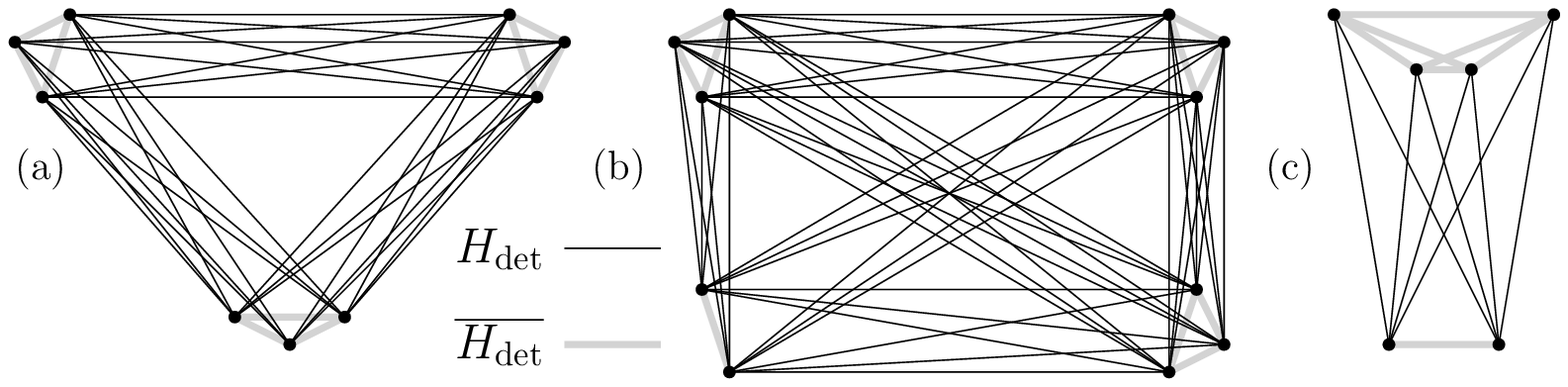}
  \captionsetup{singlelinecheck=off}
    \caption[bla]{%  
    \label{fig:Hdet}
    Some examples of the graphs $H_{\text{det}}$ and $\overline{H_{\text{det}}}$ from Definition \ref{Hzerominus} for each case: 
    \begin{enumerate}
  \item    (a) $r=9,k=q=r^*=3$,  
   \item    (b) $r=11, k=3, q=2, r^*=4,$ 
 \item      (c) $r=6, k=4, q=r^*=2$. 
 \end{enumerate}
 }
 \end{figure}

%%%%%%%%%

%%%%%%%%%%%

\section{An almost perfect tiling} \label{sec:almost}

%Let $2\leq k\leq r-1$ be integers and let $H_{\text{det}}$ be the complete $\ceil{r/k}$-partite graph on $r$ vertices having exactly $\floor{r/k}$ parts of size $k$ and $\ceil{r/k}-\floor{r/k}$ (i.e. none or one) parts of size $r-k\floor{r/k}$ (i.e. the remainder when $r$ is divided by $k$). 
%We begin by showing that in the setting of Theorem \ref{thm:Upper},  $G\cup G(n,p)$ a.a.s. has a $K_r$-tiling covering almost all of the vertices. 
In this section we study almost perfect tilings and prove Theorem~\ref{almost tiling thm} below.
As is the case throughout, in this almost perfect tiling, the edges of $G$ which contribute to the copies of $K_r$ will be copies of $H_{\text{det}}$ as defined in Definition \ref{Hzerominus}. We will rely on $G(n,p)$ to then `fill in the gaps', providing the missing edges i.e. $\overline{H_{\text{det}}}$, to guarantee that each copy of $H_{\text{det}}$ is in fact part of a copy of $K_r$
in $G\cup G(n,p)$. Note that $\chi(H_{\text{det}})=\ceil{r/k}$ and recall the definition of $\chi_{cr}$ discussed in Section \ref{sec:det-tiling}.  When $k$ divides $r$, we have $\chi_{cr}(H_{\text{det}})=r/k=\chi(H_{\text{det}})$ and when $k$ does not divide $r$, we have $\chi_{cr}(H_{\text{det}})=\floor{r/k}\frac{r}{r-(r-k\floor{r/k})}=r/k.$

Thus, the almost perfect tiling result of Koml\'os, Theorem \ref{thm:komlos}, guarantees the existence of an $H_{\text{det}}$-tiling in $G$ which covers almost all the vertices. However, given such a tiling we cannot guarantee that the correct edges appear in $G(n,p)$ in order to 
extend each copy of $H_{\text{det}}$ in the tiling to a copy of $K_r$.
%Instead, we need to show that there are \emph{many} $H_{\text{det}}$-tilings in $G$ and further, that we can greedily build an $H_{\text{det}}$-tiling in such a way that we have many options for the next copy in our tiling.
We aim instead to greedily build a $K_r$-tiling and guarantee that at each step there are $\Omega(n^r)$ copies of $H_{\text{det}}$.
%This will guarantee that one of these options will form a copy of $K_r$ when random edges are introduced and thus our $K_r$-tiling can be greedily formed also. 
%In order to find these copies of $H_{\text{det}}$, we instead apply Theorem \ref{thm:komlos} to the reduced graph of $G$. The pseudorandom condition of regularity then allows us to find many well distributed copies of $H_{\text{det}}$ as necessary. Precisely we show the following.
To achieve this, we use the regularity lemma and apply Theorem \ref{thm:komlos} to the reduced graph of $G$.
Then by the counting lemma, each copy of $H_{\text{det}}$ in the reduced graph will provide many copies of $H_{\text{det}}$ in $G$.

\begin{thm} \label{almost tiling thm}
Let $2\leq k\leq r$ and $\alpha, \gamma>0$.  Then there exists $C=C(\alpha,\gamma,r,k)>0$ %\COMMENT{AT: added $k,r$ into the definition of $C$ as if you follow the `trail' of constants certainly it depends on $r$}
%$0<\frac{1}{n_0}\ll \eta\ll \ep\ll \alpha$ and 
such that if $p\geq Cn^{-2/k}$  and $G$ is an $n$-vertex graph  with $\delta(G)\geq (1-\frac{k}{r}+\gamma)n$, then $G\cup G(n,p)$ a.a.s. contains a $K_r$-tiling covering all but at most $\alpha n$ vertices. 
\end{thm}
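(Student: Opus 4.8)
The plan is to combine Szemer\'edi's regularity lemma with Koml\'os' almost-tiling theorem applied to the reduced graph, and then use Janson's inequality (via Lemma~\ref{lm:gnpfindingsubgraph} or its greedy application) to upgrade an almost perfect $H_{\text{det}}$-tiling in $G$ into an almost perfect $K_r$-tiling in $G\cup G(n,p)$. First I would fix the hierarchy of constants $0 < 1/C \ll \eps \ll d \ll \gamma, 1/r$ and apply the degree form of the regularity lemma (Lemma~\ref{lem:degreeform}) to $G$ with parameters $\eps, d$ and $m_0$ large, obtaining a partition into an exceptional set $V_0$ of size at most $\eps n$ and clusters $V_1,\dots,V_m$ of common size $n'$, a spanning subgraph $G'$, and the $(\eps,d)$-reduced graph $R$. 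By Corollary~\ref{cor:mindeg}, $\delta(R)\geq (1-\tfrac{k}{r}+\gamma-d-2\eps)m \geq (1-\tfrac{1}{\chi_{cr}(H_{\text{det}})})m + \tfrac{\gamma}{2}m$, since $\chi_{cr}(H_{\text{det}})=r/k$ as computed in the text preceding the statement. Hence Koml\'os' theorem (Theorem~\ref{thm:komlos}) yields an $H_{\text{det}}$-tiling $\mathcal{T}_R$ in $R$ covering all but at most $\eps m$ clusters.

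Next I would pass from each copy of $H_{\text{det}}$ in $R$ to many copies of $H_{\text{det}}$ in $G'$ (hence in $G$). Each copy of $H_{\text{det}} = K^{r^*}_{k,\dots,k,q}$ in $R$ corresponds to $r^*$ clusters pairwise joined by $\eps$-regular pairs of density $> d$; by the counting lemma (Lemma~\ref{lem:regularitycounting}), restricted to these clusters $G'$ contains at least $(d_0 n')^{r}$ labelled copies of $H_{\text{det}}$ where the $i$-th part lands in the $i$-th cluster. The idea is then to build the $K_r$-tiling greedily, cluster-tuple by cluster-tuple: within one tuple of clusters (of total size $\approx r n'$) we repeatedly extract disjoint copies of $K_r$, each consisting of a copy of $H_{\text{det}}$ in $G$ together with the missing edges $\overline{H_{\text{det}}}$ supplied by $G(n,p)$. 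At each greedy step, as long as a linear fraction of each of the $r^*$ clusters remains unused, the counting lemma (applied via the slicing lemma, Lemma~\ref{lem:slice}, to the surviving subsets) still guarantees $\Omega((n')^r) = \Omega(n^r)$ copies of $H_{\text{det}}$ on the unused vertices; we then invoke a Janson-type argument — precisely the setting of Corollary~\ref{cor:simplegraphcontainment} or a direct application of Lemma~\ref{lm:gnpfindingsubgraph} with the family $\cF$ taken to be the vertex sets of these $H_{\text{det}}$-copies — to conclude that a.a.s.\ \emph{some} such copy has its complementary non-edges present in $G(n,p)$, yielding a copy of $K_r$. The key input here is that $\overline{H_{\text{det}}}$ decomposes into cliques of size at most $k$ (the parts $V_1,\dots,V_{r^*-1}$ of size $k$ each become $K_k$'s and $V_{r^*}$ becomes a $K_q$ with $q\leq k$), so by Lemma~\ref{PhiFobservations}(1),(3), $\Phi_{K_r, H_{\text{det}}} \geq \Omega(n)$ whenever $p\geq Cn^{-2/k}$ with $C$ large; this is exactly the quantity controlling $\Delta_Z$ in Janson's inequality.

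To make the greedy process a.a.s.\ valid \emph{simultaneously} for all $\Theta(n)$ steps, I would either expose $G(n,p)$ in $\Theta(1/\eps)$ independent rounds (multi-round exposure), spending one round per cluster-tuple, or — cleaner — set up a single application of Lemma~\ref{lm:gnpfindingsubgraph} with $t = \Theta(n)$ tasks (one per $K_r$ we want to place), $s$ a suitable slowly-growing or constant parameter, $W_i = U_i = \emptyset$, $v = r$, and the families $\cF_i$ being the $\Omega(n^r)$ candidate $H_{\text{det}}$-copies available at step $i$; the hypothesis~\eqref{eq1} reduces to $\Phi \geq C' n$, which holds. Running the process until every cluster in $\mathcal{T}_R$ has at most, say, $\eps n'$ vertices left uncovered, the total number of uncovered vertices is at most $|V_0| + (\text{clusters not in }\mathcal{T}_R)\cdot n' + m\cdot \eps n' \leq \eps n + \eps m\cdot n' + \eps n \leq \alpha n$ for $\eps$ small enough, which is the desired bound.

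The main obstacle I anticipate is the bookkeeping needed to guarantee that the many greedy embeddings of $\overline{H_{\text{det}}}$ into $G(n,p)$ all succeed a.a.s.\ on a \emph{single} instance of $G(n,p)$, while the set of already-used vertices — and hence the surviving candidate families $\cF_i$ — depends on the outcomes of previous steps. This is precisely what Lemma~\ref{lm:gnpfindingsubgraph} is engineered to handle (it quantifies over all small sets $V'$ of unused vertices and all index subsets $S$), so the real work is verifying its hypotheses in this instance: checking $Lt, sw \leq \beta n/(4v)$, that $\binom ts \leq 2^n$, and that the $\Phi$-bounds follow from $p\geq Cn^{-2/k}$ via Lemma~\ref{PhiFobservations}. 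A secondary technical point is ensuring the counting lemma still applies after many vertices are removed from the clusters: one maintains the invariant that each relevant cluster retains at least an $\eps$-fraction of its vertices, applies the slicing lemma to recover regularity with parameter $2\eps/\eps = 2$ — too weak — so in fact one should stop the process earlier, when a $\sqrt{\eps}$-fraction (say) remains, giving a regular pair with parameter $2\sqrt\eps$ and density $\geq d-\eps$, still well within the counting lemma's range $\eps \leq d_0$; this merely costs a slightly worse but still $o(1)$ fraction of uncovered vertices, which is harmless.
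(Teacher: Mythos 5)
Your skeleton is the same as the paper's: regularity lemma, Koml\'os applied to the reduced graph (using $\chi_{cr}(H_{\text{det}})=r/k$), slicing plus counting to get $\Omega(n^r)$ crossing copies of $H_{\text{det}}$ on whatever linear-sized pieces of a cluster-tuple survive, Janson to supply $\overline{H_{\text{det}}}$ (a disjoint union of cliques of size at most $k$, so $\Phi_{\overline{H_{\text{det}}}}\geq Cn$), and a greedy extraction of crossing $K_r$'s until each cluster is almost covered. The gap is in the one step you flagged yourself: making all $\Theta(n)$ adaptive greedy steps succeed simultaneously on a single instance of $G(n,p)$. Neither of your two mechanisms works as stated. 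Multi-round exposure with one round per cluster-tuple does not help, because within a single tuple you still place linearly many cliques sequentially, so the candidate family at each step depends on the random outcomes of the earlier steps in the \emph{same} round. And a single application of Lemma~\ref{lm:gnpfindingsubgraph} with $t=\Theta(n)$ tasks fails on two counts: the families $\cF_i$ must be fixed in advance, not ``the copies available at step $i$'' (which are history-dependent), and the lemma's robustness to used vertices is capped by $Lt\leq \beta n/(4v)$, i.e.\ it tolerates only a small linear number of removed vertices, whereas an almost-perfect tiling consumes almost all $n$ vertices (and here $\beta$ is tiny, of order $(d_0\alpha/2N(\eps,m_0))^r$, so you cannot absorb the loss into $L$).

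The fix — and what the paper actually does — is to quantify over \emph{states} rather than over steps: let $\cF$ be the family of all sets $W$ obtained by choosing, for some cluster-tuple $\cU_j$, a subset of size at least $\alpha n'/2$ inside each of its $r$ clusters. There are at most $2^n$ such $W$, each supports $\Omega(n^r)$ crossing copies of $H_{\text{det}}$ in $G$ by slicing and counting, and the per-$W$ Janson failure probability is $\exp(-\Omega(n))$, so Corollary~\ref{cor:simplegraphcontainment} (with $t\leq 2^n$ fixed families) gives that a.a.s.\ \emph{every} $W\in\cF$ contains a crossing $K_r$ in $G\cup G(n,p)$. After this single a.a.s.\ statement the greedy process is deterministic: as long as some tuple has all clusters with at least $\alpha n'/2$ uncovered vertices, the uncovered set is a member of $\cF$ and another $K_r$ can be placed, and the final count of uncovered vertices is at most $\alpha n$ exactly as you compute. (Your worry about slicing with parameter $2\eps/\eps$ is moot: one stops at the constant fraction $\alpha/2$ per cluster, so slicing gives $(4\eps/\alpha)$-regularity, comfortably below $d_0$ since $\eps\ll\alpha$; no $\sqrt{\eps}$ threshold is needed.)
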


\begin{proof}
Apply Lemma~\ref{lem:degreeform} to $G$ with $0< \eps\ll d\ll \gamma/4,\alpha/4,1/r$ and $m_0\in \NN$ large, such that $d_0:=\frac{(d-\eps)^r}{(2+r)2^r}\geq 4\eps/\alpha$. 
We may assume that $n$ is sufficiently large.
Note that by Corollary~\ref{cor:mindeg}, the resulting $(\ep, d)$-reduced graph $R$ has $m\geq m_0$ vertices 
%where $ \ep' \leq n'\leq \ep n$ 
and satisfies $\delta(R)\geq (1-\frac{k}{r}+\gamma/2)m$. Let the size of the clusters in the regularity partition be $n'$ and note that $n/N(\eps,m_0)\leq n'\leq \ep n$. 
Now by Theorem~\ref{thm:komlos}, as $m\geq m_0$ is sufficiently large, there exists an $H_{\text{det}}$-tiling $\mathcal H$ covering all but at most $\alpha m/4$ vertices of $R$.
%Let $\cU_1, \dots, \cU_{t}\in \binom{V(R)}{r}$  such that each $r$-set $\cU_j$ of clusters of $R$ spans a unique copy of
 %$H_{\text{det}}$ in $\mathcal H$.  
Let $\cU_1, \dots, \cU_{t}\in \binom{V(R)}{r}$  such that the $\cU_j$ span disjoint copies of
 $H_{\text{det}}$ in $\mathcal H$.  

Next, let $\cF$ be the collection of subsets $W\subseteq V(G)$ such that there exists some $j\in [t]$ 
%of the form $\bigcup_{U\in \cU_i}U$ 
for which $W$ intersects each $U\in \cU_j$ in at least $\alpha n'/2$ elements (and $W$ contains no vertices of clusters from outside of $\cU_j$).
Here we say that $\cU_j$ \emph{corresponds} to $W$.
%, for some $i\in [t]$. 
Moreover, we call a copy of $K_r$ in $W$ \emph{crossing} if it contains precisely one vertex from each cluster in the class $\cU_j$.

We claim that a.a.s., \emph{every} $W$ in $\cF$ contains a crossing copy of $K_r$ in $G\cup G(n,p)$. 
%so that this copy of $K_r$ contains precisely one vertex from each cluster in the class $\cU_j$ that corresponds to $W$.
  Indeed, fix some $W\in \cF$ and suppose $\cU_j$ corresponds to $W$. Then there are subsets $W_1,\ldots, W_r\subset V(G)$ and  clusters $\{U_1,\ldots, U_r\}= \cU_j$ such that $W_i\subseteq U_i$, 
\[|W_i|= \frac{\alpha n'}{2}\geq \frac{\alpha n}{2N(\eps,m_0)},\]
$\cup_{i\in{r}}W_i\subseteq W$ and $U_1,U_2,\ldots,U_r$ form a copy $H$ of $H_{\text{det}}$ in $R$. 
By Lemma~\ref{lem:slice}, for every $U_iU_j\in E(H)$, we have that $G[W_i,W_j]$ is a $(4\eps/\alpha)$-regular pair with density at least $d-\eps$. 
Thus by Lemma~\ref{lem:regularitycounting} $G[W]$ contains at least $(d_0\alpha n'/2)^r=\Omega(n^r)$ copies of $H_{\text{det}}$ where in each such copy of $H_{\text{det}}$ precisely one vertex lies in each of $W_1,\dots, W_r$; let $\mathcal C_W$ denote this collection. 
Now noting that $F:=\overline{H_{\text{det}}}=K_r- E(H_{\text{det}})$ is a collection of disjoint cliques of size at most $k$,
Lemma~\ref{PhiFobservations} (part 1 and 3) implies that
 $\Phi_F\geq Cn$. Also, we have that $|\mathcal F| \leq 2^n$.
Thus for $C>0$ sufficiently large, Corollary~\ref{cor:simplegraphcontainment} %\COMMENT{AT: note the corollary doesn't talk about labelled copies of graphs being mapped onto labelled sets of vertices, so perhaps we should tweak the statement of the corollary slightly? PM: done}
gives that for every $W \in \mathcal F$ there is a copy of $H_{\text{det}}$ from $\mathcal C_W$ which hosts a labelled copy of $\overline{H_{\text{det}}}$ in $G(n,p)$;  thus the claim is satisfied. 

\smallskip

One can now use the claim to greedily build the almost perfect $K_r$-tiling in $G\cup G(n,p)$. Indeed, initially set $\mathcal K:=\emptyset$. At each step we will add a copy of $K_r$ to $\mathcal K$ whilst ensuring $\mathcal K$ is a $K_r$-tiling
in $G\cup G(n,p)$. Further, at every step we only add a copy $K$ of $K_r$ if there is some $j \in [t]$ such that each vertex in $K$ lies in a different cluster in $\mathcal U_j$ (recall each $\mathcal U_j$ consists of $r$ clusters).

Suppose we are at a given step in this process such that there exists some cluster $U_i \in \mathcal U_j$ (for some $j$)  that still has at least $\alpha n'/2$ vertices uncovered by $\mathcal K$. 
This in fact implies that every cluster in $\cU_j$ contains at least $\alpha n'/2$ vertices uncovered by $\mathcal K$; these uncovered vertices correspond precisely to a set $W \in \mathcal F$. Hence by the above claim there is a crossing copy of $K_r$ 
in $(G\cup G(n,p))[W]$.
% so that this copy of $K_r$ contains precisely one vertex from each cluster in the class $\cU_j$. 
Add this to $\mathcal K$. 
%Clearly $\mathcal K$ is a $K_r$-tiling in $G\cup G(n,p)$.
Thus, we can repeat this process, increasing the size of $\mathcal K$ at every step, until we find that for every $j \in [t]$, all the clusters in $\cU_j$ have at least $(1-\alpha /2)n'$ vertices covered by $\mathcal K$.

%Note that by Lemma \ref{lem:slice}, every element of $\cF$ is a $2\ep/\alpha$-regular pair of density at least $d-\ep$.  Also note that $|\cF|\leq t2^{rm}\leq 2^n$.  Now consider $G\cup G(n,p)$ where $p\geq Cn^{-2/k}$.  By Lemma \ref{lem:JansonCount} and the union bound, the probability that every element of $\cF$ contains a copy of $K_r$ in $G\cup G(n,p)$ is at least $1-2^ne^{-2n}\geq 1-1/2^n$.  
That is, a.a.s. there is a $K_r$-tiling in $G \cup G(n,p)$ covering all but at most
\[
(\alpha n'/2 \times m)+ (\alpha m/4 \times n')+|V_0|\leq \alpha n
\]
vertices, as desired.
Note that the first term in the above expression comes from the vertices in clusters from the classes $\cU_j$; the second term comes from those vertices in clusters that were uncovered by $\mathcal H$.
\end{proof}

%\COMMENT{PM:added}Note that in the case where $k=r$, one can simplify the above proof significantly. This is because the copies of $K_r$ that we look for are completely provided by $G(n,p)$. Indeed, as shown in \cite[Thorem 4.9]{jlr}, for any $\alpha>0$, there exists $C=C(\alpha,r)>0$ such that if $p\geq Cn^{-2/r}$, then $G(n,p)$ a.a.s. contains a $K_r$-tiling covering all but $\alpha n$ vertices.
%\COMMENT{PM:added. JH: simplified}
Note that  one can in fact establish  the case $k=r$ in a  much simpler way because the copies of $K_r$ that we look for can be completely provided by $G(n,p)$, see e.g.~\cite[Thorem 4.9]{jlr}.
%\COMMENT{AT: note that there were important details missing from the above proof, so I have added to it quite a lot.}

\section{The absorption} \label{sec:absorption}
The aim of this section is to prove the existence of an \emph{absorbing structure} $\cA$ in $G':=G\cup G(n,p)$. % under the conditions of Theorem \ref{thm:Upper}. 
The main outcomes are Corollaries~\ref{cor:case1+2absorbingstructure}, ~\ref{cor:case3absorbingstructure} and~\ref{cor:embeddingK_rs}, which will be used in next section to prove our main result.

The key component of the absorbing structure will be some absorbing subgraph $F\subset G'$. We will define $F$ so that it can contribute to a $K_r$-tiling in many ways. In fact we will define $F$ so that if we remove $F$ from $G'$ and we tile almost all of what remains (Theorem \ref{almost tiling thm}), then no matter which small set of vertices remains, the properties of $F$ allow us to complete this tiling to a full tiling of $G'$. 
%In order to describe our absorbing structure $\cA$ we introduce some definitions that will eventually culminate in the structure we will be interested in. 
There are some complications, and the absorbing structure will have 
%To show the existence of this structure requires 
different features depending on the exact values of minimum degree and the size of the cliques that we look to tile with. 
%However the key ideas involved in this section apply to all cases and so we will treat them simultaneously. 
%\COMMENT{JH: Commented out a sentence, which I found unnecessary}

Our absorbing subgraph will be comprised of two sets of edges, namely the deterministic edges in $G$ and the random edges in $G(n,p)$. Initially, we will be concerned with finding (parts of) the appropriate subgraph in $G$ (Section 6.1). In fact, we will need to prove the existence of  many copies of the  deterministic subgraphs we want, as we will rely on there being enough of these to guarantee that one of them will match up with random edges in $G(n,p)$ (Section 6.2) to give the desired subgraph. Therefore it is useful throughout to consider, with foresight, the random edges that we will be looking for to complete our desired structure, as this also motivates the form of our deterministic subgraphs. 

%%%%%%%%%%%
\subsection{The absorbing structure - deterministic edges}

The smallest building block in our absorbing graph will be $K_{r+1}^-$, the complete graph on $r+1$ vertices with one edge missing, say between $w_1$ and $w_2$. This is useful for the simple reason that it can contribute to a $K_r$-tiling in two ways, namely $K_{r+1}^-\setminus\{w_i\}$ for $i=1,2$. We introduce the following notation to keep track of the partition of the edges between the deterministic graph and the random graph. 

\begin{definition} \label{def:H}
%Let $K_{r+1}^-$ be the complete graph with one edge missing between two vertices $w_1,w_2 \in V(K_{r+1})$. 
Suppose $t,r,r_1,r_2,\ldots,r_t\in \NN$ such that $\sum_{i=1}^tr_i=r+1$. We use the notation 
\[H:=(K_{r_1,r_2,\ldots,r_t}^t,i,j),\] 
for not necessarily distinct $i,j\in[t]$, to denote the $(r+1)$-vertex graph  $K_{r_1,r_2,\ldots,r_t}^t$ with two distinct distinguished  vertices: $w_1$ in the $i^{th}$ part (which has size $r_i$) and $w_2$ in the $j^{th}$ part (which has size $r_j$). 
\end{definition}
\begin{definition} \label{def:Hcom}
%Let $K_{r+1}^-$ be the complete graph with one edge missing between two vertices $w_1,w_2 \in V(K_{r+1})$. 
Let $r \in \mathbb N$ and consider an $(r+1)$-vertex graph $F$ with two distinguished  vertices $w_1$ and $w_2$. (Typically we will take $F=H$ as in Definition~\ref{def:H}.)
We then write\footnote{Note that our use of the notation $\overline{F}$ is non-standard here.} $\overline{F}$ to denote the graph on the same vertex set $V(K_{r+1}^-)=V(F)$ such that $E(\overline{F}):=E(K_{r+1}^-)\setminus E(F)$, where we take the non-edge of $K_{r+1}^-$ to be $w_1w_2$. Thus $K_{r+1}^-\subseteq F\cup \overline{F}$. 
\end{definition}
%\COMMENT{AT: I've separated out the definition of $\overline{F}$ and made it more general. Indeed, this is because as stated, the more general version of this definition is used in Definition~\ref{Hzero}}
%This naturally leads to considering three different cases. First let us formally state the objective.     

%\begin{lemma} \label{Absorbing Lemma} Let $r \in \mathbb{N}$ and $k \in \mathbb{N}$ such that $2\leq k \leq r-1$ and $\eta>0$. Suppose as in ... that $G$ is a graph of minimum degree $\delta(G)\geq (\frac{k-1}{r}+\eta)n$ with $|G|=n$ sufficiently large such that $r|n$. Let $p\gg n^{-2/m'}$ where $$m':=\left\{\begin{array}{ll}r-k+2 &\mbox{ if } r\equiv r-k \mod r-k+1; \\ r-k+1 &\mbox{otherwise}.\end{array}\right.$$ Let $G':=G \cup G(n,p)$. Then there exists constants $c_1 \gg c_2$ such that a.a.s. there exists a subgraph $F$ of $G'$ such that $|F|\leq c_1 n$ and $F$ satisfies the following condition. If we denote $V' :=V(G')\setminus V(F)$, we have that for any $K_r$ tiling of $G'[V']$ covering all but at most $c_2n$ vertices there is a perfect $K_r$ tiling of $G'$ which extends this tiling.  
%\end{lemma}

%The idea here is to build $F$ on a small subset of vertices, such that $F$ has a strong absorbing property. This will be characterised by the fact that $F$ can contribute to a $K_r$ tiling in many ways. In fact, $F$ is constructed by considering many smaller structures which also provide this flexibility. The basic building blocks are what we call reachable sets. 

We think of $H,\overline{H}$ and $K_{r+1}^-$ as all lying on the same vertex set throughout with the two distinguished vertices $w_1,w_2$ being defined for all three. The following graph gives the paradigm for how we split the edges of $K_{r+1}^-$ between the deterministic and the random graph.

\begin{definition} \label{Hzero}
For $r \in \mathbb{N}$ and $k \in \mathbb{N}$ such that $2\leq k \leq r$, let $r^{\ast},q \in \mathbb{N}$ be such that $k(r^{\ast}-1)+q=r$ and $0< q\leq k$. 
%Then $H_{\text{det}}=H_{\text{det}}(r,k)$ is the complete multipartite graph with parts $V_1,V_2, \ldots, V_{r^*}$ such that $|V_1|=|V_2|=\ldots=|V_{r^*-1}|=k$ and $|V_{r^*}|=q$, i.e. $H_{\text{det}}=K^{r^*}_{k,\ldots,k,q}$. 
Then $H_0:=(K^{r^*}_{k,\ldots,k,q+1},r^*,r^*)$.
\end{definition}
Some examples of $H_0$ and $\overline{H_0}$ can be seen in Figures \ref{fig:H2} and \ref{fig:H3}. 
Note that if $w_1$ and $w_2$ are the distinguished vertices of $H_0$, then $H_0\setminus{w_i}$ for $i=1,2$ are both copies of the graph $H_{\text{det}}$ from Definition \ref{Hzerominus}. Also note that $\overline{H_0}$ is a disjoint union of $k$-cliques as well as a disjoint copy of $K_{q+1}^-$. Thus, when $q\leq k-1$, it follows from Lemma \ref{PhiFobservations} and Corollary \ref{cor:simplegraphcontainment} that the graph $\overline{H_0}$ is abundant\footnote{Specifically, one can see that any linear sized set in $V(G(n,p))$ contains a copy of $\overline{H_0}$ a.a.s..} in $G(n,p)$ when $p\geq C n^{-2/k}$ for some large enough $C$. Furthermore, as we will see, the minimum degree condition for $G$ along with Lemma \ref{lem:supersat} will imply that there are $\Omega(n^{r+1})$ copies %\footnote{Here many copies means $\Omega(n^{r+1})$. JH: I suggest to put the number in the text and omit this footnote}
 of $H_0$ in $G$. This suggests the suitability of this definition as a candidate for how to partition the edge set of $K_{r+1}^-$ between deterministic and random edges. We remark that the case when $q=k$ is slightly more subtle and we have to adjust our decomposition accordingly. We will discuss this is more detail in the next section.

\subsubsection{Reachability}

%Throughout this section, $\gamma>0$ and $G$ is a fixed graph with $n$ vertices and of minimum degree $\delta(G)\ge (1-\frac{k}{r}+\gamma)n$.
In this subsection, we define reachable paths and show that we can find many of these in our deterministic graph $G$, when the graphs used to define such paths are chosen appropriately. The main results are Proposition \ref{case1reachability}, Proposition~\ref{case2reachability} and Proposition~\ref{case3reachability} which  deal with Case~1, 2 and 3 respectively. 
We first define a reachable path which is a graph which connects together $(r+1)$-vertex graphs as follows.

\begin{definition} \label{reachable paths}
Let $t, r\in \mathbb{N}$ and let $\bm{H}=(H^1,H^2,\ldots,H^{t})$ be a vector of ${(r+1)}$-vertex graphs $H^i$ such that each $H^i$ has two distinguished vertices, $w_{1}^i$ and $w_2^i$. Then an $\bm{H}$-path is the graph $P$ obtained by taking one copy of each $H^i$ and identifying $w_{2}^i$ with $w_{1}^{i+1}$, for $i\in[t-1]$. We call $w_1^1$ and $w_2^t$ the \emph{endpoints} of $P$.

In the case where $H^1=H^2=\ldots=H^t=H$ for some $(r+1)$-vertex graph $H$, we use the notation $\bm{H}=(H,t)$ and thus refer to $(H,t)$-paths.  For $\bm{H}=(H^1,H^2,\ldots,H^{t})$, we also define $\overline{\bm{H}}:=(\overline{H^1},\overline{H^2},\ldots,\overline{H^{t}})$ where $\overline{H^i}$ is as defined in Definition \ref{def:Hcom}. 
\end{definition}

%\COMMENT{AT: implicitly in proofs and when doing calculations, when counting the number of $\bm{H}$-paths in a graph $G$, we count some that appear the same vertex set (i.e. order of vertices matters)... perhaps we should say that explicitly? PM:added} 
We  give some explicit examples of $\bm{H}$-paths  later in Figure \ref{fig:Hpaths}.
In the following, as we look to find  embeddings of $\bm{H}$-paths and larger subgraphs in $G$ and $G(n,p)$, we will always be considering \emph{labelled} embeddings. Therefore, implicitly, when we define graphs such as the $\bm{H}$-paths above, we think of these graphs as having some fixed labelling of their vertices.  
Again, the motivation for the definition of $\bm{H}$-paths comes from considering $K_{r+1}^-$, with vertices $w_1,w_2$ such that $w_1w_2\notin E(K_{r+1}^-)$. 
Indeed, then a $(K_{r+1}^-,t)$-path $P$ has two $K_r$-tilings missing a single vertex; one on the vertices of $V(P)\setminus{w_1^1}$, and one on the vertices of $V(P)\setminus{w_2^t}$. 
Our first step is to find many $\bm{H}$-paths in the deterministic graph $G$, for an appropriately defined $\bm{H}$. In particular, we are interested in the images of the endpoints of the paths.

\begin{definition}
Let $\beta>0$, $t, r\in \mathbb{N}$ and $\bm{H}=(H^1,\ldots,H^t)$ be a vector of $(r+1)$-vertex graphs (each of which is endowed with a tuple of distinguished vertices). We say that two vertices $x,y\in V(G)$
in an $n$-vertex graph $G$ 
 are \emph{$(\bm{H};\beta)$-reachable} (or $(H,t;\beta)$-reachable if $\bm{H}=(H,\ldots,H)=(H,t)$) if there are at least $\beta n^{tr-1}$ distinct labelled embeddings of the ${\bm{H}}$-path $P$ in $G$ such that the endpoints of $P$ are mapped to  $\{x ,y\}$. 

%For $\beta>0$, we say that two vertices $x,y\in V(G')$ are \emph{$({\bm{H}},\beta,t)$-reachable} if there are at least $\beta n^{tr-1}$ distinct\footnote{One reachable set may have several labellings but we are concerned here with the number of distinct sets.} $({\bm{H}},t)$-reachable sets between $x$ and $y$. 
\end{definition}

As discussed before, the graph $H_0$ from Definition \ref{Hzero} will be used to provide deterministic edges for our absorbing structure.
 That is, we look for $(H_0,t)$-paths in $G$ for some appropriate $t$. However, for various reasons there are complications with this approach. 
%as we do not wish to rely on random edges giving copies of $K_{r-k+1}$ incident to fixed vertices if we can avoid this 
Sometimes using a slightly different graph $H$ will allow more vertices to be reachable to each other. Also, as is the case below when $r/k\in\mathbb{N}$, it is possible that $\overline{H_0}$ is not sufficiently common in the random graph $G(n,p)$. Therefore, we have to tweak the graph $H_0$ in order to accommodate these subtleties. This is the reason for using a vector of graphs $\bm{H}$ as we will see.  We will look first at Case 1, when $r/k\in\mathbb{N}$ and so $\overline{H_0}$ contains a copy of $K_{k+1}^-$. This is too dense to appear in the random graph $G(n,p)$ with the frequency that we require and thus we define $H_1$ as in the following proposition.

\begin{figure}
    \centering
  \includegraphics[scale=1.2]{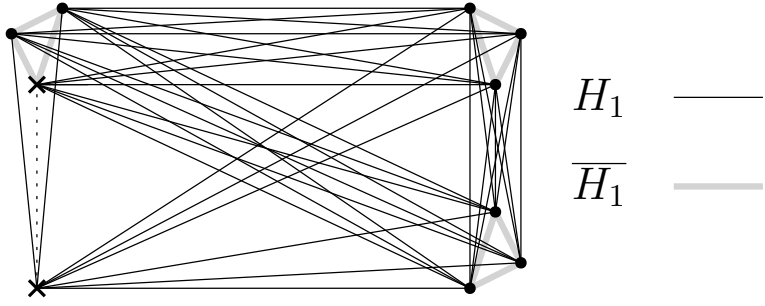}
    \caption{   \label{fig:H1} An example of $H_1$ and $\overline{H_1}$ for Case 1 (a) $r=9,k=q=r^*=3$ (see Definitions \ref{def:H}--\ref{def:Hcom} and Proposition \ref{case1reachability} for the definitions).} 
  \end{figure}

\begin{proposition} \label{case1reachability}
Let $\gamma>0$, $n,r,k\in \mathbb{N}$ such that ${r}/{k}=:r^*\in \mathbb{N}$, $2\le k\le r$ and $n$ is sufficiently large. Let $H_1:=J_1\cap K_{r+1}^-$  where $J_1:=(K^{(r^*+1)}_{k,k,\ldots,k,1},r^*+1,1)$, as defined in Definition \ref{def:H}, and we consider $K_{r+1}^-$ to be on the same vertex set as $J_1$ with a non-edge between the distinguished vertices of $J_1$. Likewise, let  $H_1':=(K^{(r^*+1)}_{k,k,\ldots,k,1},1,r^*+1)\cap K_{r+1}^-$ be the same graph with the labels of the distinguished vertices switched. See Figure \ref{fig:H1} for an example of $H_1$ (and $H'_1$ which is identical). 

%Then $G$ be a fixed graph with $n$ vertices of minimum degree $\delta(G)\ge (1-\frac{k}{r}+\gamma)n$.
 %Consider the graph $H_0$ as defined in \Cref{Hzero} and label the vertices in the part of size $q+1=m+1$ by $\{w_{01},w_{02},v_1,\ldots,v_{q-1}\}$. Now we define $H_1:=H_0\cup\{w_{01}v_j:1\leq j\leq q-1\}$. 
 Then there exists a $\beta_1=\beta_1(r,k,\gamma)>0$ such that for any $n$-vertex graph $G$ of minimum degree $\delta(G)\ge (1-\frac{k}{r}+\gamma)n$, any pair of distinct vertices in $V(G)$ are $(\bm{H_1};\beta_1)$-reachable where $\bm{H_1}:=(H_1,H_1')$.
\end{proposition}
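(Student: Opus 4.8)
Here is the proof strategy I would follow.

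\medskip

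The plan is to reduce the statement to a density/counting question about the deterministic graph $G$ and then apply supersaturation. First I would unpack what an $\bm{H_1}$-path is for $\bm{H_1}=(H_1,H_1')$: it is obtained from one copy of $H_1$ and one copy of $H_1'$ by identifying the second distinguished vertex $w_2^1$ of $H_1$ with the first distinguished vertex $w_1^2$ of $H_1'$. Since $H_1$ has $r+1$ vertices and distinguished vertex $w_1$ lying in the singleton part (the part of size $1$ in $K^{(r^*+1)}_{k,\ldots,k,1}$) and $w_2$ lying in one of the parts of size $k$, while $H_1'$ is the same graph with the roles of the two distinguished vertices swapped, the identified middle vertex $v$ plays the role of a part-of-size-$k$ vertex in the first block and a singleton-part vertex in the second block. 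Crucially, $H_1 = J_1 \cap K_{r+1}^-$ where $J_1 = (K^{(r^*+1)}_{k,\ldots,k,1}, r^*+1, 1)$: so $H_1$ as a \emph{graph} (ignoring the distinguished-vertex bookkeeping, and remembering that the non-edge $w_1w_2$ of $K_{r+1}^-$ sits precisely between parts that are already non-adjacent in $J_1$, so intersecting with $K_{r+1}^-$ changes nothing structurally) is a complete multipartite graph with parts of sizes $k, k, \ldots, k$ ($r^*$ of them) and two singleton parts. Wait — $K^{(r^*+1)}_{k,\ldots,k,1}$ has $r^*$ parts of size $k$ and one part of size $1$, giving $r^*k+1 = r+1$ vertices, correct. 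So $H_1 = K^{r^*+1}_{k,\ldots,k,1}$ as an abstract graph. I want to find $\Omega(n^{2r-1})$ labelled copies of the $\bm{H_1}$-path with the two endpoints pinned to $x$ and $y$.

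\medskip

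The key reduction is: it suffices to find $\Omega(n^r)$ labelled copies of $H_1$ in $G$ in which the distinguished vertex $w_1$ (the singleton-part vertex) is mapped to $x$ and $w_2$ (a fixed size-$k$-part vertex) is mapped to some fixed auxiliary structure, and similarly for $y$; then one glues. More precisely, the standard approach here (this is exactly the Lo–Markström-style reachability machinery) is: (1) show that for \emph{every} vertex $z \in V(G)$, there are $\Omega(n^{r-1})$ labelled copies of $H_1$ with $w_1 \mapsto z$; then (2) a counting argument (pigeonhole on the image of the middle vertex $v$) shows that for any two vertices $x,y$ there are $\Omega(n)$ choices of $v$ such that $v$ extends, on one side, to $\Omega(n^{r-1})$ copies of $H_1$ with $w_1\mapsto x$, $w_2 \mapsto v$, and on the other side to $\Omega(n^{r-1})$ copies of $H_1'$ with $w_1 \mapsto v$, $w_2 \mapsto y$ — multiplying gives $\Omega(n)\cdot \Omega(n^{r-1})\cdot\Omega(n^{r-1}) = \Omega(n^{2r-1})$ paths, as desired. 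For step (1), the point is that $H_1$ is a subgraph of the blow-up $K_{r^*+2}[\text{parts }1,1,k,\ldots,k]$... actually more directly: $H_1$ with $w_1$ a singleton-part vertex is obtained by taking $w_1 = z$ and then, since $z$ must be adjacent in $H_1$ to \emph{all} other vertices (the singleton part is complete to everything), we just need $\Omega(n^{r})$... hmm, let me recount: $w_1$ is in a singleton part and hence adjacent to all $r$ other vertices; the other singleton vertex is likewise adjacent to all; the $r^*$ parts of size $k$ form a complete multipartite graph on $r^*k = r-1+1 = r$... this is getting into exactly the routine verification I should not grind through. The essential structural fact is that $H_1 \setminus \{w_1\} = H_{\text{det}}$ (the complete $r^*$-partite graph with parts $k,\ldots,k,q$ — wait, here $q=k$ since $r/k\in\NN$, so $H_{\text{det}} = K^{r^*}_{k,\ldots,k}$ plus a pendant-ish... no). The cleanest route: use Koml\'os/Komlós-type supersaturation via Lemma~\ref{lem:supersat} after first establishing that $G$ contains $\Omega(n^{r^*+1})$ copies of the ``reduced'' clique $K_{r^*+1}$ (one vertex per part), using that $\delta(G) \geq (1-k/r+\gamma)n = (1 - \frac{1}{r^*} + \gamma)n$ and a standard greedy/counting embedding of $K_{r^*+1}$ — wait, but $K_{r^*+1}$ needs minimum degree roughly $(1-1/r^*)n$, which is exactly what we have (with slack $\gamma$), so $G$ has $\Omega(n^{r^*+1})$ copies of $K_{r^*+1}$, and moreover for every vertex $z$, $\Omega(n^{r^*})$ copies through $z$. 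Then blow up $r^*$ of the parts to size $k$ (and keep two as singletons — one being $z$, handled as a fixed part) via Lemma~\ref{lem:supersat}: this yields $\Omega(n^{r})$ copies of $H_1$ with $w_1\mapsto z$, giving step (1).

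\medskip

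So in summary the steps, in order: \textbf{Step 1.} Identify $H_1$ (and $H_1'$) structurally as complete multipartite graphs $K^{r^*+1}_{k,\ldots,k,1}$ with a specified singleton-part distinguished vertex and size-$k$-part distinguished vertex (resp. swapped). \textbf{Step 2.} Using $\delta(G) \geq (1-\frac1{r^*}+\gamma)n$, show every vertex $z$ lies in $\Omega(n^{r^*})$ copies of $K_{r^*+1}$ in $G$ (greedy common-neighbourhood argument). \textbf{Step 3.} Apply the supersaturation lemma, Lemma~\ref{lem:supersat}, to blow up each of the appropriate $r^*$ parts to size $k$, obtaining $\Omega(n^{r})$ copies of $H_1$ with $w_1 \mapsto z$ for every $z$. \textbf{Step 4.} Fix $x,y$; apply Step 3 with $z=x$ and $z=y$ and pigeonhole on the image of the shared middle vertex: at least $\Omega(n)$ vertices $v$ are simultaneously ``$\Omega(n^{r-1})$-rich'' as $w_2$-image for an $H_1$ rooted at $x$ and as $w_1$-image for an $H_1'$ rooted at $y$. \textbf{Step 5.} Multiply to get $\Omega(n^{2r-1})$ labelled $\bm{H_1}$-paths with endpoints $x,y$; set $\beta_1$ to absorb all implicit constants. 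The main obstacle I anticipate is Step 4's bookkeeping: one must be careful that when gluing a copy of $H_1$ (rooted at $x$, ending at $v$) to a copy of $H_1'$ (rooted at $v$, ending at $y$) the two copies are vertex-disjoint apart from the shared vertex $v$ — but since each uses only $O(1)$ vertices and we have $\Omega(n^{r-1})$ choices on each side, a standard deletion argument (throw away the $O(n^{r-2})$ pairs that collide) loses only a lower-order term. A secondary subtlety worth double-checking is that intersecting $J_1$ with $K_{r+1}^-$ genuinely removes no edges other than bookkeeping (the non-edge of $K_{r+1}^-$ coincides with a non-edge of $J_1$), so $H_1$ really is the full complete multipartite graph; this should follow directly from the definitions but deserves an explicit sentence.
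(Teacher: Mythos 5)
Your Step 1 misreads the definition of $H_1$, and the error is not cosmetic. In $J_1=(K^{(r^*+1)}_{k,\ldots,k,1},r^*+1,1)$ the two distinguished vertices lie in \emph{different} parts ($w_1$ in the singleton part, $w_2$ in a part of size $k$), so $w_1w_2$ \emph{is} an edge of $J_1$, and intersecting with $K_{r+1}^-$ deletes exactly this edge: $H_1$ is $K^{r^*+1}_{k,\ldots,k,1}$ minus the edge between the singleton vertex and one vertex of a $k$-part, not the full complete multipartite graph as you conclude (your final "removes no edges" check resolves the wrong way). That deleted edge is the whole point of the definition: the endpoints of a reachable path are arbitrary, possibly non-adjacent vertices, and the middle vertex of the $(H_1,H_1')$-path need not be adjacent to either endpoint. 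Under your reading, the strengthened statement you would be proving is in fact false when $k=r$ (so $r^*=1$ and $\delta(G)\geq\gamma n$): take $G$ to be two disjoint cliques of size $n/2$ and $x,y$ in different cliques; your version of the path forces a common neighbour of $x$ and $y$, which need not exist, whereas the genuine $\bm{H_1}$-path (whose middle vertex is isolated in that case) embeds in abundance.

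Second, even with the correct $H_1$, Step 4 has a genuine gap. From $\Omega(n^{r})$ copies of $H_1$ rooted only at $x$, the pigeonhole yields a set $R_x$ of vertices $v$ that are $\Omega(n^{r-1})$-rich as images of $w_2$, but only of size $cn$ for a small constant $c$ (of the order of the supersaturation constant), and similarly a set $R_y$ for $y$; two such sets can be disjoint, so "at least $\Omega(n)$ vertices are simultaneously rich" does not follow. The paper's proof avoids this by establishing the two-rooted count directly: for \emph{every} $z\in V(G)\setminus\{x,y\}$ there are $\Omega(n^{r-1})$ embeddings of $H_1$ with $w_1\mapsto x$ and $w_2\mapsto z$. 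This is possible precisely because $H_1$ omits the edge $w_1w_2$: one only needs $\Omega(n^{r^*})$ copies of $K_{r^*}$ with one vertex in $N_G(x)$ and the remaining $r^*-1$ vertices in $N_G(x)\cap N_G(z)$ (any set of at most $r^*$ vertices has at least $\gamma n$ common neighbours), and Lemma~\ref{lem:supersat} then blows these up to $H_1\setminus\{w_1,w_2\}=K^{r^*}_{k,\ldots,k,k-1}$ lying in the correct neighbourhoods, so that every $z$ serves as a valid middle vertex; the gluing with a deletion of the $O(n^{2r-2})$ intersecting pairs then finishes exactly as you outline. If you correct the identification of $H_1$ and replace the pigeonhole by this per-pair count, your argument becomes the paper's proof.
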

%\COMMENT{AT: rewritten quite a bit of this proposition} 
\begin{proof}
  %We are in the case where $H_{\text{det}}$ is balanced and so of the form $H_{\text{det}}=K^{r^*}_{m,\ldots,m}$ with $r^*$ parts of size $m=r-k+1$. 
%We will use two distinct labelled graphs to give our copies of $H_{\text{det}}$, namely $\overline{H_i^*}=K^{r^*}_{m+1,m,\ldots,m}\cup \{w_iv_j:1\leq j \leq m-1\}$ for $i=1,2$ and denoting by $P:=\{w_1,v_1,\ldots,v_{m-1},w_2\}$, the one part of size $m+1$. 
Let $w_1,w_2$ be the distinguished vertices of $H_1=(K^{(r^*+1)}_{k,k,\ldots,k,1},r^*+1,1)\cap K_{r+1}^-$ as defined in Definition \ref{def:H}.
Fix a pair $x,y \in V(G)$.

We will show that for \emph{any} $z\in V(G)\setminus \{x,y\}$, there are at least $\beta_1'n^{r-1}$ labelled embeddings of $H_1$ which map $w_{1}$ to $x$ and $w_{2}$ to $z$, for some $\beta_1'=\beta' _1(r,k, \gamma)>0$. 
Once we have established this property, this implies the proposition. Indeed, by 
 symmetry, we can also find $\beta_1'n^{r-1}$ embeddings of $H_1$ which map $w_{1}$ to $y$ and $w_{2}$ to $z$. Set $\beta_1:={\beta_1'^2}/{2}$.
Thus there are at least
\[(n-2)\times \beta_1'n^{r-1} \times (\beta_1'n^{r-1} - r^2n^{r-2}) \geq \beta_1 n^{2r-1}\]
distinct embeddings of the ${\bm{H_1}}$-path in $G$ such that the endpoints  are mapped to  $\{x ,y\}$, as desired.
This follows as there are $n-2$ choices for $z$; at least $\beta_1'n^{r-1}$ choices for the copy of $H_1$ containing $x$ and $z$; 
at least $(\beta_1'n^{r-1} - r^2n^{r-2})$ choices for the copy of $H_1'$ containing $z$ and $y$ that are \emph{disjoint} from the choice of $H_1$ (except for the vertex $z$).

\smallskip

So let us fix $x,z \in V(G)$. The proof now follows easily from \Cref{lem:supersat}. As $kr^*=r$, we can express the minimum degree as $\delta(G) \geq (1-\frac{1}{r^*}+\gamma )n$. 
Thus any set of at most $r^*$ vertices has at least $\gamma n$ common neighbours. 
Therefore we have at least $(\gamma n)^{r^*}$ labelled copies $K$ of $K_{r^*}$ where 
$V(K)=\{x_1,\ldots,x_{r^{*}}\}\subset N_G(x)$ and $\{x_2,\ldots,x_{r^*}\}\subset N_G(x) \cap N_G(z)$.
This follows by first choosing $\{x_2,\ldots,x_{r^*}\}$ and then $x_1$ with the right adjacencies. %and noting that we divide by $({r^*})!$ as the order we select the vertices does not matter.
 Thus, by \Cref{lem:supersat} we have $\beta_1'n^{r-1}$ labelled embeddings of the blow-up, $H_1\setminus\{w_{1},w_{2}\}=K^{r^*}_{k,\ldots,k,k-1}$, of these cliques, 
crucially within the correct neighbourhoods ($N_G(x)$ and $N_G(x)\cap N_G(z)$) to ensure that
 together with $\{x,z\}$ they give us the required embeddings of $H_1$. %if we let $U:=N(x)\cap N(z)$, we have that $|U|\geq (1-\frac{2}{r^*}+2\eta)n$. Now we claim that there are $(\eta n)^{r^*-1}\geq (\eta |U|)^{r^*-1}$ copies $K_{r^*-1}$ in $G[U]$. Indeed, we can form these greedily, as our minimum degree condition gives that every set of size at most $r^*$ has at least $\eta n$ common neighbours. Therefore given $x,z,u_1,\ldots,u_i \in V(G)$ we have at least $\eta n$ choices for $u_{i+1} \in U \cap_{j=1}^i N(u_i)$, for each $i=1,\ldots, r^*-2$. 
\end{proof}

%\begin{remark}
%Note that the proof of \Cref{case1reachability} shows more than the statement of the proposition. Indeed, we build a reachable set between any $x$ and $y$ via any $z$ and our choices of $\overline{H_{\text{det}}_i}$ ensure that $\overline{H_{\text{det}}_1} \setminus \{x\}, \overline{H_{\text{det}}_1}\setminus \{z\}, \overline{H_{\text{det}}_2}\setminus \{z\}, \overline{H_{\text{det}}_2}\setminus \{y\}$ all give copies of $H_{\text{det}}$. However the copy of $H_{\text{det}}$ containing $x$ also has all vertices incident to $x$, and likewise with $y$. In particular, this implies that in order to complete a given $H_{\text{det}}$ reachable set into a $K_{r^*}$ reachable set requires the appearance of disjoint copies of $K_m$ in $G(n,p)$, none of which are incident to the fixed vertices $x$ and $y$.
%\end{remark}

Note that an $\overline{\bm{H_1}}$-path $\overline{P_1}$ has endpoints which are isolated. The other vertices of $\overline{P_1}$
%a $(\overline{\bm{H_1}},2)$-path 
lie in copies of $K_k$ and these copies are  disjoint from each other except for a single pair of $K_k$s that 
 meet at a singular vertex. See Figure \ref{fig:Hpaths} for an example. We now turn to Case 2, as described in Section~\ref{sec:overview}. Here we can use the graph $H_0$ from Definition \ref{Hzero}. We also use a slight variant of $H_0$ where we redefine the distinguished vertices.

\begin{proposition} \label{case2reachability}
Suppose 
%we are in case 2, and so 
$\gamma>0$, $n,r, k\in\mathbb{N}$, such that $n$ is sufficiently large, $2\le k <r/2$ and ${r}/{k}\notin \mathbb{N}$. Further, let $r^*,q$ and $H_0$ be as defined in Definition~\ref{Hzero} and let $H_0'=(K_{k,\ldots,k,q+1}^{r*},1,2)$ be the same graph as $H_0$ with distinguished vertices in distinct\footnote{Note that this is possible as we are in the case where the number of parts, $r^*$ of $H_0$ is at least 3.} parts of size $k$ (see Figure \ref{fig:H2}). 

Then there exists $\beta_2=\beta_2(r,k,\gamma)>0$ such that for any $n$-vertex $G$  of minimum degree $\delta(G)\ge (1-\frac{k}{r}+\gamma)n$, 
  every pair of distinct vertices $x,y$ in $V(G)$ are $(\bm{H_2};\beta_2)$-reachable where $\bm{H_2}:=(H_0,H_0',H_0',H_0)$. 
  %with $H_0$ as in \Cref{Hzero} and  $H_2:=K^{r^*}_{k,\ldots,k,k,q+1}$, with identified vertices $w_{21}$ and $w_{22}$ in distinct\footnote{Note that this is possible as we are in the case where the number of parts, $r^*$ of $H_0$ is at least 3.} parts of size $k$.
\end{proposition}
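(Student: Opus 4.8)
My plan is to follow the template of the proof of Proposition~\ref{case1reachability}, but to exploit the fact that $\bm{H_2}$ has \emph{four} pieces rather than two. Unlike $H_1$ in Case~1, a single copy of $H_0$ is not robust enough to connect an arbitrary pair of vertices --- its two distinguished vertices lie in the same small part of size $q+1$ --- and the two extra pieces $H_0'$, whose distinguished vertices lie in distinct large parts, supply the needed slack. Recall $r=k(r^*-1)+q$ with $0<q<k$, so $r^*\ge 3$. Both $H_0$ and $H_0'$ are complete $r^*$-partite graphs on $r+1$ vertices, and both are invariant under swapping their two distinguished vertices $w_1,w_2$ (which live in parts of equal size), so endpoints and identification vertices can play symmetric roles. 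An $\bm{H_2}$-path $P$ has $4r+1$ vertices, endpoints $w_1^1=x$ and $w_2^4=y$, and internal identification vertices $z_1=w_2^1=w_1^2$, $z_2=w_2^2=w_1^3$, $z_3=w_2^3=w_1^4$. The idea is to count copies of $P$ by first choosing $z_1,z_2,z_3$ and then each of the four pieces in turn, which reduces the proposition to two ``pinned-copy'' estimates: first, that for every $x$ there are $\Omega(n)$ vertices $z$ admitting $\Omega(n^{r-1})$ labelled copies of $H_0$ sending $w_1\mapsto x$, $w_2\mapsto z$ (for which it suffices to find $\Omega(n^{r-1})$ copies of the blow-up $H_0\setminus\{w_1,w_2\}$ with its $r^*-1$ heavy parts inside $N(x)\cap N(z)$); and secondly, that a sufficiently rich family of edges $uv$ carry $\Omega(n^{r-1})$ labelled copies of $H_0'$ sending $w_1\mapsto u$, $w_2\mapsto v$ (for which one needs, inside $N(u)\cap N(v)$, a suitably blown-up copy of $K_{r^*-2}$ together with the two partly-pinned heavy parts placed into $N(v)$ and $N(u)$ respectively).

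The engine for these estimates is a minimum-degree computation on neighbourhood subgraphs, followed by supersaturation. Since $\bigl(1-\tfrac kr\bigr)-\bigl(1-\tfrac1{r^*-1}\bigr)=\tfrac{q}{r(r^*-1)}>0$, the hypothesis $\delta(G)\ge(1-\tfrac kr+\gamma)n$ exceeds the Tur\'an threshold for $K_{r^*}$ by a fixed positive amount, so $G$ already contains $\Omega(n^{r^*})$ copies of $K_{r^*}$ (built greedily, as any $r^*-1$ vertices have $\Omega(n)$ common neighbours). For the $H_0$-estimate, fix $x$; then $\deg_{G[N(x)]}(w)\ge |N(x)|-(k/r)n$, so up to a $\gamma$-term $G[N(x)]$ has minimum degree at least $\tfrac{r-2k}{r-k}|N(x)|$, and the identity $(r-2k)(r^*-2)-(r^*-3)(r-k)=q>0$ puts $\tfrac{r-2k}{r-k}$ strictly above the Tur\'an threshold $1-\tfrac1{r^*-2}$ for $K_{r^*-1}$; hence $G[N(x)]$ has $\Omega(n^{r^*-1})$ copies of $K_{r^*-1}$, each with $\Omega(n)$ common neighbours, and a double-count produces $\Omega(n)$ vertices $z$ lying in the common neighbourhood of $\Omega(n^{r^*-1})$ of them. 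Applying Lemma~\ref{lem:supersat} with base clique $K_{r^*}$, its first $r^*-1$ parts constrained to lie in $N(x)\cap N(z)$ and its last part free, then blows these up to $\Omega(n^{r-1})$ labelled copies of $H_0$ with $w_1\mapsto x$, $w_2\mapsto z$. The $H_0'$-estimate is analogous, working inside $G[N(u)\cap N(v)]$: when $r^*\ge4$ (equivalently $k<r/3$) this subgraph has minimum degree at least $\tfrac{r-3k}{r-2k}$ of its order up to a $\gamma$-term, and $(r-3k)(r^*-3)-(r^*-4)(r-2k)=q>0$ again places it above the Tur\'an threshold for $K_{r^*-2}$, while for $r^*=3$ one needs only a single vertex of $N(u)\cap N(v)$; supersaturation then yields the required $\Omega(n^{r-1})$ copies of $H_0'$.

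Finally one assembles $P$: given $x$ and $y$, pick $z_1$ an $H_0$-partner of $x$ and $z_3$ an $H_0$-partner of $y$ (using the $w_1\leftrightarrow w_2$ symmetry of $H_0$ for the last piece), then a vertex $z_2\in N(z_1)\cap N(z_3)$ for which $z_1z_2$ and $z_2z_3$ are ``good'' edges carrying many copies of $H_0'$; here $k<r/2$ gives $|N(z_1)\cap N(z_3)|=\Omega(n)$, which should be wide enough to meet the relevant good-edge sets. Choosing the four pieces one at a time from their $\Omega(n^{r-1})$ available copies, each avoiding the constantly many previously placed vertices, loses only lower-order terms and gives $\Omega(n)\cdot\Omega(n)\cdot\Omega(n)\cdot\bigl(\Omega(n^{r-1})\bigr)^{4}=\Omega(n^{4r-1})=\Omega(n^{tr-1})$ labelled $\bm{H_2}$-paths with endpoints mapped to $\{x,y\}$, so we take $\beta_2$ to be the implied constant. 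I expect the main obstacle to be exactly this assembly step: one must know that the various good-partner sets --- for $H_0$ around $x$ and $y$, and for $H_0'$ around the internal identification vertices --- are simultaneously large \emph{and} overlapping, not merely individually linear in size; this is precisely why a four-piece template built from two different auxiliary graphs is used instead of iterating the single gadget of Case~1, and I would expect the rigorous argument to strengthen the pinned-copy estimates above to ``robust'' versions (e.g.\ all but $o(n)$ vertices of the relevant neighbourhood being good partners), together with a careful handling of the $H_0'$ supersaturation, whose two heavy, partly-pinned parts rule out a naive greedy embedding.
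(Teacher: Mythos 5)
Your modular strategy --- pinned-copy estimates for $H_0$ and $H_0'$, then a greedy assembly of the four pieces --- is genuinely different from the paper's argument, but as written it has a real gap, and it sits exactly where you flag trouble. Your $H_0$-estimate (via the double count over copies of $K_{r^*-1}$ in $N(x)$) is fine, but the pinned $H_0'$-estimate is not delivered by the argument you give: working inside $G[N(u)\cap N(v)]$ only produces the core $K^{r^*-2}_{k,\ldots,k,q+1}$, whereas the two partly-pinned parts of size $k-1$ must in addition be completely joined to each other and to that core. The one-vertex-per-part skeleton here is $K^{r^*}_{2,2,1,\ldots,1}$ on $r^*+2$ vertices, which is $r^*$-regular, so in any completion with both $u$ and $v$ prescribed some vertex must be a common neighbour of $r^*$ already-fixed vertices; since $q<k$, the hypothesis $\delta(G)\ge(1-\frac{k}{r}+\gamma)n$ only guarantees large common neighbourhoods for sets of at most $r^*-1$ vertices, so this last step is unsupported and the estimate cannot be obtained edge-by-edge this way (a ``most edges are good'' version is true, but needs a separate global counting/averaging argument you do not supply). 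More seriously, the assembly is unproved: after fixing $z_1$ as an $H_0$-partner of $x$ and $z_3$ as an $H_0$-partner of $y$, you need some $z_2\in N(z_1)\cap N(z_3)$ with \emph{both} $z_1z_2$ and $z_2z_3$ good, and a rich family of good edges somewhere in $G$ gives no control at the particular vertices $z_1,z_3$; you explicitly name this compatibility issue as the main obstacle and leave it open, so the proposal is a programme rather than a proof.

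The paper avoids both problems by never pinning the intermediate vertices in advance. It builds the entire skeleton $J$ of the $\bm{H_2}$-path --- four $r^*$-cliques glued at $z_1,z_2,z_3$, with the outer clique vertices forced into $N_G(x)$ and $N_G(y)$ respectively --- in a single greedy pass, using an ordering in which every vertex needs to be adjacent to at most $r^*-1$ previously chosen vertices: the $x_i$ and $y_i$ first, then $z_1$ and $z_3$, then $z_2$ (constrained only by $z_1,z_3$), and the interior vertices of the two middle cliques last. One application of Lemma~\ref{lem:supersat} then blows $J$ up, and splitting the blown-up classes at $z_1,z_2,z_3$ yields the copies of $H_0,H_0',H_0',H_0$ simultaneously. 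Salvaging your piecewise route would require robust, localised versions of your pinned estimates, which in effect amounts to redoing this interleaved construction.
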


\begin{figure}
    \centering
  \includegraphics[scale=0.9]{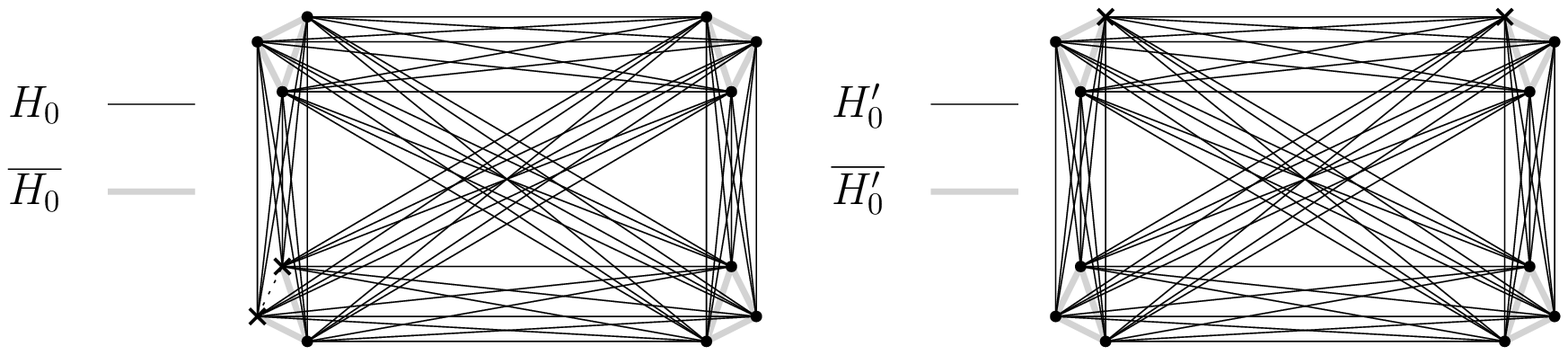}
    \caption{    \label{fig:H2} An example of $H_0$, $\overline{H_0}$, $H_0'$  and $\overline{H'_0}$  for Case 2 (b)  $r=11, k=3, q=2, r^*=4$ (see Definitions \ref{def:H}--\ref{Hzero} and Proposition \ref{case2reachability} for the definitions).}
\end{figure}

\begin{proof}
%The covering graphs we will use, as in the notation of \Cref{Reachable sets}, are $H_0=K^{r^*}_{m,\ldots,m,q+1}$ with $w^{*}_1$ and $w_{02}$ in the same part of size $q+1$ and $H_2=K^{r^*}_{m,\ldots,m,q+1}$ with $w_{21}$ and $w_{22}$ in distinct parts of size $m$. %Firstly, let us show that a given $x \in V(G)$ lies in many copies of $H_0$ with many $z$ playing the role of $w^{*}_2$. 
%\begin{claim}
%There is some $\beta'>0$ such that every $x \in V(G)$ is $(H_{\text{det}}, \beta',1)$-reachable (with respect to $H_0$) to at least $\beta'n$ vertices. 
%\end{claim}
%\begin{claimproof}
We know that $\delta(G)\geq (1-\frac{k}{r}+\gamma)n \geq (1-\frac{1}{r^*-1}+\gamma)n$. Therefore every set of at most $r^*-1$ vertices has a common neighbourhood of size at least $\gamma n$. We will appeal to \Cref{lem:supersat} to give us the whole $\bm{H_2}$-path in one fell swoop. Let $J$ be a graph with vertex set
\[
V(J)=\{x_1,\ldots,x_{r^*-1},z_1,w_1,\ldots,w_{r^*-2},z_2,u_1,\ldots,u_{r^*-2},z_3,y_1,\ldots,y_{r^*-1}\},
\]
and $E(J)$ consisting of $r^*$-cliques on $\{x_1,\ldots,x_{r^*-1},z_1\}$, $\{z_1,w_1,\ldots,w_{r^*-2},z_2\},$  $\{z_2,u_1,\ldots,u_{r^*-2},z_3\}$ \\ and $\{z_3,y_1,\ldots,y_{r^*-1}\}$. We claim that if we can find $(\gamma n)^{4r^*-3}/2$ copies of $J$ in $G$ such that $x_i \in N_G(x)$ and $y_i\in N_G(y)$ for $i=1,\ldots,r^*-1$, then we are done. 
Indeed, consider a blow-up $J'$ of $J$ with parts \begin{eqnarray*}\{X^{(k)}_1,\ldots,X^{(k)}_{r^*-1},Z^{(k+q-1)}_1,W^{(q+1)}_1,W^{(k)}_2,\ldots,W^{(k)}_{r^*-2},Z^{(2k-1)}_2,\\U^{(q+1)}_1,U^{(k)}_2,\ldots,U^{(k)}_{r^*-2},Z^{(k+q-1)}_3,Y^{(k)}_1,\ldots,Y^{(k)}_{r^*-1}\},\end{eqnarray*}
where the parts correspond to the vertices of $J$ in the obvious way and the size of each part is indicated by the superscript. Now if we have a copy of $J'$ in $G\setminus\{x,y\}$ with $X_i \subset N_G(x)$ and $Y_i\subset N_G(y)$ for all $i=1,\ldots, r^{*}-1$, then this gives us an embedding of an ${\bm{H_2}}$-path. Indeed for $i=1,3$, arbitrarily partition $Z_i:=\{z'_i\}\sqcup Z'_i \sqcup Z''_i$ with $|Z'_i|=q-1$ and $|Z''_i|=k-1$ and partition $Z_2:=\{z'_2\}\sqcup Z'_2 \sqcup Z''_2$ with $|Z'_2|=|Z''_2|=k-1$.
Then $\{x,z'_1\}\cup Z'_1\cup^{r^*-1}_{i=1} X_i$ and $\{y,z'_3\}\cup Z'_3 \cup^{r^*-1}_{i=1} Y_i$ both give copies of $H_0$ whilst $\{z'_1,z_2'\}\cup Z''_1 \cup Z'_2 \cup^{r^*-2}_{i=1}W_i$ and $\{z_2',z'_3\}\cup Z''_3 \cup Z''_2 \cup^{r^*-2}_{i=1}U_i$ both give copies of $H_0'$ where in all cases the distinguished vertices appear in the first set of the union.

It suffices then, by \Cref{lem:supersat},  to find $(\gamma n)^{4r^*-3}/2$ embeddings of $J$ in $G$ with the $x_i \in N_G(x)$ and $y_i \in N_G(y)$. We can do this greedily. Indeed if we choose the $x_i$ and $y_i$ first, followed by $z_1$ and $z_3$, then $z_2 \in N_G(z_1) \cap N_G(z_3)$ and then the remaining vertices, we are always seeking to choose a vertex in $G$ which has at most $r^*-1$ neighbours which have already been chosen. Thus, by our degree condition, we have at least $\gamma n$ choices for each vertex with the right adjacencies. To ensure that these choices actually give an embedding of $J$  we then discard any set of choices with repeated vertices, of which there are $O(n^{4r^*-4})$, and thus the conclusion holds as $n$ is sufficiently large. 
\end{proof}

Consider an $\overline{\bm{H_2}}$-path which we denote $\overline{P_2}$ (see Figure \ref{fig:Hpaths} for an example). It is formed by copies of $K_k$ and $K_{q+1}^-$ which intersect in at most one vertex and such that the endpoints of $\overline{P_2}$ lie in copies of $K_{q+1}^-$. Furthermore, note that the endpoints of $\overline{P_2}$ are in distinct connected components.   This  will be an important feature when we start to address the random edges of our absorbing structure as it will allow us to use Lemma~\ref{PhiFobservations} to conclude certain statements about the likelihood of finding our desired random subgraph in $G(n,p)$. This motivated the introduction of $H_0'$ in the previous proposition.

In Case 3, we cannot hope to prove reachability between every pair of vertices. Indeed our minimum degree in this case is $\delta(G)\geq\left(1-\frac{k}{r}+\gamma\right)n$ and $k>r/2$ and so it is possible that $\delta(G)<n/2$ and $G$ is disconnected. Thus, as in \cites{hansolo, han2016complexity}, we use a partition of the vertices into `closed' parts, where we can guarantee that two vertices in the same part are reachable, with some set of parameters. We adopt the following notation which also allows us to consider different possibilities for what vectors we use for reachability. 

\begin{definition} \label{def:closed}
Let $\bm{\cH}$ be a set of vectors, such that the entry of each vector in $\bm{\cH}$ is an $(r+1)$-vertex graph endowed with a tuple of distinguished vertices. We say that two vertices in $G$ are $(\bm{\cH};\beta)$-reachable if they are $(\bm{H};\beta)$-reachable for some $\bm{H}\in\bm{\cH}$. 

We  say that a subset $V$ of vertices in a graph $G$ is $(\bm{\cH};\beta)$-closed if every pair of vertices\footnote{Note that we do not require the vertices of the $\bm{H}$-paths  which give the reachability to lie in $V$.} in $V$ is $(\bm{\cH};\beta)$-reachable. We  denote\footnote{If $\bm{\cH}$ consists of just one vector $\bm{H}$, we simply refer to sets being $(\bm{H};\beta)$-closed and use $N_{\bm{H},\beta}(v)$ to denote the closed neighbourhood of a vertex.} by $N_{\bm{\cH},\beta}(v)$ the set of vertices in $G$ that are $(\bm{\cH};\beta)$-reachable to $v$. 
\end{definition}

Thus, in this notation, the conclusion of Proposition~\ref{case1reachability} states that $V(G)$ is $(\bm{H_1};\beta_1)$-closed for all $G$ satisfying the given hypothesis (and similarly for Proposition~\ref{case2reachability}). 
Notice that if a set $V$ is $(\bm{\cH};\beta)$-closed in a graph $G$ it may be the case that two vertices $x,y \in V$ are $(\bm{H};\beta)$-reachable whilst two other vertices $z,w \in V$ are $(\bm{H}';\beta)$-reachable for some 
distinct $\bm{H}$, $\bm{H}' \in \bm{\cH}$ of different lengths.

It will be useful for us to consider the following notion. 

\begin{definition} \label{def:concatenation}
Let $\bm{\cH},\bm{\tilde{\cH}}$ be two sets of vectors as in Definition~\ref{def:closed}. Then 
\[\bm{\cH}+\bm{\tilde{\cH}}:=\bm{\cH}\cup \bm{\tilde{\cH}}\cup (\bm{\cH}\cdot \bm{\tilde{\cH}}),\]
where $\bm{\cH}\cdot \bm{\tilde{\cH}}$ is defined to be the set\[\bm{\cH}\cdot \bm{\tilde{\cH}}:=\{(\bm{H},\bm{\tilde{H}}):=(H^1,\ldots,H^t,\tilde{H}^1,\ldots,\tilde{H}^{\tilde{t}}):\bm{H}:=(H^1,\ldots,H^t)\in \bm{\cH},\bm{\tilde{H}}:=( \tilde{H}^1,\ldots,\tilde{H}^{\tilde{t}})\in\bm{\tilde{\cH}}\}.\]
That is, $\bm{\cH}+\bm{\tilde{\cH}}$ comprises of all vectors that lie in $\bm{\cH}$, $\bm{\tilde{\cH}}$, or that can be obtained by a concatenation of a vector from $\bm{\cH}$ with a vector from $\bm{\tilde{\cH}}$.
\end{definition}

As an important example, defining $\bm{\cH}(H,\leq t):=\{(H,s):1\leq s\leq t\}$, we have that 
\[\bm{\cH}(H,\leq t_1)+\bm{\cH}(H,\leq t_2)= \bm{\cH}(H,\leq t_1+t_2).\] %\COMMENT{AT: I replaced $\subseteq$ with equality here, as unless I've misunderstood, I think this is the case. PM: So technically it is not quite equality as the definition stands because we can't have a vector of length 1, eg a single copy of $H$ after the $+$ operation. We could adjust the definition of +, or of $\bm{\cH}(H,\leq t)$ and allow `empty' vectors, or leave it as it is and maybe comment on why it is not equality?}
In what follows we will apply the following simple 
lemma repeatedly.
%We will say that two vertices are $(H,\beta, \leq t)$- reachable if they are $(H,\beta, t')$- reachable for some $t'\leq t$. We will say that a subset of vertices is $(H,\beta, \leq t)$-closed if every pair of vertices in the subset is $(H,\beta, \leq t)$-reachable and we will denote by $N_{H,\beta,\leq t}(v)$ the set of vertices in $G$ that are $(H,\beta,\leq t)$-reachable to $v$. We will need the following easy lemma repeatedly. 

\begin{lemma} \label{concatenation lemma}

Let $r\in\mathbb{N}$ and let $\bm{\cH_x}, \bm{\cH_y}$ be two sets of vectors of $(r+1)$-vertex graphs, each of which is endowed with a tuple of distinguished vertices and suppose that $t_x:=|\bm{\cH_x}|$ and $ t_y:=|\bm{\cH_y}|$ are both finite. 
Suppose $G$ is a sufficiently large $n$-vertex graph and $x,y \in V(G)$.
 Suppose there exist $\beta_x,\beta_y,\epsilon >0$, 
%$t_x,t_y\in \mathbb{N}$ 
and some subset $U\subseteq V(G)$ with $|U| \geq \epsilon n$ such that for every $z\in U$, $x$ and $z$ are $(\bm{\cH_x};\beta_x)$-reachable and $z$ and $y$ are $(\bm{\cH_y};\beta_y)$-reachable. Then $x$ and $y$ are $(\bm{\cH_x}+\bm{\cH_y};\beta)$-reachable for $\beta:=\frac{\epsilon \beta_x\beta_y}{2t_xt_y}>0$.
\end{lemma}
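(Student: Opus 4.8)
The plan is to count, for each $z \in U$, the pairs of embeddings that together witness a reachable path from $x$ to $y$ through $z$, and then aggregate over $z$ and over the choice of vectors. First I would fix, for each $z \in U$, a vector $\bm{H}_x(z) \in \bm{\cH_x}$ witnessing that $x,z$ are $(\bm{\cH_x};\beta_x)$-reachable and a vector $\bm{H}_y(z) \in \bm{\cH_y}$ witnessing that $z,y$ are $(\bm{\cH_y};\beta_y)$-reachable. Since $|\bm{\cH_x}| = t_x$ and $|\bm{\cH_y}| = t_y$ are finite, by averaging there is a single pair $(\bm{H}_x, \bm{H}_y) \in \bm{\cH_x} \times \bm{\cH_y}$ and a subset $U' \subseteq U$ with $|U'| \geq \frac{|U|}{t_x t_y} \geq \frac{\epsilon n}{t_x t_y}$ such that for every $z \in U'$, $x$ and $z$ are $(\bm{H}_x;\beta_x)$-reachable and $z$ and $y$ are $(\bm{H}_y;\beta_y)$-reachable. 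Write $\bm{H}_x = (H^1,\dots,H^{t})$ and $\bm{H}_y = (\tilde H^1,\dots,\tilde H^{\tilde t})$, and let $\bm{H} := (\bm{H}_x, \bm{H}_y) \in \bm{\cH_x}\cdot\bm{\cH_y} \subseteq \bm{\cH_x}+\bm{\cH_y}$; this is the vector through which we will establish reachability.

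Next I would count labelled embeddings of the $\bm{H}$-path $P$ with endpoints mapped to $\{x,y\}$. Such a $P$ is obtained by gluing an $\bm{H}_x$-path $P_x$ (with endpoints $x$ and some middle vertex $z$) to an $\bm{H}_y$-path $P_y$ (with endpoints $z$ and $y$) at the shared vertex $z$; note $P_x$ has $tr$ vertices, $P_y$ has $\tilde t r$ vertices, and $P$ has $tr + \tilde t r - 1$ vertices, matching the exponent $(t+\tilde t)r - 1$ in the definition of $(\bm{H};\beta)$-reachable. For each $z \in U'$ there are at least $\beta_x n^{tr-1}$ embeddings of $P_x$ sending the endpoints to $\{x,z\}$, and at least $\beta_y n^{\tilde t r - 1}$ embeddings of $P_y$ sending the endpoints to $\{z,y\}$. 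To combine one of each into a valid embedding of $P$ we must discard those pairs whose vertex sets intersect in a vertex other than $z$; since $P_x$ has $O(1)$ vertices, each fixed $P_x$ forbids at most $O(n^{\tilde t r - 2})$ of the $P_y$ embeddings, which is a lower-order correction. Hence for each $z \in U'$ the number of valid gluings is at least $\beta_x n^{tr-1}(\beta_y n^{\tilde t r -1} - O(n^{\tilde t r - 2})) \geq \tfrac12 \beta_x \beta_y n^{tr + \tilde t r - 2}$ for $n$ large. Summing over the at least $\tfrac{\epsilon n}{t_x t_y}$ choices of $z \in U'$, and observing that distinct $z$ give distinct embeddings of $P$ (the midpoint is recoverable from $P$), gives at least $\tfrac{\epsilon}{2 t_x t_y}\beta_x\beta_y\, n^{(t+\tilde t)r - 1} = \beta\, n^{(t+\tilde t)r-1}$ embeddings, so $x$ and $y$ are $(\bm{H};\beta)$-reachable, hence $(\bm{\cH_x}+\bm{\cH_y};\beta)$-reachable.

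The only mild subtlety — the step I would be most careful about — is bookkeeping the exponents and the disjointness correction: one must be sure that the $\bm{H}$-path on $tr+\tilde t r - 1$ vertices is exactly the concatenation produced by Definition \ref{def:concatenation} and Definition \ref{reachable paths} (identifying the last distinguished vertex of $\bm{H}_x$ with the first of $\bm{H}_y$), and that the $O(n^{\tilde t r - 2})$ overcount from shared vertices really is lower order, which holds because $t, \tilde t, r$ are constants. Everything else is a routine double-counting argument, and no probabilistic input is needed since this lemma is purely about the deterministic graph $G$.
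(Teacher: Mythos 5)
Your proposal is correct and follows essentially the same route as the paper: pigeonhole to fix a single pair $(\bm{H}_x,\bm{H}_y)$ on a subset $U'$ of size at least $\epsilon n/(t_xt_y)$, count pairs of embeddings through each $z\in U'$, subtract the lower-order count of non-disjoint pairs (which, as you note, also handles intersections with $\{x,y\}$ since $x\in V(P_x)$ and $y\in V(P_y)$), and sum over $z$. The exponent bookkeeping and the final constant $\beta=\epsilon\beta_x\beta_y/(2t_xt_y)$ match the paper's argument.
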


\begin{proof}
By the pigeonhole principle, there exists some $U'\subseteq U$ such that $|U'|\geq \frac{\epsilon n}{t_x t_y}$ and some $\bm{H}_x\in\bm{\cH_x}$, $\bm{H}_y\in\bm{\cH_y} $ such that for every $z\in U'$, $z$ and $x$ are $(\bm{H}_x;\beta_x)$-reachable and $z$ and $y$ are $(\bm{H}_y;\beta_y)$-reachable.
%for fixed $t'_x\leq t_x$ and $t'_y\leq t_y$. 
Suppose $\bm{H}_x$ has length $s_x$ and $\bm{H}_y$ has length $s_y$. Thus, fixing $z \in U'$, there are at least $\beta_x\beta_yn^{(s_x+s_y)r-2}$ pairs of labelled vertex sets $S_x$ and $S_y$ in $G$ such that there is an embedding  of an $\bm{H}_x$-path on $S_x\cup \{x,z\}$ mapping endpoints to $\{x,z\}$ and an embedding of a $\bm{H}_y$-path on the vertices $S_y\cup\{y,z\}$ which maps the endpoints to  $\{y,z\}$. Of these pairs, at most 
\[
s_xs_yr^2n^{(s_x+s_y)r-3}
\]
are \emph{not} vertex disjoint or they intersect $\{x,y\}$. Hence, as $n$ is sufficiently large we have at least $\frac{\beta_x\beta_y}{2}n^{(s_x+s_y)r-2}$ vertex disjoint pairs which together form an embedding of an $(\bm{H_x},\bm{H}_y)$-path. As we have
at least $\frac{\epsilon}{t_xt_y} n$ choices for $z$, this gives that $x$ and $y$ are $((\bm{H}_x,\bm{H}_y);\beta)$-reachable and 
$(\bm{H}_x,\bm{H}_y)\in \bm{\cH_x}+\bm{\cH_y}$.
%$t'_x+t'_y\leq t_x+t_y$.
\end{proof}

We now turn to proving reachability in Case 3.
%The following lemma is very close to~\cite[Lemma 6.3]{han2016complexity}.
The following two lemmas together find the partition we will work on.
Similar ideas have been used in~\cites{hansolo, han2016complexity}.

\begin{lemma} \label{case3lemma}
Suppose $\gamma>0$ and $n,r,k,q\in \mathbb{N}$ such that $r/2< k\le r-1$, $r=k+q$ and $n$ is sufficiently large. Let $c:=\ceil{r/q}$ and for $t\in \mathbb{N}$ define $\bm{\cH}^t:=\bm{\cH}(H_0,\leq 2^{t})=\{(H_0,s):1\leq s\leq 2^t\}$, where $H_0=(K^2_{k,q+1},2,2)$ is as defined in Definition~\ref{Hzero} with distinguished vertices $w_1$ and $w_2$.

Then there exists constants $0<\beta_3'=\beta_3'(r,k,\gamma)\ll\alpha=\alpha(r,k,\gamma)$ such that any $n$-vertex graph $G$ of minimum degree $\delta(G)\ge (1-\frac{k}{r}+\gamma)n$ can be partitioned into at most $c-1$ parts, each of which is $(\bm{\cH}^{c};\beta_3')$-closed and of size at least $\alpha n$.%\COMMENT{AT: looking at the proof, actually it seems we can partition into at most $c-1$ parts}

%Assume we are in case 3, and so we have $\gamma>0$, $n,r,k\in \mathbb{N}$ such that $\frac{r}{2}< k\le r-1$ and $n$ is sufficiently large. Let $G$ be a fixed graph with $n$ vertices and of minimum degree $\delta(G)\ge (1-\frac{k}{r}+\gamma)n$.
%Then there exists constants $\delta=\delta(r,k,\gamma)\gg\beta_3'=\beta_3'(r,k,\gamma)>0$, and there is a partition $\mathcal{P}$ of the vertex set into $V_1,\ldots, V_{s'}$ with $s'\leq c:=\ceil{\frac{r}{r-k}}$ such that $|V_i|\geq \delta n$ for each $i$ and each $V_i$ is $(H_0,\beta_3',\leq 2^{c-2})$-closed, recalling that $H_0=K_{k,q+1}$ with identified vertices $w_{01},w_{02}$ in the part of size $q+1$.
\end{lemma}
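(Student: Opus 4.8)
The plan is to reduce the whole statement to a single ``common neighbourhood'' condition on pairs of vertices, then run the standard machinery for producing closed partitions (as in~\cites{hansolo,han2016complexity}), using the minimum degree bound $\delta(G)\ge(1-\tfrac kr+\gamma)n=(\tfrac qr+\gamma)n$ both to count the parts and to bound their sizes. Throughout fix a small constant $\mu=\mu(r,k,\gamma)>0$, to be constrained finitely often.

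\textbf{Step 1 (reachability from large common neighbourhoods).} I would first show there is $\beta_0=\beta_0(r,k,\gamma,\mu)>0$ such that any $x,y\in V(G)$ with $|N_G(x)\cap N_G(y)|\ge\mu n$ are $(H_0,1;\beta_0)$-reachable, where $H_0=(K^2_{k,q+1},2,2)$ is as in Definition~\ref{Hzero}; since $(H_0,1)\in\bm{\cH}^{c}$ this makes them $(\bm{\cH}^{c};\beta_0)$-reachable. Indeed, a labelled embedding of $H_0$ sending its two distinguished vertices (both in the part of size $q+1$) to $\{x,y\}$ is exactly a choice of a $k$-set $S$ and a $(q-1)$-set $T$, pairwise disjoint and disjoint from $\{x,y\}$, with $S\subseteq N_G(x)\cap N_G(y)\cap\bigcap_{t\in T}N_G(t)$ (no other adjacencies are needed, as $H_0$ has no edges inside either part). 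Writing $W=N_G(x)\cap N_G(y)$, from $\sum_{v}|W\cap N_G(v)|=\sum_{w\in W}d_G(w)\ge|W|\,\delta(G)$ a $\tfrac q{2r}$-fraction of vertices $v$ satisfy $|W\cap N_G(v)|\ge\tfrac q{2r}|W|$; iterating this $q-1$ times gives $\Omega(n^{q-1})$ admissible sets $T$, each with $|W\cap\bigcap_{t\in T}N_G(t)|=\Omega(n)$, hence $\Omega(n^k)$ choices of $S$ for each, and so $\Omega(n^{r-1})$ embeddings in all. (Equivalently, one may greedily locate $\Omega(n^r)$ suitably placed labelled copies of the underlying clique pattern and then invoke Lemma~\ref{lem:supersat}.)

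\textbf{Step 2 (at most $c-1$ parts) and Step 3 (large parts).} Take the parts to be the classes of the transitive closure of $(\bm{\cH}^{c};\beta_3')$-reachability, for a constant $\beta_3'\le\beta_0$ fixed at the end. If there were $\ge c$ parts, pick representatives $x_1,\dots,x_c$; they are pairwise \emph{not} $(\bm{\cH}^{c};\beta_3')$-reachable, so by the contrapositive of Step~1, $|N_G(x_i)\cap N_G(x_j)|<\mu n$ for $i\ne j$. Inclusion--exclusion and $\delta(G)\ge(\tfrac qr+\gamma)n$ then give
\[
n\ \ge\ \Bigl|\bigcup_{i=1}^c N_G(x_i)\Bigr|\ >\ c\Bigl(\tfrac qr+\gamma\Bigr)n-\binom c2\mu n\ \ge\ \Bigl(1+c\gamma-\binom c2\mu\Bigr)n,
\]
using $c=\ceil{r/q}\ge r/q$ so that $c\cdot\tfrac qr\ge1$; choosing $\mu\le 2\gamma/(c-1)$ makes the bracket $\ge1$, a contradiction. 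For the lower bound on sizes, a crude convexity estimate as in Step~1 shows every vertex $v$ has $\Omega((q/r)^2 n)$ vertices $u$ with $|N_G(u)\cap N_G(v)|\ge\mu n$, and by Step~1 all of these lie in the class of $v$; taking $\alpha=\alpha(r,k,\gamma)>0$ small enough (and $\beta_3'\ll\alpha$) forces each class to have size $\ge\alpha n$. (In the extremal configuration — a disjoint union of $\le c-1$ cliques — the parts are exactly the cliques.)

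\textbf{Step 4 (closedness), and the main obstacle.} It remains to check that each class is genuinely $(\bm{\cH}^{c};\beta_3')$-closed, i.e.\ that passing to the transitive closure never forces reachability paths longer than the budget $2^{c}$. The tool is Lemma~\ref{concatenation lemma}: if $x$ is reachable (via $\bm{\cH}(H_0,\le t_1)$) to every vertex of a linear set $U$ and $z$ is reachable (via $\bm{\cH}(H_0,\le t_2)$) to every vertex of $U$, then $x,z$ are $(\bm{\cH}(H_0,\le t_1+t_2);\beta')$-reachable; combining this with a linear lower bound on reach-neighbourhood sizes (which, along a shortest reachability chain inside a class, forces the reach-neighbourhoods of sufficiently separated chain-vertices to be disjoint and hence caps such a chain's length by a constant), one concludes any two vertices of a class are $(\bm{\cH}(H_0,\le 2^{c});\beta_3')$-reachable. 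The delicate part is exactly this length control: it needs a reach-neighbourhood bound close to the extremal value $(q/r)n$ rather than the crude quadratic one, together with a careful iterative merging of the tentative pieces in which the allowed path length at most doubles at each of the $\le c$ stages — precisely the bookkeeping carried out in~\cites{hansolo,han2016complexity}, here adapted to the graph $H_0$.
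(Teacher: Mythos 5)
Your Steps 1--3 are sound and essentially mirror the paper's two opening observations (a pair with linearly large common neighbourhood is $((H_0,1);\beta_0)$-reachable, there are at most $c-1$ classes, and every vertex has linearly many $((H_0,1);\alpha')$-reachable vertices). The genuine gap is Step 4, which is precisely the heart of the lemma and is not actually proved. Defining the parts as transitive-closure classes of $(\bm{\cH}^{c};\beta_3')$-reachability does not make them $(\bm{\cH}^{c};\beta_3')$-closed: reachability is not transitive at fixed parameters, because gluing two families of paths through a \emph{single} intermediate vertex $z$ yields only $\Omega(n^{(t_1+t_2)r-2})$ concatenated paths, one factor of $n$ short of the $\Omega(n^{(t_1+t_2)r-1})$ needed to certify reachability at the summed length; this is exactly why \Cref{concatenation lemma} requires a \emph{linear-sized} set $U$ of intermediate vertices. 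Your proposed repair (shortest reachability chains whose reach-neighbourhoods are pairwise disjoint beyond distance two, hence constant chain length) is only sketched, and even if carried out it caps the chain length by roughly $1/\alpha'$, a constant depending on $\gamma$; concatenating along such a chain gives closedness only for paths of length on the order of $2^{c}/\alpha'$, not the budget $2^{c}$ that the lemma asserts (and which is used later, e.g.\ in \Cref{case3reachability} and in the $c(2^{c+1}+1)$-boundedness of the absorbing structures). Deferring "precisely the bookkeeping carried out in the cited papers" leaves exactly this step missing.

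The paper closes this gap with a different device that you would need to reproduce (or replace): let $\ell$ be \emph{maximal} such that there exist $\ell$ vertices $v_1,\dots,v_\ell$ that are pairwise not $(\bm{\cH}^{c-\ell};\eta_{c-\ell})$-reachable, where the path-length exponent is tied to $\ell$ itself and the parameters $\eta_0>\eta_1>\dots>\eta_c=\beta_3'$ are fixed in advance. Your Step 2 shows $\ell\le c-1$. Maximality then gives directly (not via transitivity) that the sets $\tilde N_{c-\ell-1}(v_j)$ cover $V(G)$, pairwise overlap in at most $\eps n$ vertices (else \Cref{concatenation lemma} would make $v_j,v_{j'}$ reachable at level $c-\ell$), and that each trimmed part $U_j$ is already $(\bm{\cH}^{c-\ell-1};\eta_{c-\ell-1})$-closed, since any non-reachable pair inside $U_j$ could replace $v_j$ in a larger family. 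The small leftover set $U_0$ is then distributed greedily using the linear bound $|N_{(H_0,1),\alpha'}(u)|\ge\alpha' n$, and closedness of the final parts follows from at most two further applications of \Cref{concatenation lemma}, so the total path length stays below $2^{c-\ell-1}+2\le 2^{c}$. Without this extremal choice of $\ell$ (or an equivalent mechanism controlling how the length budget accumulates), your argument establishes a weaker statement than \Cref{case3lemma}.
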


\begin{proof}
%\COMMENT{AT: made quite a lot of small changes in this proof}
%We will show $(H_{\text{det}},\beta,t)$-reachability with respect to $H_0=K_{m,q+1}$ with $w_1$ and $w_2$ in the same part of size $q+1$. 
Firstly, observe that there is some $\eta=\eta (r,k,\gamma)>0$ such that in every set of at least $c$ vertices, there are two vertices which are $((H_0,1);\eta)$-reachable. Indeed, fix some arbitrary set of vertices $S=\{v_1,\ldots,v_c\}\subset V(G)$, and for $v\in V=V(G)$, define $d_S(v):=|\{i\in [c]:vv_i\in E(G)\}|$. Let $\bar{d}_S:=\sum_{v\in V}d_S(v)/n$ be the average.  Then we have that 
\[\sum_{v\in V}d_S(v)=   \sum_{i\in[c]}d_G(v_i) \geq c\left(1-\frac{k}{r}+\gamma\right)n\geq (1+c\gamma)n.\]
Thus $\sum_{v\in V}\binom{d_S(v)}{2}\geq n \binom{\bar{d}_S}{2}\geq \frac{(c\gamma)^2}{2}n$ by Jensen's inequality. 
By averaging over all pairs we have that there exists a pair $i\neq j\in [c]$ so that  both $v_i$ and $v_j$ are in the neighbourhood of at least $\gamma^2n$ vertices. That is, $|N_G(v_i)\cap N_G(v_j)|\geq \gamma^2 n$. 

Therefore there are at least $\gamma^3 n^2$ edges  in $G$ with one endpoint in $N_G(v_i)\cap N_G(v_j)$. Applying \Cref{lem:supersat} this ensures that there is $\eta=\eta (r,k,\gamma)>0$ 
so that there are $\eta n^{r-1}$
copies of $K^2_{k,q-1}$ where the first vertex class lies in 
$N_G(v_i)\cap N_G(v_j)$. Thus together they form copies of $H_0$ with distinguished vertices $v_i$ and $v_j$; so $v_i$ and $v_j$ are 
 $((H_0,1);\eta)$-reachable in $G$.

%there must be a pair $v_i$, $v_j$ such that $|N(v_i)\cap N(v_j)|\geq \frac{2\gamma}{c} n$ as given $c$ vertex subsets of size $\left(\frac{r-k}{r}+\gamma\right) n$, there are at least $c\gamma n$ vertices which feature in at least two such subsets and there are less than $\frac{c^2}{2}$ pairs of subsets. Given such a pair $v_i$ and $v_j$, \Cref{lem:supersat} then gives $(H_0,\eta,1)$-reachability for an appropriate $\eta>0$, as there are at least $\frac{2\gamma^2}{c}n^2$ edges in $G$ with one endpoint in $N(v_i)\cap N(v_j)$. 

\smallskip

Note also that  there is some fixed $\alpha'=\alpha' (r,k,\gamma)>0$ such that $|N_{(H_0,1),\alpha'}(v)|\geq \alpha' n$ for every $v\in V(G)$. 
Indeed,
this follows as there are at least $(\gamma n)^2/2$ edges in $G$ with one endpoint in $N_G(v)$. So, by \Cref{lem:supersat}, there is a fixed $\alpha''=\alpha'' (r,k,\gamma)>0$ such that there are
 at least $\alpha''n^r$ embeddings of $H_0$ which map $w_{1}$ to $v$. Setting $\alpha' :=\alpha''/3$, this implies  that there are at least $\alpha' n$ vertices which are $((H_0,1);\alpha')$-reachable to $v$. 
%Indeed otherwise we get that there are too few embeddings of $H_0$ which map $w_{1}$ to $v$ by counting both the copies with $w_{2}$ mapped to a vertex in  $N_{(H_0,1),\alpha}(v)$ and those which map $w_{2}$ to another vertex.
%N_{H_0,\alpha,1}(v)$.

\smallskip

Now let $0<\epsilon\ll \alpha'$, $\eta=:\eta_0$ and $\eta_i:=\frac{\epsilon^4}{2^{2i+1}}\eta_{i-1}$ 
%\COMMENT{AT: changed definition from $\eta_i:=\frac{\epsilon}{2^{2(i-1)}}\eta_{i-1}^2$ here as I think we need the slightly smaller value of $\eta _i$ in one application of \Cref{concatenation lemma} }
for $i=1,\ldots, c$. 
Set $\beta '_3:= \eta _c$.
As in the statement of the lemma, define $\bm{\cH}^t:=\bm{\cH}(H_0,\leq 2^{t})$ 
for values of $t\leq c$ and note that $\bm{\cH}^t+\bm{\cH}^t= \bm{\cH}^{t+1}$. %\COMMENT{AT: again I think it is equality here. PM: Same as before.} 
We will be interested in $(\bm{\cH}^t;\eta_t)$-reachability and so we will use the shorthand notation 
$\tilde{N}_t(v):=N_{\bm{\cH}^t,\eta_{t}}(v)$. Let $\ell$ be the maximal integer such that there exists a set of $\ell$ vertices, $v_1,\ldots, v_\ell$ with $v_j$ and $v_{j'}$ \emph{not} 
$(\bm{\cH}^{c-\ell};\eta_{c-\ell})$-reachable for any pair $j\neq j'\in [\ell]$. 

Suppose $\ell=1$. Then $V(G)$ is $(\bm{\cH}^{c-1};\eta_{c-1})$-closed. 
As $\bm{\cH}^{c-1} \subseteq \bm{\cH}^{c}$ and $\eta _{c-1} >\beta '_3$, the lemma holds in this case.

We also have that $\ell\leq c-1$ from our observations above, so we can assume $2\leq \ell\leq c-1$. Now fix such a set of $\ell$ vertices, $v_1,\ldots,v_\ell$. We make the following two observations: 

\begin{enumerate}[label=\roman*]
\item[(i)] Any $v\in V(G)\setminus \{v_1,\ldots,v_\ell\}$ is in $\tilde{N}_{c-\ell-1}(v_j)$ for some $j\in[\ell]$ from our definition of $\ell$, as otherwise $v$ could be added to give a larger family contradicting the maximality of $\ell$. Indeed, this follows because two vertices that are not  $(\bm{\cH}^{c-\ell};\eta_{c-\ell})$-reachable  are certainly not  $(\bm{\cH}^{c-\ell-1};\eta_{c-\ell-1})$-reachable by definition. 
\item[(ii)] $|\tilde{N}_{c-\ell-1}(v_j)\cap \tilde{N}_{c-\ell-1}(v_{j'})|\leq \epsilon n$ for every pair $j \neq j' \in [\ell]$. This follows from \Cref{concatenation lemma} as otherwise we would have that $v_j$ and $v_{j'}$ are $(\bm{\cH}^{c-\ell};\eta_{c-\ell})$-reachable, a contradiction.
%This follows, again from our definition of the family $v_1,\ldots,v_k$. If this was not the case, say for a fixed $v_j$ and $v_{j'}$, then for any $v\in\tilde{N}_{c-k-1}(v_j)\cap \tilde{N}_{c-k-1}(v_{j'})$, we have some $t,t'\leq 2^{c-k-1}$, such that there are $\beta_{c-k-1}^2n^{(t+t')r-2}$ pairs of labelled sets $S_j$ and $S_{j'}$ such that $S_j$ is a $(H_{\text{det}},t)$-reachable set between $v_j$ and $v$
%and $S_{j'}$ is a $(H_{\text{det}},t)$-reachable set between $v_{j'}$ and $v$. Of these pairs, at most $$\beta_{c-k-1}(tr-1)(t'r-1)n^{(t+t')r-3}$$ are \emph{not} vertex disjoint. Thus, for large enough $n$, we get at least $\frac{\beta_{c-k-1}^2}{2}n^{(t+t')r-2}$ vertex disjoint pairs $S_j$ and $S_{j'}$ which together form a $(H_{\text{det}},t+t')$-reachable set. As we have $\epsilon n$ choices for $v$, this gives at least $\beta_{c-k}n^{(t+t')r-1}$ $(H_{\text{det}},t+t')$-reachable sets between $v_j$ and $v_{j'}$, a contradiction as $v_j$ and $v_{j'}$ are not $(H_{\text{det}},\beta_{c-k},\leq 2^{c-k})$-reachable and $t+t'\leq 2^{c-k}$.
\end{enumerate}
We define $U_j:=\left(\tilde{N}_{c-\ell-1}(v_j) \cup \{v_j\} \right)\setminus \left ( \bigcup_{j'\in[\ell]\setminus \{j\}}\tilde{N}_{c-\ell-1}(v_{j'}) \right )$ for $j\in [\ell]$, and 
$U_0:=V(G) \setminus \cup_{j\in[\ell]}U_j$. Now for $j\in [\ell]$, we have that $U_j$ is $(\bm{\cH}^{c-\ell-1};\eta_{c-\ell-1})$-closed. 
Indeed, if there was a $j\in[\ell]$ and $u_1,u_2 \in U_j$ not reachable, then $\{u_1,u_2\}\cup\{v_1,\ldots,v_\ell\}\setminus\{v_j\}$, is a 
larger family contradicting the definition of $\ell$. Thus, the $U_j$ almost form the partition we are looking for except that it remains to consider the vertices in $U_0$. For these, we greedily add them to  the other $U_j$: We have that for each $u\in U_0$, 
\begin{equation}
    \label{eq2}
|N_{(H_0,1),\alpha'}(u)\setminus U_0|\geq \alpha ' n - |U_0| %\stackrel{(ii)}
{\geq} \alpha ' n -\binom{\ell}{2}\epsilon n\geq \ell \epsilon n.\end{equation}
Here the second inequality holds due to (i), (ii) and the definition of the $U_j$; the final inequality holds by our choice of $\epsilon$. 
Thus, there is a $j$ such that $|N_{(H_0,1),\alpha'}(u)\cap U_j|\geq \epsilon n$, and we add $u$ to this $U_j$, arbitrarily choosing such a $j$ if there are multiple choices. 
Let $V_1,\ldots, V_\ell$ be the resulting partition. 

Applications of \Cref{concatenation lemma} show that
each $V_j$ is $(\bm{\cH}^{c};\eta_{c})$-closed.  Indeed suppose,  for example, that $w_1$ and $w_2$ are two vertices that lie in $U_0$ and are added to $U_j$ in the process of defining $V_j$. Then for each $i=1,2$, taking $W_i=N_{(H_0,1),\alpha'}(w_i)\cap U_j$,  an application of \Cref{concatenation lemma} with $U=W_1$ gives that for any $x\in U_j$, $w_1$ and $x$ are $(\bm{\cH}';\eta')$-reachable  where $\bm{\cH}'=\bm{\cH}(H_0,\leq 2^{c-\ell-1}+1)$ and $\eta'=\eps \alpha' \eta_{c-\ell-1}/2^{c-\ell}$. Another application of \Cref{concatenation lemma}, this time with $U=W_2$ then gives that $w_1$ and $w_2$ are $(\bm{\cH}'';\eta'')$-reachable with $\bm{\cH}''=\bm{\cH}(H_0,\leq 2^{c-\ell-1}+2)\subseteq \bm{\cH}^{c}$ and $\eta''=\eps \eta'\alpha'/(2^{c-\ell}+2)>\eta_{c-\ell}>\beta_3'$.  Showing other cases of reachability within each $V_j$ are similar.  
%\COMMENT{AT: replaced $(\bm{\cH}^{c-\ell+1};\eta_{c-\ell+1})$-closed with $(\bm{\cH}^{c-\ell};\eta_{c-\ell})$-closed. Think this is correct!}
%Note that $\bm{\cH}^{c-\ell} \subseteq \bm{\cH}^{c}$ and $\eta _{c-\ell} >\beta '_3$, so each $V_j$ is $(\bm{\cH}^{c};\beta '_3)$-closed.
We are now done since for each $j\in [\ell]$,
\[|V_j|\geq |U_j|\geq |N_{(H_0,1),\alpha'}(v_j)\setminus \left(N_{(H_0,1),\alpha'}(v_j)\cap U_0\right)|\stackrel{(ii)}{\geq} \alpha' n- \ell\epsilon n = \alpha n,\]
where $\alpha:=\alpha'-\ell\epsilon \gg \eta_{c-\ell}\geq \eta_c=\beta_3'$.
\end{proof}

The rough idea for how to handle Case 3 is  to run the same proof as in the other cases on each \emph{part} of the partition given by Lemma \ref{case3lemma}. The point of Lemma \ref{case3lemma} is that we recover the reachability within each part, albeit at the expense of allowing a family of possible paths used for reachability. However, in the process, we lose the minimum degree condition within each part. The purpose of the next proposition is to fix this, by adjusting parameters and making the partition coarser. Thus, we recover a minimum degree condition which is not quite as strong as what we had previously but good enough to work with in what follows.

\begin{figure}
    \centering
  \includegraphics[scale=0.9]{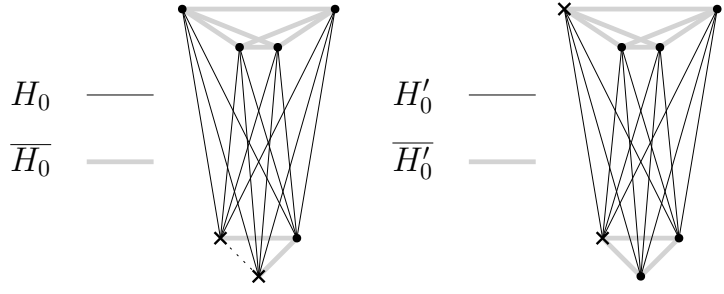}
    \caption{     \label{fig:H3} An example of $H_0$, $\overline{H_0}$, $H_0'$  and $\overline{H'_0}$  for Case 3  (c) $r=6, k=4, q=r^*=2$ (see Definitions \ref{def:H}--\ref{Hzero} and Proposition \ref{case3reachability} for the definitions). }
\end{figure}

\begin{proposition} \label{case3reachability}
Suppose $\gamma>0$ and $n,r,k,q\in \mathbb{N}$ such that $r/2< k\le r-1$, $r=k+q$ and $n$ is sufficiently large. Let $c:=\ceil{r/q}$ and let $H_0=(K^2_{k,q+1},2,2)$ as defined in Definition \ref{Hzero} and $H'_0=(K^2_{k,q+1},1,2)$ be the same graph with distinguished vertices in distinct parts of the bipartition\footnote{This is analogous to the graph $H_0'$ defined in Proposition \ref{case2reachability}.} (see Figure \ref{fig:H3}). We define the following family of vectors of $(r+1)$-vertex graphs (endowed with tuples of vertices):
\[\bm{\cH_3}:= \bigcup^{c(2^{c+1}+1)}_{t=3}\left\{\bm{H}\in\{H_0,H_0'\}^t:\bm{H}[1]=\bm{H}[t]=H_0  \mbox{ and } \bm{H}[i]=H_0' \mbox{ for some } 2 \leq i\leq t-1 \right\}, \]
where $\bm{H}[i]$ denotes the $i^{th}$ entry of $\bm{H}$.

Then for all $\eps>0$, there exists constants  $0<\beta_3(r,k,\gamma,\eps)\ll \alpha(r,k,\gamma)$ such that for any $n$-vertex graph $G$ with minimum degree $\delta(G)\geq (1-\frac{k}{r}+\gamma)n$ there is a partition $\cP$ of $V(G)$ into at most $c-1$ parts such that each part $U\in \cP$ satisfies the following:
\begin{enumerate}[label=(\roman*)]
%\item $s\leq c:=\ceil{\frac{r}{r-k}}$;
\item $|U|\geq \alpha n$; %for each $i\in [s]$;
\item  All but at most $\eps n$ vertices $v\in U$ satisfy
%For all $i\in [s]$, all but at most $\eps n$ vertices $v \in U_i$ have
$|N_G(v)\cap U|\geq (1-\frac{k}{r}+\frac{\gamma}{2})|U|$;
\item $U$ is $(\bm{\cH_3};\beta_3)$-closed.
%Defining $H_3:=K_{\ceil{r/2},\floor{r/2}+1}$ with $w_{31}$ and $w_{32}$ in distinct parts, and $t:=c(2^{c-2}+1)$, we have that for any $i\in[s]$ and choice of  $x,y\in U_i$,  $x$ and  $y$ are $({\bm{H}},\beta_3,t')$-reachable, where ${\bm{H}}$ is a family consisting only of the graphs $H_0$ and $H_3$ and $t' \leq t$;
%\item Every $U_i$ contains at least $\rho n^r$ copies of $H_3^-:=K_{\ceil{r/2},\floor{r/2}}$ and $\rho n^{r+1}$ copies of $H_3$.
\end{enumerate}

%Let $\gamma, n,k,r$ and $G$ be as in \Cref{case3lemma}. Then there exists constants $\rho,\delta,\eps,\beta_3> 0$ such that  $\gamma, \rho,\delta\gg \epsilon \gg \beta_3>0$ and a partition $\mathcal{P'}$ of the vertex set into $U_1,\ldots,U_{s}$ such that \begin{enumerate}[label=(\roman*)]
%\item $s\leq c:=\ceil{\frac{r}{r-k}}$;
%\item $|U_i|\geq \delta n$ for each $i\in [s]$;
%\item For all $i\in [s]$, all but at most $\eps n$ vertices $v \in U_i$ have $|N(v)\cap U_i|\geq (1-\frac{k}{r}+\frac{\gamma}{2})|U_i|$;
%\item Defining $H_3:=K_{\ceil{r/2},\floor{r/2}+1}$ with $w_{31}$ and $w_{32}$ in distinct parts, and $t:=c(2^{c-2}+1)$, we have that for any $i\in[s]$ and choice of  $x,y\in U_i$,  $x$ and  $y$ are $({\bm{H}},\beta_3,t')$-reachable, where ${\bm{H}}$ is a family consisting only of the graphs $H_0$ and $H_3$ and $t' \leq t$;
%\item Every $U_i$ contains at least $\rho n^r$ copies of $H_3^-:=K_{\ceil{r/2},\floor{r/2}}$ and $\rho n^{r+1}$ copies of $H_3$.
%\end{enumerate} 
\end{proposition}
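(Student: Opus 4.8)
The plan is to bootstrap from Lemma~\ref{case3lemma}, which already produces a partition of $V(G)$ into at most $c-1$ parts that are each $(\bm{\cH}^c;\beta_3')$-closed and of size at least $\alpha n$ (with $\beta_3' \ll \alpha$). That partition gives us properties (i) and (iii) almost immediately: property (i) is verbatim, and for (iii) we need to upgrade $(\bm{\cH}^c;\beta_3')$-closedness (closedness via vectors $(H_0,s)$ for $1\le s\le 2^c$) to $(\bm{\cH_3};\beta_3)$-closedness. The point here is that $\bm{\cH_3}$ consists of alternating vectors of $H_0$ and $H_0'$ with $H_0$ at both ends and at least one $H_0'$ somewhere in the middle, of length between $3$ and $c(2^{c+1}+1)$; I would show $\bm{\cH}^c$-reachability (a path of copies of $H_0$) can be converted into $\bm{\cH_3}$-reachability by inserting an $H_0'$ step. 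The cleanest way is to use \Cref{concatenation lemma}: given $x,y$ reachable by an $(H_0,s)$-path, one can find a large set $U$ of vertices $z$ reachable from $x$ via some short $\{H_0,H_0'\}$-vector (containing an $H_0'$) and from $z$ to $y$ via an $(H_0,s')$-path, then concatenate. So the first part of the proof is a short argument of this flavour, tracking that the resulting lengths stay within the range $[3, c(2^{c+1}+1)]$ (the generous upper bound on lengths in $\bm{\cH_3}$ is there precisely to absorb this).

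The real content is property (ii): within each part $U$, all but at most $\eps n$ vertices have $|N_G(v)\cap U|\ge (1-\tfrac{k}{r}+\tfrac{\gamma}{2})|U|$. This is where the partition from Lemma~\ref{case3lemma} is not good enough as stated, since a vertex $v\in U$ might have most of its neighbours outside $U$. The strategy, following \cites{hansolo, han2016complexity}, is to \emph{coarsen} the partition: iteratively merge any part $U$ that contains more than $\eps n$ vertices of `low internal degree' into another part. Concretely, call $v\in U$ \emph{bad} if $|N_G(v)\cap U| < (1-\tfrac{k}{r}+\tfrac{\gamma}{2})|U|$; since $\delta(G)\ge (1-\tfrac{k}{r}+\gamma)n$ and $n = \sum |U_i|$ with at most $c-1$ parts, a bad $v\in U$ must send at least roughly $\tfrac{\gamma}{2}|U| + \gamma n - \tfrac{\gamma}{2}n \ge \tfrac{\gamma}{2}n$ edges outside $U$ (using $|U|\le n$), hence a positive fraction of its neighbourhood lies in some other single part $U'$. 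If $U$ has more than $\eps n$ bad vertices, then by pigeonhole a linear-sized subset $S\subseteq U$ of them all have $\ge \tfrac{\gamma}{4(c-1)}n$ neighbours into the same other part $U'$; an application of \Cref{concatenation lemma} (with the role of the `middle set' played by those common neighbours) shows $U\cup U'$ is still closed with respect to an enlarged family of vectors — and this is exactly why $\bm{\cH_3}$ allows vectors up to length $c(2^{c+1}+1)$: each merge roughly adds at most a constant to the path length, and there are at most $c-1$ merges. So we merge $U$ into $U'$, reducing the number of parts; repeat. Since the number of parts is at most $c-1$ and strictly decreases at each merge, the process terminates, and at the end every part has at most $\eps n$ bad vertices — giving (ii) — while (i) is preserved (parts only grow) and (iii) holds with the final, enlarged family of vectors, which is contained in $\bm{\cH_3}$ for a suitable choice of the length bounds.

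To make the reachability upgrade under merging precise, I would set up a sequence of parameters $\beta_3' = \xi_0 \gg \xi_1 \gg \cdots \gg \xi_{c-1} =: \beta_3$ (depending on $r,k,\gamma,\eps$), where $\xi_{i+1}$ is the reachability constant surviving after $i+1$ merges. At each merge we apply \Cref{concatenation lemma} with $\epsilon$ there equal to $\tfrac{\gamma}{4(c-1)}$ (the fraction of the neighbourhood guaranteed into the target part), with $\bm{\cH_x}$ being the current family of vectors for $U$ and $\bm{\cH_y}$ the one for $U'$, plus we throw in the single-step $H_0'$ or $(H_0,1)$ vectors needed to link the newly merged vertices; this multiplies the reachability constant by a factor depending only on $c,\gamma,\eps$ and the current family sizes, all of which are bounded in terms of $r,k$, so finitely many iterations keep it positive. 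I also need the analogue of the `$N_{(H_0,1),\alpha'}(v)$ is large' observation from the proof of Lemma~\ref{case3lemma}, which follows the same way from \Cref{lem:supersat} and the minimum degree condition, to guarantee the common-neighbour sets used in the concatenations are linear-sized.

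The main obstacle I expect is bookkeeping rather than a conceptual hurdle: one has to verify carefully that after each merge the family of vectors witnessing closedness still fits inside $\bm{\cH_3}$ (in particular that every vector has $H_0$ at both ends, has an interior $H_0'$, and has length in $[3, c(2^{c+1}+1)]$), and that the cascade of concatenation-lemma applications — at most $c-1$ merges, each possibly needing a couple of extra linking steps, on top of the length-$2^c$ paths from $\bm{\cH}^c$ — really does stay within the stated length bound $c(2^{c+1}+1)$. The constant $c(2^{c+1}+1)$ is chosen to be comfortably larger than $(c-1)(2^c + O(1))$, so the arithmetic works out, but writing it cleanly requires care. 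A secondary point to be careful about is that the merging must be done so that a part never shrinks (so (i) is maintained) and so that the bad-vertex count is recomputed against the new, larger part after each merge; since enlarging $|U|$ only makes the threshold $(1-\tfrac{k}{r}+\tfrac{\gamma}{2})|U|$ proportionally larger while the internal degree of a previously-good vertex also grows by at least the number of its neighbours in the newly absorbed part, good vertices stay good, so no oscillation occurs.
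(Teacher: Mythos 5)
Your overall architecture (start from the partition of Lemma~\ref{case3lemma}, then coarsen until every part has at most $\eps n$ vertices of small internal degree) is sound and close in spirit to the paper, which coarsens in one shot by taking the connected components of an auxiliary graph on the parts whose edges record the presence of at least $\mu n^2$ edges of $G$ between two parts; your computation that a bad vertex sends at least $\gamma n/2$ edges out of its part, and your termination argument (the part count strictly decreases, and a single part has no bad vertices), are fine. The genuine gap is in how you maintain and upgrade reachability, and both mechanisms you name fail as stated. First, a single edge between a bad vertex $z\in U$ and a neighbour $w\in U'$ cannot be promoted to a ``single-step $H_0'$ or $(H_0,1)$ vector'': since $k>r/2$ the codegree of two vertices can be zero and, more generally, a given edge need not extend to even one copy of $K^2_{k,q+1}$ (let alone $\Omega(n^{r-1})$ labelled copies), so adjacency alone gives no reachability. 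Second, Lemma~\ref{concatenation lemma} with ``those common neighbours'' as the middle set cannot be invoked, because that lemma requires the middle vertex to be reachable to \emph{both} $x$ and $y$, and reachability from a vertex of $U$ to a vertex of $U'$ is exactly what has not yet been established at the moment of the merge. The same issue occurs in your first step, where you posit ``a large set of $z$ reachable from $x$ via a short vector containing an $H_0'$'' without saying where any copies of $H_0'$ come from.

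The missing idea, which is the heart of the paper's proof (Claim~\ref{cc}), is to manufacture the $H_0'$ bridges by supersaturation over \emph{many} edges between two linear-sized sets rather than edge by edge: if $S_x$ consists of vertices $(\bm{\cH}^c;\beta_3')$-reachable to $x$, $S_y$ of vertices reachable to $y$, and there are at least $\mu n^2$ edges of $G$ between $S_x$ and $S_y$, then Lemma~\ref{lem:supersat} (blowing an edge up to $K^2_{k,q+1}$) yields $\Omega(n^{r+1})$ embeddings of $H_0'$ with $w_1'$ landing in $S_x$ and $w_2'$ in $S_y$; averaging over which vectors of $\bm{\cH}^c$ witness the reachability of the two landing points, and discarding the non-disjoint choices, produces the required number of $(\bm{H}_x,H_0',\bm{H}_y)$-paths from $x$ to $y$, and such a vector lies in $\bm{\cH_3}$ (ends $H_0$, interior $H_0'$). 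Your merging rule does supply the hypothesis for this (at least $\eps n/(c-1)$ bad vertices each sending $\Omega(n)$ edges into $U'$ gives $\Omega(n^2)$ edges between the two parts, and inside a part the minimum degree together with the small out-degree gives $\Omega(n^2)$ internal edges), so with this bridge inserted both your within-part upgrade and your cross-part closedness go through. One further caution: your claim that each merge ``adds at most a constant to the path length'' is wrong if, after a merge, the merged part's already-bridged vectors are reused in the next bridge, since then lengths roughly double per merge and can exceed $c(2^{c+1}+1)$ already for $c=4$; to stay within the bound you should, as the paper does, always route through the original parts of Lemma~\ref{case3lemma} (all-$H_0$ vectors of length at most $2^c$ on either side of each bridge), using at most $c$ bridges in total. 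Finally, your parenthetical ``good vertices stay good'' is false in general (a good vertex of $U$ may have few neighbours in $U'$), but it is not needed: (ii) is simply the stopping condition of your process.
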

\begin{proof}
This is a simple case of adjusting the partition already obtained after applying \Cref{case3lemma}. Let $\alpha,\beta_3'$ be defined as in the outcome of Lemma \ref{case3lemma} and let $\mathcal{P'}$ be the partition of $V(G)$ obtained, with vertex parts denoted $V_1,\ldots,V_{s}$. Fix $\mu:= \eps\alpha\gamma/2c^3$. We create an auxiliary graph $J$ on vertex set $\{V_1,\ldots,V_{s}\}$ where for $i\neq j\in [s]$ we have an edge $V_iV_j$ in $J$ if and only if there are at least $\mu n^2$ edges in $G$ with one endpoint in $V_i$ and one in $V_j$. Then our new partition $\cP$ in $G$ will come from the connected components of $J$. That is, if $C_1,\ldots,C_{t}$ are the components of $J$, then for $i\in[t]$, we define $U_i:=\cup_{j:V_j\in C_i}V_j$ and let $\cP$ consist of the $U_i$ with $i\in[t]$.  Then certainly point $(i)$  of the hypothesis is satisfied for all $U_i$. Also $(ii)$ is satisfied. %with $\eps:= 2\mu c^3/ \gamma$. 
Indeed, suppose there exists $i\in[t]$, with $d_{U_i}(v):=|N_G(v)\cap U_i|<(1-\frac{k}{r}+\frac{\gamma}{2})|U_i|$ for at least $\eps n$ vertices of $U_i$. Thus, for such vertices $|N_G(v)\cap (V(G)\setminus U_i)|\geq \frac{\gamma\alpha}{2}n$ and by averaging there exists some $V_{j(v)}\in \cP '$ such that $V_{j(v)}\cap U_i=\emptyset$ and $d_{V_{j(v)}}(v)\geq \frac{\gamma\alpha}{2c}n$. 
We average again to conclude that there is some $j,j'\in[s]$ such that $V_j\subset U_i$, $V_{j'}\cap U_i=\emptyset$ and $V_j$ contains at least ${\eps} n/{c^2}$ vertices $v$ 
%\COMMENT{AT: changed $\frac{\eps}{c} n$ to $\frac{\eps}{c^2} n$ and $\frac{\gamma\alpha}{2c^2}n$ to $\frac{\gamma\alpha}{2c}n$. Think that is correct}
which have degree into $V_{j'}$ $d_{V_{j'}}(v) \geq \frac{\gamma\alpha}{2c}n$. This contradicts our definition of $J$ as then $V_j V_{j'}$ should be an edge of $J$ and thus in the same part of $\cP$.

%ndeed, suppose we have $\eps n$ vertices in some $U_i$ with degree at least $\frac{\gamma}{2} n$ outside of $U_i$. Then at least $\frac{\eps}{c}n $ of these points were belonging to the same $V_j\subseteq U_i$ and all of these vertices have degree at least $\frac{\gamma}{2c}n$ to some part $V_{j'}$ such that $V_{j'}\nsubseteq U_i$. Thus there exists some $j^*$ such that $V_{j^*}\nsubseteq U_i$, and there are at least $\frac{\eps}{c^2}n$ vertices in $V_j\subseteq U_i$ with  at least $\frac{\gamma}{2c}n$ neighbours in $V_{j^*}$. But then there are $\mu n^2$ edges between $V_j$ and $V_{j^*}$, a contradiction.  For point $(v)$, we appeal to \Cref{lem:supersat}. 
%Indeed, there are at least $\rho' n^2$ edges in each $U_i$ where $\rho'=\delta\gamma-s'\mu>0,$ counting the degrees of vertices in $U_i$ and discarding edges that leave $U_i$. Thus \Cref{lem:supersat} gives at least $\rho n^r$ copies of $H_3^-$ in $U_i$ and similarly for $H_3$ for some suitably defined $\rho$. 

Thus it only remains to establish reachability. We begin by proving the following claim which is a slight variation of Lemma~\ref{concatenation lemma}. 
\begin{claim}\label{cc}
 Let $\bm{\cH}^c$ be as defined in Lemma \ref{case3lemma}. Suppose $x,y\in V(G)$ and that there exist (not necessarily disjoint)
 sets $S_x, S_y\subset V(G)$ such that for any $z_x\in S_x$, $x$ and $z_x$ are  $(\bm{\cH}^c; \beta_3')$-reachable and for any $z_y\in S_y$, $y$ and $z_y$ are  $(\bm{\cH}^c; \beta_3')$-reachable. If there exists at least $\mu n^2$ edges with one endpoint in $S_x$ and one endpoint in $S_y$, then $x$ and $y$  are $(\bm{H};\beta_3'')$-reachable for some $\beta_3''=\beta_3 ''(\mu, \beta '_3,c)>0$ and $\bm{H}\in \bm{\cH_3}$ of length at most $2^{c+1}+1$. 
\end{claim}
Indeed letting $w'_1, w'_2$ be the distinguished vertices of $H_0'$, we have, by Lemma \ref{lem:supersat}, that there are at least $\mu'n^{r+1}$ embeddings of $H_0'$ into $G$ which map $w'_1$ to $S_x$ and $w_2'$ to $S_y$ for some $\mu'=\mu'(\mu)>0$. 
By averaging, there exists $\bm{H}_x,\bm{H}_y\in \bm{\cH}^c$ such that there are $\frac{\mu'}{2^{2c}}n^{r+1}$ embeddings of $H'_0$ such that the image of $w_1'$ and $x$ are $(\bm{H}_x;\beta_3')$-reachable and the image of $w_2'$ and $y$ are $(\bm{H}_y;\beta_3')$-reachable. By considering the embeddings of $\bm{H}_x$, $\bm{H}_y$ and $H_0'$  which join to give an embedding of an $(\bm{H}_x,H_0',\bm{H}_y)$-path (that is, ignoring choices of embeddings which are not vertex disjoint), we see that $x$ and $y$ are $((\bm{H}_x,H_0',\bm{H}_y),\beta_3'')$-reachable with $\beta_3'':=\frac{\mu'\beta_3'^2}{2^{2c+1}}$. This completes the proof of the claim.

\medskip

Recall the partition $\cP'=\{V_1,V_2,\ldots,V_s\}$. Further consider any part $U \in \mathcal P$.
First suppose $U=V_j$ for some $j$.  Now given any 
 $x, y\in U$, by Lemma \ref{case3lemma},  $x$ and $y$ are already $(H_0,s)$-reachable for some $s\leq 2^c$. However, $(H_0,s)$ 
does not contain a copy of $H'_0$ and so is not a valid vector in the family $\bm{\cH_3}$. We therefore apply Claim~\ref{cc} with $S_x=S_y=V_j\setminus \{x, y\}$, 
to conclude that $x$ and $y$ are  $(\bm{H};\beta_3'')$-reachable for some $\bm{H}\in \bm{\cH_3}$ of length at most $2^{c+1}+1$.  Indeed since $V_j$  sends fewer than $\mu n^2$ edges out to any other part $V_i$ of $\cP'$ and $|V_j|\geq \alpha n$, the minimum degree condition on $G$ ensures that
there are at least $2\mu n^2$ edges in $G[V_j]$ and hence $\mu n^2$ edges in $S_x=S_y$ allowing Claim~\ref{cc} to be applied. 

Next suppose $U$ is the union of more than one part from $\cP'$.
If $x\in V_i\subseteq U$ and $y\in V_j \subseteq U$, for $i\neq j\in [s]$ and $V_iV_j\in E(J)$ as defined above, we can again  apply Claim~\ref{cc} to conclude $x$ and $y$ are  $(\bm{H};\beta_3'')$-reachable for some $\bm{H}\in \bm{\cH_3}$ of length at most $2^{c+1}+1$. Therefore, we just need to establish reachability for vertices $x,y$ such that $x\in V_i$, $y\in V_j$ with $V_iV_j\notin E(J)$ but such that $V_i$ and $V_j$ are in the same component of $J$. 
If $i\not =j$, there is a path of (at most $c$) edges  from $V_i$ to $V_j$ in $J$; if $i=j$ there is a walk of length $2\leq c$ in $J$ that starts and ends at $V_i=V_j$ (i.e. traverse a single edge in $J$).
In both cases
 we can repeatedly apply Lemma \ref{concatenation lemma} to derive that $x$ and $y$ are $(\bm{\cH_3};\beta_3)$-reachable with $\beta_3:=\left(\frac{\alpha \beta_3''}{2^{(2^{c+1}+2)}}\right)^c.$ It is crucial here that we apply Claim~\ref{cc} in all cases to establish the reachability here (even when $i=j$) in order to guarantee that the  vectors witnessing the reachability contain a copy of $H_0'$ and hence indeed lie in $\bm{\cH_3}$.
\end{proof}

We remark that the reason for the introduction of $H_0'$ in Proposition~\ref{case3reachability} 
%\COMMENT{AT: changed from Proposition \ref{case1reachability}... I guess that's what you meant?}
is two-fold. Firstly, it allows us to establish reachability  between parts from Lemma \ref{case3lemma} which have many edges between them. Moreover, as in Proposition \ref{case2reachability}, we have that for every $\bm{H}\in\bm{\cH_3}$, if $\overline{P}$ is an $\overline{\bm{H}}$-path, then the endpoints of $\overline{P}$ are in distinct connected components on $\overline{P}$ (see Figure \ref{fig:Hpaths} for an example), which is something that we will require later.

%%%%%%%%%%
\subsubsection{Absorbing gadgets}

%Recall the $(r+1)$-vertex graphs we have used so far for reachability. We defined $H_0$ to be $K^{r^*}_{k,\ldots,k,q+1}$ in \Cref{Hzero}, where $q,r^*\in \mathbb{N}$ were defined so that $k(r^*-1)+q=r$ and $0\leq q \leq k$. We also endowed this graph with two identified vertices $\{w_{01},w_{02}\}$ in the part of size $q+1$. In case 1 (\Cref{case1reachability}), we then adapted this graph, defining $H_1$ to be $H_0$ with edges from $w_{01}$ to all other vetrices in the part of size $q+1$ apart from $w_{02}$. 
%In case 2 (\Cref{case2reachability}), we defined $H_2$ which is the same graph as $H_0$, except we now identify $w_{21}$ and $w_{22}$ in distinct parts of size $k$. 
%Finally, in case 3 (\Cref{case3reachability}), we used the graph $H_3=K_{\ceil{r/2},\floor{r/2}+1}$ with identified vertices $w_{31}$ and $w_{32}$ in distinct parts of this bipartite graph. We now look to build reachable gadgets using these reachable paths and we will also use the $r$-vertex graphs, $H_{\text{det}}=K^{r^*}_{k,\ldots,k,q}$ as defined in \Cref{Hzerominus}, and for case 3, the graph $H_3^-=K_{\ceil{r/2},\floor{r/2}}$ as defined in \Cref{case3reachability}. 

In this section, we will focus on larger subgraphs which we look to embed in our graph and which will be used as part of an absorbing structure. These are formed by piecing together the $\bm{H}$-paths of the previous section and the aim will be to obtain subgraphs with even more flexibility, in that they will be able to contribute to a tiling in many ways. The key definition is a graph which we call an absorbing gadget.

\begin{figure}
    \centering
  \includegraphics[scale=1]{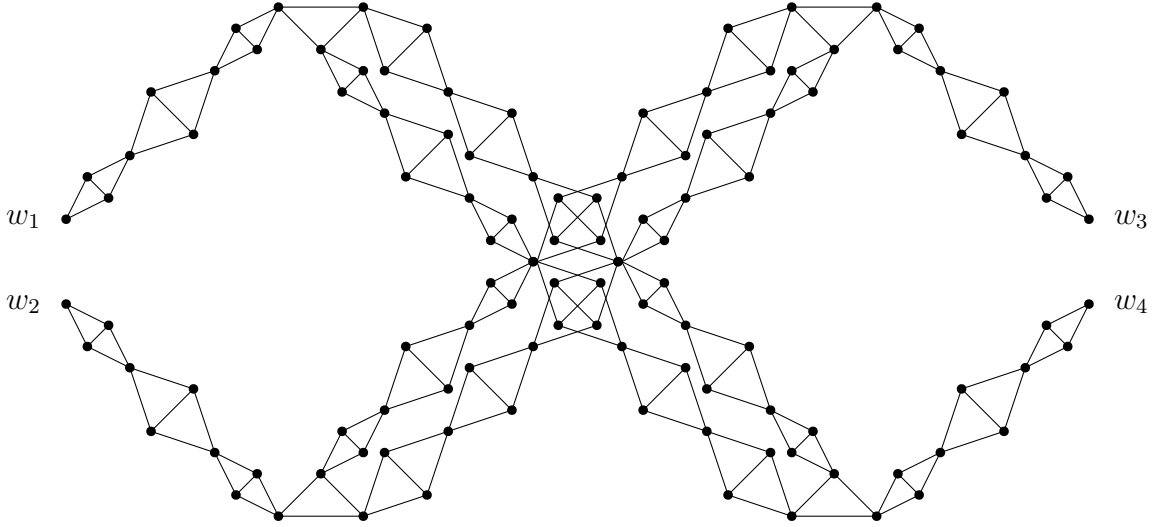}
    \caption{An $(\bm{\underline{H}},K_3)$-absorbing gadget  with $\bm{\underline{H}}:=\{\bm{H}_{i,j}:i\in[3],j\in[4]\}$ such that each $\bm{H}_{i,j}=(K_4^-,3)$. The base set of the absorbing gadget is $W:=\{w_1,w_2,w_3,w_4\}$. }
    \label{fig:Absorbing Gadget}
\end{figure}

\begin{definition} \label{absorbing gadgets}
Let $r,s\in \mathbb{N}$, let $H$ be an $r$-vertex graph and let  $\bm{\underline{H}}:=\{\bm{H}_{i,j}:i\in[r],j\in[s]\}$ be a labelled family of vectors of $(r+1)$-vertex graphs (with tuples of distinguished vertices). Then an $(\bm{\underline{H}},H)$-absorbing gadget is a graph obtained by the following procedure. Take disjoint $\bm{H}_{i,j}$-paths for $1\leq i \leq r$ and $1\leq j\leq s$ and denote their endpoints by $u_{i,j}$ and $v_{i,j}$. Place a copy of $H$ on $\{v_{i,j}:i\in [r]\}$ for each $j\in [s]$. For $2\leq i\leq r$, identify all vertices $\{u_{i,j}: 1\leq j\leq s\}$ and relabel this vertex $u_i$. Finally relabel $u_{1,j}$ as $w_j$ for $j\in [s]$ and let $W:=\{w_1,w_2,\ldots,w_s\}$, which we refer to as the \emph{base set} of vertices for the absorbing gadget.

%Let $X=\{x_1,\ldots,x_t\}\subset V(G)$ be a subset of vertices such that $|X|=t\leq 40$ and let $\cH$ be a set of $(r+1)$-vertex graphs and $\mathcal{J}$ a set of $r$-vertex graphs and $\ell\in \mathbb{N}$. A $(\cH,\mathcal{J},\ell)$-\emph{absorbing gadget} $R\subseteq V(G)$ for $X$ is a subset of vertices of $G$, disjoint from $X$ such that there is some labelling of $R$ as 
%$$R=\{y_2,y_3,\ldots,y_{r}\}\sqcup\{z_{1s},\ldots,z_{rs}:1\leq s\leq t\}\sqcup \{Y_{is}:1\leq i\leq r, 1\leq s\leq t\},$$
%such that 
%\begin{itemize}
%\item For each $1\leq s\leq t$, there is some $H^s\in \mathcal{J}$ such that $\{z_{1s},\ldots,z_{rs}\}$ hosts a copy of $H^s$.
%\item For each $1\leq s\leq t$ and $1\leq i\leq r$, there is some vector ${\bm{H}}_{is}$ of $(r+1)$-vertex graphs whose entries lie in $\cH$ such that $Y_{is}$ is a $({\bm{H}}_{is},\ell_{is})$-absorbing path between $y_{is}$ and $z_{is}$ for some $\ell_{is}\leq \ell$, where we have $y_{1s}:=x_s$ for $s\in [t]$ and $y_{is}:=y_i$ for all $i\geq 2$.
%\end{itemize}
%We say two $(\cH,\mathcal{J},\ell)$-absorbing gadgets for $X$ are isomorphic if the choices of (labelled) $H^s\in \mathcal{J}$, $\ell_{is}\in \mathbb{N}$ and ${\bm{H}}_{is}\in \cH^{\ell_{is}}$ in the above two bullet points are the same for both absorbing gadgets. We say that a vertex set $X$ is $(\cH,\mathcal{J},\ell,\beta)$-\emph{reachably compatible} if there exists some $q\in \mathbb{N}$ such that $X$ has at least $\beta n^q$ distinct isomorphic $(\cH,\mathcal{J},\ell)$-absorbing gadgets of size $q$.
\end{definition}

An example of an absorbing gadget is given in Figure \ref{fig:Absorbing Gadget}. 
Recall that we always consider $K_{r+1}^-$ to have two distinguished vertices which form the only non-edge of the graph. 
In the previous section we commented on how a $(K_{r+1}^-,t)$-path $P$ with endpoints $x$ and $y$ has two $K_r$-tilings 
covering all but one vertex; the first misses $x$, the other misses $y$.
 The point of the absorbing gadget is to generalise this property, giving a graph which can use any one of a number of vertices (the base set) in a $K_r$-tiling. In more detail, suppose $s,t^*\in\mathbb{N}$ and $\bm{\cH}=\bm{\cH}(K_{r+1}^-,\leq t^*)=\{(K_{r+1}^-,t):1\leq t\leq t^*\}$ as defined in the previous subsection. 
Let $\bm{\underline{H}}:=\{\bm{H}_{i,j}:i\in[r],j\in[s]\}$ where each $\bm{H}_{i,j}$ is an element from $\bm \cH$.
Then an $(\bm{\underline{H}},K_r)$-absorbing gadget $F$ with base set $W=\{w_1,\ldots,w_s\}$ 
%\COMMENT{AT: made the change here}
has the property that for \emph{any} $j\in[s]$, there is a $K_r$-tiling covering precisely $(F\setminus W)\cup \{w_j\}$. Indeed, %adopting the notation of Definition~\ref{absorbing gadgets}, 
we have that for all $j'\neq j$ and $i\in[r]$, there is a $K_r$-tiling of the $\bm{H}_{i,j'}$-path $P_{i,j'}$ which uses\footnote{We label all vertices in this discussion as in Definition \ref{absorbing gadgets}.} $v_{i,j'}$ and not the other endpoint of $P_{i,j'}$. Then there is a tiling of the $\bm{H}_{1,j}$-path which uses $w_j$, a tiling of the $\bm{H}_{i,j}$-path for $2\leq i\leq r$ which uses $u_i$ and a copy of $K_r$ on $\{v_{i,j}:i\in [r]\}$ which completes the desired $K_r$-tiling.  

As in the previous subsection, we begin by showing that there are many absorbing gadgets in the deterministic graph. Again, although we are interested in $(\bm{\underline{H}},K_r)$-absorbing gadgets for some $\bm{\underline{H}}$ consisting of vectors, all of whose entries are $K_{r+1}^-$, we split the edges of our absorbing gadget and rely on the deterministic graph to provide many copies of a subgraph of the gadget. In particular, we will use here our paradigm $H_{\text{det}}$, defined in Definition \ref{Hzerominus}. The following general proposition allows us to show that we can find many absorbing gadgets if all the vertices which we hope to map the base set to, are reachable to each other. 

%\begin{definition} \label{absorbing gadget}
%Let $K_{r+1}^-$ denote the complete $r+1$ vertex graph with an edge missing between two identified vertices (in the sense of \Cref{reachable paths}). Then a $(\{K_{r+1}^-\},\{K_r\},\ell)$-absorbing gadget, say $A$, for a set $X$ is called an \emph{($\ell$-bounded) absorbing gadget} for $X$. It has the following key property: For any choice of one $x\in X$, there is a $K_r$-tiling of $A\cup\{x\}$. Indeed, using the notation of \Cref{reachable gadgets}, one can tile $\cup_{i\in[r]} Y_{is}\cup\{z_{is}\}$ for each $i$ such that $x\neq x_i$ and for $j$ such that $x=x_j$, there is a tiling of $x_j\cup Y_{1j}$, of $y_{i}\cup Y_{ij}$ for $2 \leq i\leq r$ and a copy of $K_r$ on $\{z_{1j},\ldots,z_{rj}\}$. 
%\end{definition}

\begin{definition}\label{def1} Let $r,s \in \mathbb N$.
Let $\bm{\cH}$ be a finite set of vectors, such that each entry of each vector in $\bm{\cH}$ is an $(r+1)$-vertex graph with a tuple of distinguished vertices. 
We write $\bm{\cH}(r \times s)$ for the collection of all ordered labelled sets $\bm{\underline{H}}:=\{\bm{H}_{i,j}:i\in[r],j\in[s]\}$ where each $\bm{H}_{i,j}$ is an element from $\bm \cH$.
If $\bm{\cH}$ consists of a single vector $\bm H$ we write $\bm H(r \times s):=\bm{\cH}(r\times s)$. That is, $\bm H(r \times s)$ is the ordered labelled (multi)set with each element a copy of $\bm H$.
\end{definition}
%\COMMENT{AT: added definition}

\begin{proposition} \label{prop:manyabsorbinggadgets}
Let $\alpha,\gamma,\beta'>0$, $q,k,r \in \mathbb{N}$ and let $\bm{\cH}$ be a finite set of vectors, such that each entry of each vector in $\bm{\cH}$ is an $(r+1)$-vertex graph with a tuple of distinguished vertices. 

Then there exists $\beta=\beta(\alpha,\gamma,\beta',q,k,r,\bm{\cH})>0$, such that for sufficiently large $n$, if $G$ is an $n$-vertex graph with vertex subset $U\subseteq V(G)$ such that $U$ is  $(\bm{\cH};\beta')$-closed, $|U|\geq \alpha n$ and $\delta(G[U])\geq (1-\frac{k}{r}+\gamma)|U|$, then for any set $X=\{x_1,\ldots,x_s\}\subset U$ with $|X|\leq q$, there exists some $\bm{\underline{H}}\in \bm{\cH}(r \times s)$ and some $(\bm{\underline{H}},H_{\text{det}})$-absorbing gadget $F$ with base set $W=\{w_1,\ldots,w_{s}\}$ such that there are at least $\beta n^{v(F)-s}$ embeddings of $F$ in $G$ which map $w_i$ to $x_i$ for $i\in [s]$.
\end{proposition}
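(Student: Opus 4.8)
The plan is to build the desired absorbing gadget greedily in two stages: first I would assemble a constant-sized \emph{core} consisting of $r-1$ ``hub'' vertices together with $s$ vertex-disjoint copies of $H_{\text{det}}$ inside $U$, and then I would attach this core to $X$ by reachable paths, which exist in abundance since $U$ is $(\bm{\cH};\beta')$-closed. The one structural subtlety, addressed below, is that the statement demands a \emph{single} graph $F$ (hence a single choice of $\bm{\underline{H}}$) admitting many embeddings, so the reachability vectors cannot be chosen adaptively per embedding; I would handle this by generating many cores first and only then pigeonholing on the (finitely many) possible assignments of vectors from $\bm{\cH}$.

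For the core: since $\delta(G[U])\ge(1-\tfrac{k}{r}+\gamma)|U|$ and $r=k(r^{\ast}-1)+q$ with $0<q\le k$, every family of at most $r^{\ast}-1$ vertices of $U$ has at least $\bigl(1-(r^{\ast}-1)(\tfrac{k}{r}-\gamma)\bigr)|U|=\bigl(\tfrac{q}{r}+(r^{\ast}-1)\gamma\bigr)|U|=\Omega(n)$ common neighbours inside $U$, so building greedily yields $\Omega(n^{r^{\ast}})$ labelled copies of $K_{r^{\ast}}$ inside $U$; applying Lemma~\ref{lem:supersat} with $J=K_{r^{\ast}}$ and all prescribed sets equal to $U$ then gives $\Omega(n^{r})$ labelled copies of $H_{\text{det}}$ (the blow-up of $K_{r^\ast}$ with parts of sizes $k,\dots,k,q$) inside $U$. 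I would then pick an ordered tuple of distinct hub vertices $u_2,\dots,u_r\in U\setminus X$ followed by pairwise disjoint labelled copies $D_1,\dots,D_s$ of $H_{\text{det}}$ in $U$, each avoiding $X\cup\{u_2,\dots,u_r\}$ and the earlier copies; since all of $r,s,q,k$ are constants and hitting a fixed vertex costs only a factor $O(1/n)$, this produces at least $\Omega(n^{r-1})\cdot\Omega(n^{rs})=\Omega(n^{r-1+rs})$ \emph{core tuples} $\mathbf{c}=\bigl((u_i)_{i=2}^{r},(D_j)_{j=1}^{s}\bigr)$. Write $v_{i,j}$ for the $i$-th vertex of $D_j$, and set $p_{i,j}:=x_j$ if $i=1$ and $p_{i,j}:=u_i$ otherwise.

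Next I would fix $\bm{\underline{H}}$ and count. For each core tuple $\mathbf{c}$ and each $(i,j)\in[r]\times[s]$, the vertices $p_{i,j},v_{i,j}$ both lie in $U$, so by closedness they are $(\bm{H};\beta')$-reachable for some $\bm{H}\in\bm{\cH}$; fixing one such choice defines an assignment $(i,j)\mapsto\bm{H}_{i,j}(\mathbf{c})$ taking one of at most $|\bm{\cH}|^{rs}=O(1)$ values, so by pigeonhole some fixed assignment $\bm{\underline{H}}^{\ast}=\{\bm{H}^{\ast}_{i,j}\}\in\bm{\cH}(r\times s)$ (see Definition~\ref{def1}) is realised by $\Omega(n^{r-1+rs})$ of the core tuples. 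Let $F$ be the associated $(\bm{\underline{H}}^{\ast},H_{\text{det}})$-absorbing gadget with base set $W=\{w_1,\dots,w_s\}$ and let $t_{i,j}$ be the length of $\bm{H}^{\ast}_{i,j}$. For each of these $\Omega(n^{r-1+rs})$ cores I would embed the core of $F$ accordingly ($u_i\mapsto u_i$, $w_j\mapsto x_j$, and the $i$-th vertex of the $j$-th copy of $H_{\text{det}}$ in $F$ to $v_{i,j}$) and then, processing the $rs$ pairs $(i,j)$ in turn, embed the internal vertices of the $\bm{H}^{\ast}_{i,j}$-path joining $p_{i,j}$ to $v_{i,j}$: reachability supplies $\ge\beta'n^{t_{i,j}r-1}$ such path embeddings with the prescribed endpoints, and discarding the $O(n^{t_{i,j}r-2})$ that meet the core or an earlier path leaves $\ge\tfrac{\beta'}{2}n^{t_{i,j}r-1}$ choices. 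Distinct cores, and distinct path choices within a core, give distinct embeddings, and a direct count gives $v(F)-s=(r-1)+rs+\sum_{i,j}(t_{i,j}r-1)=(r-1)+r\sum_{i,j}t_{i,j}$, so the number of embeddings of $F$ fixing $w_j\mapsto x_j$ for all $j$ is at least $\Omega\bigl(n^{\,r-1+rs}\bigr)\cdot\prod_{i,j}\Omega\bigl(n^{\,t_{i,j}r-1}\bigr)=\Omega\bigl(n^{\,v(F)-s}\bigr)$, which is $\ge\beta n^{v(F)-s}$ for a suitable $\beta=\beta(\alpha,\gamma,\beta',q,k,r,\bm{\cH})>0$.

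The main obstacle is precisely the quantifier issue flagged at the outset: one must commit to a single $F$ (and hence a single $\bm{\underline{H}}$) before counting, which forces the ``cores first, pigeonhole second'' structure rather than a naive vertex-by-vertex build. Beyond that the argument is routine greedy embedding together with Lemma~\ref{lem:supersat}, the only computation needing care being the exponent bookkeeping $v(F)-s=(r-1)+r\sum_{i,j}t_{i,j}$, which is what makes the product of the stagewise counts come out to exactly $n^{v(F)-s}$.
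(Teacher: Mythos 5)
Your proposal is correct and follows essentially the same route as the paper: greedily choose $r-1$ hub vertices and $s$ disjoint copies of $H_{\text{det}}$ using the minimum degree condition together with Lemma~\ref{lem:supersat}, then attach reachable paths supplied by the $(\bm{\cH};\beta')$-closedness, with your exponent bookkeeping $v(F)-s=(r-1)+r\sum_{i,j}t_{i,j}$ agreeing with the paper's count. The only difference is where the averaging over the finitely many gadget types occurs: you pigeonhole over the $|\bm{\cH}|^{rs}$ vector assignments at the core stage before embedding the paths, whereas the paper first counts ordered $Q$-vertex sets hosting \emph{some} gadget from the finite family $\cF_s$ and then averages over gadget types (losing factors $Q!$ and $|\cF_s|$) -- both correctly resolve the requirement that a single $F$ be fixed before counting.
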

\begin{proof}
Firstly notice that for a fixed $s\leq q$, there is a finite number (i.e.~$|\bm \cH|^{rs}$) of $(\bm{\underline{H}},H_{\text{det}})$-absorbing gadgets $F$ such that $\bm{\underline{H}}\in \bm{\cH}(r\times s)$ and $F$ has a base set of size $s$. 
Let $\cF_s$ be the set of all such absorbing gadgets, let $f:=|\cF_s|$ and set $Q:=\max\{|F|-s: F\in \cF_s\}$. We claim that there is some $\beta''=\beta''(\alpha,\gamma,\beta',q,k,r,\bm{\cH})>0$ such that with $G$ and $U$ as in the statement of the proposition and $X\subset U$ of size $s$, there are at least $\beta''n^Q$ subsets $S\subseteq V(G)\setminus X$ of $Q$ ordered vertices such that there is an embedding of some $F\in \cF_s$ in $G$ which maps the base set of $F$ to $X$ and the other vertices to a subset\footnote{In particular, if $|F|<Q$ then not all of the vertices of $S$ are used in this embedding.} of $S$.  
%\COMMENT{PM:footnoted}
Given this claim, the conclusion of the proposition follows easily. Indeed, by averaging we get that there is some $F\in \cF_s$ and at least $({\beta''}/{f}) n^Q$ ordered subsets $S$ of $Q$ vertices in $V(G)$ as above, that correspond to an embedding of $F$.
%such that for each such $S$ there is an embedding of $F$ which maps $W$ to $X$ and the remaining vertices of $F$ into $S$. 
Then setting $\beta:={\beta''}/({Q!f})$, we get that there must be at least $\beta n^{|F|-s}$ embeddings of $F$ in $G$ which map the base set to $X$. Indeed 
for each such embedding $F'$ of $F$, the vertex set $V(F')\setminus X$ lies in at most $Q!n^{Q-(|F|-s)}$ different ordered sets of vertices $S \subseteq V(G)$.

%if there were fewer such embeddings of $F$, one could not get so many ordered subsets as above after considering all possible extensions of embeddings with isolated vertices and all possible orderings of the resulting $Q$-subsets.

\smallskip

So it remains to find these $\beta''n^{Q}$ ordered subsets $S$. We will show that $S$ can be generated in a series of steps so that every time we choose some $a$ vertices, we have $\Omega(n^a)$ choices. We will use the notation of Definition \ref{absorbing gadgets}. Firstly we select $r-1$ vertices $Y=\{y_2,y_3,\ldots,y_r\}$  in $U\setminus X$ which we can do in $\binom{|U\setminus X|}{r-1}=\Omega(n^{r-1})$ many ways. Now repeatedly find disjoint copies of $H_{\text{det}}$ in $U\setminus (X\cup Y)$ and label these $\{z_{i,j}:1\leq i\leq r, 1\leq j\leq s\}$ such that $\{z_{i,j}:1\leq i\leq r\}$ comprise a copy of $H_{\text{det}}$ for each $j\in [s]$. In order to do this we repeatedly apply Lemma \ref{lem:supersat} and the degree condition which we can take to be $\delta(G[U])\geq (1-\frac{k}{r}+\frac{\gamma}{2})|U|$ (ignoring any neighbours of vertices that have already been chosen in $S$). 
Hence there are $\Omega (n^{rs})$ choices for these copies of $H_{\text{det}}$.

Now for $2\leq i\leq r$ and $1\leq j\leq s$, we have that $y_i$ and $z_{i,j}$ are $(\bm{H}_{i,j},\beta')$-reachable for some $\bm{H}_{i,j}\in\bm{\cH}$ of length $t_{i,j}$ say. 
Thus there are $\beta' n^{rt_{i,j}-1}$ embeddings of an  $\bm{H}_{i,j}$-path $P$ in $G$ which map the endpoints of $P$ to $\{y_i,z_{i,j}\}$. We ignore those choices of embeddings of $P$ which use previously chosen vertices of $S$, of which there are $O(n^{rt_{i,j}-2})$. Similarly, for $1\leq j\leq s$, $x_{j}$ and $z_{1,j}$ are $(\bm{H}_{1,j},\beta')$-reachable for some $\bm{H}_{1,j}\in\bm{\cH}$, so select an embedding of an $\bm{H}_{1,j}$-path in $G$ which maps the endpoints to $\{x_{j},z_{1,j}\}$ and has all other vertices disjoint from previously chosen vertices. This gives an embedding of an $(\bm{\underline{H}}, H_{\text{det}})$-absorbing gadget in $G$ which maps the base set $W$ to $X$, $u_{i,j}$ to $z_{i,j}$ for $i\in [r]$, $j\in[s]$ and maps $u_i$ to $y_i$ for $i\in [r]$. Choosing unused vertices arbitrarily until we have a set $S$ of $Q$ vertices, the claim and hence the proof of the proposition are settled. 
\end{proof}

\subsection{The absorbing structure - random edges}

In this section, we will introduce the edges of $G(n,p)$ and show that $G\cup G(n,p)$ contains the absorbing structure we desire. The absorbing structure will be formed by choosing absorbing gadgets rooted on certain prescribed sets of vertices. The absorbing gadgets will be $ (\bm{\underline{H}},K_r)$-absorbing gadgets $F^*$ for some $\bm{\underline{H}}$ consisting of vectors whose entries are all $K_{r+1}^-$. In order to obtain these absorbing gadgets, we consider the absorbing gadgets of just deterministic edges which we looked at in the previous section and show that with high probability, one of these matches up with random edges to get the required subgraph $F^*$.
We begin by investigating the absorbing gadgets that we look for in the random graph. 

\subsubsection{Absorbing gadgets in the random graph}%\COMMENT{PM: Added this subsection} 
\label{Random AGs}
Recalling Definitions~\ref{Hzerominus} and \ref{absorbing gadgets}, let $\bm{\underline{H}}:=\{\bm{H}_{i,j}:i\in[r],j\in[s]\}$ be a labelled family of vectors of $(r+1)$-vertex graphs and suppose that there is an embedding $\phi$ of an $(\bm{\underline{H}},H_{\text{det}})$-absorbing gadget $F'$  in $G$ which maps the base set of the gadget to some $U\subset V(G)$, with $|U|=s$. Recalling Definition~\ref{reachable paths}, define $\bm{\overline{\underline{H}}}:=\{\overline{\bm{H}_{i,j}}:i\in[r],j\in[s]\}$. Now in order to complete this absorbing gadget $F'$ into one which has the form that we require,  we have to find a labelled embedding of an $(\bm{\overline{\underline{H}}},\overline{H_{\text{det}}})$-absorbing gadget $F$ onto the ordered vertex set $\phi(V(F'))$ in $G(n,p)$. The following lemma will be used to show that there are sufficiently many embeddings in $G(n,p)$  of the necessary $F$s defined as above. It is worth noting that as $F$ is uniquely defined by $F'$, it is in fact the way that we chose our deterministic absorbing gadgets, that guarantees the following conclusions.

\begin{figure}    
    \centering
  \includegraphics[scale=1.1]{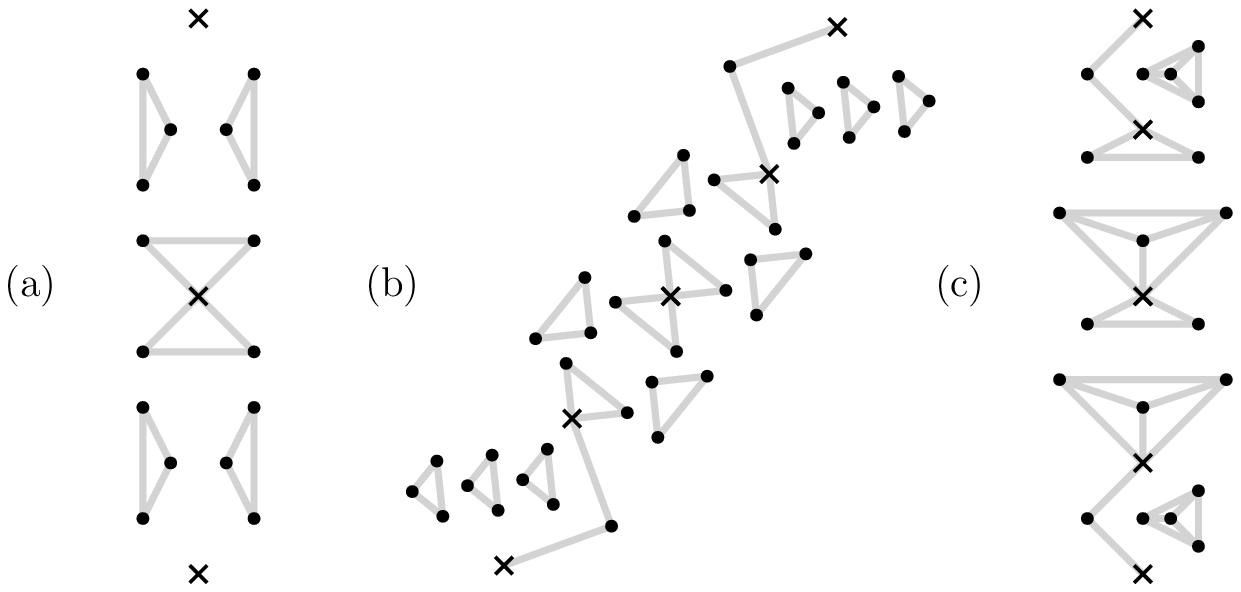}
  \captionsetup{singlelinecheck=off}
    \caption[bla]{% 
    \label{fig:Hpaths}
    Some examples of $\bm{H}$-paths for various $\bm{H}$ (see Definition \ref{reachable paths} and Propositions \ref{case1reachability}, \ref{case2reachability} and \ref{case3reachability} for relevant definitions):
    \begin{enumerate}
  \item    (a) An $\bm{\overline{H_1}}$ path with $r=9$ and $k=q=r^*=3$. 
   \item    (b) An  $\bm{\overline{H_2}}$ path with $r=11, k=3, q=2$ and $r^*=4$.
 \item      (c) An  $\bm{\overline{H_3}}$ -path  where $\bm{H_3}= (H_0,H'_0,H'_0,H_0)\in \bm{\cH_3}$ and we have  $r=6, k=4$ and  $q=r^*=2$. 
 \end{enumerate}
 }
 \end{figure}

\begin{lemma} \label{AG Phis}
Let $k,r,s\in \mathbb{N}$ and $C>1$, with $2\leq k \leq r$ and suppose $p=p(n)\geq Cn^{-2/k}$. Suppose $\bm{\underline{H}}$ is such that:
\begin{enumerate}
\item $\bm{\underline{H}}\in \bm{H_1}(r\times s)$ if $r/k\in \mathbb{N}$, recalling the definition of $\bm{H_1}$ from Proposition \ref{case1reachability};
\item $\bm{\underline{H}}\in \bm{H_2}(r\times s)$ if $r/k\notin \mathbb{N}$ and $k<r/2$, recalling the definition of $\bm{H_2}$ from Proposition \ref{case2reachability};
\item $\bm{\underline{H}}\in \bm{\cH_3}(r\times s)$ if $k>r/2$, recalling the definition of $\bm{\cH_3}$ from Proposition \ref{case3reachability}.
\end{enumerate}
Then if $F$ is an $(\bm{\overline{\underline{H}}},\overline{H_{\text{det}}})$-absorbing gadget with base set $W$ such that $|W|=s$, we have that $\Phi_{F\setminus W}\geq Cn$ and $\Phi_{F,W}\geq Cn^{1/k}$.
\end{lemma}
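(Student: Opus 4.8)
The plan is to read off the precise combinatorial structure of $F$ and then feed it into the $\Phi$-calculus of Lemma~\ref{PhiFobservations}. First I would unpack, in each of the three cases, what the complemented blocks look like: $\overline{H_1}$ is a disjoint union of cliques of size at most $k$ plus an isolated vertex; $\overline{H_0}$ is a disjoint union of $r^*-1$ copies of $K_k$ together with one $K_{q+1}^-$ whose missing edge joins the two distinguished vertices; $\overline{H_0'}$ is a disjoint union of copies of $K_k$ and one $K_{q+1}$; and in Cases~2 and~3 one has $q+1\le k$. Hence each relevant $\overline{\bm{H}}$-path is obtained by gluing cliques of size $\le k$ and copies of $K_{q+1}^-$ at single vertices, and — this is the crucial input, recorded in the remarks following Propositions~\ref{case2reachability} and~\ref{case3reachability}, and strengthened to ``isolated'' in Case~1 — its two endpoints lie in \emph{distinct connected components} of the path. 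From this I would deduce the two structural facts I actually need about $F$ (built from $rs$ such paths and $s$ disjoint copies of $\overline{H_{\text{det}}}$, itself a union of $\le k$-cliques, by the endpoint identifications of Definition~\ref{absorbing gadgets}): (a) every block of $F$ has at most $k$ vertices — no identification welds two atomic blocks into a larger $2$-connected subgraph, since the identifications $u_{i,j}\mapsto u_i$ together with the copies of $\overline{H_{\text{det}}}$ could only do so via a path of blocks whose endpoints are forced into distinct components; and (b) each base vertex $w_j$ lies in just one block of $F$, it is isolated in $F$ in Case~1 and whenever $q=1$, and otherwise it occurs as a missing-edge endpoint of one of $s$ special copies of $K_{q+1}^-$ ($3\le q+1\le k$), these lying in distinct components of $F$ with each component containing at most one of them.

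Granting (a), the bound $\Phi_{F\setminus W}\ge Cn$ is immediate: for any $H\subseteq F$ with components $H_1,\dots,H_c$ and block set $\{B_t\}$, using $v_{B_t}\le k$ and $\sum_t(v_{B_t}-1)=v_H-c$,
\[ e_H=\sum_t e_{B_t}\le\sum_t\binom{v_{B_t}}{2}\le\frac{k}{2}\sum_t(v_{B_t}-1)=\frac{k}{2}(v_H-c)\le\frac{k}{2}(v_H-1), \]
so for $p\ge Cn^{-2/k}$ and $e_H\ge1$ one gets $n^{v_H}p^{e_H}\ge C^{e_H}n^{v_H-2e_H/k}\ge Cn$; thus $\Phi_F\ge Cn$, and $\Phi_{F\setminus W}\ge\Phi_F\ge Cn$ since $F\setminus W\subseteq F$. (Alternatively one may build $F$ up from its blocks by iterated one-vertex unions and disjoint unions, each block having $\Phi\ge Cn$ by parts~1 and~2 of Lemma~\ref{PhiFobservations}, and apply part~3 repeatedly.)

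For $\Phi_{F,W}\ge Cn^{1/k}$ I would argue constructively via part~3 of Lemma~\ref{PhiFobservations}. If all $w_j$ are isolated in $F$ (Case~1, or $q=1$) then a minimiser may be taken with no isolated vertices, hence avoiding $W$, and $\Phi_{F,W}\ge\Phi_F\ge Cn\ge Cn^{1/k}$. Otherwise $q\ge2$, which forces $k\ge3$ (in Cases~2 and~3, $q<k$, so $q\ge2$ is incompatible with $k=2$). Decompose $F$ into atomic pieces: the plain blocks — cliques of size $\le k$ with empty $W$-part, each with $\Phi\ge Cn$ by part~1 — and the $s$ special blocks $K_{q+1}^-$, the $j$-th with $W$-part $\{w_j\}$ at a missing-edge endpoint, each with $\Phi_{K_{q+1}^-,w_j}\ge\min\{Cn^{1-2/k},Cn^{2/k}\}\ge Cn^{1/k}$ by part~2. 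Since $w_j$ is never a cut vertex, all identifications take place at non-$W$ vertices, so part~3 applies at each step; and since the special pieces lie in distinct components, each component containing at most one, no gluing is ever between two special pieces. Building each component up one piece at a time, a gluing of two pieces each with parameter $\ge Cn$ gives $\min\{Cn,Cn,C^2n\}=Cn$, while a gluing of a $Cn^{1/k}$-piece with a $Cn$-piece gives $\min\{Cn^{1/k},Cn,C^2n^{1/k}\}=Cn^{1/k}$; hence each component has $\Phi$ relative to $W$ at least $Cn$, or at least $Cn^{1/k}$ if it contains the special piece, and the disjoint union of the components (the $F_6$-case of part~3) yields $\Phi_{F,W}\ge\min\{Cn,Cn^{1/k}\}=Cn^{1/k}$.

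The hard part is the structural analysis of the first paragraph: pinning down exactly which $2$-connected blocks occur in $F$ and checking that the base vertices lie in distinct small components, either isolated or as missing-edge endpoints of separated copies of $K_{q+1}^-$. This is where the ``distinct connected components'' property of $\overline{\bm{H}}$-paths does all the work — without it the hub identifications $u_{i,j}\mapsto u_i$ and the copies of $\overline{H_{\text{det}}}$ could fuse several paths into large $2$-connected blocks, or glue two special $K_{q+1}^-$'s at a vertex, and since already the disjoint-union tolerance $\Phi_3\Phi_4n^{-1}\ge C^2n^{2/k-1}$ for two $Cn^{1/k}$-pieces drops below $1$ once $k\ge3$, either failure would break the required bounds. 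Once the structure is established, the rest is routine bookkeeping with Lemma~\ref{PhiFobservations}.
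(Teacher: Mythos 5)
Your proposal is correct in substance and uses the same key tool (Lemma~\ref{PhiFobservations}) and the same crucial structural input (the endpoints of each $\overline{\bm{H}}$-path lying in distinct components, resp.\ being isolated in Case~1), but it is organised differently from the paper. The paper argues case by case: in Case~1 it notes $\Phi_{F,W}=\Phi_{F\setminus W}$ and that $F\setminus W$ is a disjoint union of $K_k$'s and of pairs of $K_k$'s sharing a vertex; in Cases~2 and~3 it first isolates the component $F_w$ of each base vertex, bounds $\Phi_{F_w,w}$ and $\Phi_{F_w\setminus w}$, and then builds up $F\setminus(\cup_w F_w)$ by an explicit ordering of cliques and $K_{q+1}^-$'s each sharing at most one vertex with what came before, applying part~3 of Lemma~\ref{PhiFobservations} at every step. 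You instead (i) get $\Phi_{F\setminus W}\geq \Phi_F\geq Cn$ in all cases at once by the direct block count $e_H\le\tfrac{k}{2}(v_H-1)$, which only needs that every block of $F$ has at most $k$ vertices, and (ii) get $\Phi_{F,W}\geq Cn^{1/k}$ by a uniform one-vertex-at-a-time gluing of blocks with the special $K_{q+1}^-$'s carrying the base vertices. This buys uniformity over the three cases and makes explicit why the ``distinct components'' property is indispensable (your cycle-of-cliques remark), something the paper only uses implicitly via its claimed ordering. One inaccuracy to fix, though it does not affect the bounds: the non-special blocks of $F$ are \emph{not} all cliques when $q\ge 2$ --- in Cases~2 and~3 the blocks containing the hub vertices $u_i$ and the endpoints $v_{i,j}$ (and, in Case~3, possibly interior blocks of the longer paths) are further copies of $K_{q+1}^-$ disjoint from $W$; since $3\le q+1\le k$, part~2 of Lemma~\ref{PhiFobservations} still gives $\Phi_{K_{q+1}^-}\geq Cn$ for these, so your gluing computation goes through verbatim once the list of ``plain'' atomic pieces is amended to include them (and the same amendment is harmless for your edge-count, which only uses the block-size bound $v_B\le k$).
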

\begin{proof} 
We recommend that the reader refers to the examples in Figure \ref{fig:Hpaths} to help visualise some of the ideas in this proof. 
Note that as the endpoints of an  $\overline{\bm{H_1}}$-path are isolated, we have that the base set of an $(\overline{\bm{H_1}}(r\times s),\overline{H_{\text{det}}})$ absorbing gadget $F$ is also an isolated set of vertices and so $\Phi_{F,W}=\Phi_{F\setminus W}$. Defining $K_k+K_k$ as two copies of $K_k$ which meet in a singular vertex, we have that  $F\setminus W$ consists of disjoint copies of $K_k$ and $r \times s$ disjoint copies of $K_k+K_k$, one for each  $\overline{\bm{H_1}}$-path used in $F$. Therefore Lemma \ref{PhiFobservations} (1) shows that $\Phi_{K_k}\geq C n$, and repeated applications of Lemma \ref{PhiFobservations} (3)  show that $\Phi_{K_k+K_k}\geq Cn$ and in turn $\Phi_{F\setminus W}\geq Cn$ as required. 

Case 2 is similar. Here we have that  $q=r-k\lfloor r/k \rfloor <k$ and each of the base vertices $w$ of $F$ lie in a copy, say $F_w$, of the graph defined as follows.  Take a copy of $K^-_{q+1}$ and a copy of $K_k$  that meet in exactly one vertex, which is one of the vertices of the nonedge in $K^-_{q+1}$. Furthermore, we have that the base vertex $w$ is the other vertex in the nonedge  of this copy of $K_{q+1}^-$. We have that each of the $F_w$ is disconnected from the rest of $F$ and an application of Lemma \ref{PhiFobservations}~(1), (2) and $(3)$ gives that $\Phi_{F_w\setminus w}\geq Cn$ and $\Phi_{F_w,w}\geq Cn^{1/k}$ if  $q\geq 2$. If $q=1$, then $F_w$ is an isolated vertex $w$ and a copy of $K_k$ so we have $\Phi_{F_w,w}=\Phi_{F_w\setminus w}\geq Cn$. 
Now note that $F\setminus (\cup_{w\in W}F_w)$ consists of copies of $K_k$, $K^-_{q+1}$, $K_{q+1}$ and a copy of $K_q$ (in the copy of $\overline{H_{\text{det}}}$ in $F$) which intersect each other in at most one vertex. 
Furthermore, one can view  $F\setminus (\cup_{w\in W}F_w)$ as being `built up' from these copies in the following way:
there is an ordering (starting with $\overline{H_{\text{det}}}$) on these copies of $K_k$, $K^-_{q+1}$, $K_{q+1}$  and $K_q$ such that, starting with the empty graph and adding these copies 
 in this order, each new copy shares at most one vertex with the previous copies already added, and at the end of the process we obtain $F\setminus (\cup_{w\in W}F_w)$. Each time we add a copy, we can  apply  Lemma~\ref{PhiFobservations} (3) and then again to add in the $F_w$ (to obtain $F$). This leads us to conclude that $\Phi_{F\setminus W}\geq Cn$ and $\Phi_{F,W}\geq Cn^{1/k}$ as required. 

In Case 3, let $q=r-k<k$ and let us fix some $\underline{\bm{H}}\in \bm{\cH_3}(r\times s)$ which then defines our $F$. For each $w\in W$, let $F_w$ be the connected component of $F$ which contains $w$. Due to the definition of $\bm{\cH_3}$, and in particular the fact that each $\bm{H}\in \bm{\cH_3}$ contains a copy of $H_0'$ as defined in Proposition~\ref{case3reachability}, we have that $F_w\neq F_{w'}$ for all $w\neq w'\in W$. 
Also, for $q\ge 2$, it can be seen that $F_w$ is a graph obtained by sequentially `gluing' copies of $K_{q+1}^-$ to vertices of degree $q-1$ and that $w$ is a vertex of degree $q-1$ in the resulting graph. 
Similarly to the previous case, applications of Lemma \ref{PhiFobservations} (2) and $(3)$ imply that $\Phi_{F_w\setminus w}\geq Cn$ and $\Phi_{F_w,w}\geq Cn^{1/k}$ if  $q\geq 2$ and if $q=1$, we see that $F_w$ is an isolated vertex, namely $w$ itself. 
Also as before, we have that  $F\setminus (\cup_{w\in W}F_w)$ consists of copies of $K_k$, $K^-_{q+1}$, $K_{q+1}$ and a copy of $K_q$ which intersect each other in at most one vertex.
Thus, introducing the ordering of these copies as in Case 2, we can apply Lemma~\ref{PhiFobservations} repeatedly to obtain the desired conclusion. 
\end{proof}

We will use Lemma~\ref{AG Phis}  to prove the existence of our desired absorbing gadgets in $G'=G\cup G(n,p)$.
Before embarking on this however, we need to know how we wish our absorbing gadgets (in particular their base sets) to intersect in $G'$. This is given by the notion of a \emph{template} in the following subsection.

\subsubsection{Defining an absorbing structure}

A \emph{template} $T$ with \emph{flexibility} $m\in \mathbb{N}$ is a
bipartite graph on $7m$ vertices with vertex classes $I$ and
$J_1\sqcup J_2$, such that $|I|=3m$, $|J_1|=|J_2|=2m$, and for any
$\overline{J}\subset J_1$, with $|\overline{J}|=m$, the induced graph
$T[V(T)\setminus\overline{J}]$ has a perfect matching. We call $J_1$ the
\textit{flexible} set of vertices for the template. Montgomery first
introduced the use of such templates when applying the absorbing
method in his work on spanning trees in random graphs~\cite{M14a,M19}.
There, he used a sparse template of maximum degree~$40$, which we will
also use. It is not difficult to prove the existence of such
templates for large enough~$m$ probabilistically; see 
e.g.~\cite[Lemma~2.8]{M14a} . The idea has since be used by various authors in different settings \cite{FKL16,FN17,Kwan16,HKMP18,nenadov2018ramsey,han2019finding}. %\COMMENT{PM: added the recent paper of Jie , Yoshi, Yury and me}. 
We will use a template here as an auxiliary graph in order to build an absorbing structure for our purposes. 

\begin{definition} \label{def: absorbing structure}
Let $m, t^*\in \mathbb{N}$ and $T=(I=\{1,\ldots,3m\},J_1\sqcup J_2=\{1,\ldots,2m\}\sqcup\{2m+1,\ldots,4m\}, E(T))$ 
be a bipartite template with maximum degree $\Delta(T)\le 40$ and flexibility $m$ as defined above.  Further, let 
\[\bm{\cH}:=\bm{\cH}(K_{r+1}^-,\leq t^*)=\{(K_{r+1}^-,t):1\leq t\leq t^*\}\] 
be the set of vectors of length at most $t^*$ whose entries are all $K_{r+1}^-$. 

A ($t^*$-bounded) \emph{absorbing structure} $\cA=(\Phi,Z,Z_1)$ of flexibility $m$ in a graph $G'$ consists of a vertex set $Z=Z_1\sqcup Z_2\subset V(G')$ which we label $Z_1:=\{z_1,\ldots,z_{2m}\}$ and $Z_2:=\{z_{2m+1},\ldots,z_{4m}\}$ %such that $|Z_1|=|Z_2|=2m$ 
and a set $\Phi:=\{\phi_1,\ldots,\phi_{3m}\}$ of  embeddings of absorbing gadgets into $G'$. 
We require the following properties:
\begin{itemize}
    \item For $i\in [3m]$, setting $N(i):=\{j:(i,j)\in E(T) \subset I\times J\}$ and $n(i):=|N(i)|$, we have that $\phi_i$ is an embedding of some $(\bm{\underline{H}},K_r)$-absorbing gadget $F_i$ such that 
		$\bm{\underline{H}}\in \bm{\cH}(r \times n(i))$ and the base set of $F_i$, which we denote $W_i$, is mapped to $\{z_j:j\in N(i)\}\subseteq Z$ by $\phi_i$.
    \item The embeddings of the absorbing gadgets are vertex disjoint other than the images of the base sets. That is, for all $i\in[3m]$, $\phi_i(V(F_i)\setminus W_i)\subseteq V(G')\setminus Z$ and $\phi_i(V(F_i)\setminus W_i)\cap \phi_{i'}(V(F_{i'})\setminus W_{i'})=\emptyset$ for all $i\neq i'\in [3m]$.
\end{itemize}
We call $Z_1$ the \emph{flexible set} of the absorbing structure.
\end{definition}
Thus the absorbing structure is an embedding of a larger graph which is formed of $3m$ disjoint absorbing gadgets whose base vertices are then identified according to a template of  flexibility $m$. We will refer to the vertices of $\cA$ which are the vertices which feature in the embedding of this larger graph. That is,  \[V(\cA):=\bigsqcup_{i\in [3m]}\phi(V(F_i)\setminus W_i)\bigsqcup Z.\]

 %we have that for $i\in[3m]$, that $Y_i$ is an embedding of an $(\bm{\underline{H}}(i),K_r)$-absorbing gadget such that $\bm{\underline{H}}(i)\in \bm{\cH}$

%. We have that for each $i\in[3m]$, $Y_i$ is an absorbing gadget for
%$N_T(x_i)\subseteq Z$ in $G'$ as defined in \Cref{absorbing gadgets}. We also require that $\sqcup_{Y\in\mathcal{W}}V(Y)$ is disjoint from $Z$.   
\begin{rem} \label{boundy}
If $\mathcal{A}$ is a $t^*$-bounded absorbing structure of flexibility $m$, then it has less than $125t^* r^2m$ vertices in total. 
\end{rem}
%\COMMENT{AT: added remark so easy to cite}
 
 In our proof, we will bound $t^*$ by a constant and look for an absorbing structure on a small linear number of vertices. The key property of the absorbing structure is that it inherits the flexibility of the template that defines it, but in the context of $K_r$-tilings, as detailed in the following remark.
 \begin{rem} \label{keyabsorbingproperty}
 If $G'$ contains an absorbing structure $\cA=(\Phi,Z,Z_1)$ of flexibility $m$, then for any subset of vertices $\overline{Z}\subset Z_1$ such that $|\overline{Z}|=m$, there is a $K_r$-tiling in $G'$ covering precisely  $V(\cA)\setminus \overline{Z}$.
 \end{rem}
 %has a similar property to the template that defines it. Namely, for any set $\overline{Z} \subset Z_1\subset V(G')$ such that $|\overline{Z}|=m$, there is a perfect $K_r$-tiling of $V(\mathcal{A})\setminus \overline{Z}$ in $G'$. 
 Indeed given such a $\overline{Z}$, letting $\overline{J}$ be the corresponding indices from $J$, we have that  $T\setminus \overline{J}$ has a  perfect matching. The matching then indicates, for each $i\in[3m]$, which vertex $z_{j_i}$ of $Z$ to use in a tiling of the corresponding absorbing gadget. That is, for each $i$, if $\phi_i$ is `matched' to $z_{j_i}$ by the perfect matching, then we take the $K_r$-tiling covering $\phi_i(F_i\setminus W_i)\cup \{z_{j_i}\}$ 
%\COMMENT{AT: added $\setminus W_i$ here}
(which exists by the key property of the absorbing gadget mentioned after Definition~\ref{absorbing gadgets}) and then take their union.
 
 \subsubsection{The existence of an absorbing structure}
 
 In order to prove the existence of an absorbing structure, we must find embeddings of absorbing gadgets in our graph. In the previous section we found many embeddings of certain absorbing gadgets with deterministic edges and thus it remains to find embeddings 
of complementary absorbing gadgets, using only random edges. Therefore we will turn to Lemma~\ref{lm:gnpfindingsubgraph}, which is a general result regarding embeddings in random graphs. However, there is still some work to do in the application of this lemma and the following proposition shows how we can use Lemma~\ref{lm:gnpfindingsubgraph} repeatedly in order to embed a larger graph. 
We state the proposition in a more general form than just for showing the existence of absorbing structures as we will also use the result at other points in the proof. As the statement of the proposition is somewhat technical, we recommend that the reader sees how it is applied in Corollaries \ref{cor:case1+2absorbingstructure}, \ref{cor:case3absorbingstructure} and 
%of Theorem~\ref{thm:Upper}.
\ref{cor:embeddingK_rs} to help with digesting it.

\begin{proposition} \label{prop:embeddinglargegraphs}
%\COMMENT{AT: TO DO. We are applying Lemma~\ref{PhiFobservations} inside the proof of this proposition. So I guess we will need $k \geq 2$ and e.g. $p=p(n)> 2n^{-2/k}$ added to the hypothesis of the proposition. PM: Hmmm, yea it certainly doesn't harm to put that condition in but I don't think we actually use it. We only use part (iii) of Lemma~\ref{PhiFobservations} whereas those conditions are only needed for part $(i)$ and $(ii)$, right?}
%Let $\kappa_d,\kappa_w,\kappa_e,\kappa_v, k\in \mathbb{N}$, $\eta,\beta,c>0$ and $p=p(n),t=\eta n$ such that $2^{(\kappa_v+2\kappa_e+9)}(\kappa_v+2\kappa_e+\kappa_w)(\kappa_v)!\eta\leq \beta^2 c$ 
Let $\kappa_d,\kappa_w,\kappa_e,\kappa_v, k, \in \mathbb{N}$ and $\beta>0$. % such that $2^{(\kappa_v+2\kappa_e+9)}(\kappa_v+2\kappa_e+\kappa_w)(\kappa_v)!\eta\leq \beta^2 c$ 
%\COMMENT{AT: doesn't look quite right when applying Lemma~\ref{lm:gnpfindingsubgraph} later on. Probably $2^{(K_v+2K_e+7)}(K_v+2K_e+K_w)(K_v)!\eta\leq \beta^2 c$ works}
Then there exists $\eta_0>0$ and $C>0$ such that the following holds for any $0<\eta<\eta_0$, $n\in \mathbb{N}$ and $t=\eta n\in \mathbb{N}$. 

Suppose that $F_1,\ldots,F_t$ are labelled graphs with distinguished base vertex sets $W_i\subset V(F_i)$ such that $|W_i|\leq \kappa_w$, $v_i:=|V(F_i)\setminus W_i|\leq \kappa_v$, $e(F_i[W_i])=0$ and $e(F_i)\leq \kappa_e$ for all $i\in[t]$. 
Suppose that $p=p(n)$ such that $\Phi_{F_i\setminus W_i}=\Phi_{F_i\setminus W_i}(n,p)\geq Cn$ and $\Phi_{F_i,W_i}=\Phi_{F_i,W_i}(n,p)\geq Cn^{\frac{1}{k}}$ for all $i \in [t]$. Let $V$ be an $n$-vertex set, and $U_1,\ldots,U_t\subset V$ be subsets such that $|U_i|=|W_i|$ for each $i\in[t]$, and defining \[d(i):=|\{j\in[t]:U_i\cap U_j\neq \emptyset\}|,\]
we have that $d(i)\leq \kappa_d$. Finally, suppose that $\cF_1,\cF_2,\ldots, \cF_t$ are families of vertex sets such that each $\cF_i$ contains $\beta n^{v_i}$ ordered subsets of $V$ of size $v_i$. 

Then 
a.a.s.
 there is a set of embeddings $\phi_1,\phi_2,\ldots,\phi_t$ such that each $\phi_i$ embeds a copy of $F_i$ into $G(n,p)$ on $V$ with $W_i$ being mapped to $U_i$ and $V(F_i)\setminus W_i$ being mapped to a set in $\cF_i$ which does not intersect $\cup_{i\in[t]}U_i$. Furthermore for $i\neq i'$, we have that $\phi_i(V(F_i)\setminus W_i)\cap\phi_{i'}(V(F_{i'})\setminus W_{i'})=\emptyset$.
\end{proposition}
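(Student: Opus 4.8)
The plan is to embed the graphs $F_1,\dots,F_t$ one at a time (or rather, in carefully chosen batches) using repeated applications of Lemma~\ref{lm:gnpfindingsubgraph}, exposing the random graph in rounds so that each round is responsible for a bounded fraction of the $F_i$'s. First I would set up the parameters: take $\beta' := \beta/2$, and choose $L := \kappa_v$ (the maximum number of ``new'' vertices an embedding uses), $v := \kappa_v$, $w := \kappa_w$, $e := \kappa_e$, $r$ the uniformity (here $r=2$), and let $C$ be large enough (depending on $\kappa_d,\kappa_w,\kappa_e,\kappa_v,k,\beta$) that the hypotheses $s\cdot\Phi \ge (2^{v+7}v!/\beta'^2)\min\{Lt\log n, n\}$ and $\Phi' \ge (2^{v+7}v!/\beta'^2)n$ of Lemma~\ref{lm:gnpfindingsubgraph} are implied by $\Phi_{F_i\setminus W_i}\ge Cn$ and $\Phi_{F_i,W_i}\ge Cn^{1/k}$ once $s$ is taken to be a suitable slowly-growing quantity (e.g. $s = \Theta(n^{1-1/k}/\log n)$ suffices to absorb the gap between $\Phi\ge Cn^{1/k}$ and the required $Lt\log n \le \eta n \log n$). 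Here $\eta_0$ is chosen small enough that $Lt, sw \le \beta' n/(4v)$ holds, which is where we need $\eta$ small.

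The heart of the argument is an iterative process. I would maintain a set $R\subseteq[t]$ of indices not yet embedded and a set $V'\subseteq V$ of vertices not yet used by any embedding; initially $R=[t]$ and $V'=V$. At each step, I want to apply the conclusion of Lemma~\ref{lm:gnpfindingsubgraph}: for \emph{any} choice of $s$ indices $S\subseteq R$ whose base-vertex sets $U_i$ are pairwise disjoint, \emph{some} $i\in S$ admits a valid embedding of $F_i$ into the exposed random graph avoiding the already-used vertices (and landing in a set of $\cF_i$). The key point — which is exactly why Lemma~\ref{lm:gnpfindingsubgraph} is stated for arbitrary $S$ and arbitrary $V'$ of size $\ge n - Lt$ — is that this holds \emph{simultaneously} a.a.s. for all such $S$ and $V'$, so we may keep applying it greedily: at each step pick such an $S$ (using that $d(i)\le\kappa_d$ means we can always find $s$ pairwise-base-disjoint indices in $R$ as long as $|R|\ge \kappa_d s$, by a greedy selection), obtain one new embedding, delete that index from $R$, delete its $\le \kappa_v$ new vertices from $V'$, and repeat. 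Since $|V\setminus V'|$ never exceeds $\kappa_v t \le \kappa_v\eta n \le Lt$, the constraint $|V'|\ge n - Lt$ in Lemma~\ref{lm:gnpfindingsubgraph} remains satisfied throughout. When $|R|$ drops below $\kappa_d s$, I finish off the last few indices by one more application (or by shrinking $s$), so all $t$ indices get embedded. To control the randomness: I would split $[t]$ into $O(\log n)$ or $O(1/\eta)$ blocks and expose a fresh independent copy of $G(n,p')$ with $p' = p/(\text{number of rounds})$ (a standard multi-round exposure / $p = 1-(1-p')^{k}$ trick), applying Lemma~\ref{lm:gnpfindingsubgraph} once per round to that round's batch; the union of the exposed edges is dominated by $G(n,p)$, and a union bound over the constantly (or logarithmically) many rounds keeps the total failure probability $o(1)$.

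I expect the main obstacle to be the bookkeeping that makes the greedy selection of the pairwise-base-disjoint set $S$ consistent with the a.a.s. guarantee of Lemma~\ref{lm:gnpfindingsubgraph}: one must be sure that at every step of the greedy process, among the \emph{currently remaining} indices there really is an available $S$ of the required size $s$ with pairwise-disjoint $U_i$'s, and that the index $i\in S$ handed back to us (which we do not get to choose) can be removed without breaking the invariant on $|R|$. The degree bound $d(i)\le\kappa_d$ is what saves us — removing one index destroys at most $\kappa_d$ potential ``partners'' — so as long as $s$ is much smaller than $|R|/\kappa_d$ we can always refill $S$; only the final $O(\kappa_d s) = o(n)$ indices need separate (and easy) handling. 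A secondary technical point is verifying that $\Phi_{F_i\setminus W_i}\ge Cn$ and $\Phi_{F_i,W_i}\ge Cn^{1/k}$ indeed imply inequality~\eqref{eq1} for the chosen $s$ and the per-round probability $p'$; this is a short computation using $\Phi_{F_i,W_i}(n,p') \ge c\,\Phi_{F_i,W_i}(n,p)$ for the constant number of rounds, and the fact that $\min\{Lt\log n, n\} \le \eta n\log n$ together with $s = \Theta(n^{1-1/k}/\log n)$ gives $s\Phi \ge s\cdot Cn^{1/k} = \Theta(Cn/\log n)\cdot\log n$, which beats the right-hand side once $C$ is large. Everything else — that the images land in the prescribed families $\cF_i$, and that distinct embeddings are vertex-disjoint off the base sets — is delivered verbatim by the conclusion of Lemma~\ref{lm:gnpfindingsubgraph}.
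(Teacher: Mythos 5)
Your overall strategy (greedy embedding driven by Lemma~\ref{lm:gnpfindingsubgraph}, multi-round exposure, and using $d(i)\le\kappa_d$ to extract pairwise-base-disjoint index sets) is the same as the paper's, but there is a genuine quantitative gap in how you handle the leftover indices, and this is precisely where the paper has to work. First, your choice $s=\Theta(n^{1-1/k}/\log n)$ does not satisfy \eqref{eq1} in the first application: with $t=\eta n$ we have $\min\{Lt\log n,n\}=n$, so one needs $s\cdot\Phi\gtrsim n$, i.e.\ $s\gtrsim n^{1-1/k}/C$ with $C$ a \emph{constant}; your computation ``$s\cdot Cn^{1/k}=\Theta(Cn/\log n)\cdot\log n$'' conflates two quantities, and with your $s$ the product is only $\Theta(Cn/\log n)$, short by a $\log n$ factor. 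More importantly, even with $s=\Theta(n^{1-1/k})$ the greedy process stalls once the set $R$ of unembedded indices has size below roughly $\kappa_d s=\Theta(n^{1-1/k})$ --- this is a \emph{polynomially large} tail, not ``the last few indices'', and it cannot be finished by ``one more application'': applying Lemma~\ref{lm:gnpfindingsubgraph} to a residual of size $t'=\Theta(n^{1-1/k})$ forces (since now $\min\{Lt'\log n,n\}=Lt'\log n$) a new value $s'\gtrsim t'\log n/\Phi=\Theta(n^{1-2/k}\log n)$, after which the greedy stalls again with $\Theta(n^{1-2/k}\log n)$ indices left, and so on. One genuinely needs a cascade of about $k$ exposure rounds with geometrically shrinking parameters (the paper takes $t_j=\eta n^{1-(j-1)/k}((\kappa_d+1)\log n)^{j-1}$ and $s_j=t_jn^{-1/k}\log n$), and only once the residual is polylogarithmic can one finish with $s=1$, which is exactly where the hypothesis $\Phi_{F_i,W_i}\ge Cn^{1/k}$ is used ($\Phi\gg t_{k+1}\log n$). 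Your alternative of splitting $[t]$ into $O(\log n)$ or $O(1/\eta)$ equal blocks does not fix this, since each block is still of linear size and reproduces the same polynomial tail; and your parenthetical ``or by shrinking $s$'' gestures at the right fix but omits the iteration and the changing form of \eqref{eq1} that makes it work.

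Two smaller points: Lemma~\ref{lm:gnpfindingsubgraph} requires all $F_i$ to have exactly the same number $v$ of non-base vertices and exactly the same number $e$ of edges, whereas in Proposition~\ref{prop:embeddinglargegraphs} these only satisfy upper bounds $\kappa_v,\kappa_e$; the paper first pads each $F_i$ with isolated edges and vertices (and enlarges the sets in $\cF_i$ accordingly), checking via Lemma~\ref{PhiFobservations} that the $\Phi$-conditions survive. Also, when you re-expose in rounds you must argue conditionally on the success of the earlier rounds (law of total probability over the fixed set of embeddings chosen so far), since the sets $\cF_i$ fed into the next application depend on those choices; a bare union bound over rounds is not quite the right formulation, though this is easily repaired.
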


\begin{proof}
Fix $\kappa_v':=\kappa_v+2\kappa_e$ and let $\eta_0<\beta^22^{-(\kappa'_v+2\kappa_e+9)}{(\kappa'_v!(\kappa'_v+\kappa_e+\kappa_w))^{-1}}$. The idea here is to greedily extract the desired embeddings, finding them one at a time in $G(n,p)$. %\COMMENT{JH: I commented out a few sentences. In particular, I wouldn't say the multi-round exposure is necessary (or, equivalently, one-round is not possible) -- maybe some more careful approach e.g. the greedy embedding as in the blow-up lemma would work.}
%In order to do we would like a statement that guarantees that no matter which indices are left to embed, there exists a valid embedding amongst this set of indices. 
%It turns out that such a statement is not possible. 
%That is to say, we cannot naively apply a concentration result to guarantee that an embedding exists at all points in this process. 
To achieve this, we use the multi-round exposure trick, having a constant number of phases such that in each phase we find a collection of embeddings. At the beginning of each phase we `reveal' another copy of $G(n,p)$ on the same vertex set and focus \emph{only} on the indices for which we have not yet found a suitable embedding, showing that in any sufficiently large subset of these indices there is an index for which we can find a suitable embedding. 
At each phase, we will apply Lemma~\ref{lm:gnpfindingsubgraph} and so we first need to slightly adjust the sets we are considering in order to be in the setting of that lemma.   

Firstly let us adjust each $F_i$ so that it has $\kappa_v'$ non-base vertices and $\kappa_e$ edges. To each $F_i$ add $\kappa_e-e(F_i)$ isolated edges. Then 
%let $K_v'=:\max \{v(F_i')- v(W_i):i\in[t]\}$ and to each $F_i'$ 
add isolated vertices until the resulting graph has $\kappa_{v}'+|W_i|$ vertices and redefine $F_i$ as the resulting graph. Note that if $p=p(n)$ is such that  $\Phi_{F_i\setminus W_i}(n,p)\geq Cn$ and $\Phi_{F_i,W_i}(n,p)\geq Cn^{\frac{1}{k}}$ for the original $F_i$ as in the statement of the proposition, then these conditions are preserved under the above changes to $F_i$ for each $i$, by Lemma~\ref{PhiFobservations}. We also arbitrarily extend each set in each $\cF_i$ to get sets of size $\kappa_v'$. As we can extend with any vertices not already in the set, it can be seen that we can have families $\cF_i$ of size at least $\beta'n^{\kappa'_v}$ for some  $\beta'>\beta/(2^{\kappa_v'})$ which we now fix. Clearly, a set of valid embeddings of these new $F_i$ 
(where the new vertices of $F_i$ are mapped to the new vertices from a set in $\cF_i$\footnote{This will be guaranteed in applications of Lemma~\ref{lm:gnpfindingsubgraph} as the lemma is concerned with \emph{labelled} embeddings.})
will also yield a set of embeddings of the original graphs we were interested in. 

Now let us turn to the phases of our algorithm. We will generate $G(n,p)$ in $k+1$  rounds so that $G(n,p)=\cup_{j=1}^{k+1}G_j$ with each $G_j$ an independent copy of  $G(n,p')$, where $p'$ is such that $(1-p)=(1-p')^{k+1}$. Note that for any graph $F$,  vertex subset $W\subset V(F)$, constant  $c>0$ and probability $p$, one has that   $\Phi_{F\setminus W}(n,cp)=c'\Phi_{F\setminus W}(n,p)$ for some constant $c'=c'$ between $1$ and $c^{e(F)}$. Likewise, multiplication of the probability by some constant $c>0$ results in multiplication of $\Phi_{F,W}$ by some constant factor. Hence, choosing $C>0$ sufficiently large, we can guarantee that if $\Phi_{F_i\setminus W_i}(n,p)\geq Cn$ and $\Phi_{F_i,W_i}(n,p)\geq Cn^{\frac{1}{k}}$ as in the statement of the proposition, then $\Phi_{F_i\setminus W_i}(n,p')\geq C'n$ and $\Phi_{F_i,W_i}(n,p')\geq C'n^{\frac{1}{k}}$ with $C'$   such that   $C'\geq\frac{2^{k_v'+9}\kappa_v'!\kappa_v'}{\beta'^2}$. % and we can express $G(n,Cp)=\cup_{j=1}^{k+1}G_j$ with each $G_j$ a copy of  $G(n,C'p)$. 
We fix such a $C>0$ and for $j=1,\ldots,k$, we define $t_j:=\eta n^{1-\frac{j-1}{k}}((\kappa_d+1)\log n)^{j-1}$ and $s_j:=t_j n^{-\frac{1}{k}}\log n=\eta n^{1-\frac{j}{k}}(\kappa_d+1)^{j-1}(\log n)^{j}$. We also define $t_{k+1}:=(\kappa_d+1)s_k=\eta ((\kappa_d+1)\log n)^k$, $s_{k+1}:=1$ and $t_{k+2}:=0$.

Now, as discussed, we look to choose embeddings one by one in order to reach the desired conclusion. Therefore, for the sake of brevity, at any point in the argument let us say that an embedding $\phi_i$ of $F_i$ is \emph{valid} if it maps $W_i$ to $U_i$ and maps $V(F_i\setminus W_i)$ to a set in $\cF_i$ which is disjoint from $U:=\cup_{i\in [t]}U_i$ and also disjoint from $\phi_{i'}(V(F_{i'}\setminus W_{i'}))$ for all indices $i'\in[t]$ for which we have already chosen an embedding.  Our claim is that a.a.s. (with respect to $G(n,p)$) we can repeatedly choose valid embeddings until we have found embeddings for all $t$ indices in $T:=\{1,\ldots,t\}$. We therefore need to show that we never get stuck and that this greedy algorithm always finds a valid embedding. In order to do this, we split the algorithm into $k+1$ phases and rely on the edges of $G_j$ in the $j^{th}$ phase where we will find $t_j-t_{j+1}$ valid embeddings. We will show that for all $j\in[k+1]$, conditioned on the fact that the algorithm has succeeded so far, we have that a.a.s (with respect to $G_j=G(n,p')$) the algorithm will succeed for a further phase. The conclusion then follows easily as there are constantly many phases.

So let us analyse the $j^{th}$ phase and  condition on the fact that the process has been successful so far and so there are $t_j$ indices that remain for us to find embeddings for. Let us further fix a specific set\footnote{Note that when $j=1$ we must have that $T_1=T$.} of $t_j$ indices $T_j\subseteq T$ that remain and some set of already chosen valid embeddings $\{\phi_{i}:i\in R_j\}$ where  $R_j:=T\setminus T_j$. %We also have. 
%such that $\phi_i$ is an embedding of $F_i$ in $\cup_{j'=1}^{j-1}G_{j'}$  with $W_i$  mapped to $U_i$ and $V(F_i)\setminus W_i$ mapped to a set in $\cF_i$ as required. 
%Let $U:=\cup_{i\in[t]}U_i$ and 
By the law of total probability,  it suffices to condition on this fixed set of embeddings so far and show that a.a.s. (with respect to $G_j$) we can repeatedly find valid embeddings, each time removing the corresponding index from $T_j$, until there are $t_{j+1}$ indices remaining.  So let $V''_j:=\cup_{i\in R_j}\phi_i(V(F_i))\cup U$ and for $i\notin R_j$, define $\cF^{(j)}_i:=\{S\in \cF_i:S\cap V''_j=\emptyset\}$. We have that $|\cF_i^{(j)}|\geq \frac{\beta'}{2}n^{\kappa'_v}$ as $| V''_j|<\frac{\beta'}{2}n$ due to our condition on $\eta_0$. We then apply Lemma \ref{lm:gnpfindingsubgraph} to the sets $\cF_{i}^{(j)}$ such that $i\in T_j$, 
and where $t_j,s_j, \frac{\beta '}{2},\kappa'_v,\kappa'_v,\kappa_w,\kappa_e$ and $p'$ play the role of $t,s, \beta, L,v,w,e$  and $p$ respectively. Let us check that the conditions needed for the lemma are  satisfied. Indeed, we certainly have that $\kappa'_v t_j\leq\kappa'_v t \leq \frac{\beta'n}{8\kappa_v'}$, $s_j\kappa_w \leq \frac{\beta'n}{8\kappa_v'}$ and  $\binom{t_j}{s_j}\leq \binom{n}{s_j}\leq 2^n$.  Moreover, when $1\leq j \leq k$,  
\[C's_jn^{\frac{1}{k}}=C't_j\log n \geq \left(\frac{2^{k_v'+9}\kappa_v'!\kappa_v'}{\beta'^2}\right)t_j\log n \mbox{ and }C'n \ge \left(\frac{2^{\kappa_v'+9}\kappa_v'!}{\beta'^2}\right)n, \]
by our definition of $C'$ whilst for $j=k+1$, $C's_{k+1}n^{\frac{1}{k}}=\omega(t_{k+1}\log n)$. This verifies the conditions in \eqref{eq1} in all cases and so we conclude that a.a.s., given any set $V'_j$ of at most $\kappa_v't_j$ vertices and any set $S_j$ of $s_j$ indices in $T_j$ such that the sets $U_i$ with $i\in S_j$ are pairwise disjoint, there is an index $i^*\in S_j$ and a valid embedding of $F_{i^*}$ in $G_j$ which avoids $V'_j$. This then implies that the greedy process will suceed throughout this phase. Indeed, we can now  initiate with $V_j'=\emptyset$ and repeatedly find indices $i\in T_j$ for which we have a valid embedding $\phi_i$. We add this embedding to our chosen embeddings, add the vertices of it to $V_j'$ and delete the index $i$ from $T_j$. The conclusion that we drew from Lemma \ref{lm:gnpfindingsubgraph} above asserts that we continue this process until we have $t_{j+1}$ indices remaining in $T_j$, which is precisely what we need. Indeed, for $1\leq j\leq k$, if we have more than $t_{j+1}$ indices in $T_j$ left then by the upper bound on $d(i)$ for $i$ in $T_j$ taking a maximal set $S\subset T_j$ such that $U_i$ are all pairwise disjoint for $i\in S$, we have that $|S|\geq t_{j+1}/(\kappa_d+1)\geq s_j$. In the final phase when $j=k+1$ we can simply find embeddings one at a time as $s_{k+1}=1$. This concludes the proof. %  and we can use the conclusion of the lemma in $G_j$ to find a valid embedding for an index in $S$.
\end{proof}

As corollaries, we can conclude the existence of absorbing structures in $G\cup G(n,p)$. We split the cases here as Case~1 and 2 are much simpler. 

\begin{cor} \label{cor:case1+2absorbingstructure}
Let $k,r\in\NN$ such that either $2\leq k \leq r/2$ or $k=r$ and let $\gamma>0$. There exists $\eta_0>0$ and $C>0$ such that if $p\geq C n^{-2/k}$ and $G$ is an $n$-vertex graph with minimum degree $\delta(G)\geq (1-\frac{k}{r}+\gamma) n$, then for any $0<\eta<\eta_0$ and any set of $2\eta n$ vertices $X_1\subseteq V(G)$,  a.a.s. %\COMMENT{AT: so given a fixed $X_1$ we can find an absorbing structure a.a.s. However, this doesn't say a.a.s we can find an absorbing structure for any such set $X_1$.}
there exists a $4$-bounded absorbing structure $\cA=(\Phi,Z,Z_1)$ in $G':=G\cup G(n,p)$ of flexibility $m:=\eta n$, which has flexible set $Z_1=X_1$.
\end{cor}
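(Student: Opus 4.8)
The plan is to build the absorbing structure of Definition~\ref{def: absorbing structure} by hand. First I would fix, for all large $m=\eta n$, a template $T$ on vertex classes $I=[3m]$ and $J_1\sqcup J_2$ with flexibility $m$ and $\Delta(T)\le 40$, which exists by \cite[Lemma~2.8]{M14a}. Put $Z_1:=X_1$, let $Z_2$ be any further $2m$ vertices of $G$ (possible since $4\eta n<n$ for $\eta$ small), and label $Z=Z_1\sqcup Z_2=\{z_j:j\in J_1\sqcup J_2\}$ along the $J$-side of $T$. For $i\in[3m]$ write $N(i)$ for the $T$-neighbourhood of $i$, $n(i):=|N(i)|\le 40$, and $X_i:=\{z_j:j\in N(i)\}$; since every $j\in J_1\sqcup J_2$ must be covered in some perfect matching of $T[V(T)\setminus\overline J]$, it has a neighbour in $I$, so $\bigcup_{i\in[3m]}X_i=Z$. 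The aim is then to attach to each $i\in[3m]$ an embedding $\phi_i$ of a $(\bm{\underline H},K_r)$-absorbing gadget (with all entries $K_{r+1}^-$ of bounded length) into $G':=G\cup G(n,p)$ whose base set lands on $X_i$, with the gadgets disjoint off $Z$; then $\cA:=(\{\phi_i\},Z,Z_1)$ will be the desired structure.

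To get the deterministic skeleta I would set $\bm H:=\bm{H_1}$ if $r/k\in\mathbb N$ (this includes $k=r$) and $\bm H:=\bm{H_2}$ if $k<r/2$ and $r/k\notin\mathbb N$; between them these cover all $k$ permitted here. By Proposition~\ref{case1reachability} or~\ref{case2reachability} accordingly, $V(G)$ is $(\bm H;\beta')$-closed for some $\beta'=\beta'(r,k,\gamma)>0$, and $\delta(G)\ge (1-\frac{k}{r}+\gamma)n$ by hypothesis. So I apply Proposition~\ref{prop:manyabsorbinggadgets}, once for each $i\in[3m]$, with $U=V(G)$, $\alpha=1$, this $\beta'$, $q:=40$ and $X=X_i$. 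This yields a fixed $(\bm{\underline H}_i,H_{\text{det}})$-absorbing gadget $F'_i$ with $\bm{\underline H}_i\in\bm H(r\times n(i))$ and base set $W_i$, together with a family $\cF_i$ of at least $\beta n^{v_i}$ ordered $v_i$-tuples (where $v_i:=|V(F'_i)\setminus W_i|$ is bounded by a constant depending only on $r,k$, and $\beta=\beta(r,k,\gamma)>0$), each of which is the image of $V(F'_i)\setminus W_i$ under a labelled embedding of $F'_i$ into $G$ sending $W_i$ to $X_i$.

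Next I would complete these with random edges and assemble. Let $F_i$ be the complementary $(\bm{\overline{\underline H}}_i,\overline{H_{\text{det}}})$-absorbing gadget; since complementing each block preserves its vertex set, $F_i$ lives on the \emph{same labelled} vertex set as $F'_i$, has the same (independent) base set $W_i$, and the superposition $F'_i\cup F_i$ contains a $(\bm{\underline H}'_i,K_r)$-absorbing gadget in which each path-entry of $\bm{\underline H}'_i$ is a vector $(K_{r+1}^-,t)$ with $t$ the length of $\bm H$ (hence $t\le 4$), using that $H_{\text{det}}\cup\overline{H_{\text{det}}}=K_r$ and, by Definition~\ref{def:Hcom}, $K_{r+1}^-\subseteq F\cup\overline F$ on each block $F$ of the paths. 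By Lemma~\ref{AG Phis} (cases 1 and 2), $\Phi_{F_i\setminus W_i}\ge Cn$ and $\Phi_{F_i,W_i}\ge Cn^{1/k}$ whenever $p\ge Cn^{-2/k}$. Taking $U_i:=X_i$, the overlap degree $d(i)=|\{i':X_i\cap X_{i'}\ne\emptyset\}|$ is at most $40^2$ since $\Delta(T)\le 40$ and $|X_i|\le 40$. I now apply Proposition~\ref{prop:embeddinglargegraphs} to the $3\eta n$ graphs $F_i$, with data $(W_i,U_i,\cF_i)$ and constants $\kappa_d:=1600$, $\kappa_w:=40$, and $\kappa_v,\kappa_e$ the evident constant bounds; this fixes $\eta_0$ and the constant $C$ (enlarging $C$ so that Lemma~\ref{AG Phis} also applies, and shrinking $\eta_0$ so that $3\eta_0$ is below the threshold of the proposition). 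A.a.s.\ it returns labelled embeddings $\psi_i$ of the $F_i$ into $G(n,p)$, each sending $W_i$ to $X_i$ and $V(F_i)\setminus W_i$ to a tuple $\vec S_i\in\cF_i$ avoiding $\bigcup_{i'}U_{i'}=Z$, with pairwise disjoint non-base images. Since $\vec S_i$ is the image of $V(F'_i)\setminus W_i$ under a labelled embedding $\chi_i$ of $F'_i$ sending $W_i$ to $X_i$ with the same ordering of non-base vertices, $\chi_i$ and $\psi_i$ coincide as a single vertex map $\phi_i$; its image edges in $G'$ contain a $(\bm{\underline H}'_i,K_r)$-absorbing gadget with all entries $K_{r+1}^-$ of length $\le 4$, base set mapped onto $X_i\subseteq Z$, and non-base part disjoint from $Z$ and from the other gadgets. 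Hence $\cA=(\{\phi_i\}_{i\in[3m]},Z,Z_1)$ is a $4$-bounded absorbing structure of flexibility $m=\eta n$ with flexible set $Z_1=X_1$.

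I do not expect any single hard step — as the text notes, Cases~1 and~2 are the easy ones — so the work is in the dovetailing. The three things to watch are: that the bookkeeping constants $\kappa_d,\kappa_w,\kappa_v,\kappa_e$ really are bounded in terms of $r,k$ and the (bounded) template degree; that the $\Phi$-bounds supplied by Lemma~\ref{AG Phis} are \emph{exactly} the hypotheses $\Phi_{F_i\setminus W_i}\ge Cn$, $\Phi_{F_i,W_i}\ge Cn^{1/k}$ required by Proposition~\ref{prop:embeddinglargegraphs}; and, most importantly, that superposing the deterministic $F'_i$ and the random $F_i$ on the same labelled vertex set genuinely yields a $(\bm{\underline H}'_i,K_r)$-absorbing gadget all of whose path-entries are copies of $K_{r+1}^-$ of length at most $4$, so that $\cA$ is indeed a $4$-bounded absorbing structure in the sense of Definition~\ref{def: absorbing structure}.
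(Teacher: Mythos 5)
Your proposal is correct and follows essentially the same route as the paper's proof: a Montgomery template with $\Delta\le 40$ dictating the base sets $U_i$, deterministic $(\bm{H_1}$ or $\bm{H_2},H_{\text{det}})$-absorbing gadgets supplied via the reachability propositions and Proposition~\ref{prop:manyabsorbinggadgets}, and the complementary gadgets found in $G(n,p)$ by Proposition~\ref{prop:embeddinglargegraphs} using the $\Phi$-bounds of Lemma~\ref{AG Phis}, with $\kappa_d=1600$. The superposition check you flag (that each block yields a $(K_{r+1}^-,t)$-path with $t\le 4$) is exactly the paper's observation that the resulting gadgets are $(\bm{K}_a(r\times|U_i|),K_r)$-gadgets with $\bm{K}_a$ a $(K_{r+1}^-,2a)$-path, $a\in\{1,2\}$.
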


\begin{proof}%\COMMENT{AT: quite a few  changes in this proof}
%The proof is fairly immediate from previous results. 
We look to apply Proposition~\ref{prop:embeddinglargegraphs} and simply need to establish the hypothesis of the proposition. 
Consider a bipartite template $T=(I=\{1,\ldots,3m\},J_1\sqcup J_2=\{1,\ldots,2m\}\sqcup\{2m+1,\ldots,4m\}, E(T))$  as in Definition~\ref{def: absorbing structure}; recall such a template exists~\cite{M14a}.
Fix $Z_1=X_1=\{z_1,\ldots, z_{2m}\}$ and choose an arbitrary set of $2m$ vertices $Z_2\subset V(G)\setminus Z_1$ which we label $\{z_{2m+1},\ldots,z_{4m}\}$. Now towards applying Proposition~\ref{prop:embeddinglargegraphs}, we set $t:=3m$ and for $i \in [t]$ we define the sets $U_i:=\{z_j:j\in N(i)\}$ where $N(i)$ is as in Definition~\ref{def: absorbing structure}. Note that we can set $\kappa_d:=1600$ as we start with a template $T$ with  $\Delta(T) \leq 40$, so for any set $N(i)\subset J$ (of at most $40$ vertices), there are at most $1600$ indices $i'\in I=[3m]$ such that $N(i')\cap N(i)\neq \emptyset$. 

Now, fixing $i$, the collection $\cF_i$, which we will use when applying Proposition~\ref{prop:embeddinglargegraphs}, will be obtained from Proposition~\ref{prop:manyabsorbinggadgets}. Indeed, this proposition implies, along with Propositions~\ref{case1reachability} and~\ref{case2reachability}, that there is some $\beta>0$ such that the following holds with $a=1$ if $r/k\in\mathbb{N}$ (Case 1) and $a=2$ otherwise (Case 2). 
\begin{claim}\label{C3P0}
For any set $U$ of at most $40$ vertices, there is an $(\bm{H}_a (r \times |U|),H_{\text{det}})$-absorbing gadget\footnote{Recall here the definition of $\bm{H}_1$ from Proposition~\ref{case1reachability}, of $\bm{H}_2$ from 
Proposition~\ref{case2reachability} and $H_{\text{det}}$ from Definition~\ref{Hzerominus}.
The notation $\bm{H}_a (r \times |U|)$ is also defined as in Definition~\ref{def1}.}
 $F'$ such that there are at least $\beta n^{|F'|-|U|}$ embeddings of $F'$ in $G$ which map the base set of the absorbing gadget to $U$. 
\end{claim}
For each $i$, apply Claim~\ref{C3P0} with $U_i$ playing the role of $U$ to obtain a collection $\mathcal F_i$ of ordered vertex sets from $V(G)$ that combined with $U_i$ each span such an absorbing gadget $F'_i=F'$.
For each such embedding of $F'_i$, if we have an ordered  $(\overline{\bm{H}_a}(r \times |U_i|),\overline{H_{\text{det}}})$-absorbing gadget $F_i$ (in $G(n,p)$), 
 on the same vertex set, then we obtain the desired embedding $\phi_i$ of a 
$(\bm{K}_a (r\times |U_i|),K_r)$-absorbing gadget
 in $G\cup G(n,p)$, where $\bm{K}_a$ is a $(K_{r+1}^-,2a)$-path. Applying Proposition~\ref{prop:embeddinglargegraphs} with small enough $\eta>0$  thus gives us the absorbing structure, upon noticing that the conditions on $\Phi_{F_i,W_i}$ and $\Phi_{F_i\setminus W_i}$ are satisfied by Lemma \ref{AG Phis}. %repeated applications of Lemma~\ref{PhiFobservations}. 
%\COMMENT{TO DO: AT: perhaps we should have a separate lemma that says that the conditions on $\Phi_{F_i,W_i}$ and $\Phi_{F_i\setminus W_i}$ hold? We could explain in each of the three cases that we define the absorbing structures in $G$ differently so that their
%`complementary' structures are plentiful in $G(n,p)$. I know things are mentioned earlier in the paper, but I still think at the moment the reader may not really get a feel for why we use different structures in the 3 cases. PM: added, see Section \ref{Random AGs}.}
\end{proof}

The third case, when $r/2<k\leq r-1$, follows the exact same method of proof. %and so we omit the details. 
The main difference comes from the fact that we do not have many absorbing gadgets for \emph{all} small sets of vertices in the deterministic graph but only for sets which lie in one part of the partition dictated by Lemma \ref{case3reachability}. Therefore  we look to find an absorbing structure in each part of the partition. Thus when we apply Proposition \ref{prop:embeddinglargegraphs}, we do so to find all these absorbing structures at once, in order to guarantee that these absorbing structures are disjoint. The conclusion is as follows. 

\begin{cor} \label{cor:case3absorbingstructure}
Let $r/2< k \leq r-1$ be integers, and define $q:=r-k$, $c:=\ceil{r/q}$ and $\gamma>0$. Then there exists $\alpha>0$ such that the following holds for all $0<\eps<\alpha\gamma/4$. There exists $C=C(r,k,\gamma,\eps)>0$ and $\eta_0=\eta_0(r,k,\gamma,\eps)>0$ such that if $p\geq C n^{-2/k}$ and $G$ is an $n$-vertex graph with minimum degree $\delta(G)\geq (1-\frac{k}{r}+\gamma) n$, then for any $0<\eta<\eta_0$ there is a partition $\cP=\{V_1,V_2,\ldots,V_\rho, W\}$ of $V(G)$ into at most $c$ parts with the following properties:
\begin{itemize}
    \item $|V_i|\geq \alpha n$ for $i\in [\rho]$;
    \item $|W|\leq \eps n$;
    \item $\delta({G[V_i]})\geq (1-\frac{k}{r}+\frac{\gamma}{4})|V_i|$ for each $i\in [\rho]$;
    \item For any collection of subsets $X_i\subset V_i$ such that $1.8 \eta |V_i|\leq |X_i| \leq 2\eta |V_i|$ for all $i\in[\rho]$, there a.a.s. exists a set of  $c(2^{c+1}+1)$-bounded absorbing structures $\{\cA_i=(\Phi_i,Z_i,Z_{i1}):i\in[\rho]\}$ in $G':=G\cup G(n,p)$ such that each $\cA_i$ has  flexibility $m_i:=|X_i|/2$ and has flexible set $Z_{i1}=X_i$. Furthermore $V(\cA_i)\cap V(\cA_{i'})=\emptyset$ for all $i\neq i'\in [\rho]$.
\end{itemize}
\end{cor}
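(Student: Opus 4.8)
The plan is to reduce Corollary~\ref{cor:case3absorbingstructure} to Proposition~\ref{case3reachability}, Proposition~\ref{prop:manyabsorbinggadgets} and Proposition~\ref{prop:embeddinglargegraphs}, in essentially the same way that Corollary~\ref{cor:case1+2absorbingstructure} was deduced, but now working inside each part of the partition simultaneously. First I would fix $\eps>0$ small (with $\eps<\alpha\gamma/4$, where $\alpha$ is the constant coming from Proposition~\ref{case3reachability}) and apply Proposition~\ref{case3reachability} to $G$; this yields a partition of $V(G)$ into at most $c-1$ parts $U_1,\dots,U_\rho$, each of size $\geq\alpha n$, each $(\bm{\cH_3};\beta_3)$-closed, and each having all but at most $\eps n$ of its vertices with internal degree at least $(1-\frac{k}{r}+\frac{\gamma}{2})|U_i|$. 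To obtain the stated partition $\cP=\{V_1,\dots,V_\rho,W\}$, I would move the (at most $\eps n$ per part) low-internal-degree vertices into an exceptional set $W$; the resulting sets $V_i:=U_i\setminus W$ still have $|V_i|\geq\alpha n$ (after possibly shrinking $\alpha$), satisfy $\delta(G[V_i])\geq(1-\frac{k}{r}+\frac{\gamma}{4})|V_i|$ since we only deleted $\eps n\leq \frac{\gamma\alpha}{4}n$ vertices, and remain $(\bm{\cH_3};\beta_3')$-closed for a slightly worse $\beta_3'$ (closedness of a subset follows since reachability witnesses are unaffected by restricting which pairs we consider, and we lose only a constant factor in the count by forbidding a small set of vertices — compare the argument at the start of the proof of Lemma~\ref{lm:gnpfindingsubgraph}).

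Next, within each $V_i$ I would build the deterministic skeleton of an absorbing structure exactly as in the proof of Corollary~\ref{cor:case1+2absorbingstructure}. Fix the subsets $X_i\subset V_i$ given in the statement, set $m_i:=|X_i|/2$, and take a bipartite template $T_i$ of flexibility $m_i$ with $\Delta(T_i)\leq 40$ (these exist by~\cite{M14a}). Label $Z_{i1}:=X_i=\{z^{(i)}_1,\dots,z^{(i)}_{2m_i}\}$ and pick an arbitrary disjoint $Z_{i2}\subset V_i\setminus X_i$ of size $2m_i$ (possible since $|X_i|\leq 2\eta|V_i|$ is tiny). For each $\ell\in[3m_i]$ set $U^{(i)}_\ell:=\{z^{(i)}_j:j\in N_{T_i}(\ell)\}$, a set of at most $40$ vertices in $V_i$. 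Since $V_i$ is $(\bm{\cH_3};\beta_3')$-closed and $\delta(G[V_i])\geq(1-\frac{k}{r}+\frac{\gamma}{4})|V_i|$, Proposition~\ref{prop:manyabsorbinggadgets} (applied with $U=V_i$, $\bm{\cH}=\bm{\cH_3}$, parameters $\alpha,\gamma/4,\beta_3'$) gives a constant $\beta>0$ and, for each such $U^{(i)}_\ell$, some $\bm{\underline{H}}\in\bm{\cH_3}(r\times|U^{(i)}_\ell|)$ and an $(\bm{\underline{H}},H_{\text{det}})$-absorbing gadget $F'_{i,\ell}$ with at least $\beta n^{|F'_{i,\ell}|-|U^{(i)}_\ell|}$ embeddings in $G$ mapping its base set to $U^{(i)}_\ell$; let $\cF_{i,\ell}$ be the corresponding family of non-base vertex sets. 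Crucially, these are all sets of deterministic edges and each embeddable family has size $\Omega(n^{v})$, so they are simultaneously available to feed into Proposition~\ref{prop:embeddinglargegraphs}.

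Finally I would apply Proposition~\ref{prop:embeddinglargegraphs} \emph{once}, with index set running over all pairs $(i,\ell)$ with $i\in[\rho]$, $\ell\in[3m_i]$, so that $t=\sum_i 3m_i\leq 3\cdot 2\eta n$, which is $\eta' n$ for a suitable $\eta'=\eta'(\eta)\to 0$. For each such index the graph $F_{i,\ell}$ is the complementary absorbing gadget, an $(\bm{\overline{\underline{H}}},\overline{H_{\text{det}}})$-absorbing gadget; Lemma~\ref{AG Phis} (Case~3) gives $\Phi_{F_{i,\ell}\setminus W_{i,\ell}}\geq Cn$ and $\Phi_{F_{i,\ell},W_{i,\ell}}\geq Cn^{1/k}$, which are exactly the hypotheses of Proposition~\ref{prop:embeddinglargegraphs}. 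The bound $d(i,\ell)\leq\kappa_d$ holds with $\kappa_d=1600$ as in Corollary~\ref{cor:case1+2absorbingstructure}, noting that base sets from different parts $V_i\neq V_{i'}$ are automatically disjoint so only overlaps within a single template matter. The output of Proposition~\ref{prop:embeddinglargegraphs} is, a.a.s., a family of pairwise (outside of base sets) vertex-disjoint embeddings $\phi_{i,\ell}$ into $G'$, which in $G\cup G(n,p)$ combine with the deterministic gadgets $F'_{i,\ell}$ to yield genuine $(\bm{K}_{r+1}^-\text{-type},K_r)$-absorbing gadgets; assembling those with index $i$ according to the template $T_i$ gives the absorbing structure $\cA_i=(\Phi_i,Z_i,Z_{i1})$, which is $c(2^{c+1}+1)$-bounded because every vector in $\bm{\cH_3}$ has length at most $c(2^{c+1}+1)$. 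Disjointness $V(\cA_i)\cap V(\cA_{i'})=\emptyset$ for $i\neq i'$ is immediate since the $V_i$ are disjoint and all non-base vertices of $\cA_i$ were chosen inside $V_i$ (and the base vertices lie in $X_i\cup Z_{i2}\subset V_i$).

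The step I expect to be the main obstacle is the bookkeeping needed to run Proposition~\ref{prop:embeddinglargegraphs} across all parts at once while keeping the embeddings of distinct $\cA_i$ genuinely disjoint: one must make sure that when Proposition~\ref{prop:manyabsorbinggadgets} produces $\cF_{i,\ell}$, these families can be taken with all non-base vertices inside $V_i$ (so that the disjointness of the $V_i$ does the work), and that the single application of Proposition~\ref{prop:embeddinglargegraphs} is legitimate even though the various $F_{i,\ell}$ are not all isomorphic and the base sets $U^{(i)}_\ell$ live in different parts — but this is exactly the generality in which Proposition~\ref{prop:embeddinglargegraphs} was stated (arbitrary labelled $F_i$, arbitrary $U_i$), so no new idea is required, only care with the constants $\kappa_w,\kappa_v,\kappa_e,\kappa_d$ which can all be bounded uniformly in terms of $r$ and $c$. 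A secondary technical point is verifying that restricting a $(\bm{\cH_3};\beta_3)$-closed set to a subset obtained by deleting $\eps n$ vertices preserves closedness with a controlled parameter, but this is the same simple count (avoiding a linear-sized forbidden set costs at most a constant factor in the number of reachable-path embeddings) used elsewhere in the paper.
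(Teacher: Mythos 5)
Your proposal is correct and follows essentially the same route as the paper: apply Proposition~\ref{case3reachability} (with its $\eps$-parameter taken a factor $c$ smaller so that the union of the low-internal-degree vertices over the at most $c-1$ parts fits into $W$ with $|W|\leq\eps n$), build a template and the deterministic gadgets via Proposition~\ref{prop:manyabsorbinggadgets} inside each part, verify the $\Phi$-conditions with Lemma~\ref{AG Phis}, and make a single application of Proposition~\ref{prop:embeddinglargegraphs} over all parts at once. Your flagged ``main obstacle'' is in fact a non-issue: Proposition~\ref{prop:manyabsorbinggadgets} does not (and need not) confine the non-base gadget vertices to $V_i$, since the disjointness of the $\cA_i$ and the avoidance of all base sets already follow from the conclusion of the single application of Proposition~\ref{prop:embeddinglargegraphs}, exactly as in the paper; likewise no loss in the closedness parameter is needed when passing to $V_i\subset U_i$, because reachable paths are not required to lie inside the closed set.
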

\begin{proof}
We begin by applying Proposition \ref{case3reachability}  to get a vertex partition $\cP$ with at most $c-1$ parts and in each part $U\in \cP$ we remove any vertex $v$ which has internal degree $d_U(v)=|N_G(v)\cap U|<(1-\frac{k}{r}+\frac{\gamma}{2})|U|$, and add $v$ to $W$. The resulting partition is the partition we will use. Choosing  $\eps_{\ref{case3reachability}}$ in the application of Proposition~\ref{case3reachability} to be less than $\eps/c$, we have that the first three bullet points are satisfied. 
Below we show the last bullet point, and to aid readability we temporarily fix $i=1$.

Now given a set of $X_1\subset V_1$ we choose a set $Z_2\subset V_1\setminus X_1$ such that $|Z_2|=2m_1$. 
Further, according to some template $T=(I=\{1,\ldots,3m_1\},J_1\sqcup J_2=\{1,\ldots,2m_1\}\sqcup\{2m_1+1,\ldots,4m_1\}, E(T))$  as in Definition~\ref{def: absorbing structure}, we label $X_1$ according to $J_1$ and $Z_2$ according to $J_2$ and identify sets $U_{i'}\subseteq X_1$ for each $i'\in [3m_1]$ according to the neighbourhood of $i'$ in $T$. 
As in Corollary \ref{cor:case1+2absorbingstructure}, by Propositions~\ref{case3reachability} and~\ref{prop:manyabsorbinggadgets} there exists some $\beta>0$ such that for each $i'\in [3m_1]$, fixing $s_{i'}=|U_{i'}|$ the following holds. 
There is some $\underline{\bm{H}}_{i'}\in \bm{\cH_3}(r \times s_{i'})$ and some $(\underline{\bm{H}}_{i'}, H_{\text{det}})$-absorbing gadget $F'_{i'}$ such that there are at least $\beta n^{|F'_{i'}|-s_{i'}}$ embeddings of $F'_{i'}$ in $G$ which map the base set of $F'_{i'}$ to $U_{i'}$. 
Each of these embeddings gives a candidate vertex set for which we could embed an $(\overline{\underline{\bm{H}}_{i'}}, \overline{H_{\text{det}}})$-absorbing gadget, say $F_{i'}$ %with base set $W_{i,i'}$
to get a copy of a $(\underline{\bm{K}},K_r)$-absorbing gadget in $G'$, with base set $U_{i'}$, where $\underline{\bm{K}}\in \bm{\cH}(r\times s_{i'})$ and $\bm{\cH}=\bm{\cH}(K_{r+1}^-,\leq c(2^{c+1}+1))$. Using Lemma \ref{AG Phis},  
%to establish that $\Phi_{F_{i,i'}, W_{i,i'}}\geq Cn^{1/k}$ and $\Phi_{i,i'}_{F_{i,i'}\setminus W_{i,i'}}\geq Cn$ for each $i$ and $i'$, 
we can now apply Proposition \ref{prop:embeddinglargegraphs} (provided $\eta>0$ is sufficiently small) to get the desired embeddings of all the $F_{i'}$ which an absorbing structure $\cA_1$ as in the statement of the corollary. We in fact apply Proposition \ref{prop:embeddinglargegraphs} for all $i\in[\rho]$ at once which gives the collection of absorbing structures as required.   
\end{proof}
%\COMMENT{TO DO: I think there should at least be a proof sketch of this corollary so we can state which results we combine to prove this. PM: Added}
Before proving the upper bound in our main result, Theorem~\ref{thm:Main}, we give one last consequence of Proposition~\ref{prop:embeddinglargegraphs} which will be useful for us.

\begin{cor} \label{cor:embeddingK_rs}
Suppose that $2 \leq k\leq r$ and $\gamma, \beta>0$. Then there exists $\alpha=\alpha(r,k,\gamma,\beta)>0$ and $C>0$ such that the following holds. Suppose $G$ is an $n$-vertex graph with disjoint vertex sets $U, W$ such that $|U|\leq \alpha n$, 
%\COMMENT{AT: since for our applications we often have that $|U|\leq \alpha n$, might be clearly for the reader if we state that explicitly here}
$|W|\geq \beta n$ and for all $v\in U\cup W$, $|N_G(v)\cap W|\geq (1-\frac{k}{r}+\gamma)|W|$ and $p=p(n)$ is such that  $p\geq C n^{-2/k}$. Then a.a.s. in $G\cup G(n,p)$ there is a set of $|U|$ disjoint copies $K_r$ so that each copy of $K_r$ contains a vertex of $U$ and $r-1$ vertices of $W$.  
\end{cor}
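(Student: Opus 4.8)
The plan is to invoke Proposition~\ref{prop:embeddinglargegraphs} once more, this time with the graphs $F_i$ being (almost) copies of $K_r$, and with the family $\cF_i$ being built greedily inside $W$ via the supersaturation lemma. First I would set up the parameters: for each vertex $u\in U$ we want one copy of $K_r$ containing $u$ together with $r-1$ vertices of $W$. So we let $t:=|U|$, enumerate $U=\{u_1,\dots,u_t\}$, and for each $i\in[t]$ put $W_i:=\{w^i\}$ a single distinguished vertex, $U_i:=\{u_i\}$, and $F_i:=K_r$ with $w^i$ one of its vertices, so $V(F_i)\setminus W_i$ has $v_i=r-1$ vertices and $e(F_i)=\binom{r}{2}$. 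Since the $U_i$ are pairwise disjoint singletons, $d(i)=1$, so $\kappa_d=1$; and $\kappa_w=1$, $\kappa_v=r-1$, $\kappa_e=\binom r2$ are all constants. The probability condition is exactly what part~1 and part~2 of Lemma~\ref{PhiFobservations} give: for $F_i=K_r$ (or rather the relevant subgraph count) we have $\Phi_{F_i\setminus W_i}=\Phi_{K_{r-1}}\geq Cn$ when $p\geq Cn^{-2/k}$ provided $r-1\le k$... but here $k$ may be much smaller than $r$, so this is the crux.

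The subtlety is that $K_r$ is far too dense to appear in $G(n,p)$ alone when $p=\Theta(n^{-2/k})$ and $k<r$; we must use the deterministic edges of $G$. So instead of taking $F_i=K_r$ literally, I would split it as in the rest of the paper: the deterministic part is a copy of $H_{\text{det}}(r,k)$ (the complete $r^*$-partite graph $K^{r^*}_{k,\dots,k,q}$) and the random part is $\overline{H_{\text{det}}}$, a disjoint union of cliques of size at most $k$. Concretely, for each $i$ I would first use the degree hypothesis $|N_G(v)\cap W|\ge(1-\tfrac kr+\gamma)|W|$ for all $v\in U\cup W$ together with Lemma~\ref{lem:supersat}: every set of at most $r^*$ vertices of $U\cup W$ has $\ge \gamma|W|/2$ common neighbours inside $W$ (since $(1-\tfrac kr)= (1-\tfrac1{r^*})$ up to the non-balanced correction — more carefully, since $\chi_{cr}(H_{\text{det}})=r/k$, the same counting as in Proposition~\ref{case1reachability}/\ref{case2reachability} works), so greedily one finds $\Omega(|W|^{r-1})=\Omega(n^{r-1})$ labelled copies of $H_{\text{det}}$ inside $W\cup\{u_i\}$ placing $u_i$ in one part and the remaining $r-1$ vertices in $W$. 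This yields, for each $i$, a family $\cF_i$ of $\ge\beta' n^{r-1}$ ordered $(r-1)$-subsets of $W$ (disjoint from $U$) such that $\{u_i\}\cup S$ spans a copy of $H_{\text{det}}$ in $G$ for every $S\in\cF_i$. Then the random part $F_i=\overline{H_{\text{det}}}$ (on the same vertex set, with $W_i$ a single vertex in one of the size-$k$ or size-$q$ parts) has $\Phi_{F_i\setminus W_i}\ge Cn$ and $\Phi_{F_i,W_i}\ge Cn^{1/k}$ by Lemma~\ref{PhiFobservations}(1)--(3), exactly as computed in Lemma~\ref{AG Phis}: $\overline{H_{\text{det}}}$ is a disjoint union of $K_k$'s and one $K_q$, and removing a single vertex from one clique only improves the exponent. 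Applying Proposition~\ref{prop:embeddinglargegraphs} with these data then produces a.a.s. disjoint embeddings $\phi_i$ in $G\cup G(n,p)$, each mapping $u_i$ to itself, the other vertices into $W$, and completing a copy of $K_r$; disjointness of the $\phi_i(V(F_i)\setminus W_i)$ is guaranteed by the proposition, so these $|U|$ copies of $K_r$ are vertex-disjoint and each uses exactly one vertex of $U$ and $r-1$ of $W$, as required.

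One point to be careful about: I need the family $\cF_i$ to have size $\ge\beta'' n^{r-1}$ with $\beta''$ a constant independent of $n$ (so that the hypothesis of Proposition~\ref{prop:embeddinglargegraphs} is met). This follows from Lemma~\ref{lem:supersat} applied to the $\Omega(n^{r^*})$ copies of $K_{r^*}$ rooted appropriately, exactly as in the reachability proofs: first one gets $\Omega(n^{r^*})$ labelled copies of $K_{r^*}$ with the first part in $N_G(u_i)\cap W$-many common neighbourhoods, then supersaturation blows these up to $\Omega(n^{r-1})$ copies of $H_{\text{det}}\setminus(\text{the }u_i\text{-vertex})$ in the right neighbourhoods, which together with $u_i$ give copies of $H_{\text{det}}$. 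The constant here depends only on $r,k,\gamma$. After discarding copies of $H_{\text{det}}$ that use vertices outside $W$ or equal to vertices of $U$ (of which there are $O(|U|n^{r-2})=O(\alpha n^{r-1})$), we still have $\ge\beta'' n^{r-1}$ valid sets provided $\alpha$ is small enough; this is where the hypothesis $|U|\le\alpha n$ is used. I would choose $\alpha=\alpha(r,k,\gamma,\beta)$ small enough at the end to absorb all these losses and to satisfy the smallness requirement $\eta<\eta_0$ coming out of Proposition~\ref{prop:embeddinglargegraphs} (with $\eta\approx\alpha$, since $t=|U|\le\alpha n$).

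The main obstacle is purely bookkeeping: verifying that all the constants ($\kappa_d,\kappa_v,\kappa_w,\kappa_e,\beta$, the exponent $k$ in $\Phi_{F_i,W_i}\ge Cn^{1/k}$) line up with the hypotheses of Proposition~\ref{prop:embeddinglargegraphs}, and — the one genuinely non-trivial input — confirming that $\Phi_{F_i,W_i}\ge Cn^{1/k}$ and $\Phi_{F_i\setminus W_i}\ge Cn$ for $F_i=\overline{H_{\text{det}}}$ with a root in a size-$k$ (or size-$q$) part. This is essentially a special, simpler case of the computation already carried out in Lemma~\ref{AG Phis} (there the absorbing gadget's complement was analysed; here we just have a single copy of $\overline{H_{\text{det}}}$, which is one of the building blocks of those gadgets), so I would cite that lemma's proof technique: each component of $\overline{H_{\text{det}}}$ is a clique of size $\le k$, so $\Phi$ of it is $\ge Cn$ by Lemma~\ref{PhiFobservations}(1), and the $\Phi_{F_i,W_i}$ bound is handled by Lemma~\ref{PhiFobservations}(2)--(3) exactly as for the $F_w$ components there. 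Everything else is a routine instantiation.
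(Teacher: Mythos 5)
Your proposal follows essentially the same route as the paper: build, for each $u\in U$, a family $\cF_u$ of $\Omega(n^{r-1})$ ordered $(r-1)$-sets in $W$ spanning copies of $H_{\text{det}}$ rooted at $u$ (degree condition plus Lemma~\ref{lem:supersat}), and then apply Proposition~\ref{prop:embeddinglargegraphs} with $F_u=\overline{H_{\text{det}}}$ rooted at a single distinguished vertex, with $\kappa_d$ constant and $\alpha$ small. Two details, both of which the paper pins down and you leave loose, actually matter. First, the distinguished vertex cannot go in ``one of the size-$k$ or size-$q$ parts'' indifferently: if the root sits in a clique of size $k$ of $\overline{H_{\text{det}}}$ then, taking $H$ to be that whole $K_k$ in the definition of $\Phi_{F,W}$, one gets $n^{k-1}p^{\binom{k}{2}}=\Theta(1)$ at $p=Cn^{-2/k}$, so the hypothesis $\Phi_{F_u,W_u}\geq Cn^{1/k}$ of Proposition~\ref{prop:embeddinglargegraphs} fails; your justification (``removing a single vertex from one clique only improves the exponent'') concerns $\Phi_{F\setminus W}$, not the rooted quantity. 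The paper places $u$ in the part of size $q$ of $H_{\text{det}}$ precisely so that the rooted component of $\overline{H_{\text{det}}}$ is a $K_q$ with $q\le k$, for which the rooted bound does hold (when $q<k$). Second, for $k\nmid r$ you cannot in general find copies of $K_{r^*}$ lying entirely inside $N_G(u)\cap W$ (take $G[W]$ complete multipartite with $r^*-1$ parts of size $k|W|/r$ and one of size $q|W|/r$, and $u$ nonadjacent to one large part), so the supersaturation step should be run, as in the paper, on copies of $K_{r^*+1}^-$ with $u$ in the non-edge — equivalently, a $K_{r^*}$ of which only $r^*-1$ vertices are forced into $N_G(u)$ — whose blow-up places $u$ in the $q$-part of $H_{\text{det}}$; this only needs common neighbourhoods of at most $r^*-1$ vertices, which the minimum degree does supply. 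With these two corrections your argument coincides with the paper's proof.
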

\begin{proof}
Firstly, let $r^*:=\ceil{r/k}$.  By the fact that $|N_G(v)\cap W|\geq (1-\frac{k}{r}+\gamma)|W|$ for all $v \in U \cup W$, we have that each vertex $u\in U$ is in at least $(\frac{\beta\gamma}{2}n)^{r^*}$ distinct copies of $K_{r^*+1}^-$ in $G$ such that the  other vertices of each copy lie in $W$, and $u$ is contained in the nonedge of each $K_{r^*+1}^-$.  Thus by Lemma \ref{lem:supersat}, there exists some $\beta'>0$ such that each $u\in U$ is in $\beta'n^{r-1}$ copies of $H_{\text{det}}$ with the other vertices of each copy in $W$, and $u$ in the part of size $q:=r-(r^*-1)k$ in $H_{\text{det}}$. 
Let $\cF_u$ be the collection of $(r-1)$-sets of vertices in $W$ that, together with $u$, give rise to these copies of 
$H_{\text{det}}$ containing $u$. Set $F_u:=\overline{H_{\text{det}}}=K_r-E({H_{\text{det}}})$ with an identified  vertex $w_u$ in the clique of size $q$ in $\overline{H_{\text{det}}}$. Thus an ordered embedding in $G(n,p)$ of $F_u$ which maps $w_u$ to $u$ and $V(F_u)\setminus \{w_u\}$ to an ordered set in $\cF_u$ will give an embedding of $K_r$ in $G\cup G(n,p)$ containing $u$ and vertices of $W$. By Lemma \ref{PhiFobservations} we have that $\Phi_{F_u,{w_u}}\geq Cn^{1/k}$ and $\Phi_{F_u\setminus w_u}\geq Cn$. Thus, provided $\alpha>0$ is sufficiently small, an application of Proposition \ref{prop:embeddinglargegraphs} gives the desired set of embeddings of $K_r$ in $G\cup G(n,p)$.
\end{proof}

\section{Proof of the upper bound of Theorem~\ref{thm:Main}}\label{sec:proof}
In this section we prove the upper bound of Theorem \ref{thm:Main}. %which %we restate here in the following form. 
%\COMMENT{again, I think we want to actually prove this theorem in $k=r$ case too}
%\begin{theorem} \label{thm:Upper}
%Let $2\leq k\leq r$ be integers and $\gamma>0$. Then there exists $C=C(\gamma,k,r)>0$ 
%\COMMENT{AT: changed to $C=C(\gamma,k,r)>0$  as $C$ does depend on $k$ and $r$. Also deleted `$n_0\in \mathbb{N}$ such that if $ n\geq n_0$' as it is an a.a.s. result}
%such that if   $p\geq Cn^{-2/k}$ and  $G$ is an $n$-vertex graph where $r$ divides $n$ and $\delta(G)\geq (1-\frac{k}{r}+\gamma)n$, then $G\cup G(n,p)$  a.a.s. contains a perfect $K_r$-tiling. 
%covering all but at most $\alpha n$ vertices. 
%\end{theorem}
%Here we use $a.a.s.$ to mean that the probability that there is a $K_r$-tiling in $G\cup G(n,p)$ is a function of $n$ which tends to $1$ as $n$ tends to infinity. Thus, the above theorem implies the $(i)$-statement of Definition \ref{def:threshold} for Theorem \ref{thm:Main}. %\COMMENT{AT: not sure if we  need this paragraph here. Perhaps in notation section we should formally define a.a.s.? PM: Yea, so what I was trying to do here is point out the slight change of viewpoint from looking at a sequence of graphs to a single $n$-vertex graph. When we have a sequence of graphs, we have a sequence of probabilities which tend to 1, whereas now we just have one probability but it is a function of $n$. Probably I'm just overcomplicating things and this is not necessary though. }
%\subsection{Proof of \Cref{thm:Upper}}
Fix some sufficiently large $n\in r \mathbb N $  and let $G$ be an  $n$-vertex graph with $\delta(G)\geq (1-\frac{k}{r}+\gamma)n$. We will show that there exists $C=C(\gamma,k,r)>0$ 
such that if   $p\geq Cn^{-2/k}$, then $G':=G\cup G(n,p)$  a.a.s. contains a perfect $K_r$-tiling. Again, we split the proof according to the parameters. We first treat Cases~1 and 2 together (i.e. when $2\leq k\leq r/2$ or $k=r$). Here we avoid many of the technicalities which occur in Case~3 and the main scheme of the proof is clear. 

\smallskip

{\noindent \it Proof of Cases 1 and 2.} %\COMMENT{AT: quite a number of changes in this proof}
Suppose $2\leq k\leq r/2$ or $k=r$, and let $C,C'>0$ be chosen such that we can express $G(n,p)=\cup_{j=1}^4G_j$ with each $G_j$ a copy of $G(n,p')$ where $p'\geq C'n^{-2/k}$ and $C'>0$ is large enough to be able to draw the desired conclusions in what follows. 
Now fix $0<\eta<\min\{\frac{\gamma}{2000r^2},\eta_0\}$ where $\eta_0$ is as in Corollary~\ref{cor:case1+2absorbingstructure} and consider $X'\subseteq V(G)$ 
to be the subset generated by taking every vertex in $V(G)$ in $X'$ with probability $1.9\eta$, independently of the other vertices. With high probability, by Chernoff's theorem, 
we have that $1.8\eta n\leq |X'|\leq 2\eta n$ and for every vertex $v\in V(G)$, $|N_G(v)\cap X'|\geq (1-\frac{k}{r}+\frac{3\gamma}{4})|X'|$. 
Take an instance of $X'$ where this is the 
case and let $X:=X'$ if $|X'|$ is even and $X:=X'\cup\{x\}$ for some arbitrary vertex $x\in V(G)\setminus X'$ if $|X'|$ is odd. 
Apply Corollary~\ref{cor:case1+2absorbingstructure} to get  a $4$-bounded absorbing structure $\cA=(\Phi,Z,Z_1)$ in $G\cup G_1$ with flexibility $|X|/2$ and flexible set $Z_1=X$. Remark~\ref{boundy} implies $|V(\cA)|\leq 500r^2\eta n\leq \gamma n/4$. 

Then letting $V':=V(G)\setminus V(\cA)$, we have that  $\delta (G[V'])\geq (1-\frac{k}{r}+\frac{\gamma}{2})|V'|$. Choose $\alpha:= \min\{\alpha_{\ref{cor:embeddingK_rs}},\frac{\gamma\eta}{4r}\}$, 
where $\alpha_{\ref{cor:embeddingK_rs}}$ is the constant obtained when applying Corollary~\ref{cor:embeddingK_rs} with $r,k,\gamma /2, \eta$ playing the role of $r,k,\gamma, \beta$ respectively.

Apply Theorem~\ref{almost tiling thm} to obtain a $K_r$-tiling $\cK_1$ in $(G\cup G_2)[V']$ covering  all but at most $\alpha n$ vertices of $V'$. 
Let $Y$ denote the set of those vertices in $V'$ uncovered by $\cK_1$. Apply Corollary \ref{cor:embeddingK_rs} to obtain a $K_r$-tiling $\cK_2$ in $(G\cup G_3)[X \cup Y]$ which covers $Y$ and covers precisely $(r-1)|Y|\leq \gamma\eta n/2\leq \gamma |X|/2$ vertices of $X$. Let $\tilde{X}$ be the set of  those vertices in $X$ not covered by $\cK_2$. We have that $\delta(G[\tilde{X}])\geq (1-\frac{k}{r}+\frac{\gamma}{4})|\tilde{X}|$ so we can apply Theorem~\ref{almost tiling thm} to obtain a $K_r$-tiling $\cK_3'$  in 
$(G\cup G_4)[\tilde{X}]$ which covers all but at most $|X|/4$ vertices of $\tilde{X}$. 

By Remark~\ref{keyabsorbingproperty} we know that for any subset $X_1$ of $X$ of size $|X|/2$, there is a $K_r$-tiling covering precisely $V(\cA) \setminus X_1$. Thus,
$|V(\cA)|-|X|/2$ is divisible by $r$. Therefore, as  the only vertices in $V(G)$ uncovered by $V(\cK_1\cup \cK_2)$ are those from $(V(\cA)\setminus X) \cup \tilde{X}$,
 there must be a subtiling $\cK_3\subseteq \cK_3'$ which covers all but exactly $|X|/2$ vertices of $\tilde{X}$. 

Let $\overline{X}$ be the set of vertices of $X$ that are covered by cliques in $\cK_2\cup \cK_3$. Thus $|\overline{X}|=|X|/2$ and by Remark~\ref{keyabsorbingproperty} there is a $K_r$-tiling $\cK_4$ in $G\cup G_1$ covering precisely $V(\cA)\setminus \overline{X}$. Hence, $\cK:=\cK_1\cup \cK_2\cup \cK_3\cup \cK_4$ gives a perfect $K_r$-tiling of $G\cup G(n,p)$ as required. 
\qed

\medskip

If $r/2<k\leq r-1$, we have to overcome a few technicalities. The idea is to apply Corollary~\ref{cor:case3absorbingstructure} and to apply the same approach as above in each of the parts of the resulting partition to find a $K_r$-tiling. Of course we also have to incorporate the vertices of the exceptional class $W$ into copies of cliques in our tiling; this is straightforward using Corollary~\ref{cor:embeddingK_rs}.
So we cover these vertices first
 before embarking on tiling the majority of the graph. 

More subtle is a problem that arises from divisibility. That is, when we tile each part according to the scheme above, we cannot guarantee that we are left with a subset of the flexible set of the right size to apply the key property of the absorbing structure. Therefore we embed `crossing' copies of $K_r$ in our flexible sets in order to resolve this divisibility hurdle at the end of our process. We find these copies in the following manner. Consider the graph $F:=K_{\ceil{\frac{r-1}{2}},\floor{\frac{r-1}{2}}}$. Because of our minimum degree condition  and Lemma~\ref{lem:supersat},  every part $V_i$ contains at least $\gamma'n^{r-1}$ copies of $F$ for some $\gamma'>0$. Now let $\overline{F}$ be the graph consisting of a copy of $K_{\ceil{\frac{r-1}{2}}+1}$ and a copy of $K_{\floor{\frac{r-1}{2}}+1}$ joined at a single vertex $x$, say. If we consider $F$ and $\overline{F}-x$ to have the same vertex set so that $\overline{F}-x=K_{r-1}-E(F)$, then
 $F\cup \overline{F}$ is a copy of $K_r$. Also note that it follows from \Cref{PhiFobservations} that $\Phi_{\overline{F}}\geq C' n$ for $p\geq C'n^{-2/k}$. We will look for embeddings of $K_r=F\cup \overline{F}$ in $G\cup G(n,p)$ such that the vertex $x$ is mapped to one part of the partition and the $r-1$ other vertices lie in another part of the partition. 

\medskip

{\noindent \it Proof of Case 3.} %\COMMENT{AT: quite a few little changes in proof}
Suppose $r/2< k\leq r-1$, $q:=r-k$ and $c:=\ceil{r/q}$. Now let $C,C'>0$ be chosen so that we can express $G(n,p)=\cup_{j=1}^{4}G_j\cup_{i=1}^c (G_{i1}\cup G_{i2})$ with each $G_j, G_{i1}$ and $G_{i2}$  a copy of $G(n,p')$ where $p'\geq C'n^{-2/k}$ and $C'>0$ is large enough to be able to draw the desired conclusions in what follows.

We use our first copy of $G(n,p')$ to find the crossing copies of $K_r$ discussed above. Apply Corollary~\ref{cor:case3absorbingstructure}, letting $\alpha_1>0$ be the outcome of the corollary with input $r,k,\gamma$.
Choose $0<\eps<\min\{\alpha_1\gamma/(8r), \alpha_{\ref{cor:embeddingK_rs}}\}$, where $\alpha_{\ref{cor:embeddingK_rs}}$ is the  constant obtained when applying Corollary~\ref{cor:embeddingK_rs} with  $r,k,\gamma/4,1/2$
playing the roles of $r,k,\gamma, \beta$ respectively.
Thus, Corollary~\ref{cor:case3absorbingstructure} yields a partition
 $V_1,\ldots,V_\rho,W$ of  $V(G)$ where $\rho\leq c$ and $|W|\leq \eps n$. 
Note $\delta (G[V_i]) \geq (1-\frac{k}{r}+\frac{\gamma}{4})|V_i|$ for each $i \in [\rho]$.
Thus for each $i \in [\rho-1]$ and every subset $V'\subseteq V(G)$ of at least $n-c(r-1)r$ vertices,  Lemma~\ref{lem:supersat} implies there exists some $\gamma'=\gamma' (r,k,\gamma)>0$ such that there are at least $\gamma'n^r$ choices of pairs $(S,v)\in \binom{V'\cap V_{i+1}}{r-1}\times (V'\cap V_{i})$ such that $S$ hosts a copy of the graph $F:=K_{\ceil{\frac{r-1}{2}},\floor{\frac{r-1}{2}}}$ discussed above. 

Therefore, using that $\Phi_{\overline{F}}\geq C' n$ with the graph $\overline{F}$ as described above, we can apply Corollary~\ref{cor:simplegraphcontainment} to conclude that for any subset $V'$ of vertices of at least $n-c(r-1)r$ vertices and any $i\in[\rho-1]$, there is a copy of $K_r$ in $G\cup G_1$ which has $r-1$ vertices in $V_{i+1}$ and one vertex in $V_{i}$. Therefore, we can greedily choose copies of $K_r$ so that we have a set $\cR:=\cup_{i\in[\rho-1]}\cR_i$ of disjoint copies of $K_r$ in $G\cup G_1$  such that $\cR_i$ contains $r-1$ copies of $K_r$ with one vertex in $V_i$ and $r-1$ vertices of $V_{i+1}$. Let $\cR_\rho:=\emptyset$ and  $R_i:=V(\cR)\cap V_i$ for $i\in[\rho]$, where $V(\cR)$ denotes the vertices which feature in cliques in $\cR$. Note that $|R_1|=r-1$, $|R_2|=|R_3|=\cdots=|R_{\rho-1}|=r(r-1)$ and $|R_\rho|=(r-1)^2$.
We will incorporate these $R_i$ into our flexible sets in order to use the copies of $K_r$ that they define to fix divisibility issues that arise in the final stages of the argument.

\smallskip

Now fix $0<\eta<\min\{\frac{\gamma}{4000c2^cr^2},\eta_0\}$ where $\eta_0$ is as in Corollary~\ref{cor:case3absorbingstructure} and for each $i\in[\rho]$ consider $X'_i\subseteq V(G)$ to be a subset selected by taking every vertex in $V_i\setminus R_i$  with probability $1.9\eta$, independently of the other vertices. With high probability, by Chernoff's theorem, we have that $1.8\eta |V_i|\leq |X_i'|\leq 2\eta |V_i|-r(r-1)$ and for every vertex $v\in V_i$, $|N_G(v)\cap X'_i|\geq (1-\frac{k}{r}+\frac{\gamma}{8})|X_i'|$. Therefore, for each $i$, take an instance of $X'_i$ where this is the case and let $X_i:=X_i'\cup R_i$ if $|X_i'|+|R_i|$ is even and $X_i:=X_i'\cup R_i\cup\{x\}$ for some arbitrary vertex $x\in V_i\setminus (X'_i\cup R_i)$ if $|X'_i|+|R_i|$ is odd. Apply Corollary \ref{cor:case3absorbingstructure} to get  a collection  $\{\cA_i=(\Phi_i,Z_i,Z_{i1}):i\in[\rho]\}$ of absorbing structures in $G\cup G_2$ such that each $\cA_i$ has flexibility $|X_i|/2$ and flexible set $Z_{i1}=X_i$. By 
Remark~\ref{boundy} we have that $A:=\cup_{i\in[\rho]} V(\cA_i)$ is such that $|A|\leq 125c2^{c+2}r^2\eta n\leq \gamma n/8$.

Therefore, setting $V':=V(G)\setminus (W\cup A)$, we have that for every $w\in W\cup V'$, $|N_G(w)\cap V'|\geq (1-\frac{k}{r}+\frac{\gamma}{4})|V'|$ and so an application of 
Corollary~\ref{cor:embeddingK_rs} yields a $K_r$-tiling $\cK_1$ in $G\cup G_3$ of $|W|$ cliques, each using one vertex of $W$ and $r-1$ vertices of $V'$. 
Setting $V'':=V(G)\setminus (A\cup V(\cK_1))$, we have that $\delta(G[V''])\geq (1-\frac{k}{r}+\frac{\gamma}{8})|V''|$. So, as in the previous proof, we let $\alpha_2:= \min\{\alpha_{\ref{cor:embeddingK_rs}},\frac{\gamma\eta}{16r}\}$, where $\alpha_{\ref{cor:embeddingK_rs}}$ is obtained from Corollary~\ref{cor:embeddingK_rs} (where $\gamma/8$ and $\eta$ play the roles of $\gamma$ and $\beta$ respectively),  and we apply Theorem~\ref{almost tiling thm} to obtain
 a $K_r$-tiling $\cK_2$ in $(G\cup G_4)[V'']$ covering  all but at most $\alpha_2 n$ vertices of $V''$. Let $Y$ be the set of vertices from $V''$ uncovered by $\cK_2$ and set $Y_i:=Y\cap V_i$ for each $i\in[\rho]$. 
%Now we also define, for $i\in[\rho]$, the set $U_i:=X_i\setminus R_i$ of vertices in the corresponding flexible set which are not contained in our reserved cliques $\cR$. 
%Now for each $i\in[\rho]$ we have that every $v$ vertex in $Y_i\cup U_i$ has $|N(v)\cap U_i|=$ 

Now for each $i\in[\rho]$ a simple application of Corollary~\ref{cor:embeddingK_rs} yields a $K_r$-tiling $\cK_{i1}$ in $G\cup G_{i1}$ which covers $Y_i$ and 
uses precisely $(r-1)|Y_i|\leq \gamma\eta n/16 \leq \gamma |X'_i|/16$ vertices of $X'_i$. Note that we do not use any vertices of $R=\cup_{i\in[\rho]}R_i$ in these cliques. For each $i\in[\rho]$ let $\tilde{X}_i$ be the vertices of $X_i\setminus R_i$ not involved in copies of $K_r$ in $\cK_{i1}$. As $\delta(G[\tilde{X}_{i}])\geq (1-\frac{k}{r}+\frac{\gamma}{16})|\tilde{X}_{i}|$, we can apply Theorem~\ref{almost tiling thm} to obtain a 
$K_r$-tiling $\cK'_{i}$ in $(G\cup G_{i2})[\tilde{X}_{i}]$ which covers all but at most $|X_i|/4$ vertices of $\tilde{X}_i$, for each $i\in[\rho]$.

Note that we will not use the full tilings $\cK'_{i}$ in our final tiling. 
So (ignoring for now the tilings $\cK'_{i}$), it remains to cover the vertices in $(V(\cA_i)\setminus X_i)\cup R_i\cup \tilde{X}_i$ for each $i\in[\rho]$. 
We do so by means of the following algorithm. We initiate with the $\cK'_{i}, \cR_i$ as above and set $\overline{Z}_i:=V(\cK_{i1})\cap X_i$ and $\cK_{i2}:=\emptyset$ for all $i\in[\rho]$ and $i'=1$. Now whilst $|\overline{Z}_{i'}|\leq |X_{i'}|/2-r+1$, remove a clique from $\cK'_{i'}$, add it to $\cK_{i'2}$ and add its vertices to $\overline{Z}_{i'}$. Once this process stops, add $|X_{i'}|/2-|\overline{Z}_{i'}|$ copies of $K_r$ in $\cR_{i'}$ to $\cK_{i'2}$, and add all their vertices in $X_{j}$ to $\overline{Z}_j$ for $j=i',i'+1$. If $i'\leq \rho -1$, repeat this process, setting $i'=i'+1$. Note that when $i'=\rho$, $\cR_{i'}=\emptyset$ and there are no cliques which we could add in this process. However, setting $\cK_0=\cK_1\cup \cK_2 \cup_{i\in[\rho]} (\cK_{i1}\cup \cK_{i2})$ we have that $|V(\cK_0)|$, $n$, and  $|V(\cA_i)|-|X_i|/2$ are divisible by $r$ for each $i$, so we can deduce that the algorithm takes no cliques from $\cR_\rho$ and terminates with $|\overline{Z}_i|=|X_i|/2$ for all $i\in[\rho]$. 

Finally, by the key property of the absorbing structure (Remark~\ref{keyabsorbingproperty}), we have that for each $i\in[\rho]$, there is a $K_r$-tiling $\cK_{i3}$ in $G\cup G_2$
covering  $V(\cA_i)\setminus \overline{Z}_i$  and thus $\cup_{i\in[\rho]}\cK_{i3}\cup\cK_0$ is the desired perfect $K_r$-tiling in $G\cup G(n,p)$.
\qed

\section{Concluding remarks}\label{sec:conc}
In this paper we have almost completely resolved the perfect $K_r$-tiling problem for randomly perturbed graphs. The only cases that Theorem~\ref{thm:Main} does not resolve is when 
$\alpha =1/r,2/r,\dots, (r-2)/r$. Note however for $\alpha :=1-k/r$ where $2 \leq k \leq r- 1$ we know that
\[n ^{-2/k} \leq p(K_r,\alpha) \leq n^{-2/(k+1)}.\]
In fact, one can slightly improve the lower bound, giving that $p(K_r,\alpha)\geq p(K_k,0)=n^{-2/k}(\log n)^{\frac{2}{k^2-k}}$. Indeed, as in our lower bound construction (Section \ref{sec:lower}),  take $G$ to be complete graph on $n$ vertices with a clique of size $kn/r$ removed. Letting $A$ be the resulting independent set of vertices,  if $p=o(p(K_k,0))$ then a.a.s. we have that the number of copies of $K_{k+1}$ in $G(n,p)$ is less than, say, $(\log n)^3$ by Markov's inequality, whilst the number of vertices in $A$ which do not lie in copies of $K_k$ is at least $n^{1-o(1)}$, as can be seen by a second moment calculation (see e.g. \cite[Theorem 3.22]{jlr}). This precludes the existence of a  perfect $K_r$-tiling in $G \cup G(n,p)$, as the average intersection of a clique in such a tiling with the vertex set $A$ would be $k$ and we cannot tile $G(n,p)[A]$ with a family of cliques whose average size is $k$ given the restrictions above. This leaves a gap between the upper and lower bounds and it would be very interesting to resolve the problem for these `boundary' cases.

It is also of interest to consider the analogous problem for perfect $H$-tilings for arbitrary graphs $H$. Note that whilst the main result from~\cite{bwt2} determines $p(H,\alpha)$ for all graphs $H$ and $0<\alpha<1/|H|$, the problem is still wide open for larger values of $\alpha$. The methods from our paper are likely to be useful for the general problem, though we suspect how $p(H,\alpha)$  `jumps' as $\alpha$ increases will depend heavily on the structure of $H$. Thus we believe it would be a significant 
challenge to prove such a general result.

\section*{Acknowledgments} 
Much of the research in this paper was undertaken whilst the authors were visiting the Instituto Nacional de Matem\'atica Pura e Aplicada in Rio de Janeiro as part of the Graphs@IMPA thematic program in 2018. 
We are grateful to IMPA for the nice working environment. We are also extremely grateful to Louis DeBiasio %\COMMENT{PM:added as I had a lot of conversations with him about little things when writing this.} 
and Shagnik Das for many useful conversations and to the referees for their careful reviews and their suggestions. In particular, we thank one of the referees who brought our attention to the improvement on the lower bound in the concluding remarks.

\bibliographystyle{abbrv}
\bibliography{refs}

@article{benn,
 author               = {Bennett, Patrick and Dudek, Andrzej and Frieze, Alan},
 journal              = {arXiv preprint arXiv:1710.02716},
 title                = {Adding random edges to create the square of a {H}amilton cycle},
 year                 = {2017},
 }

@article{bfkm,
 author               = {Bohman, Tom and Frieze, Alan and Krivelevich, Michael and Martin, Ryan},
 journal              = {Random Structures \& Algorithms},
 number               = {2},
 pages                = {105--117},
 publisher            = {Wiley Online Library},
 title                = {Adding random edges to dense graphs},
 volume               = {24},
 year                 = {2004},
 }

@article{bfm1,
 author               = {Bohman, Tom and Frieze, Alan and Martin, Ryan},
 journal              = {Random Structures \& Algorithms},
 number               = {1},
 pages                = {33--42},
 publisher            = {Wiley Online Library},
 title                = {How many random edges make a dense graph {H}amiltonian?},
 volume               = {22},
 year                 = {2003},
 }

@article{bhkm,
 author               = {Bedenknecht, Wiebke and Han, Jie and Kohayakawa, Yoshiharu and Mota, Guilherme Oliveira},
  journal={Random Structures \& Algorithms},
  volume={55},
  number={4},
  pages={795--807},
  year={2019},
  publisher={Wiley Online Library},
 title                = {Powers of tight {H}amilton cycles in randomly perturbed hypergraphs},
 }

@article{bhkmpp,
 author               = {B{\"o}ttcher, Julia and Han, Jie and Kohayakawa, Yoshiharu and Montgomery, Richard and Parczyk, Olaf and Person, Yury},
  journal={Random Structures \& Algorithms},
  volume={55},
  number={4},
  pages={854--864},
  year={2019},
  publisher={Wiley Online Library},
 title                = {Universality for bounded degree spanning trees in randomly perturbed graphs},
 }

@article{bmpp2,
 author               = {B{\"o}ttcher, Julia and Montgomery, Richard and Parczyk, Olaf and Person, Yury},
  journal={Mathematika},
  volume={66},
  number={2},
  pages={422--447},
  year={2020},
  publisher={Wiley},
 title                = {Embedding spanning bounded degree graphs in randomly perturbed graphs},
 }

@article{bwt2,
 author               = {Balogh, J{\'o}zsef and Treglown, Andrew and Wagner, Adam Zsolt},
 journal              = {Combinatorics, Probability and Computing},
 pages                = {159--176},
 publisher            = {Cambridge University Press},
 title                = {Tilings in randomly perturbed dense graphs},
 volume               = {28},
 year                 = {2019},
 }

@article{cdkm,
 author               = {Czygrinow, Andrzej and DeBiasio, Louis and Kierstead, Hal A and Molla, Theodore},
 journal              = {Combinatorics, Probability and Computing},
 number               = {5},
 pages                = {754--773},
 publisher            = {Cambridge University Press},
 title                = {An extension of the {H}ajnal--{S}zemer{\'e}di theorem to directed graphs},
 volume               = {24},
 year                 = {2015},
 }

@article{corradi,
 author               = {Corr{\'a}di, Kereszt{\'e}ly and Hajnal, Andr{\'a}s},
 journal              = {Acta Mathematica Hungarica},
 number               = {3-4},
 pages                = {423--439},
 title                = {On the maximal number of independent circuits in a graph},
 volume               = {14},
 year                 = {1963},
 }

@article{das,
 author               = {Das, Shagnik and Treglown, Andrew},
 journal              = {Combinatorics, Probability and Computing, to appear},
 title                = {Ramsey properties of randomly perturbed graphs: cliques and cycles},
 year                 = {2020},
 }

@article{dudek,
 author               = {Dudek, Andrzej and Reiher, Christian and Ruci{\'n}ski, Andrzej and Schacht, Mathias},
   journal={Random Structures \& Algorithms},
  volume={56},
  number={1},
  pages={122--141},
  year={2020},
  publisher={Wiley Online Library},
 title                = {Powers of {H}amiltonian cycles in randomly augmented graphs},
 }

@article{erdHos1983supersaturated,
 author               = {Erd{\H{o}}s, Paul and Simonovits, Mikl{\'o}s},
 journal              = {Combinatorica},
 number               = {2},
 pages                = {181--192},
 publisher            = {Springer},
 title                = {Supersaturated graphs and hypergraphs},
 volume               = {3},
 year                 = {1983},
 }

@article{FKL16,
 author               = {Ferber, Asaf and Kronenberg, Gal and Luh, Kyle},
  journal={Transactions of the American Mathematical Society},
  volume={372},
  number={6},
  pages={4239--4262},
  year={2019},
 title                = {Optimal Threshold for a Random Graph to be 2-Universal},
 }

@article{FN17,
 author               = {Ferber, Asaf and Nenadov, Rajko},
 journal              = {Random Structures \& Algorithms},
 number               = {4},
 pages                = {604--637},
 publisher            = {Wiley Online Library},
 title                = {Spanning universality in random graphs},
 volume               = {53},
 year                 = {2018},
 }

@article{forum,
 author               = {Czygrinow, Andrzej and DeBiasio, Louis and Molla, Theodore and Treglown, Andrew},
 journal              = {Forum of Mathematics, Sigma},
 organization         = {Cambridge University Press},
 pages                = {e2},
 title                = {Tiling directed graphs with tournaments},
 volume               = {6},
 year                 = {2018},
 }

@article{han2016complexity,
 author               = {Han, Jie and Treglown, Andrew},
  journal={Journal of Combinatorial Theory, Series B},
  volume={141},
  pages={72--104},
  year={2020},
  publisher={Elsevier},
 title                = {The complexity of perfect matchings and packings in dense hypergraphs},
 }

@article{han2019finding,
 author               = {Han, Jie and Kohayakawa, Yoshiharu and Morris, Patrick and Person, Yury},
 journal              = {Journal of Graph Theory},
 title                = {Finding any given 2-factor in sparse pseudorandom graphs efficiently},
 year                 = {2020},
 pages = {1--22},
 }

@article{hansolo,
 author               = {Han, Jie},
 journal              = {Transactions of the American Mathematical Society},
 pages                = {5197--5218},
 title                = {Decision problem for Perfect Matchings in Dense k-uniform Hypergraphs},
 year                 = {2017},
 }

@article{hanzhao,
 author               = {Han, Jie and Zhao, Yi},
  journal={Journal of Combinatorial Theory, Series B, to appear},
  year={2020},
  publisher={Elsevier},
 title                = {{H}amiltonicity in randomly perturbed hypergraphs},
 }

@article{HKMP18,
 author               = {Han, Jie and Kohayakawa, Yoshiharu and Morris, Patrick and Person, Yury},
  journal={European Journal of Combinatorics},
  volume={82},
  pages={102999},
  year={2019},
  publisher={Elsevier},
 title                = {Clique-factors in sparse pseudorandom graphs},
 }

@article{hs,
 author               = {Hajnal, Andr{\'a}s and Szemer{\'e}di, Endre},
 journal              = {Combinatorial theory and its applications},
 number               = {4},
 pages                = {601--623},
 title                = {Proof of a conjecture of {P}. {E}rd{\H{o}}s},
 volume               = {2},
 year                 = {1970},
 }

@article{jkv,
 author               = {Johansson, Anders and Kahn, Jeff and Vu, Van},
 journal              = {Random Structures \& Algorithms},
 number               = {1},
 pages                = {1--28},
 publisher            = {Wiley Online Library},
 title                = {Factors in random graphs},
 volume               = {33},
 year                 = {2008},
 }

@book{jlr,
 author               = {Janson, Svante and {\L}uczak, Tomasz and Ruci\'nski, Andrzej},
 publisher            = {John Wiley \& Sons},
 title                = {Random graphs},
 year                 = {2010},
 }

@article{joos2,
 author               = {Joos, Felix and Kim, Jaehoon},
  journal              = {Random Structures \& Algorithms},
 volume={56},
  number={1},
  pages={169--219},
  year={2020},
  publisher={Wiley Online Library},
 title                = {Spanning trees in randomly perturbed graphs},
 }

@article{k1,
 author               = {Krivelevich, Michael},
 journal              = {Combinatorics, Probability and Computing},
 number               = {3},
 pages                = {337--347},
 publisher            = {Cambridge University Press},
 title                = {Triangle factors in random graphs},
 volume               = {6},
 year                 = {1997},
 }

@article{kim,
 author               = {Kim, Jeong Han},
 journal              = {Random Structures \& Algorithms},
 number               = {2},
 pages                = {111--132},
 publisher            = {Wiley Online Library},
 title                = {Perfect matchings in random uniform hypergraphs},
 volume               = {23},
 year                 = {2003},
 }

@article{kks1,
 author               = {Krivelevich, Michael and Kwan, Matthew and Sudakov, Benny},
 journal              = {Combinatorics, Probability and Computing},
 number               = {6},
 pages                = {909--927},
 publisher            = {Cambridge University Press},
 title                = {Cycles and matchings in randomly perturbed digraphs and hypergraphs},
 volume               = {25},
 year                 = {2016},
 }

@article{kks2,
 author               = {Krivelevich, Michael and Kwan, Matthew and Sudakov, Benny},
 journal              = {SIAM Journal on Discrete Mathematics},
 number               = {1},
 pages                = {155-171},
 title                = {Bounded-degree spanning trees in randomly perturbed graphs},
 volume               = {31},
 year                 = {2017},
 }

@article{komlos,
 author               = {Koml{\'o}s, J{\'a}nos},
 journal              = {Combinatorica},
 number               = {2},
 pages                = {203--218},
 publisher            = {Springer},
 title                = {Tiling {T}ur{\'a}n theorems},
 volume               = {20},
 year                 = {2000},
 }

@misc{ksregularitysurvey,
 author               = {J{\'a}nos Koml{\'o}s and Mikl{\'o}s Simonovits},
 title                = {Szemer{\'e}di's Regularity Lemma and Its Applications in Graph Theory},
 year                 = {1996},
 }

@article{kst,
 author               = {Krivelevich, Michael and Sudakov, Benny and Tetali, P.},
 journal              = {Random Structures \& Algorithms},
 pages                = {180-193},
 title                = {On smoothed analysis in dense graphs and formulas},
 volume               = {29},
 year                 = {2006},
 }

@article{kuhn2,
 author               = {K{\"u}hn, Daniela and Deryk Osthus},
 journal              = {Combinatorica},
 number               = {1},
 pages                = {65-107},
 title                = {The minimum degree threshold for perfect graph packings},
 volume               = {29},
 year                 = {2009},
 }

@article{Kwan16,
 author               = {Kwan, M.},
 journal              = {arXiv preprint arXiv:1611.02246},
 title                = {Almost all {S}teiner triple systems have perfect matchings},
 year                 = {2016},
 }

@article{LM1,
 author               = {Lo, A. and Markstr\"om, K.},
 bdsk-url-1           = {http://dx.doi.org/10.1007/s00373-014-1410-8},
 date-added           = {2017-02-14 19:33:33 +0000},
 date-modified        = {2017-02-14 19:33:33 +0000},
 doi                  = {10.1007/s00373-014-1410-8},
 fjournal             = {Graphs and Combinatorics},
 issn                 = {0911-0119},
 journal              = {Graphs and Combinatorics},
 keywords             = {Hypergraph; k-Graph; F-factor; Minimum degree; Primary 05C65; 05C70; 05C07},
 number               = {3},
 pages                = {679-712},
 publisher            = {Springer Japan},
 title                = {F-Factors in Hypergraphs Via Absorption},
 url                  = {http://dx.doi.org/10.1007/s00373-014-1410-8},
 volume               = {31},
 year                 = {2015},
 }

@article{M14a,
 author               = {Montgomery, Richard},
 journal              = {arXiv preprint arXiv:1405.6559},
 title                = {Embedding bounded degree spanning trees in random graphs.},
 year                 = {2014},
 }

@article{mm,
 author               = {McDowell, Andrew and Richard Mycroft},
 journal              = {Electronic Journal of Combinatorics},
 pages                = {P4.36},
 title                = {{H}amilton $\ell $-cycles in randomly-perturbed hypergraphs},
 volume               = {25},
 year                 = {2018},
 }

@article{my,
 author               = {Keevash, Peter and Mycroft, Richard},
 journal              = {Journal of Combinatorial Theory, Series B},
 pages                = {187--236},
 publisher            = {Elsevier},
 title                = {A multipartite {H}ajnal--{S}zemer{\'e}di theorem},
 volume               = {114},
 year                 = {2015},
 }

@unpublished{nan,
 archiveprefix        = {arXiv},
 author               = {Nenadov, Rajko and Miloš Trujić},
 eprint               = {1811.09209(2018)},
 note                 = {arXiv preprint arXiv:1811.09209},
 title                = {Sprinkling a few random edges doubles the power},
 year                 = {2018},
 }

@article{nenadov2018ramsey,
 author               = {Nenadov, Rajko and Pehova, Yanitsa},
  journal={SIAM Journal on Discrete Mathematics},
  volume={34},
  number={2},
  pages={1001--1010},
  year={2020},
  publisher={SIAM},
 title                = {On a {R}amsey--{T}ur\'an variant of the {H}ajnal--{S}zemer\'edi theorem},
 }

@book{prob1,
 author               = {Alon, Noga and Spencer, Joel H},
 edition              = {1st},
 publisher            = {John Wiley \& Sons},
 title                = {The probabilistic method},
 year                 = {1992},
 }

@article{short,
 author               = {Kierstead, Hal A and Kostochka, Alexandr V},
 journal              = {Combinatorics, Probability and Computing},
 number               = {2},
 pages                = {265--270},
 publisher            = {Cambridge University Press},
 title                = {A short proof of the {H}ajnal--{S}zemer{\'e}di theorem on equitable colouring},
 volume               = {17},
 year                 = {2008},
 }

@incollection{survey,
 author               = {K{\"u}hn, Daniela and Deryk Osthus},
 booktitle            = {Surveys in Combinatorics},
 editor               = {Huczynska, S and Mitchell, J D and Roney-Dougal, C M},
 pages                = {137-167},
 publisher            = {Cambridge University Press},
 series               = {London. Math. Soc. Lecture Notes},
 title                = {Embedding large subgraphs into dense graphs},
 volume               = {365},
 year                 = {2009},
 }

@article{sz,
 author               = {Shokoufandeh, Ali and Zhao, Yi},
 doi                  = {10.1002/rsa.10091},
 journal              = {Random Structures \& Algorithms},
 keywords             = {Tiling, Regularity Lemma, Blow-up Lemma, Critical Chromatic Number},
 number               = {2},
 pages                = {180-205},
 title                = {Proof of a tiling conjecture of {K}omlós},
 volume               = {23},
 year                 = {2003},
 }

@incollection{szemeredi,
 author               = {Szemerédi, Endre},
 booktitle            = {{P}robl\'{e}mes {C}ombinatoires et {T}h\'{e}orie des {G}raphes {C}olloques {I}nternationaux {CNRS} 260},
 pages                = {399-401},
 title                = {Regular partitions of graphs},
 year                 = {1978},
 }

@article{treg,
 author               = {Treglown, Andrew},
 journal              = {Combinatorics, Probability and Computing},
 number               = {6},
 pages                = {873-928},
 title                = {On directed versions of the {H}ajnal--{S}zemer{\'e}di theorem},
 volume               = {24},
 year                 = {2015},
 }

@incollection{zsurvey,
 address              = {New York},
 author               = {Zhao, Yi},
 booktitle            = {Recent trends in combinatorics},
 editor               = {Beveridge, A and Griggs, J and Hogben, L and Musiker, G and Tetali, P},
 pages                = {145-165},
 publisher            = {Springer, Cham},
 series               = {IMA Volumes in Mathematics and its Applications},
 title                = {Recent advances on {D}irac-type problems for hypergraphs},
 volume               = {159},
 year                 = {2015},
 }

@article{knierim2019k_r,
  title={${K}_r$-Factors in Graphs with Low Independence Number},
  author={Knierim, Charlotte and Su, Pascal},
  journal={arXiv preprint arXiv:1912.00230},
  year={2019}
}

@article{antoniuk2020high,
  title={High powers of {H}amiltonian cycles in randomly augmented graphs},
  author={Antoniuk, Sylwia and Dudek, Andrzej and Reiher, Christian and Ruci{\'n}ski, Andrzej and Schacht, Mathias},
  journal={arXiv preprint arXiv:2002.05816},
  year={2020}
}

@article{M19,
  title={Spanning trees in random graphs},
  author={Montgomery, Richard},
  journal={Adv. Math.},
  volume={356},
  pages={106793},
  year={2019},
  publisher={Elsevier}
}
\end{document}